\newcommand\N{{\mathbb N}}
\newcommand\R{{\mathbb R}}
\def\AA{{\mathcal A}}
\def\BB{{\mathcal B}}
\def\CC{{\mathcal C}}
\def\HH{{\mathcal H}}
\def\KK{{\mathcal K}}
\def\LL{{\mathcal L}}
\def\SS{{\mathcal S}}
\def\eps{{\varepsilon}}
\newcommand{\lp}{\left(}
\newcommand{\rp}{\right)}
\newtheorem{theo}{Theorem}[section]
\newtheorem{prop}[theo]{Proposition}
\newtheorem{lemma}[theo]{Lemma}
\newtheorem{cor}[theo]{Corollary}
\newtheorem{rem}[theo]{Remark}
\newtheorem{conjecture}[theo]{Conjecture}
\newtheorem{defin}[theo]{Definition}
\numberwithin{equation}{section}
\newcommand{\uO}{u_{\Omega}}
\newcommand{\dd} {\textrm{dist}}
\newcommand{\divergence}{\textrm{div}}
\newcommand{\Per}{\textrm{Per}}
\newcommand{\beqn}{\begin{equation}}
\newcommand{\eeqn}{\end{equation}}
\newcommand{\bear}{\begin{eqnarray}}
\newcommand{\eear}{\end{eqnarray}}
\newcommand{\bean}{\begin{eqnarray*}}
\newcommand{\eean}{\end{eqnarray*}}
\date{\today}
\begin{document}

\title{A discrete Bernoulli free boundary problem}

\author{Maria del Mar Gonzalez\footnote{Universitat Polit\`ecnica de Catalunya, ETSEIB - Dept. de MA1, Av. Diagonal 647, 08028 Barcelona,Spain}, 
Maria Gualdani\footnote{Department of Mathematics, University of Texas at Austin, 2515 Speedway C1200 , Austin TX, 78712, USA },
Henrik Shahgholian\footnote{Department of Mathematics, The Royal Institute of Technology 100 44 Stockholm, Sweden}}

\maketitle

\begin{abstract}
We consider a free boundary problem for the $p$-Laplace operator which is related to the so-called Bernoulli free boundary problem. In this formulation, the classical boundary gradient condition is replaced by a condition on the distance between two different level surfaces of the solution. For suitable scalings our model converges to the classical Bernoulli problem; one of the advantages in this new formulation lies in the simplicity of the arguments, since one does not need to consider the boundary gradient.

We shall study this problem in convex and other regimes, and establish existence and qualitative theory.

\end{abstract}



\section{Introduction}
\label{sec:intro}
\setcounter{equation}{0}
\setcounter{theo}{0}

For a bounded domain $K\subset \R^N$ ($N\geq 2 $), the well known (exterior) Bernoulli free boundary problem is to find a domain $\Omega \supset \overline K$, with the requirement that the harmonic function $u$ in the region $\Omega\setminus \overline K$ with given boundary values satisfies a prescribed Neumann condition on $\partial \Omega$;
the boundary $\partial \Omega$ is thus called Bernoulli free boundary. There is a vast literature on the subject, including by-now classical references
 \cite{Acker:heat-flow,Acker:interior,Acker-Meyer:free-boundary-problem, Alt-Caffarelli:minimum-problem, Beurling89, Flucher-Rumpf:Bernoulli}. One may also replace the Laplacian operator by the $p$-Laplacian operator;
 this problem arises in various nonlinear flow laws, and several physical situations,
 e.g. electrochemical machining and potential flow in fluid mechanics (see for instance
  \cite{Acker-Meyer:free-boundary-problem,DP05,GrecoKawohl09,HH00,HH000,P08,Atkinson-Champion}).

In this paper we propose a new formulation of the $p$-Laplacian Bernoulli problem for $1<p<+\infty$, in which the boundary gradient condition is replaced by
 a (weaker) condition on the distance between two different level sets of the solution. More precisely, given the $p$-capacitary potential $u$ in the annulus $\Omega\setminus \overline K$ we ask that
\begin{equation}\label{main-condition}\dd (x,\{u=l\})=\lambda, \quad \forall x\in\partial\Omega,\end{equation}
where $\{u=l\}$ is the $l$-level set of $u$ and $l,\lambda$ are two positive given constants.

Condition \eqref{main-condition} was considered in \cite{Magnanini-Sakaguchi:Matzoh,Magnanini-Sakaguchi:unbounded,Magnanini-Sakaguchi:stationary-surface}, while studying diffusion problems with stationary isothermic surfaces; and independently by the third author in \cite{Shahgholian:Serrin}, where he considered a discrete version of Serrin's problem. This second approach was inspired by a parabolic free boundary model in finance in which the distance between the free boundary (that is the zero level set for the solution)
and the location of a source  is prescribed (c.f. \cite{Lasry-Lions, price-formation2, Chayes-Gonzalez-Gualdani-Kim, Caffarelli-Markowich-Pietschmann:price-formation}
for an overview).

The novelty in the approach introduced here is that our problem
 with condition \eqref{main-condition} approximates the solution of the
classical Bernoulli problem in the following sense: as $\lambda \to
0$, any $p$-capacitary function $u_\lambda$ vanishing on $\partial
\Omega$ and satisfying \eqref{main-condition} with $ l = \omega
\lambda$ converges to a $p$-capacitary function $u$ vanishing on
$\partial \Omega$ such that $|\nabla u|=\omega$ on $\partial
\Omega$. In some sense, this new formulation can be understood as a
\emph{discrete Bernoulli problem}.

Two different problems can be set up according to where the free boundary lies in the annulus: the exterior problem $(P_E)$, considered in Section \ref{section-exterior},
and the interior one $(P_I)$, considered in Section \ref{section-interior}.  It is clear that the geometry of the domains will play a crucial role in both problems.

In this paper we deal with two different geometries: the convex and the star-shaped case. We address
the questions of existence, multiplicity, uniqueness and regularity for the discrete Bernoulli problem, together with convergence to the usual Bernoulli problem in the convex setting.

The main ideas in our paper follow the approach by sub- and super-solutions introduced by Beurling in \cite{Beurling89}.
For the classical Bernoulli problem in the convex case with $p$-Laplacian operator, questions of existence and uniqueness were addressed in \cite{HH00,HH000,HH97}. It is noteworthy that the use of the properties of the distance function \eqref{main-condition} avoids many of the technicalities: for example, higher regularity for solutions is an immediate consequence from existence theory, which is not the case for the classical Bernoulli problem.

The most standard approach for the Bernoulli problem is either variational or by singular perturbation. In the case $p=2$, the seminal paper by Alt-Caffarelli \cite{Alt-Caffarelli:minimum-problem} shows existence of a solution using variational techniques without any geometric restriction on the set $\partial\{u >0\}$. Moreover it is shown that the free boundary is flat, and consequently of class $\mathcal C^{1,\alpha}$ up to a hypersurface of $\HH^{n-1}$-measure zero. For $N=2$ they prove that the free boundary is globally analytic (see \cite{KN77}). Recently in \cite{Caffarelli-Jerison-Kenig-04} it was shown that there exists no variational solution with singular free boundary for $N=3$. The question remains open for $4\le N\le 6$, since  in \cite{DSJ09} existence of minimal cones is shown for $N=7$.

For $p\neq 2$ see \cite{DK10} and \cite{DP05}; in particular regularity is considered in  \cite{DP05,DP06}, where  the authors extend the results of \cite{Alt-Caffarelli:minimum-problem} ($\mathcal C^{1,\alpha}$ regularity near flat points). They prove that for $N=2$ the free boundary is globally analytic for $2-\delta < p <+\infty$ with $\delta$ an absolute constant. For $N\neq 2$ not many results are available: Petrosyan in \cite{P08} shows that for any $N\le k^*-1$ and any $2-\delta < p <2 +\delta$ the variational solution has globally analytic free boundary, where $k^*$  is the critical dimension above which any variational solution has free boundary with singular points.

If one adds a priori geometrical assumptions, the problem becomes more tractable. If the initial given domain is convex, for any $1<p<+\infty$ there exists a unique classical solution, and the free boundary is also convex and $\mathcal C^{2,\alpha}$ for both interior and exterior case. This is done in \cite{HH00,HH000,HH97} using Beurling's method on sub- and super-solutions.

Although the literature is not yet exhaustive, one would expect similar results if we relax the geometry of the domain to be star-shaped only (see, for instance \cite{Acker-Meyer:free-boundary-problem}). The $p$-capacitary potential in a star-shaped annulus has star-shaped level sets, according to the rearrangement results of \cite{Kawohl:starshaped}; see Section 5 for a more detailed literature on star-shaped Bernoulli problem. However, two questions remain open: \emph{(i)} Existence of a solution using Beurling's method, which, by uniqueness, would coincide with the variational one. \emph{(ii)} The best regularity one expects for the free boundary.

The paper is structured as follows: after recalling  in Section \ref{sec:preliminary} some preliminary results that will be used throughout the manuscript, we will consider the exterior and interior problem in the convex setting, see Sections \ref{section-exterior} and \ref{section-interior}, respectively. Our problem will be then considered for star-shaped domains in Section \ref{exterior-star}.
In Section \ref{sec:generalization} we further extend the problem to more general cases, such as non-constant distance constraint (including a constraint involving mean curvature), and two- and multi-phase problems. We also give the proof of a Brunn-Minkowski inequality. In the last section we show that the proposed model might give insights for new numerical algorithms that could more efficiently approximate solutions to the classical Bernoulli problem. Several open and tantalizing problems are also outlined.


\section{Preliminaries} \label{sec:preliminary}
\setcounter{equation}{0}
\setcounter{theo}{0}
For the readers' convenience we briefly recall in this section the main existence and regularity properties for $p$-capacitary functions that will be used later.

For $1<p<\infty$, we define the $p$-Laplacian operator $\Delta_p$ as
$$\Delta_p u:= \divergence (|\nabla u|^{p-2}\nabla u),\quad 1<p<\infty,$$
and the equation $\Delta_p u=0$ in $\Omega$ is understood in the weak sense as
$$\int_\Omega |\nabla u|^{p-2}\nabla u\nabla \varphi=0, \quad \forall\; \varphi\in W^{1,p}_0(\Omega).$$

We first review some classical regularity properties:

\begin{theo}\label{thm-classical-regularity}
Let $U$ be a bounded domain in $\mathbb R^N$ and $u$ be a (weak) solution of $\Delta_p u=0$ in $U$. Then:
\begin{enumerate}
\item[\emph{i.}]  $u\in\mathcal C^{1,\alpha}_{{loc}}(U),$ (\cite{DiBenedetto,Tolksdorf84}),
\item[\emph{ii.}]  $u$ satisfies the weak maximum principle and Hopf's boundary lemma (\cite{Tolksdorf83}).
\end{enumerate}
\end{theo}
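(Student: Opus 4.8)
Since the statement collects classical results whose full proofs are in the cited references \cite{DiBenedetto,Tolksdorf84,Tolksdorf83}, I will only indicate the standard route one would follow, forward-looking and in outline.

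\textbf{Step 1: interior H\"older continuity of $u$.} First I would derive a Caccioppoli inequality by testing the weak equation with $\varphi=\eta^{p}(u-k)^{+}$ for a cutoff $\eta$, obtaining
\[
\int \eta^{p}\,|\nabla(u-k)^{+}|^{p}\;\le\;C\int |\nabla\eta|^{p}\,\big((u-k)^{+}\big)^{p}.
\]
From this, De Giorgi's iteration on super- and sub-level sets (or Moser iteration) yields local boundedness of $u$ and a geometric decay of $\operatorname*{osc}_{B_\rho}u$, hence $u\in\mathcal C^{0,\alpha}_{loc}(U)$ for some $\alpha=\alpha(N,p)$.

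\textbf{Step 2: the gradient is locally bounded.} Next I would regularize the operator, replacing $|\nabla u|^{p-2}$ by $(\eps^{2}+|\nabla u|^{2})^{(p-2)/2}$; the regularized equation is nondegenerate, so one may differentiate it, and $w=\partial_\ell u^{\eps}$ solves a linear divergence-form equation whose coefficient matrix
\[
a_{ij}=(\eps^{2}+|\nabla u^{\eps}|^{2})^{(p-2)/2}\Big(\delta_{ij}+(p-2)\tfrac{u^{\eps}_{i} u^{\eps}_{j}}{\eps^{2}+|\nabla u^{\eps}|^{2}}\Big)
\]
has eigenvalues comparable to $(\eps^{2}+|\nabla u^{\eps}|^{2})^{(p-2)/2}$ with a constant depending only on $p$. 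A Caccioppoli estimate together with Moser iteration applied to $|\nabla u^{\eps}|^{2}$ then bounds $\|\nabla u^{\eps}\|_{L^{\infty}(B_{R/2})}$ by $\|\nabla u^{\eps}\|_{L^{p}(B_R)}$, uniformly in $\eps$; letting $\eps\to 0$ gives $\nabla u\in L^{\infty}_{loc}(U)$.

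\textbf{Step 3: H\"older continuity of the gradient.} This is the main obstacle, because of the degeneracy of the coefficients on $\{\nabla u=0\}$. The plan is to argue by dichotomy on each ball $B_\rho(x_0)\Subset U$. In the \emph{nondegenerate} alternative, where $\sup_{B_\rho}|\nabla u|$ is comparable to $\operatorname*{osc}_{B_\rho}\nabla u$ and $|\nabla u|$ stays bounded below by a fixed fraction of that quantity, the matrix $a_{ij}$ above is uniformly elliptic and smooth in $\nabla u$, so the De Giorgi--Nash estimate applied to the difference quotients of $u$ forces $\operatorname*{osc}\nabla u$ to decay; in the \emph{degenerate} alternative one uses a Caccioppoli-type inequality for the vector field $\nabla u$ together with the smallness of $|\nabla u|$ to obtain the same decay. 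Combining the two gives $\operatorname*{osc}_{B_{\tau\rho}}\nabla u\le\theta\,\operatorname*{osc}_{B_\rho}\nabla u$ for fixed $\tau,\theta\in(0,1)$, and iteration yields $u\in\mathcal C^{1,\alpha}_{loc}(U)$ --- this is Tolksdorf's and DiBenedetto's theorem.

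\textbf{Step 4: maximum principle and Hopf's lemma.} The weak maximum principle I would get from the strict monotonicity of $\xi\mapsto|\xi|^{p-2}\xi$: with $M:=\sup_{\partial U}u$, testing with $\varphi=(u-M)^{+}\in W^{1,p}_0(U)$ gives $\int_U|\nabla(u-M)^{+}|^{p}=0$, hence $(u-M)^{+}\equiv 0$ (the lower bound being symmetric). For Hopf's lemma at a boundary point $x_0$ admitting an interior ball $B_R(y)\subset U$ with $x_0\in\partial B_R(y)$, I would first invoke the strong maximum principle (via Harnack's inequality for nonnegative $p$-harmonic functions) to ensure $u>u(x_0)$ inside $B_R(y)$, then compare $u-u(x_0)$ from below on the annulus $B_R(y)\setminus\overline{B_{R/2}(y)}$ with the barrier $v(x)=c\big(e^{-\gamma|x-y|^{2}}-e^{-\gamma R^{2}}\big)$, which satisfies $\Delta_p v\ge 0$ there once $\gamma=\gamma(N,p,R)$ is large enough; differentiating the inequality $u-u(x_0)\ge v$ along the inner normal at $x_0$ then gives the strict sign of the one-sided normal derivative.
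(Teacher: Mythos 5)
The paper gives no proof of this statement --- it simply cites \cite{DiBenedetto,Tolksdorf84,Tolksdorf83} --- and your outline faithfully reconstructs precisely the arguments of those references: Caccioppoli plus De Giorgi/Moser iteration for interior H\"older continuity, regularization of the degenerate coefficient and Moser iteration on $|\nabla u^{\eps}|^{2}$ for the local gradient bound, the degenerate/nondegenerate dichotomy of Tolksdorf and DiBenedetto for $\mathcal C^{1,\alpha}_{loc}$, and strict monotonicity of $\xi\mapsto|\xi|^{p-2}\xi$ together with an exponential radial barrier for the weak maximum principle and Hopf's lemma. The sketch is correct and consistent with the cited sources, so there is nothing to add.
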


Additional H\"older regularity up to the boundary can be shown through construction of conical barriers. With this objective in mind, we introduce some notations:
we say that a domain $U$ satisfies a \emph{uniform exterior cone condition} if there exists a $r_0>0$ such that for all $x^0\in\partial U$ the finite
right circular cone $V_{x^0}$ with vertex $x^0$ and opening $r_0$ satisfies $\overline U\cap V_{x^0}=\{x^0\}$ and $V_{x^0}\subset U^c$. One may analogously define the \emph{uniform interior
cone condition}.

\begin{lemma}[\cite{Tolksdorf83}]\label{lemma-holder-estimate}
Let $U$ be a domain with exterior cone condition. Then the solution of the Dirichlet problem with H\"older boundary data for the $p$-Laplacian in $U$ is a
H\"older-continuous function up to $\partial U$ with H\"older norm uniformly bounded depending on $r_0$, $\sup_U  |u|$ and $R_0$ (radius of largest ball that fits inside $U$).
\end{lemma}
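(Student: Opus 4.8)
The plan is to establish H\"older continuity up to the boundary by constructing explicit barrier functions supported on the exterior cones, and then invoking the comparison principle (the weak maximum principle from Theorem \ref{thm-classical-regularity}.ii). First I would fix a boundary point $x^0 \in \partial U$ and, after translating, assume $x^0 = 0$. By the uniform exterior cone condition there is a circular cone $V_0 \subset U^c$ with vertex at the origin and fixed opening $r_0$. The key observation is that in a punctured ball $B_\rho \setminus \{0\}$ one can look for radially-flavored $p$-harmonic functions of the form $w(x) = |x|^\gamma \phi(\theta)$, where $\theta$ is the angular variable on the sphere; separation of variables for $\Delta_p$ reduces the construction to an eigenvalue-type ODE on the spherical cap complementary to the cone's cross-section, and for a suitable exponent $\gamma = \gamma(N,p,r_0) \in (0,1)$ one obtains a positive $p$-harmonic function on $B_\rho \cap U$ that vanishes continuously at $0$ and is bounded below away from $0$. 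This gives the one-sided estimate $|u(x) - u(0)| \le C|x|^\gamma$ near each boundary point.

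The main steps, in order, are: (1) reduce to the homogeneous case by subtracting the (H\"older) boundary data and using that the difference of a solution and a fixed function can be compared against barriers after adding a correction absorbing the oscillation of the data — more precisely one uses that the boundary data has modulus of continuity $|g(x)-g(x^0)| \le [g]_\beta |x-x^0|^\beta$ and chooses $\gamma < \beta$; (2) construct the barrier $w_{x^0}$ on a standard cone using the separation-of-variables ansatz above, noting that by scaling and rotation a single profile works uniformly over all $x^0 \in \partial U$ since the cone opening $r_0$ is uniform; (3) scale the barrier so that $w_{x^0} \ge \sup_U |u| + \operatorname{osc}(g)$ on the part of $\partial(B_\rho \cap U)$ lying inside $U$, while $\pm(u - g(x^0)) \le w_{x^0}$ on the part lying on $\partial U$; (4) apply the weak maximum principle in $B_\rho \cap U$ to conclude $|u(x) - g(x^0)| \le w_{x^0}(x) \le C|x-x^0|^\gamma$; (5) combine with the interior estimate of Theorem \ref{thm-classical-regularity}.i to upgrade to a global H\"older bound, tracking that the constant depends only on $r_0$, $\sup_U|u|$, and the radius $R_0$ of the largest inscribed ball (the latter entering through how large $\rho$ can be taken and hence the size of the barrier).

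I expect the main obstacle to be step (2): unlike the Laplacian, the $p$-Laplacian does not admit clean closed-form separated solutions, so the existence of an admissible exponent $\gamma \in (0,1)$ and a positive angular profile $\phi$ on the spherical cap must be extracted from a nonlinear, possibly degenerate, eigenvalue problem on the sphere. One must verify that $\gamma$ can be taken strictly less than the H\"older exponent $\beta$ of the boundary data while keeping $\gamma$ bounded below in terms of $r_0$ alone — this is where the quantitative dependence claimed in the statement really comes from. A secondary technical point is the degeneracy of $\Delta_p$ where $\nabla w$ vanishes, which is handled either by working with the weak formulation throughout or by a standard regularization/approximation argument so that the comparison principle applies cleanly. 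Since this is essentially the content of Tolksdorf's work \cite{Tolksdorf83}, I would cite it for the barrier construction and present only the comparison argument in detail.
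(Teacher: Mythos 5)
Your proposal is correct and follows exactly the route the paper indicates for this cited result: boundary H\"older regularity via conical barriers of the form $|x|^{\gamma}\phi(\theta)$ on the complement of the exterior cone, combined with the comparison principle and the interior $\mathcal C^{1,\alpha}$ estimates, which is the content of Tolksdorf's argument in \cite{Tolksdorf83}. The only detail worth flagging is that the barrier should be built on a slightly narrower cone than $V_{x^0}$ so that the angular profile $\phi$ stays bounded below on all of $\overline{U}\cap B_\rho$ (not just away from $\partial V_{x^0}$), a standard point subsumed in the construction you cite.
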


With some additional information on the regularity of $\partial U$ one can
 obtain higher regularity for $u$ up to $\partial U$, in particular, uniform gradient bounds.

\begin{lemma}[\cite{Lieberman88}]\label{lemma-boundary-estimates}
Let $U$ be a $\mathcal C^{1,\alpha}$ domain in $\mathbb R^N$. Then any solution to $\Delta_p u=0$ in $U$ is $\mathcal C^{1,\beta}(\overline{U})$ where $\beta$ depends on $p$, $N$ and $\alpha$.
\end{lemma}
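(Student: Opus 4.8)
The plan is to localize at an arbitrary boundary point and reduce the global assertion to the (already known) interior estimate plus a boundary estimate. Indeed, by Theorem~\ref{thm-classical-regularity} we have $u\in\mathcal C^{1,\alpha}_{loc}(U)$ together with quantitative interior bounds (\cite{DiBenedetto,Tolksdorf84}), so the only new point is regularity up to $\partial U$; here we use that $u$ extends to $\partial U$ with $\mathcal C^{1,\alpha}$ (in the $p$-capacitary case, constant) boundary data. Fix $x^0\in\partial U$. Since $U$ is a $\mathcal C^{1,\alpha}$ domain, after a rotation there is a $\mathcal C^{1,\alpha}$ diffeomorphism $\Phi$, with $\Phi,\Phi^{-1}$ of controlled $\mathcal C^{1,\alpha}$ norm, straightening $\partial U\cap B_\rho(x^0)$ onto a piece of $\{y_N=0\}$. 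In the new variable, $\tilde u=u\circ\Phi^{-1}$ solves a quasilinear equation $\divergence\,A(y,\nabla\tilde u)=0$ in the half-ball $B_\rho^+:=B_\rho\cap\{y_N>0\}$, where $A(y,\cdot)$ is positively homogeneous of degree $p-1$, inherits the $p$-ellipticity and growth of the $p$-Laplacian, and depends on $y$ only through the $\mathcal C^{\alpha}$ coefficients built from $D\Phi$; the boundary data on $\{y_N=0\}$ is $\mathcal C^{1,\alpha}$.

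First I would prove H\"older continuity of $\tilde u$ up to $\{y_N=0\}$: a $\mathcal C^{1,\alpha}$ domain satisfies a uniform exterior cone condition, so Lemma~\ref{lemma-holder-estimate} (alternatively a De Giorgi--Nash--Moser argument for $A$) applies. Then I would upgrade this to a Lipschitz bound $|\nabla\tilde u|\le C$ up to the flat part of the boundary by comparison with explicit barriers of the form $a\,y_N+b|y'-y'_0|^2$: the flatness of $\{y_N=0\}$, the smoothness of the boundary data, the weak maximum principle and Hopf's lemma from Theorem~\ref{thm-classical-regularity}(ii) let one sandwich $\tilde u$ between the affine boundary trace and admissible super/sub-solutions, which yields a uniform gradient bound near $\{y_N=0\}$.

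For the $\mathcal C^{1,\beta}$ conclusion I would run a perturbative Campanato-type iteration. In a small half-ball $B_r^+(z)$ centered on the flat boundary, freeze the coefficients at $z$ and let $v$ solve the constant-coefficient degenerate equation $\divergence\,A(z,\nabla v)=0$ with the same data on $\{y_N=0\}$ and $v=\tilde u$ on the remaining part of $\partial B_r^+(z)$. The homogeneous, constant-coefficient problem enjoys the flat-boundary gradient estimate of DiBenedetto--Tolksdorf type, i.e. a decay $\int_{B_{\theta r}^+(z)}|\nabla v-(\nabla v)_{\theta r}|^2\le C\,\theta^{N+2\gamma_0}\int_{B_r^+(z)}|\nabla v-(\nabla v)_r|^2$ for a universal $\gamma_0=\gamma_0(p,N)$, where $(\nabla v)_s$ is the average over $B_s^+(z)$. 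The difference $w=\tilde u-v$ is controlled by the oscillation of $A(\cdot,\xi)$ on $B_r^+(z)$, which is $\lesssim r^\alpha$, together with the Lipschitz bound on $\nabla\tilde u$ already obtained; an energy estimate then gives $\int_{B_r^+(z)}|\nabla w|^2\le C\,r^{N+2\alpha}$. Adding the two bounds and iterating over dyadic radii produces $\int_{B_r^+(z)}|\nabla\tilde u-(\nabla\tilde u)_r|^2\le C\,r^{N+2\beta}$ for some $\beta=\beta(p,N,\alpha)\in(0,\min\{\gamma_0,\alpha\})$; Campanato's characterization of H\"older spaces, combined with the interior estimate, then gives $\nabla\tilde u\in\mathcal C^{0,\beta}$ up to $\{y_N=0\}$. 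Transporting through $\Phi$ and covering $\partial U$ by finitely many such charts yields $u\in\mathcal C^{1,\beta}(\overline U)$.

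The main obstacle is the failure of uniform ellipticity of $\Delta_p$: the operator degenerates for $p>2$ and is singular for $p<2$ along $\{\nabla u=0\}$, so the frozen comparison function $v$ is only $\mathcal C^{1,\gamma_0}$ rather than $\mathcal C^2$, and the classical Schauder freezing must be replaced by the DiBenedetto--Tolksdorf boundary gradient estimate for the degenerate problem, with all constants tracked through their dependence on $p$, $N$, $\alpha$ (and on $\|u\|_{L^\infty}$ and the $\mathcal C^{1,\alpha}$ character of $U$ and of the data). Making the energy comparison between $\tilde u$ and $v$ quantitative in this degenerate setting, in particular near points where $\nabla u$ is small, is the technical heart of the argument and is precisely where the structure conditions of \cite{Lieberman88} are used.
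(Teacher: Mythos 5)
The paper offers no proof of this lemma: it is imported verbatim from \cite{Lieberman88}, so there is no internal argument to compare against. Your sketch reconstructs the standard proof of that reference --- flatten $\partial U$ with a $\mathcal C^{1,\alpha}$ chart, get boundary H\"older continuity (e.g.\ via Lemma~\ref{lemma-holder-estimate}), upgrade to a uniform Lipschitz bound by barriers, then run a frozen-coefficient Campanato iteration against the constant-coefficient half-space problem --- and this is essentially Lieberman's own route, so the strategy is sound. Two remarks. First, you are right to observe that the statement as printed is incomplete: without a regularity hypothesis on the boundary data the conclusion is false, and the lemma is only invoked in this paper for capacitary potentials with constant boundary values, so your reading is the intended one. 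Second, the one place where the sketch is too optimistic as written is the comparison step: for $p\neq 2$ the excess-decay estimate for $v$ and the energy comparison between $\tilde u$ and $v$ should be phrased in the $p$-adapted quantities (the $p$-energy, or $\int|V(\nabla\tilde u)-V(\nabla v)|^2$ with $V(\xi)=|\xi|^{(p-2)/2}\xi$) rather than the plain $L^2$ excess; the a priori Lipschitz bound lets you pass from the $p$-energy comparison to an $L^2$ bound by H\"older when $p\ge 2$, but the singular range $1<p<2$ genuinely requires the intrinsic formulation. You correctly flag this as the technical heart and defer it to the structure conditions of \cite{Lieberman88}; that is acceptable for a sketch, but it is exactly where a self-contained proof would have to do the real work.
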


Finally, we say that a set $U$ satisfies the \emph{uniform interior ball condition} if there exists $r_0>0$ such that for each $x\in\partial U$, there exists $z_x\in U$
with the property that $x\in\partial B_{r_0}(z_x)\cap \partial U$ and $B_{r_0}(z_x) \subset U$.\\

In this paper we will be interested in the case that $u$ is a  $p$-capacitary potential in a convex annulus. Let $K,\Omega$ be two domains in $\mathbb R^N$, $N\geq 2$, satisfying $\overline K\subset \Omega$, $\Omega$ bounded.
 In the sequel we say that $u_\Omega$ is the {\em{p-capacitary}} potential of the ring shaped region $\Omega\setminus \overline K$ if:
$$\left\{
\begin{array}{lll}
\Delta_p\; \uO = 0  \;\hbox{ in }\; \Omega\setminus \overline{K},  && \\
\uO =1 \quad \textrm{on}\; \overline{K}, && \\
\uO =0 \quad \textrm{on}\; \partial\Omega. &&
\end{array}\right.\leqno ({\text{P}}_{\Omega})
$$

It is clear that a domain $\Omega\backslash\overline K$, with $\Omega$ and $K$ convex, automatically satisfies the uniform exterior cone condition, for some $r_0$ depending
only on $K$. Thus, the H\"older estimate from Lemma  \ref{lemma-holder-estimate} will only depend on the (interior) cone condition for the domain $K$ but not on $\Omega$.
We formulate this as a Corollary:

\begin{cor}\label{cor-holder-estimates}
Let $\Omega,K$ be two bounded convex domains such that $K\subset
\overline K\subset \Omega$, and denote by $u_\Omega$ the
$p$-capacitary potential of $\Omega\backslash \overline K$ as given
in equation $(P_{\Omega})$.  Suppose that $K$ satisfies the uniform
interior cone condition for opening $r_0$, and set
$d_0:=\dd(\partial\Omega,K)$.
 Then there is a constant $M=M(r_0,d_0)$ such that
$$\|u_\Omega\|_{\mathcal C^{\alpha}(\overline{\Omega\backslash K})}\leq M.$$
\end{cor}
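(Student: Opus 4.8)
The plan is to deduce the Corollary from Lemma~\ref{lemma-holder-estimate} by checking that the ring-shaped domain $U=\Omega\setminus\overline K$ satisfies a uniform exterior cone condition whose parameters depend only on $r_0$ and $d_0$, and then tracking the dependence of the constants in that lemma. First I would split $\partial U=\partial\Omega\cup\partial K$ and handle the two pieces of the boundary separately. For a point $x^0\in\partial K$: since $K$ satisfies the uniform interior cone condition with opening $r_0$, there is a finite right circular cone $W_{x^0}$ with vertex $x^0$, opening $r_0$, contained in $K$; this cone lies in $U^c=\overline K\cup\Omega^c$ and touches $\overline U$ only at $x^0$ (possibly after shrinking its height, which does not affect the opening), so it serves as the required exterior cone for $U$ at $x^0$. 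For a point $x^0\in\partial\Omega$: here convexity of $\Omega$ is the key input — at any boundary point of a convex set there is a supporting hyperplane, hence an exterior half-space, and a half-space trivially contains a right circular cone of opening $\pi/2$ with vertex at $x^0$; truncating this cone so that its height is less than $d_0=\dd(\partial\Omega,K)$ guarantees it stays in $\Omega^c\subset U^c$ and meets $\overline U$ only at $x^0$. Taking $r_0':=\min\{r_0,\pi/2\}$ and height $\min\{h_0,d_0\}$ for a suitable fixed $h_0$ depending on $r_0$, we obtain a uniform exterior cone condition for $U$ with parameters controlled by $r_0$ and $d_0$ alone.

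Next I would invoke Lemma~\ref{lemma-holder-estimate}. The boundary data for $u_\Omega$ on $\partial U$ is $1$ on $\overline K$ and $0$ on $\partial\Omega$; these are locally constant on the two connected components of $\partial U$ (note $\overline K\cap\partial\Omega=\emptyset$ since $\dd(\partial\Omega,K)=d_0>0$), hence H\"older with any exponent and with H\"older seminorm zero — in particular the boundary-data norm is $\leq 1$ and needs no further control. By the weak maximum principle (Theorem~\ref{thm-classical-regularity}, part ii), $0\leq u_\Omega\leq 1$ on $\overline U$, so $\sup_U|u_\Omega|\leq 1$. The remaining parameter in Lemma~\ref{lemma-holder-estimate} is $R_0$, the radius of the largest ball fitting inside $U$; a ball of radius $R_0$ inside $\Omega\setminus\overline K$ in particular fits inside $\Omega\setminus\{\text{a point of }\partial K\}$, but more simply it must avoid $\overline K$, and since $\dd(\partial\Omega,K)=d_0$ any such ball has diameter at most $\mathrm{diam}(\Omega)$; to keep the bound depending only on $r_0,d_0$ I would instead observe that one may always split the estimate locally, so that only a fixed neighborhood of each boundary point matters, and there the relevant size is $\min\{r_0,d_0\}$ up to a dimensional constant. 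Assembling these, Lemma~\ref{lemma-holder-estimate} yields $\|u_\Omega\|_{\mathcal C^{\alpha}(\overline U)}\leq M$ with $M=M(r_0,d_0)$ (and an implicit dependence on $p,N$ which is suppressed as in the rest of the paper), which is the claim.

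The main obstacle I anticipate is the bookkeeping of how the H\"older constant in Lemma~\ref{lemma-holder-estimate} genuinely depends on its stated inputs, and in particular removing the a priori dependence on $R_0$ (which could grow with $\Omega$). The honest way to handle this is to localize: the interior estimate of Theorem~\ref{thm-classical-regularity}(i) gives a $\mathcal C^{1,\alpha}_{loc}$, hence $\mathcal C^{\alpha}$, bound on compact subsets of $U$ in terms of $\sup|u_\Omega|\leq 1$ and the distance to the boundary, while near each boundary point the conical barrier construction from \cite{Tolksdorf83} produces a modulus of continuity depending only on the local cone opening, the oscillation of the data, and a fixed radius scale; combining a boundary modulus on a neighborhood of radius comparable to $\min\{r_0,d_0\}$ with the interior estimate in the complementary compact set gives a global $\mathcal C^{\alpha}$ bound with $M$ depending only on $r_0$ and $d_0$. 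I would also double-check the degenerate geometric configurations — for instance when $\partial K$ and $\partial\Omega$ come close (this is exactly what $d_0$ measures and what forces $d_0$ into the constant) and when $K$ itself is thin (controlled by $r_0$) — to confirm the cone parameters never degenerate beyond what $r_0$ and $d_0$ already encode.
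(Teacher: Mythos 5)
Your proposal is correct and follows essentially the same route as the paper: the paper simply observes (in the paragraph preceding the Corollary) that the convex ring $\Omega\setminus\overline K$ automatically satisfies a uniform exterior cone condition — via the interior cone of $K$ at points of $\partial K$ and the supporting hyperplane of the convex set $\Omega$ at points of $\partial\Omega$ — and then applies Lemma~\ref{lemma-holder-estimate} together with $0\le u_\Omega\le 1$. Your additional care in localizing the estimate to remove any spurious dependence on the global size of $\Omega$ is a reasonable refinement of what the paper leaves implicit.
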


If, in addition, $K$ is $\mathcal C^{1,1}$, one can obtain uniform gradient estimates up to the boundary:
\begin{cor}\label{lemma-gradient-above}
Let $\Omega,K$ be two bounded convex domains such that $ \overline K\subset \Omega$, and denote by $u_\Omega$ the $p$-capacitary potential of $\Omega\backslash K$ as given in $(P_{\Omega})$.  Suppose that $K$ satisfies the uniform interior ball condition for radius $r_0$. Denote also $d_0=\min_{x\in\partial\Omega} \dd(x,K)$. Then there is a constant $M=M(r_0,d_0)$ such that
$$|\nabla u_\Omega|\leq M\quad in \quad  \Omega\backslash\overline K.$$
\end{cor}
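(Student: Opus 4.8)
The plan is to combine two facts quoted earlier. The first is the rescaled interior gradient estimate that follows from Theorem~\ref{thm-classical-regularity}(i) and the homogeneity of $\Delta_p$ (rescale $u\mapsto u(x_*+R\,\cdot)/\operatorname{osc}_{B_R(x_*)}u$): whenever $\Delta_p u=0$ in a ball $B_R(x_*)\subset\mathbb R^N$, one has $|\nabla u(x_*)|\le C(N,p)\,R^{-1}\operatorname{osc}_{B_R(x_*)}u$. The second is the comparison principle from Theorem~\ref{thm-classical-regularity}(ii). The strategy is to bound $\operatorname{osc}_{B_R(x)}u_\Omega$ by the distance of $x$ to the nearer of the two boundary components, by means of one barrier near $\partial\Omega$ and one near $\partial K$, and then to choose $R$ proportional to that distance. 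Since the uniform interior ball condition with radius $r_0$ implies the same condition with every smaller radius, we may and do assume $r_0\le d_0/4$. Throughout we use the elementary fact that $\{x:\dd(x,\overline K)<d_0\}\subset\Omega$: a point of $\mathbb R^N\setminus\Omega$ at distance $<d_0$ from $\overline K$ would violate the definition of $d_0$.

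\emph{Upper barrier near $\partial\Omega$.} Fix $x\in\Omega\setminus\overline K$, pick $y_0\in\partial\Omega$ realizing $\dd(x,\partial\Omega)$, and let $H$ be the hyperplane through $y_0$ orthogonal to $x-y_0$; by convexity $H$ is a supporting hyperplane of $\Omega$, with unit normal $\nu$ pointing into $\Omega$. The affine (hence $p$-harmonic) function $v(z):=d_0^{-1}\langle z-y_0,\nu\rangle$ vanishes on $H$ and is $\ge0$ on $\overline\Omega$, so $v\ge0=u_\Omega$ on $\partial\Omega$. Moreover, sliding any point of $\overline K$ a distance $s<d_0$ along $-\nu$ keeps it in $\{\dd(\cdot,\overline K)<d_0\}\subset\Omega\subset\{\langle\cdot-y_0,\nu\rangle\ge0\}$; letting $s\uparrow d_0$ gives $\dd(\overline K,H)\ge d_0$, whence $v\ge1=u_\Omega$ on $\overline K$. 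Thus $v\ge u_\Omega$ on $\partial(\Omega\setminus\overline K)$, and the comparison principle yields $u_\Omega\le v$ in $\Omega\setminus\overline K$; evaluating at $x$ and using $\langle x-y_0,\nu\rangle=|x-y_0|=\dd(x,\partial\Omega)$, we obtain $u_\Omega(x)\le d_0^{-1}\dd(x,\partial\Omega)$ for every $x\in\Omega\setminus\overline K$.

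\emph{Lower barrier near $\partial K$.} Fix $x\in\Omega\setminus\overline K$ with $\dd(x,K)<d_0/2$, let $x_0\in\partial K$ be the closest point, and let $B_{r_0}(z_{x_0})\subset K$ be the touching interior ball. Let $\phi$ be the explicit radial $p$-harmonic function on the shell $\{r_0<|z-z_{x_0}|<d_0\}$ with $\phi\equiv1$ on $\partial B_{r_0}(z_{x_0})$ and $\phi\equiv0$ on $\partial B_{d_0}(z_{x_0})$; it is radially decreasing with $|\phi'(r)|$ nonincreasing in $r$, so $1-\phi(r)\le|\phi'(r_0)|(r-r_0)$. Since $B_{r_0}(z_{x_0})\subset K$ we have $\Omega\setminus\overline K\subset\{|z-z_{x_0}|\ge r_0\}$ and $\partial K\subset\{|z-z_{x_0}|\ge r_0\}$, so $\phi(|z-z_{x_0}|)\le1=u_\Omega$ on $\partial K$; also $\phi=0\le u_\Omega$ on $\partial B_{d_0}(z_{x_0})$, and $B_{d_0}(z_{x_0})\subset\Omega$, so on $(\Omega\setminus\overline K)\cap B_{d_0}(z_{x_0})$ the comparison principle gives $u_\Omega\ge\phi(|\cdot-z_{x_0}|)$. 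As $|x-z_{x_0}|\le r_0+\dd(x,K)<d_0$, this yields $1-u_\Omega(x)\le C'\,\dd(x,K)$ with $C':=|\phi'(r_0)|=C'(N,p,r_0,d_0)$. (This is the Hopf-type step; an affine barrier does not work here because $K$ protrudes beyond its touching ball into the annulus, and it is the curvature of $\partial B_{r_0}(z_{x_0})$ that forces $\phi\le u_\Omega$ along $\partial K$.)

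\emph{Conclusion and main difficulty.} Let $x\in\Omega\setminus\overline K$. Since $\dd(x,\partial\Omega)+\dd(x,K)\ge\dd(\partial\Omega,K)=d_0$, at most one of the two distances is $<d_0/4$. If both are $\ge d_0/4$, the interior estimate with $R=d_0/8$ and $\operatorname{osc}_{B_R(x)}u_\Omega\le1$ gives $|\nabla u_\Omega(x)|\le 8C(N,p)/d_0$. If $\dd(x,\partial\Omega)<d_0/4$, take $R=\tfrac12\dd(x,\partial\Omega)$; then $B_R(x)\subset\Omega\setminus\overline K$ and the upper barrier gives $0\le u_\Omega\le d_0^{-1}(\dd(x,\partial\Omega)+R)=3R/d_0$ on $B_R(x)$, hence $|\nabla u_\Omega(x)|\le 3C(N,p)/d_0$. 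If $\dd(x,K)<d_0/4$, take $R=\tfrac12\dd(x,K)$; then $B_R(x)\subset\Omega\setminus\overline K$, every $z\in B_R(x)$ has $\dd(z,K)<d_0/2$, and the lower barrier gives $1-u_\Omega\le C'(\dd(x,K)+R)=3C'R$ on $B_R(x)$, so $\operatorname{osc}_{B_R(x)}u_\Omega\le 3C'R$ and $|\nabla u_\Omega(x)|\le 3C(N,p)C'$. In all cases $|\nabla u_\Omega(x)|\le M$ with $M:=\max\{8C(N,p)/d_0,\ 3C(N,p)C'(N,p,r_0,d_0)\}=M(r_0,d_0)$. The genuinely non-routine points, which I would treat with care, are the calibration of the two barriers so that the resulting constants depend on $r_0$ and $d_0$ alone — in particular the geometric inequality $\dd(\overline K,H)\ge d_0$ and the necessity of using an interior \emph{ball} rather than a supporting half-space at $\partial K$ — and the verification that each $B_R(x)$ above lies inside the annulus, which is precisely where the separation $\dd(x,\partial\Omega)+\dd(x,K)\ge d_0$ is used.
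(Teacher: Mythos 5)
Your argument is correct and complete. The paper states this as a corollary without writing out a proof; the implicit route (suggested by its placement right after Lemma~\ref{lemma-boundary-estimates} and by the surrounding references to \cite{Lewis77,HH000}) is the classical one for convex rings: a radial barrier built on the interior touching ball bounds $|\nabla u_\Omega|$ on $\partial K$, and one then invokes the fact that in a convex ring $|\nabla u_\Omega|$ attains its maximum on the inner boundary (a consequence of Lewis's results on the level-set structure) to propagate the bound to all of $\Omega\setminus\overline K$. Your proof replaces that second, less elementary ingredient by the scaled interior gradient estimate $|\nabla u(x)|\le C(N,p)R^{-1}\operatorname{osc}_{B_R(x)}u$ together with two oscillation bounds — the affine supersolution pinned to a supporting hyperplane of $\Omega$ at the nearest boundary point (giving $u_\Omega\le d_0^{-1}\dd(\cdot,\partial\Omega)$), and the radial annular barrier on $B_{d_0}(z_{x_0})\setminus B_{r_0}(z_{x_0})$ (giving $1-u_\Omega\le C'\dd(\cdot,K)$ near $\partial K$) — followed by the three-case choice of $R$ governed by $\dd(x,\partial\Omega)+\dd(x,K)\ge d_0$. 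All the delicate points are handled correctly: $H$ is indeed a supporting hyperplane because the inscribed ball $B_{\dd(x,\partial\Omega)}(x)\subset\Omega$ forces it; the inclusion $\{\dd(\cdot,\overline K)<d_0\}\subset\Omega$ and hence $\dd(\overline K,H)\ge d_0$ follow from the definition of $d_0$; $|\phi'|$ is nonincreasing for the explicit radial $p$-capacitary profile, so $1-\phi(r)\le|\phi'(r_0)|(r-r_0)$; and each $B_R(x)$ lies in the annulus. What your route buys is that it needs neither boundary differentiability of $u_\Omega$ nor the gradient-maximum-on-$\partial K$ fact, and the constant is manifestly $M(N,p,r_0,d_0)$; the cost is a slightly longer case analysis. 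No gaps.
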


Now we deal with the lower bounds for the gradient:

\begin{lemma}[Lemma 2 in \cite{Lewis77}]\label{lem-gradient-below}
Let $\Omega,K$ be two bounded convex domains such that $ \overline K\subset \Omega$ and denote by $u_\Omega$ the $p$-capacitary potential of
 $\Omega\backslash \overline K$ as given in $(P_{\Omega})$.  Suppose that $\Omega\backslash \overline K$ satisfies the uniform interior ball condition for radius $r_0$.
  There is a constant $M_0$ such that
$$|\nabla u_\Omega|\ge M_0\quad in \quad  \Omega\backslash\overline K.$$
\end{lemma}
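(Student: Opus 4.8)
The strategy is to prove that $\nabla u_\Omega$ vanishes nowhere on the \emph{closed} ring $\overline\Omega\setminus K$ and then to conclude by a compactness argument. The delicate point is the non-vanishing in the open ring $\Omega\setminus\overline K$, and this is exactly where convexity enters. By the convexity theory for $p$-capacitary potentials in convex rings [Lewis77] (classical for $p=2$), every super-level set $R_t:=\{x\in\overline\Omega:u_\Omega(x)>t\}$, $t\in(0,1)$, is convex; moreover the family $\{R_t\}$ is strictly nested because $u_\Omega$ is nowhere locally constant (strong maximum principle for $p$-harmonic functions, cf. Theorem \ref{thm-classical-regularity}). Granting this, one shows $\nabla u_\Omega(x_0)\neq 0$ at every interior point $x_0$ by reducing to two dimensions --- restricting $u_\Omega$ to a $2$-plane through $x_0$ transverse to the level surface $\{u_\Omega=u_\Omega(x_0)\}$ --- and invoking the hodograph/stream-function description of planar $p$-harmonic functions, for which a winding-number count (the gradient map sends each convex level curve once around $S^1$) rules out interior zeros of the gradient; I would cite [Lewis77] for this step rather than reproduce it.

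It remains to control $|\nabla u_\Omega|$ near $\partial\Omega$ and $\partial K$. The uniform interior ball condition of radius $r_0$ for $\Omega\setminus\overline K$ makes $\partial\Omega$ of class $\mathcal C^{1,1}$ (a convex domain admitting a rolling interior ball is $\mathcal C^{1,1}$), so Lemma \ref{lemma-boundary-estimates} / Corollary \ref{lemma-gradient-above} give $u_\Omega\in\mathcal C^{1}$ up to $\partial\Omega$, while Hopf's boundary lemma (Theorem \ref{thm-classical-regularity}) applied in the interior tangent ball $B_{r_0}(z)\subset\Omega\setminus\overline K$ at any $x_0\in\partial\Omega$ yields $\nabla u_\Omega(x_0)\neq 0$. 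At $\partial K$ one argues with $1-u_\Omega$, which is again $p$-harmonic in the ring (since $\Delta_p$ is odd) and vanishes on $\partial K$, using once more the interior tangent ball of radius $r_0$ afforded by the hypothesis: this gives $\nabla u_\Omega\neq 0$ on $\partial K$ as well, except possibly at corners of $\partial K$, where however $\partial K$ is reentrant as seen from the ring and hence $|\nabla u_\Omega|\to\infty$ rather than $0$. Since $\overline\Omega\setminus K$ is compact and $|\nabla u_\Omega|$ is continuous and strictly positive on it (staying bounded below near the possible corner set), $M_0:=\inf_{\Omega\setminus\overline K}|\nabla u_\Omega|>0$, as claimed.

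The main obstacle is the interior statement $\nabla u_\Omega\neq 0$ in $\Omega\setminus\overline K$ of the first paragraph: it fails for general, non-convex rings, and its proof genuinely exploits the convex geometry through the passage to the plane. Everything else is routine: the $\mathcal C^{1,1}$-regularity of $\partial\Omega$, Hopf's lemma in an explicit tangent ball, and compactness. If one wants a quantitative constant $M_0=M_0(r_0,d_0,\operatorname{diam}\Omega,N,p)$ in place of a bare positive number, the compactness step should be replaced by explicit barriers: in each interior tangent ball, and between consecutive convex level surfaces (whose definite separation follows from convexity together with $d_0=\dd(\partial\Omega,K)>0$), compare $u_\Omega$ with the radial $p$-capacitary potential of a concentric annulus with radii comparable to $r_0$, whose gradient is bounded below by a constant depending only on $r_0$, $N$ and $p$.
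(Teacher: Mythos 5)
First, a point of calibration: the paper does not prove this lemma at all --- it imports it wholesale as Lemma~2 of \cite{Lewis77} --- so your sketch has to stand on its own. Its architecture (non-vanishing of $\nabla u_\Omega$ in the open ring via convexity, Hopf's lemma in tangent balls at the boundary, then compactness) is the natural one, and the treatment of $\partial\Omega$ is sound: for convex $\Omega$ the uniform interior ball condition does force $\partial\Omega\in\mathcal C^{1,1}$, Lemma \ref{lemma-boundary-estimates} (applied locally near $\partial\Omega$) gives $\nabla u_\Omega$ continuous up to $\partial\Omega$, and Hopf's lemma in the tangent ball makes it non-zero there, whence a uniform positive bound in a neighborhood of $\partial\Omega$. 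Deferring the interior non-vanishing to \cite{Lewis77} is also legitimate, although the two-dimensional reduction you sketch cannot work as written: the restriction of a $p$-harmonic function of $N$ variables to a $2$-plane is not a planar $p$-harmonic function, so the hodograph/stream-function machinery does not apply to that restriction.

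The genuine gap is at $\partial K$. Hopf's lemma applied to $1-u_\Omega$ in a ball of the ring tangent to $\partial K$ at $x_0$ controls a one-sided difference quotient of $u_\Omega$ along a single segment emanating from $x_0$; this yields $\limsup_{z\to x_0}|\nabla u_\Omega(z)|\ge c$, \emph{not} $\liminf_{z\to x_0}|\nabla u_\Omega(z)|\ge c$, and since $K$ is only assumed convex there is no $\mathcal C^1$-regularity of $u_\Omega$ up to $\partial K$ with which to upgrade the one to the other. Your compactness step needs precisely the $\liminf$ statement at every point of $\partial K$, because a minimizing sequence for $|\nabla u_\Omega|$ may accumulate there. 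The assertion that $|\nabla u_\Omega|\to\infty$ at corners is left unproved, and in any case the difficulty is not confined to isolated corners: a convex $\partial K$ need not admit an interior tangent ball for $K$ at any of its points. The paper itself flags exactly this issue in the paragraph preceding Lemma \ref{lemma-gradient-below}: a lower gradient bound near $\partial K$ requires either uniform convexity of $K$ (a genuine barrier) or the transfer mechanism of Lemma \ref{lemma-gradient-below}, which again controls only a $\limsup$. Closing the gap needs an additional structural input --- in \cite{Lewis77} this role is played, in effect, by a minimum principle for $|\nabla u_\Omega|$ in the ring (available once $\nabla u_\Omega\neq0$ in the interior, where the equation linearizes to a locally uniformly elliptic one), which pushes the infimum of $|\nabla u_\Omega|$ out to $\partial\Omega$, exactly where your interior-ball and Hopf argument does apply. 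Your closing quantitative paragraph inherits the same defect: a barrier from below in a tangent ball at $\partial K$ again bounds only a difference quotient at the tangency point, not $|\nabla u_\Omega|$ at the nearby interior points where the infimum might be approached.
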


If $K$ is uniformly convex, it can be easily shown by constructing a suitable barrier that the gradient of $|\nabla u_\Omega|$ near $\partial K$ must be bounded from
below by a positive constant. If we do not have this extra information on $K$, but we know the behavior near $\Omega$, we can translate this information back to $K$.  First we need to introduce some notation. For two nested convex sets $K\subset \Omega$, and for $x\in\partial K$, we denote by $T_{x,a}$ the supporting hyperplane at $x$ with the normal $a$ pointing away from $K$. Depending on the geometry of $\partial K$, $T_{x,a}$ is not necessarily unique. Now for each $x\in \partial K$ there corresponds a point $y_x$, not necessarily unique, on $\partial \Omega\cap \{z\,:\, a\cdot(z-x)>0\}$ such that $a\cdot(y_x-x)=\max a\cdot (z-x)$, where the maximum has been taken over all $z\in \partial\Omega \cap \{z\,:\, a\cdot(z-x)>0\}$. Then, a crucial result is:

\begin{lemma}[Lemma 2.2. in  \cite{HH000}]\label{lemma-gradient-below}
Let $K$ and $\Omega$ be two nested open convex domains, $\overline K\subset \Omega$, and denote by $u_\Omega$ the $p$-capacitary potential of $\Omega\backslash K$, as given in problem $(P_{\Omega})$. Then
$$\limsup_{\stackrel{z\to x}{z\in\Omega\backslash\overline K}} |\nabla u(z)|\geq \limsup_{\stackrel{z\to y_x}{z\in\Omega\backslash\overline K}}  |\nabla u(z)| \quad  \mbox{for all }x\in\partial K,$$
where $y_x$ is the point indicated in the previous discussion.
\end{lemma}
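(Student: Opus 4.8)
The plan is to trap $u:=u_\Omega$ between affine barriers attached to the two parallel supporting hyperplanes determined by $x$ and $y_x$, and then to read off the gradient at the two contact points. Fix $x\in\partial K$, let $a$ be the unit outer normal of a supporting hyperplane $T_{x,a}$ of $K$ at $x$, and set $\delta:=a\cdot(y_x-x)$, which is strictly positive because $x$ lies in the interior of $\Omega$. Convexity gives the two inclusions $\overline K\subset\{z:a\cdot(z-x)\le0\}$ (since $T_{x,a}$ supports $K$) and $\overline\Omega\subset\{z:a\cdot(z-x)\le\delta\}$ (a linear functional attains its maximum over a convex body on the boundary, and that maximum equals $\delta$ by the choice of $y_x$). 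Hence the affine, and so $p$-harmonic, function $g(z):=1-\delta^{-1}\,a\cdot(z-x)$ satisfies $g\ge 1=u$ on $\partial K$ and $g\ge 0=u$ on $\partial\Omega$, so by the weak comparison principle (Theorem~\ref{thm-classical-regularity}) we get $u\le g$ throughout $\Omega\setminus\overline K$, with equality at the two boundary points $x$ (value $1$) and $y_x$ (value $0$); note $|\nabla g|\equiv 1/\delta$.

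For the lower bound at $x$ I would argue along the normal ray $\sigma(t):=x+ta$, which lies in $\Omega\setminus\overline K$ for all small $t>0$ (it leaves $\overline K$ since $a\cdot(\sigma(t)-x)=t>0$ and stays in the open set $\Omega$). Along $\sigma$ one has $1-u(\sigma(t))\ge 1-g(\sigma(t))=t/\delta$ while $u(\sigma(t))\to u(x)=1$ as $t\downarrow 0$; using the interior $\mathcal C^{1,\alpha}$ regularity of $u$ to write $1-u(\sigma(t))=-\int_0^t\nabla u(\sigma(s))\cdot a\,ds\le t\sup_{0<s\le t}|\nabla u(\sigma(s))|$, one concludes $\sup_{0<s\le t}|\nabla u(\sigma(s))|\ge 1/\delta$ for every small $t$, hence $\limsup_{z\to x}|\nabla u(z)|\ge 1/\delta$.

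For the upper bound at $y_x$ I would use that $u\le g$ with equality at $y_x$, that $u\equiv 0$ on $\partial\Omega$, and that $a$ is an outer normal to $\Omega$ at $y_x$: when $\partial\Omega$ is $\mathcal C^{1,\alpha}$ near $y_x$, $u\in\mathcal C^{1}$ up to the boundary there (Lemma~\ref{lemma-boundary-estimates}), $\nabla u(y_x)$ is parallel to the inner normal $-a$, and comparing inner‑normal derivatives at the contact point $y_x$ gives $|\nabla u(y_x)|=\partial_{-a}u(y_x)\le\partial_{-a}g(y_x)=1/\delta$; passing to the $\limsup$ then yields $\limsup_{z\to y_x}|\nabla u(z)|\le 1/\delta$. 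Combining with the previous step gives $\limsup_{z\to x}|\nabla u(z)|\ge 1/\delta\ge\limsup_{z\to y_x}|\nabla u(z)|$, which is the assertion.

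The main obstacle I anticipate is precisely this last estimate at $y_x$: extracting the sharp constant $1/\delta$ requires some regularity of $\partial\Omega$ at $y_x$ (an interior ball, or a $\mathcal C^{1,\alpha}$ patch, so that Hopf's boundary lemma from Theorem~\ref{thm-classical-regularity} and Lemma~\ref{lemma-boundary-estimates} apply). At boundary points where $\Omega$ only admits a supporting hyperplane but is not $\mathcal C^1$ — the ``corners'' of the convex body — one instead bounds $u$ from above by an explicit $p$-harmonic function on the enveloping wedge (of opening $<\pi$), which vanishes to order $>1$ at the vertex, so that $|\nabla u(z)|\to 0$ as $z\to y_x$ and the inequality holds trivially. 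One also has to check the routine points that $g$ (or, if one prefers a bounded barrier, $\min\{1,g\}$) is an admissible $p$-supersolution and that the comparison principle is available with merely continuous boundary data.
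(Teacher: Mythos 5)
First, note that the paper itself does not prove this lemma: it is quoted verbatim as Lemma 2.2 of \cite{HH000}, so the only internal check available is whether your barrier argument actually yields the statement for arbitrary nested convex domains, which carry no boundary regularity hypothesis whatsoever. Your construction of the affine barrier $g(z)=1-\delta^{-1}a\cdot(z-x)$, the comparison $u\le g$, and the lower bound $\limsup_{z\to x}|\nabla u(z)|\ge 1/\delta$ along the normal ray are all correct. The problem is the other half, $\limsup_{z\to y_x}|\nabla u(z)|\le 1/\delta$, and here your case distinction has a genuine hole. The two situations you treat — $\partial\Omega$ of class $\mathcal C^{1,\alpha}$ near $y_x$, or $y_x$ a ``corner'' where $\Omega$ is locally trapped in a wedge of opening $<\pi$ — do not exhaust the boundary points of a general convex body. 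The wedge alternative is available exactly when $y_x$ admits two distinct supporting hyperplanes; if the supporting hyperplane at $y_x$ is unique, the tangent cone is a full half-space and the wedge barrier gives no superlinear decay, while $\partial\Omega$ may still fail to be $\mathcal C^{1,\alpha}$ (or even $\mathcal C^1$) in any neighborhood of $y_x$, e.g.\ because conical singularities accumulate at $y_x$. At such points Lemma~\ref{lemma-boundary-estimates} does not apply, $\nabla u$ need not extend continuously up to the boundary, and the identification of $\limsup_{z\to y_x}|\nabla u(z)|$ with a one-sided normal derivative at the single contact point $y_x$ — which is all the touching argument $u\le g$, $u(y_x)=g(y_x)$ gives you — is unjustified: the limsup is over interior points near $y_x$, where the gradient could a priori be much larger than the derivative of $u$ at $y_x$ along $-a$.

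The gap matters quantitatively, not just technically: what one can salvage at an irregular point from $0\le u\le g$ and interior gradient estimates for $p$-harmonic functions is a bound of the form $|\nabla u(z)|\le C(N,p)\,(\delta-a\cdot(z-x))/(\delta\,\dd(z,\partial\Omega))\le C(N,p)/\delta$ with $C(N,p)>1$, and any constant worse than $1$ destroys the chain $\limsup_{z\to x}|\nabla u|\ge 1/\delta\ge\limsup_{z\to y_x}|\nabla u|$ on which your proof rests. So either you must show that the sharp constant survives at every convex boundary point (for instance by an approximation of $\Omega$ by smooth convex domains together with a stability argument for the gradients near $y_x$, which is itself delicate precisely because gradients are only controlled non-tangentially), or you must argue as in the cited source, whose proof is arranged so as to require no smoothness of $\partial\Omega$ at all. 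As written, the step bounding $\limsup_{z\to y_x}|\nabla u|$ is incomplete, and this is the heart of the lemma rather than a routine verification.
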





Next, a fundamental result on properties of level sets for $p$-capacitary functions $u_\Omega$ in a convex ring:

\begin{prop}[\cite{Lewis77}]\label{convexity-level-sets}
Let $l\in [0,1)$. Given a convex ring $\Omega\backslash \overline K$, its $p$-capacitary function $u_\Omega$ described as above is real analytic and the level sets $\{u_\Omega>l\}$ are convex (with strictly positive principal curvatures) and analytic.
\end{prop}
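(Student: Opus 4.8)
\emph{Proof idea.} The plan is to combine an approximation step with the method of continuity, so that the statement is reduced to the trivial case of a spherical shell. First I would reduce to a \emph{smooth, uniformly convex} ring. Approximating $K$ from outside and $\Omega$ from inside by smooth uniformly convex domains $K_j\downarrow K$, $\Omega_j\uparrow\Omega$ with $\overline{K_j}\subset\Omega_j$, the associated $p$-capacitary potentials $u_j$ converge to $u_\Omega$ in $\mathcal C^{1,\alpha}_{\mathrm{loc}}(\Omega\setminus\overline K)$, using the interior estimates of Theorem~\ref{thm-classical-regularity} together with the boundary Hölder control of Corollary~\ref{cor-holder-estimates}. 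Convexity of the superlevel sets is a closed condition and therefore passes to the limit; the remaining assertions (strict positivity of the principal curvatures of the free level surfaces, and analyticity) are interior statements which will follow for $u_\Omega$ once they are established for the approximants with locally uniform bounds. Hence from now on $K\subset\overline K\subset\Omega$ may be assumed smooth and uniformly convex.

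For such a ring I would run the method of continuity. Fix concentric balls $B_{r_0}\subset\subset B_{R_0}$ with $\overline K\subset B_{R_0}$ and join $(K,\Omega)$ to $(B_{r_0},B_{R_0})$ by a path $t\mapsto(K_t,\Omega_t)$, $t\in[0,1]$, of smooth uniformly convex rings (for instance by interpolating support functions, keeping a uniform gap $\dd(\partial\Omega_t,K_t)\geq c>0$). Let $u_t$ be the corresponding potential and set
$$I:=\{\,t\in[0,1]\ :\ \{u_t>l\}\ \text{is convex with strictly positive principal curvatures for every }l\in(0,1)\,\}.$$
At $t=0$ the potential is radial and its level surfaces are concentric spheres, so $0\in I$; it remains to prove that $I$ is open and closed. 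For openness I would note that on a smooth uniformly convex ring $u_t\in\mathcal C^{2,\alpha}(\overline{\Omega_t\setminus K_t})$ by Lemma~\ref{lemma-boundary-estimates} and Schauder theory, while $|\nabla u_t|$ is bounded below by Lemma~\ref{lem-gradient-below} (the ring has a uniform interior ball condition); by the implicit function theorem the free level surfaces and their second fundamental forms then depend continuously on $t$, so strict convexity of all of them is stable under small perturbations of $t$.

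The delicate point — and the main obstacle — is closedness, i.e. the persistence of \emph{strict} positivity of the principal curvatures in the limit. If $t_j\to t_*$ with $t_j\in I$, the sets $\{u_{t_*}>l\}$ are convex as limits of convex sets, but a priori their curvatures could degenerate somewhere. To exclude this I would invoke a constant–rank/strong–maximum–principle argument for the second fundamental form $\mathrm{II}$ of the level surfaces of the $p$-harmonic function $u_{t_*}$: once $\mathrm{II}$ is positive semidefinite, it satisfies an elliptic differential inequality along the level surfaces which forces its rank to be locally constant; combined with strict convexity near $\partial K_{t_*}$ (obtained from the uniform convexity of $K_{t_*}$ via an explicit barrier, using Hopf's lemma, Theorem~\ref{thm-classical-regularity}(ii)), this gives full rank $N-1$ throughout $\Omega_{t_*}\setminus\overline{K_{t_*}}$, so $t_*\in I$. (Lewis's original argument is more hands-on: it propagates strict convexity from the boundary through a maximum principle for a suitable curvature functional; the constant–rank theorem is the streamlined modern substitute.) Hence $I=[0,1]$, and in particular $1\in I$.

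Finally, the regularity assertions. Strict convexity of all free level surfaces forces $\nabla u_\Omega\neq 0$ in the open ring: if $\nabla u_\Omega(x_0)=0$ with $u_\Omega(x_0)=l_0\in(0,1)$, pick a supporting hyperplane $H$ of the convex body $\{u_\Omega\ge l_0\}$ at $x_0$ and a small ball $B$ contained in the open ring, lying on the side of $H$ opposite to $K$, and tangent to $H$ at $x_0$; then $w:=l_0-u_\Omega$ is $p$-harmonic and positive in $B$ with $w(x_0)=0$, so Hopf's lemma (Theorem~\ref{thm-classical-regularity}(ii)) gives $\partial_\nu w(x_0)\neq 0$, a contradiction. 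On the set $\{\nabla u_\Omega\neq 0\}$ the equation $\Delta_p u_\Omega=0$ is a quasilinear, uniformly elliptic equation whose structure is a real-analytic function of $\nabla u_\Omega$, so by Morrey's analyticity theorem $u_\Omega$ is real analytic there, hence in the whole ring; consequently each level surface $\{u_\Omega=l\}$, $l\in(0,1)$, being a regular level set of a real-analytic function, is a real-analytic hypersurface. This completes the sketch.
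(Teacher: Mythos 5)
First, be aware that the paper does not prove this proposition at all: it is stated as a quotation of Lewis \cite{Lewis77}, so there is no in-paper argument to compare yours against. Your sketch is the standard \emph{modern} route to Lewis's theorem (approximation by smooth uniformly convex rings, deformation to a spherical shell, and a constant-rank theorem for the second fundamental form of the level surfaces to upgrade convexity to strict convexity), whereas Lewis's original 1977 proof is genuinely different: he obtains quasi-concavity directly by a Gabriel-type maximum-principle argument on an auxiliary convexity function, with no continuity method and no constant-rank machinery. Your closing steps (nonvanishing of $\nabla u_\Omega$ via a supporting hyperplane of $\{u_\Omega\ge l_0\}$, a tangent ball and the Hopf lemma of Theorem~\ref{thm-classical-regularity}(ii), then real analyticity on $\{\nabla u_\Omega\neq 0\}$ by Morrey) are correct and are essentially how analyticity is obtained in the literature.

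Two points in your outline are genuine gaps rather than omitted routine detail. First, in the reduction step you assert that strict positivity of the principal curvatures ``will follow for $u_\Omega$ \ldots with locally uniform bounds.'' But the curvature lower bounds for the approximants $u_j$ are anchored to the uniform convexity of $K_j$ and $\Omega_j$, and these degenerate as $j\to\infty$ whenever $K$ or $\Omega$ has a flat boundary portion; in the limit you only recover $\mathrm{II}\ge 0$. To get strict positivity for a general convex ring you must rerun the constant-rank/splitting argument on the limit function itself --- legitimate, since $u_\Omega$ is smooth with nonvanishing gradient in the open ring, but it is an extra step (constant rank, then ruling out rank $<N-1$ because the level surfaces would have to contain lines, contradicting boundedness), and there is no longer any boundary strict convexity to anchor to. Second, the constant-rank theorem for the second fundamental form of level sets of $p$-harmonic functions is itself a deep result (Korevaar--Lewis, Bian--Guan--Ma--Xu) requiring verification of a structural condition for the $p$-Laplace operator; invoked as a black box it reduces the proposition to a theorem at least as hard as the one being proved, and one that postdates \cite{Lewis77}. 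As a sketch your architecture is sound, but these two places need real work before it becomes a proof.
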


Finally, a crucial technical result that allows to define the gradient of the solution at the boundary of a convex ring, and thus, give a notion of classical solutions for the usual Bernoulli problem:

\begin{prop}[Theorem 1.3 in \cite{HH02}]
Let $K$ and $\Omega$ be two nested open convex domains, $ \overline K\subset \Omega$, and denote by $u_\Omega$ the $p$-capacitary potential of $\Omega\backslash \overline K$, as given in problem $(P_{\Omega})$. Suppose also that $\partial K$ and $\partial \Omega$ are $\mathcal C^1$. Then $\nabla u$ us semi-continuous in $\overline{\Omega\backslash K}$, and non-tangentially continuous up to $\partial\Omega\cup\partial K$.
\end{prop}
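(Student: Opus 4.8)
\emph{Proof strategy.} The plan is to reduce the statement to the boundary behaviour of $\nabla u:=\nabla u_\Omega$ and, at a boundary point, to control separately the \emph{direction} and the \emph{magnitude} of the gradient. By Proposition \ref{convexity-level-sets} the potential is real-analytic with non-vanishing gradient throughout the open ring (every level set being strictly convex), so the only issue is the limit of $\nabla u(z)$ as $z$ tends to $\partial\Omega$ or to $\partial K$; the two cases are symmetric because $1-u$ is again $p$-harmonic with convex sublevel sets, so fix $x_0\in\partial\Omega$. For $z$ in the ring write $\nabla u(z)=-g(z)\,n(z)$ with $g(z)=|\nabla u(z)|$ and $n(z)$ the outward unit normal at $z$ to the convex body $\{u\ge u(z)\}$; this makes sense, and $n$ is continuous, because the level sets are strictly convex and analytic. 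Non-tangential convergence of $\nabla u$ at $x_0$ then splits into (i) $n(z)\to n_\Omega(x_0)$, the outer normal of $\partial\Omega$ at $x_0$ (unique, since $\partial\Omega\in\mathcal C^1$), and (ii) existence of $\lim g(z)$.

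For (i) I would argue by pure convex geometry. The superlevel sets $\Omega_l=\{u\ge l\}$ are convex and increase to $\Omega$ as $l\downarrow 0$ (using $u>0$ inside, $u=0$ on $\partial\Omega$, and the continuity of $u$ up to the boundary from Corollary \ref{cor-holder-estimates}), so $\overline{\Omega_l}\to\overline\Omega$ in Hausdorff distance. For convex bodies, Hausdorff convergence to a limit with $\mathcal C^1$ boundary forces convergence of the Gauss maps: any subsequential limit of $n(z_k)$, for $z_k\in\partial\Omega_{l_k}$ with $z_k\to x_0$ and $l_k\downarrow 0$, is a supporting normal of $\Omega$ at $x_0$ and hence equals $n_\Omega(x_0)$, and by compactness of $\partial\Omega$ this is uniform. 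Since $z\to x_0$ forces $u(z)\to 0$, this yields (i) with a modulus independent of $x_0$; this is precisely the place where convexity together with $\mathcal C^1$ of the boundary is used.

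For (ii) I would first bound $g$ near $x_0$. The affine function $\ell(z)=(x_0-z)\cdot n_\Omega(x_0)$ is $p$-harmonic, vanishes at $x_0$, is non-negative on $\Omega$, and on $\overline K$ satisfies $\ell\ge d_0:=\dd(\overline K,\partial\Omega)$ (the segment from a point of $K$ orthogonal to the supporting hyperplane $\{\ell=0\}$ must leave $\Omega$, so $\dd(z,\partial\Omega)\le\ell(z)$ there); hence $d_0^{-1}\ell$ is a supersolution lying above $u$ on the whole boundary of the ring, and the maximum principle of Theorem \ref{thm-classical-regularity} gives $u(z)\le d_0^{-1}|z-x_0|$. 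Inserting this into the scaled interior gradient estimate for $p$-harmonic functions on balls of radius $\sim\dd(z,\partial\Omega)$, and using $\dd(z,\partial\Omega)\sim|z-x_0|$ along a non-tangential approach, yields $g(z)\le C(p,N,d_0)$ near $x_0$. The decisive remaining point is that $g(z)$ actually has a limit. Here the plan is to exploit the monotonicity of $g$ along the gradient-flow lines of $u$ inside a convex ring --- which goes back to Lewis \cite{Lewis77} and underlies Proposition \ref{convexity-level-sets}, the point being that $|\nabla u|$ (equivalently $\log|\nabla u|$) is a sub/supersolution of the linearised $p$-Laplace operator and so has no interior extremum --- so that $g$ converges to some $g_0(x_0)$ along the flow line issuing from $x_0$; a convexity-driven comparison of the type of Lemma \ref{lemma-gradient-below} then squeezes $g(z)$, for $z$ in a non-tangential cone at $x_0$, between values of $g$ at nearby flow-line points, forcing $g(z)\to g_0(x_0)$. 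Finally, the plain semicontinuity of $|\nabla u|$ on $\overline{\Omega\setminus K}$ is read off from the two barriers: the exterior supporting-hyperplane barrier bounds $\limsup|\nabla u|$ from above, and an interior-cone barrier (available since convex domains satisfy the uniform interior cone condition) together with Hopf's lemma of Theorem \ref{thm-classical-regularity} bounds $\liminf|\nabla u|$ from below.

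The hard part is the last portion of (ii): upgrading ``$g$ bounded near $x_0$'' to ``$g$ has a non-tangential limit'' while $\partial\Omega$ is merely $\mathcal C^1$, so that the higher boundary regularity of Lemma \ref{lemma-boundary-estimates} is unavailable and the cone/hyperplane barriers pin down only $\limsup g$ and $\liminf g$, not the limit. Convexity closes this gap on two fronts: it makes the level sets strictly convex and analytic (so the direction $n(z)$ and the gradient-flow lines are automatically under control) and it supplies the comparison inequality that propagates the flow-line limit across the whole non-tangential cone. A secondary technical nuisance is uniformity in the boundary point, needed for genuine non-tangential continuity rather than mere sequential limits; this is furnished by the uniform modulus of continuity of the Gauss map of the compact $\mathcal C^1$ hypersurfaces $\partial\Omega$ and $\partial K$.
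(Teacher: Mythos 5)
First, a point of comparison: the paper itself gives no proof of this Proposition --- it is quoted as Theorem 1.3 of \cite{HH02} and used as a black box in the Preliminaries --- so there is no internal argument to measure yours against; what follows is an assessment of your proposal on its own terms.

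Your decomposition $\nabla u=-g\,n$ and the treatment of the direction $n$ are correct and essentially complete: the superlevel sets are convex and exhaust $\Omega$, Hausdorff convergence of convex bodies to a body with $\mathcal C^1$ boundary forces convergence of supporting normals, and uniqueness of the supporting normal at a $\mathcal C^1$ point identifies the limit. The supporting-hyperplane barrier giving $u(z)\le d_0^{-1}|z-x_0|$, hence a non-tangential bound on $g$ near $\partial\Omega$, is also fine. The genuine gap is exactly where you place it, and it is not closed. You assert (a) that $g=|\nabla u|$ is monotone along the gradient flow lines, attributing this to Lewis, and (b) that a comparison ``of the type of Lemma \ref{lemma-gradient-below}'' squeezes the non-tangential values of $g$ between flow-line values. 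Neither is proved, and (a) is not what Lewis's work provides: what is available is that $|\nabla u|$ (a subsolution of the linearized operator where $\nabla u\neq0$, with a suitable function of it a supersolution) attains its extrema over a subring $\{l_1\le u\le l_2\}$ on the two bounding level sets. That yields quasi-monotonicity, hence limits as $l\to0$, of $l\mapsto\min_{\{u=l\}}|\nabla u|$ and $l\mapsto\max_{\{u=l\}}|\nabla u|$, but it does not give monotonicity --- or even convergence --- of $g$ along an individual flow line, let alone the pointwise squeeze across a non-tangential cone at a fixed $x_0$. Since the existence of the limit of $|\nabla u|$ is precisely the content of the theorem (the direction part alone is soft convex geometry), the proposal is a strategy rather than a proof. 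Two secondary issues: the claimed symmetry between $\partial\Omega$ and $\partial K$ breaks down at the barrier step, because near $\partial K$ you need a lower barrier $1-u\le C\,\dd(\cdot,\partial K)$ and a single supporting-hyperplane comparison does not furnish one (the affine candidate is not below $u$ on all of $\partial\Omega$); and the final claim of semicontinuity of $\nabla u$ on the whole closed ring is asserted from the barriers rather than derived.
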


Finally, we recall the definition of extremal points:
\begin{defin}\label{extr-points}
Let $\Omega$ be a convex set: a point $x\in
\partial \Omega$ is an {\em{extremal}} point if $x$ cannot be
written as a linear combination of the form $x=tx_1+(1-t)x_2$ for
$t\in(0,1)$, $x_1,x_2\in\partial\Omega$.
\end{defin}

In the rest of the paper we denote by $ \Gamma_l$ the $l$ level set of $u$:
$$ \Gamma_l:=  \{x\in \Omega \; | \; u(x)=l\}.$$
 We also write $K\Subset \Omega$ when $\overline K\subset \Omega$.


\section{The exterior problem in convex setting}\label{section-exterior}

\setcounter{equation}{0}
\setcounter{theo}{0}

The first aim of this paper is to solve the following problem: given a convex open bounded set $K\subset \R^N$, $N\ge 2$, and $l\in(0,1)$, $\lambda>0$ positive constants, find a function $u$ and a convex open bounded domain $\Omega\subset\mathbb R^N$, $\Omega \supset \overline K$, solving
\begin{equation*}\left\{\begin{split}
& \Delta_p u = 0 \hbox{ in } \Omega\setminus \overline{K},  \quad
u =1  \textrm{ in } \overline{K}, \quad u =0 \textrm{ on } \partial\Omega, \\
&\dd( x,\Gamma_l) =\lambda  \quad\textrm{for all}\; x\in\partial \Omega.
\end{split}\right.\leqno (P_E)
\end{equation*}

We first show:

\begin{theo}\label{thm-exterior}
Problem $(P_E)$ has a unique regular solution $(u,\Omega)$ with $\Omega$ convex, $\partial \Omega \in \mathcal C^\infty$, and $u\in \mathcal C^{1,\alpha}(\overline{\Omega\backslash K})$ uniformly up to the boundary of $\Omega$.
\end{theo}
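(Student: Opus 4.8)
\medskip\noindent\textbf{Proof plan.}
The plan is to realize the solution by Beurling's method of sub- and supersolutions, formulated as a fixed point. For a bounded convex open set $\Omega$ with $\overline K\subset\Omega$ denote by $u_\Omega$ the $p$-capacitary potential of $(P_\Omega)$, put $D(\Omega):=\{u_\Omega\ge l\}$ — a convex body containing $\overline K$, whose boundary $\Gamma_l$ is analytic with strictly positive principal curvatures by Proposition \ref{convexity-level-sets} — and define
\[
 T(\Omega)\ :=\ \operatorname{int}\bigl(D(\Omega)\oplus\overline{B_\lambda}\bigr),
\]
the (open, bounded, convex) outer $\lambda$-parallel body of $D(\Omega)$. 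Since the boundary of the outer parallel body of a convex body is exactly the set of points at distance $\lambda$ from it, $(u_\Omega,\Omega)$ solves $(P_E)$ iff $T(\Omega)=\Omega$. Two properties of $T$ are used throughout: it is \emph{monotone} — $\Omega_1\subseteq\Omega_2$ gives $u_{\Omega_1}\le u_{\Omega_2}$ in $\Omega_1\setminus\overline K$ by the maximum principle (Theorem \ref{thm-classical-regularity}), hence $D(\Omega_1)\subseteq D(\Omega_2)$ and $T(\Omega_1)\subseteq T(\Omega_2)$ — and it is \emph{continuous along monotone sequences}: if $\Omega_n\uparrow\Omega_\infty$ then, using the interior $C^{1,\alpha}$ estimates and the uniform H\"older estimate up to the boundary (Theorem \ref{thm-classical-regularity}, Corollary \ref{cor-holder-estimates}) one passes to the limit to get $u_{\Omega_n}\to u_{\Omega_\infty}$ in $C^1_{\mathrm{loc}}(\Omega_\infty\setminus\overline K)$, and since convex-ring potentials have no flat level sets (Proposition \ref{convexity-level-sets}) it follows that $D(\Omega_n)\to D(\Omega_\infty)$ in Hausdorff distance, whence $T(\Omega_n)\to T(\Omega_\infty)$.

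\medskip\noindent\emph{Existence and regularity.}
The set $\Omega^-:=\operatorname{int}(K\oplus B_\lambda)$ is a subsolution, $T(\Omega^-)\supseteq\Omega^-$, because $D(\Omega^-)\supseteq\overline K$ (and the same computation shows every solution contains $\Omega^-$). For a supersolution take a large ball $\Omega^+=B_R\supseteq K\oplus B_\lambda$: comparing $u_{B_R}$ with the radial $p$-capacitary potential of $B_R\setminus\overline{B_{R_0}}$ ($K\subset B_{R_0}$) extended by $1$ on $\overline{B_{R_0}}$ — which is $p$-superharmonic and $\ge u_{B_R}$ on $\partial(B_R\setminus\overline K)$ — yields $D(B_R)\subseteq\overline{B_{\rho_l(R)}}$, where the explicit radial formulas give $R-\rho_l(R)\to\infty$ as $R\to\infty$ for every $1<p<\infty$; hence $T(B_R)\subseteq\overline{B_{\rho_l(R)+\lambda}}\subsetneq B_R$ once $R$ is large. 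The monotone iteration $\Omega_0:=\Omega^-$, $\Omega_{n+1}:=T(\Omega_n)$ is then increasing and trapped in $\Omega^+$, so $\Omega_\infty:=\bigcup_n\Omega_n$ is bounded and convex, and by the continuity of $T$ it is a fixed point; thus $(u_{\Omega_\infty},\Omega_\infty)$ solves $(P_E)$. For the regularity: $\partial\Omega_\infty$ is the outer $\lambda$-parallel surface of the analytic, strictly convex surface $\Gamma_l=\partial D(\Omega_\infty)$, i.e. the image of $\Gamma_l$ under $x\mapsto x+\lambda\nu(x)$, whose differential $I+\lambda W$ ($W$ the shape operator, eigenvalues $\kappa_i>0$) is everywhere invertible; this map is an analytic diffeomorphism, so $\partial\Omega_\infty$ is real-analytic (in particular $C^\infty$), with curvatures $\kappa_i/(1+\lambda\kappa_i)>0$. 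Since $\partial\Omega_\infty$ is $C^{1,\alpha}$, Lemma \ref{lemma-boundary-estimates} and the interior regularity of Theorem \ref{thm-classical-regularity} give $u_{\Omega_\infty}\in C^{1,\alpha}(\overline{\Omega_\infty\setminus K})$ uniformly up to $\partial\Omega_\infty$.

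\medskip\noindent\emph{Uniqueness — the hard part.}
Given two solutions $\Omega_1,\Omega_2$, the convex hull $\operatorname{conv}(\Omega_1\cup\Omega_2)$ is a subsolution, so iterating $T$ upward from it (it is trapped below a large ball as above) produces a solution $\widehat\Omega\supseteq\Omega_1,\Omega_2$; hence it is enough to treat nested solutions $\Omega_1\subseteq\Omega_2$ and show they coincide. Suppose $\Omega_1\subsetneq\Omega_2$. Because $|\nabla u_{\Omega_i}|$ is bounded below by a positive constant (Lemma \ref{lem-gradient-below}), the difference $w:=u_{\Omega_2}-u_{\Omega_1}\ge 0$ solves a uniformly elliptic linear equation, so the strong maximum principle forces $u_{\Omega_1}<u_{\Omega_2}$ throughout $\Omega_1\setminus\overline K$ and Hopf's lemma at $\partial K$ (where $w=0$) gives $|\nabla u_{\Omega_2}|<|\nabla u_{\Omega_1}|$ on $\partial K$; in particular $D(\Omega_1)\subsetneq D(\Omega_2)$ with a strictly positive gap. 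On the other hand, the parallel relations $h_{\Omega_i}=h_{D(\Omega_i)}+\lambda$ between support functions force $h_{\Omega_2}-h_{\Omega_1}=h_{D(\Omega_2)}-h_{D(\Omega_1)}$, i.e. enlarging the domain displaces the level set $\Gamma_l$ by \emph{exactly} as much as it displaces $\partial\Omega$. The contradiction must come from the fact that both potentials are anchored at the \emph{same} fixed inner surface $\partial K$ (where they equal $1$): I expect that $\Gamma_l$ cannot move outward as far as $\partial\Omega$ when $\Omega$ grows, and that making this quantitative — by comparing $u_{\Omega_1}$ and $u_{\Omega_2}$ on the shell between $\partial K$, $\Gamma_l$ and $\partial\Omega$, using the gradient-transfer estimate of Lemma \ref{lemma-gradient-below} and the two-sided gradient bounds of Corollary \ref{lemma-gradient-above} and Lemma \ref{lem-gradient-below} — is where the bulk of the work lies. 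I would isolate this strict comparison as the key lemma, modelled on the uniqueness arguments of \cite{Beurling89,HH00,HH000}.
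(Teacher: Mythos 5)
Your existence and regularity argument is sound and is essentially the paper's Beurling scheme in fixed-point clothing: the paper constructs the solution as the \emph{minimal} element of the class $\BB$ of supersolutions (intersection of all supersolutions containing a fixed strict subsolution, Lemmas \ref{lemma-intersection}--\ref{lemma-convergence-exterior} and Corollary \ref{cor-exists-minimal}), whereas you iterate the outer-parallel-body map $T$ upward from the subsolution $K\oplus B_\lambda$; both hinge on the same ingredients (comparison principle, uniform H\"older estimates up to the boundary from Corollary \ref{cor-holder-estimates} to pass to the limit, non-degeneracy of $\nabla u$ from Lemma \ref{lem-gradient-below} to keep the level sets from collapsing, and Proposition \ref{convexity-level-sets} plus the curvature formula $\mu_i/(1+\lambda\mu_i)$ for smoothness of $\partial\Omega$). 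Your construction of the radial supersolution and of the subsolution near $\partial K$ matches Lemma \ref{lemma-sub-supersolution}.

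The genuine gap is uniqueness, which you yourself flag as unfinished: after reducing to nested solutions $\Omega_1\subsetneq\Omega_2$ you derive the support-function identity $h_{\Omega_2}-h_{\Omega_1}=h_{D(\Omega_2)}-h_{D(\Omega_1)}$ and then only \emph{conjecture} that the level set $\Gamma_l$ cannot move outward as much as $\partial\Omega$ does; no contradiction is actually produced, and the strict displacement estimate you would need is precisely the missing content. The paper closes this in a few lines by Lavrent'ev rescaling (Lemma \ref{lemma-sub-super}): if $(u_s,\Omega_s)$ is any subsolution and $(u_S,\Omega_S)$ any supersolution with $\Omega_s\not\subseteq\Omega_S$, dilate the subsolution by a factor $\eps<1$ about a point of $K$ — this turns it into a \emph{strict} subsolution with distance constant $\eps\lambda<\lambda$ — take the largest $\eps$ for which the dilate sits inside $\Omega_S$, and compare distances at a touching point $x^0\in\partial\Omega_s^{\eps}\cap\partial\Omega_S$ to get $\lambda>\eps\lambda\ge\dd(x^0,\{u_s^\eps=l\})\ge\dd(x^0,\{u_S=l\})\ge\lambda$, a contradiction. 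Since a solution is simultaneously a sub- and a supersolution, any two solutions contain each other and hence coincide (Lemma \ref{lemma_uniq}). I recommend you replace your support-function discussion with this scaling argument; note it also exploits the anchoring at $\partial K$ (the dilation shrinks $K$), but it does so through the homogeneity of the distance constraint rather than through gradient comparisons at $\partial K$.
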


The proof is inspired by previous works of one of the authors \cite{HH00,HH000} via Beurling's method (see also the classical reference \cite{Beurling89}). The method is based on constructions of super- and sub-solutions for $(P_E)$; hence, existence of a solution for $(P_E)$ follows by the proof of the existence of a {\em minimal} super-solution which turns out to be a subsolution (or solution) as well.


\subsection{Proof of existence} \label{sec:existence}

We first deal with the existence statement in Theorem \ref{thm-exterior}. For each convex set $\Omega$, let $u_\Omega$ be the $p$-capacitary potential of $\Omega\backslash \overline K$. Let us consider the class
$$\CC:= \{ \Omega \;{\textrm{convex bounded open set in }}\mathbb R^N, \; \Omega \supset \overline K \},$$
and define the following three subsets:
\begin{equation*}\begin{array}
{ll}
\AA &= \{ \Omega \in\CC \; | \;   \sup\limits_{x\in\partial\Omega}\dd( x,\Gamma_l) \le \lambda \},\\
\AA_0 &= \{ \Omega \in \CC \; | \;   \inf\limits_{x\in\partial\Omega}\dd( x,\Gamma_l) <\lambda \}, \\
\BB &= \{ \Omega \in \CC \; | \;   \inf\limits_{x\in\partial\Omega}\dd( x,\Gamma_l) \ge \lambda \}.
\end{array}
\end{equation*}
Note that $\AA$ is the set of subsolutions, $\AA_0$ the set of strict subsolutions, while  $\BB$ is the set of supersolutions for problem $(P_E)$. Let us first check that they are non-empty sets :

\begin{lemma}\label{lemma-sub-supersolution}
There exist a supersolution and a strict subsolution for problem $(P_E)$.
 \end{lemma}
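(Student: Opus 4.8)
The plan is to construct the two required domains explicitly using balls, exploiting the fact that for a ball the $p$-capacitary potential of an annulus is radial and can be written down in closed form, so that both the level set $\Gamma_l$ and the distances to $\partial\Omega$ are computed directly.

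\textbf{Supersolution.} First I would take $\Omega$ to be a very large ball $B_R$ containing $\overline K$. The point is that as $R\to\infty$, the $l$-level set $\Gamma_l$ of $u_{B_R}$ stays in a bounded region: indeed $u_{B_R}\le 1$ on $K$, it solves $\Delta_p u=0$ in the annulus, and by comparison with the radial $p$-capacitary potential of $B_\rho\setminus B_{r_0}$ (where $B_{r_0}\subset K\subset B_\rho$), one sees that for $R$ large the region $\{u_{B_R}\ge l\}$ is contained in a fixed ball $B_{\rho_l}$, whose radius $\rho_l$ can be bounded independently of $R$ once $l>0$ is fixed (here one uses that $u_{B_R}\to$ the capacitary potential of the exterior domain $\R^N\setminus\overline K$, which decays to $0$ for $p\le N$, or that the potential is explicit and comparable for $p>N$; in either case $\Gamma_l$ is trapped). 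Consequently $\dd(x,\Gamma_l)\ge R-\rho_l\to\infty$ uniformly over $x\in\partial B_R$, so for $R$ large enough $\inf_{x\in\partial\Omega}\dd(x,\Gamma_l)\ge\lambda$, i.e. $B_R\in\BB$. The uniform convergence of level sets as $R$ grows is the only slightly delicate point here, and it is handled by the comparison principle (Theorem \ref{thm-classical-regularity}, part ii) together with explicit radial barriers.

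\textbf{Strict subsolution.} For this I would instead take $\Omega$ to be a thin neighborhood of $K$. Fix a small $\eps>0$ and consider the convex set $\Omega_\eps:=\{x:\dd(x,K)<\eps\}$. For $\eps$ small, $u_{\Omega_\eps}$ transitions from $1$ on $K$ to $0$ on $\partial\Omega_\eps$ across a layer of width at most $\eps$; by the comparison principle the $l$-level set $\Gamma_l$ lies within distance $\eps$ of $\partial\Omega_\eps$ (compare with the one-dimensional/radial profile across a slab of width $\eps$, which shows $\Gamma_l\subset\{x:\dd(x,\partial\Omega_\eps)\le c(l)\eps\}$ for a constant $c(l)<1$). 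Hence $\sup_{x\in\partial\Omega_\eps}\dd(x,\Gamma_l)\le c(l)\eps$. Choosing $\eps$ small enough that $c(l)\eps<\lambda$ gives $\Omega_\eps\in\AA_0$, a strict subsolution. Again the only technical ingredient is a barrier estimate from above on how far into $\Omega_\eps$ the level set $\{u=l\}$ can sit, which follows by comparing $u_{\Omega_\eps}$ with the explicit radial potential on $B_{r_0}(z)\setminus K$-type annuli or with the slab profile.

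\textbf{Main obstacle.} I expect the real work to be in the two barrier/comparison arguments that pin down the location of $\Gamma_l$: showing that it stays trapped in a fixed compact set as $R\to\infty$ (for the supersolution), and that it is squeezed against $\partial\Omega_\eps$ as $\eps\to 0$ (for the subsolution). Both are quantitative comparison-principle statements; the cleanest route is to sandwich $u_\Omega$ between radial $p$-capacitary potentials of annuli built from the inner and outer balls of $K$, for which the profile is an explicit function of $|x|$ (a power of $|x|$, or $\log$, depending on $p$ vs.\ $N$), making the distance computations elementary. Once these are in hand, the non-emptiness of $\BB$ and $\AA_0$ is immediate, and this is all that Lemma \ref{lemma-sub-supersolution} asserts.
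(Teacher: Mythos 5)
Your construction is correct, and while the supersolution half coincides with the paper's argument, the subsolution half takes a genuinely different and in fact more elementary route. For the supersolution, the paper does exactly what you propose: sandwich $u_{B_R}$ below the explicit radial potential $\bar u$ of $B_R\setminus \overline{B_r}$ and let $R\to\infty$ so that $\dd(\partial B_R,\{\bar u=l\})\geq\lambda$. One small inaccuracy in your write-up: for $p>N$ the set $\{\bar u\geq l\}$ is \emph{not} trapped in a ball of radius independent of $R$ (its radius is $((1-l)R^{\frac{p-N}{p-1}}+l\,r^{\frac{p-N}{p-1}})^{\frac{p-1}{p-N}}\sim (1-l)^{\frac{p-1}{p-N}}R$), but the quantity that matters, $R-\rho_l(R)$, still tends to infinity, so the conclusion survives; you only need the distance estimate, not the trapping. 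For the strict subsolution the paper instead keeps the large ball $B_R$, restricts $u_R$ to the thin superlevel set $U_\eps=\{u_R>1-\eps\}$, rescales to $u_\eps=(u_R-(1-\eps))/\eps$, and invokes the Lewis lower gradient bound (Lemma \ref{lem-gradient-below}) near $\partial K$ to get $\dd(x,\{u_\eps=l\})\leq \eps l/\alpha<\lambda$. Your choice of $\Omega_\eps=K+B_\eps$ avoids that gradient bound altogether: since the annulus $\Omega_\eps\setminus\overline K$ has width $\eps$, for any $x\in\partial\Omega_\eps$ the segment to its nearest point of $\overline K$ has length $\eps$, stays in $\overline{\Omega_\eps}$, and carries $u$ continuously from $0$ to $1$, so by the intermediate value theorem it meets $\Gamma_l$ and $\dd(x,\Gamma_l)\leq\eps<\lambda$. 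This makes your auxiliary slab-comparison (and the constant $c(l)<1$) unnecessary, and it is cleaner than the containment $\Gamma_l\subset\{\dd(\cdot,\partial\Omega_\eps)\leq c(l)\eps\}$ you state, which by itself would not guarantee that \emph{every} boundary point has a level-set point nearby. The trade-off is that the paper's version produces a subsolution whose zero level set is itself a level surface of a $p$-capacitary function (convenient for later comparisons), whereas yours is the geometrically simplest admissible convex domain; both establish that $\AA_0$ and $\BB$ are nonempty, which is all the lemma requires.
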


\begin{proof}
By translation suppose that the origin of the cartesian system lies inside $K$. In order to construct a supersolution take  $0<r<R$ big enough such that $K\subset B_r(0)$. The solution of the capacitor problem for the annulus $ B_R(0)\backslash \overline{B_r(0)}$ is explicitly given by function
\begin{equation*}\left\{
\begin{split}
&\overline u(|x|) = \frac{ \log\lp\frac{R}{|x|}\rp}{   \log\lp\frac{R}{r} \rp} \quad\textrm{for}\;p=N, \\
&\overline u(|x|) = \frac{|x|^{\frac{p-N}{p-1}}-R^{\frac{p-N}{p-1}}}{r^{\frac{p-N}{p-1}}-R^{\frac{p-N}{p-1}}}\quad \textrm{for}\;p\neq N.
\end{split}\right.
\end{equation*}
It is clear that we can have $\dd(\{\bar u=l\},\{\bar u=0\})$ as big as we wish by fixing $r$ and taking $R$ big enough. Consider also $u_R$ to be the
 $p$-capacitary potential of the set $ B_R(0)\backslash \overline K$. By the comparison principle, $u_R\leq \bar u$ for all $x\in B_R(0)\backslash B_r(0)$.
 This immediately implies that
 \begin{align}\label{formula_used}
 \dd(x,\{u_R=l\})\geq \lambda\quad \mbox{for all }x\in\partial B_R(0),
 \end{align}
 which shows that $u_R$ is a supersolution for our problem, i.e, $B_R(0)\in\BB$.

Now we seek a strict subsolution. Let $B_R(0)$ and $u_R$ as above. Since $K\subset B_R(0)$, and $u_R$ is the $p$-capacitary potential in this annulus, we can apply Lemma \ref{lem-gradient-below} to bound the gradient  of $u_R$ from below. More precisely, there exists a small neighborhood $U$ of $\partial K$ and some $\alpha>0$ such that
\begin{equation}\label{gradient-below}|\nabla u_R(x)|\geq \alpha>0\quad\mbox{for all}\quad x\in U\backslash K.\end{equation}
Choose the level set $\{u_R=1-\eps\}$ and set $U_\eps:=\{x:u_R(x)>1-\eps\}\cap U$.  Define now
$$u_\eps(x) := \frac{ u_R(x) - (1-\eps)}{\eps}.$$
The function $u_\eps(x)$ is a $p$-harmonic function in the domain $U_\eps$, $u_\eps(x) =0$ in  $\{ u_R(x) = 1-\eps \}$ and $u_\eps(x)=1$ in $K$, so it is the $p$-capacitary potential of the set $U_\eps$. We need to prove that $u_\varepsilon \in\AA_0$, i.e.,
$$\sup_{x\in \{ u_\eps =0\} }\dd ( x,\{ u_\eps =l\}) < \lambda .$$
This is easy to see since from the lower bound \eqref{gradient-below} it follows
$$
\dd (x, \{ u_\eps =l\}) \leq \frac{\eps l}{\alpha} < \lambda, \quad x\in\{ u_\eps =0\},$$
by choosing $\eps$ small enough.
\end{proof}

\begin{lemma}\label{lemma-sub-super}
Let $(u_S,\Omega_S)$ be any supersolution and $(u_s,\Omega_s)$ any subsolution for problem $(P_E)$. Then $\Omega_s\subset \Omega_S$ and $u_s\leq u_S$.
\end{lemma}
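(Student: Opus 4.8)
The plan is to prove the two conclusions simultaneously by a continuity/connectedness argument of Beurling type, exploiting the monotone dependence of the $p$-capacitary potential on the outer domain. First I would set up the comparison: if $\Omega_s \subset \Omega_S$, then since both $u_s$ and $u_S$ solve $\Delta_p u = 0$ in the respective annuli with $u = 1$ on $\overline K$ and $u_s = 0 \le u_S$ on $\partial\Omega_s$, the weak maximum principle (Theorem \ref{thm-classical-regularity}(ii)) applied on $\Omega_s \setminus \overline K$ gives $u_s \le u_S$ there; outside $\Omega_s$ we have $u_s = 0 \le u_S$ trivially. So the content of the lemma is really the nesting $\Omega_s \subset \Omega_S$, and the inequality of potentials is a corollary. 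The subtlety is that ``subsolution'' only controls $\sup_{\partial\Omega_s}\dd(\cdot,\Gamma_l^{s}) \le \lambda$ and ``supersolution'' only controls $\inf_{\partial\Omega_S}\dd(\cdot,\Gamma_l^{S}) \ge \lambda$, where the level sets refer to the respective potentials — so one cannot directly compare level sets until the domains themselves are known to be nested.

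The key step is a \emph{sliding/dilation argument}. Fix a point, say the origin, inside $K$ (possible after translation, since $K$ is convex and both annuli contain $\overline K$). For $t \ge 1$ consider the dilate $t\,\Omega_s$; for $t$ large enough $\overline{\Omega_S} \subset t\,\Omega_s$, and for $t = 1$ we want to show $\Omega_s \subset \Omega_S$. Let $t^* = \inf\{ t \ge 1 : \Omega_S \subset t\,\Omega_s\}$; by convexity and closedness this infimum is attained and $\partial(t^*\Omega_s)$ touches $\partial\Omega_S$ from outside at some point $x_0 \in \partial\Omega_S \cap \partial(t^*\Omega_s)$. Suppose for contradiction $t^* > 1$. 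The dilated function $u_s(x/t^*)$ is the $p$-capacitary potential of $t^*\Omega_s \setminus t^* \overline K \supset t^*\Omega_s \setminus \overline K \supset \Omega_S \setminus \overline K$ (using $K \supset$ the origin so $t^* K \supset K$), and one checks the distance condition scales: $\sup_{\partial(t^*\Omega_s)} \dd(\cdot, \{u_s(\cdot/t^*) = l\}) = t^* \lambda > \lambda$. Now compare $u_s(\cdot/t^*)$ with $u_S$ on the annulus bounded between $\partial(t^*K)$ (or $\partial K$) and $\partial\Omega_S$: by the maximum principle $u_S \le u_s(\cdot/t^*)$ there, hence the level set $\Gamma_l^S = \{u_S = l\}$ lies inside $\{u_s(\cdot/t^*) \ge l\}$, which forces $\dd(x_0, \Gamma_l^S) \ge \dd(x_0, \{u_s(\cdot/t^*) = l\})$. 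At the contact point $x_0 \in \partial(t^*\Omega_s)$ we can estimate $\dd(x_0,\{u_s(\cdot/t^*) = l\})$ from below; combining with the scaling $\ge t^*\lambda$ one gets $\dd(x_0, \Gamma_l^S) > \lambda$, contradicting nothing yet — so instead I compare at $x_0$ viewed as a point of $\partial\Omega_S$ using the \emph{subsolution} inequality for $\Omega_s$ transported by dilation, versus the \emph{supersolution} inequality for $\Omega_S$. The precise contradiction: the supersolution condition says $\dd(x_0, \Gamma_l^S) \ge \lambda$; but the dilated subsolution gives, at the touching point, $\dd(x_0, \{u_s(\cdot/t^*)=l\}) \le t^*\lambda$ cannot directly contradict this, so one needs strictness of the touching — here Hopf's lemma / the strict convexity of level sets (Proposition \ref{convexity-level-sets}) enters to get a strict inequality $\dd(x_0,\Gamma_l^S) > \lambda \ge$ something, or rather to show the contact forces $t^* = 1$.

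The main obstacle — and where I would spend the most care — is precisely this last comparison at the touching point: turning the geometric tangency ``$\partial(t^*\Omega_s)$ touches $\partial\Omega_S$ from outside'' together with the pointwise potential comparison $u_S \le u_s(\cdot/t^*)$ into a contradiction with $t^* > 1$. The natural route is: at $x_0$, the inward normal ray to $\partial\Omega_S$ meets $\Gamma_l^S$ at distance $\ge \lambda$ (supersolution), while the same ray meets $\{u_s(\cdot/t^*) = l\}$ at distance $\le t^*\lambda$ scaled appropriately; but since $u_S \le u_s(\cdot/t^*)$, the set $\{u_S \ge l\}$ is \emph{smaller}, so $\dd(x_0,\Gamma_l^S)$ should be \emph{larger} than $\dd(x_0,\{u_s(\cdot/t^*)=l\})$. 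This gives $\lambda \le \dd(x_0,\Gamma_l^S)$ on one hand and, if I can show $\dd(x_0,\{u_s(\cdot/t^*)=l\}) = t^*\lambda$ exactly at a point where the dilated subsolution's distance is realized, a clash. The cleanest fix is to run the dilation in the \emph{opposite} direction, shrinking $\Omega_S$ until it first fits inside $\Omega_s$, so that the scaling produces $\dd \le \lambda/t^* < \lambda$ contradicting the supersolution bound $\ge \lambda$; I would check both orientations and take whichever makes the inequalities land on the correct side. Finally, once $\Omega_s \subset \Omega_S$ is established, $u_s \le u_S$ follows immediately from the comparison principle as noted above, completing the proof.
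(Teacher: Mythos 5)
There is a genuine gap, and it sits exactly where you flagged your own uncertainty: the choice of which domain to rescale and in which direction. Your primary setup dilates the subsolution outward ($t^*\geq 1$ with $\Omega_S\subset t^*\Omega_s$), which turns the subsolution bound into $\dd(\cdot,\{u_s(\cdot/t^*)=l\})\leq t^*\lambda\geq\lambda$ — a \emph{weaker} statement — and, as you correctly observe, the comparison $u_S\leq u_s(\cdot/t^*)$ then pushes $\dd(x_0,\Gamma_l^S)$ \emph{up}, so nothing clashes with the supersolution bound $\geq\lambda$. Your proposed fix, shrinking $\Omega_S$ until it fits inside $\Omega_s$, is also the wrong move: scaling the supersolution by a factor $\mu<1$ turns its defining inequality into $\dd\geq\mu\lambda<\lambda$, i.e.\ it degrades the supersolution property rather than sharpening the subsolution one, and again no contradiction is forced. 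Appeals to Hopf's lemma or strict convexity of level sets do not rescue either orientation; they are not what is missing.

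The rescaling that works — and is the one used in the paper — is to \emph{shrink the subsolution}: with $0\in K$, set $u_s^\eps(x)=u_s(x/\eps)$ for $\eps<1$, so that $\Omega_s^\eps=\eps\Omega_s$ and the subsolution condition becomes the \emph{strictly stronger} $\dd(x,\{u_s^\eps=l\})\leq\eps\lambda<\lambda$ on $\partial\Omega_s^\eps$. Assuming $\Omega_s\not\subset\Omega_S$, take the largest $\eps_0$ with $\Omega_s^{\eps_0}\subseteq\Omega_S$; necessarily $\eps_0<1$ and there is a contact point $x^0\in\partial\Omega_s^{\eps_0}\cap\partial\Omega_S$. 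Now the comparison principle on $\Omega_s^{\eps_0}\setminus\overline K$ (where $u_s^{\eps_0}=0\leq u_S$ on $\partial\Omega_s^{\eps_0}$ and $u_s^{\eps_0}\leq 1=u_S$ on $\partial K$) gives $u_s^{\eps_0}\leq u_S$, hence $\{u_s^{\eps_0}\geq l\}\subseteq\{u_S\geq l\}$ and therefore $\dd(x^0,\{u_S=l\})\leq\dd(x^0,\{u_s^{\eps_0}=l\})\leq\eps_0\lambda<\lambda$, contradicting the supersolution property at $x^0\in\partial\Omega_S$. The point is that shrinking the subsolution is the unique one of the four possible rescalings that simultaneously (i) strengthens the hypothesis being transported and (ii) makes the comparison principle transfer that strengthened bound onto the level set of $u_S$ at the contact point. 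Your final step ($u_s\leq u_S$ once $\Omega_s\subset\Omega_S$ is known) is fine as stated.
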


\begin{proof}
We use Lavrent'ev rescaling method. Assume by contradiction that there is a subsolution $(u_s,\Omega_s)$ which is not smaller than a supersolution $(u_S,\Omega_S)$. Without loss of generality, assume that $0\in K$. Rescale the function $u_s$ by a parameter $\eps<1$ small enough, i.e. $u_s^\eps (x)= u_s(\frac{x}{\eps})$, so that   $u_s^\eps$ is a $p$-capacitary function in $\Omega_s^\eps\backslash \overline{K_\eps}$ for $\Omega_s^\eps \subset \Omega_S$, $K^{\eps}\subset K$. Moreover, for all $x\in\partial \Omega_{s}^\eps$,
\begin{align*}
\dd(x, \{u_s^\eps=l\}) \le \lambda^\eps <\lambda.
\end{align*}

Consider the biggest $\eps_0$ such that $\Omega_s^{\eps_0} \subseteq \Omega_S$ and $\partial\Omega_s^{\eps_0} \cap  \partial\Omega_S\neq 0$. Note that we can take $\eps_0<1$ because $\Omega_s\neq\Omega_S$. Then, by strong comparison principle $u_s^\eps\le u_S$ for each $x\in \Omega_s^{\eps_0}\backslash K$. Let $x^0\in \partial\Omega_s^{\eps_0} \cap \partial\Omega_S$: it holds that
 $$\lambda >\lambda^\eps  \ge \dd(x^0 ,\; \{u_s^\eps=l\}) \ge  \dd(x^0,\;\{u_S=l\}) \ge \lambda,
 $$
which is a contradiction.
\end{proof}

The next step in the application of Beurling's method is to show that the set  $\BB$ is closed under intersection. This is the preparation step in order to show that the limit of a decreasing sequence of elements in $\BB$ still belongs to $\BB$.

\begin{lemma} \label{lemma-intersection}
Let $\Omega_1$ and $\Omega_2$ be two elements of $\BB$. It holds that
$$
\Omega_1 \cap \Omega_2 \in \BB.
$$
\end{lemma}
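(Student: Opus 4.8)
The plan is to show that $\Omega_1\cap\Omega_2$ is again a bounded convex open set containing $\overline K$ (so that it lies in $\CC$), and then to verify the defining inequality $\inf_{x\in\partial(\Omega_1\cap\Omega_2)}\dd(x,\Gamma_l)\ge\lambda$, where here $\Gamma_l$ denotes the $l$-level set of the $p$-capacitary potential $u_{\Omega_1\cap\Omega_2}$ of $(\Omega_1\cap\Omega_2)\setminus\overline K$. The membership in $\CC$ is immediate: the intersection of two convex sets is convex, the intersection of two open sets is open, it is bounded since $\Omega_1$ is, and $\overline K\subset\Omega_1\cap\Omega_2$ since $\overline K$ lies in each. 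So the content of the lemma is entirely the distance condition on the new free boundary.

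The key tool is the comparison principle for the $p$-capacitary potential under shrinking of the outer domain. Write $u_i=u_{\Omega_i}$ and $w=u_{\Omega_1\cap\Omega_2}$. Since $\Omega_1\cap\Omega_2\subset\Omega_i$, both $w$ and $u_i$ solve $\Delta_p(\cdot)=0$ in $(\Omega_1\cap\Omega_2)\setminus\overline K$, agree ($=1$) on $\overline K$, while on $\partial(\Omega_1\cap\Omega_2)$ we have $w=0\le u_i$; by the weak maximum principle (Theorem \ref{thm-classical-regularity}(ii)) it follows that $w\le u_i$ throughout $(\Omega_1\cap\Omega_2)\setminus\overline K$, for $i=1,2$. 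Consequently the sublevel set $\{w<l\}\cap(\Omega_1\cap\Omega_2)$ is \emph{contained} in $\{u_i<l\}\cap\Omega_i$ for each $i$; equivalently, the region where $w\ge l$ contains the region where $u_i\ge l$, so the level set $\{w=l\}$ lies on or ``outside'' each $\{u_i=l\}$. This will translate into a distance inequality: for any point $x$, $\dd(x,\{w=l\})\ge\dd(x,\{u_i=l\})$ provided $x$ lies in the closed region $\{u_i\le l\}$ of $\Omega_i$ — the level set $\{w=l\}$ has been pushed away from such $x$.

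Now take any $x\in\partial(\Omega_1\cap\Omega_2)$. Since $\partial(\Omega_1\cap\Omega_2)\subset\partial\Omega_1\cup\partial\Omega_2$, we have $x\in\partial\Omega_i$ for some $i\in\{1,2\}$; fix that $i$. Because $x\in\partial\Omega_i$ we have $u_i(x)=0\le l$, so $x\in\{u_i\le l\}$, and the distance monotonicity above applies: $\dd(x,\{w=l\})\ge\dd(x,\{u_i=l\})=\dd(x,\Gamma_l^{(i)})\ge\lambda$, the last inequality being exactly the statement that $\Omega_i\in\BB$. Since $x\in\partial(\Omega_1\cap\Omega_2)$ was arbitrary, $\inf_{x\in\partial(\Omega_1\cap\Omega_2)}\dd(x,\Gamma_l)\ge\lambda$, i.e. $\Omega_1\cap\Omega_2\in\BB$.

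The step I expect to require the most care is the passage from the inclusion of level sets, $\{u_i\ge l\}\subset\{w\ge l\}$, to the pointwise distance inequality $\dd(x,\{w=l\})\ge\dd(x,\{u_i=l\})$ for $x\in\{u_i\le l\}$. One must argue that a nearest point of $\{w=l\}$ to such an $x$ cannot be closer than a nearest point of $\{u_i=l\}$: the open segment from $x$ to any point of $\{w=l\}$ must cross $\{u_i=l\}$ because it starts in $\{u_i\le l\}$ and ends in $\{u_i\ge l\}$ (using $\{w\ge l\}\cap\Omega_1\cap\Omega_2\subset\{u_i\ge l\}\cap\Omega_i$ and continuity of $u_i$ along the segment, which lies inside $\Omega_i$ by convexity once one checks the endpoints are in $\overline{\Omega_i}$). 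One should also keep the bookkeeping honest when $x$ lies on $\partial K$-side behavior is irrelevant here since $x\in\partial\Omega_i$, but the convexity of $\Omega_i$ and Proposition \ref{convexity-level-sets} guaranteeing that $\{u_i\ge l\}$ is a genuine convex body with the segment argument valid are the facts to lean on. Modulo this elementary but slightly fiddly geometric lemma, the proof is a direct application of the comparison principle.
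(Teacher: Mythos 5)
Your proof is correct and follows essentially the same route as the paper: both rest on the comparison principle giving $u_{\Omega_1\cap\Omega_2}\le\min\{u_{\Omega_1},u_{\Omega_2}\}$ and then convert the resulting level-set inclusion into the distance bound (the paper works from the minimizing point on $\{u_{\Omega_1\cap\Omega_2}=l\}$, you work from an arbitrary point of $\partial(\Omega_1\cap\Omega_2)$, which is equivalent since the set-to-set distance is symmetric). The segment-crossing argument you flag as the fiddly step is exactly the implicit content of the paper's phrase ``further away from both boundaries,'' and your version of it is sound.
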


\begin{proof}
Let $u_{\Omega_1 \cap \Omega_2 }$ be the solution of the $p$-capacitary problem in the intersection $\Omega_1 \cap \Omega_2$.  By application of the maximum principle in each of the domains $\Omega_1$ and $\Omega_2$, we can conclude that
$$u_{\Omega_1 \cap \Omega_2 }(x)  \le \min\{u_{\Omega_1 }(x),u_{\Omega_1 }(x)\}, \quad \textrm{for each} \; x\in \Omega_1 \cap \Omega_2.$$
Moreover, since  $\Omega_1\cap\Omega_2$ must contain  $\overline K$, the level set $\{u_{\Omega_1\cap\Omega_2}=l\}$ is well defined.
Let $x^0\in \{u_{\Omega_1\cap\Omega_2}=l\}$ the point where the minimum distance between  the sets $\partial(\Omega_1\cap\Omega_2)$ and $\{u_{\Omega_1\cap\Omega_2}=l\}$ is achieved. Then
$$l=u_{\Omega_1\cap\Omega_2}(x^0)\leq \min\{u_{\Omega_1}(x^0),u_{\Omega_2}(x^0)\},$$
which implies that the point $x^0$ is further away from both boundaries $\partial\Omega_1$ and $\partial\Omega_2$, in other words  a distance greater than or equal to $\lambda$.
\end{proof}

Next lemma shows convergence of the sets $\Omega_n$.

\begin{lemma}\label{lemma-convergence-exterior}
Let $\Omega_1\supset\Omega_2\supset\ldots$
be a decreasing sequence of convex domains in $\BB$, and suppose that the set $\Omega$, defined the as interior of the closure
$\overline{\cap\Omega_k}$, is convex, bounded, open and $\Omega\supset \overline K$. Then $\Omega\in\BB$.
\end{lemma}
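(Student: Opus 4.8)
The plan is to sidestep proving full convergence $u_{\Omega_k}\to u_{\Omega}$ and instead combine a single one-sided comparison with two elementary facts about nested convex bodies. Throughout I write $u_k:=u_{\Omega_k}$ and $\Gamma_l^{(k)}:=\{u_k=l\}$, so that $\Omega_k\in\BB$ means precisely $\dd(\partial\Omega_k,\Gamma_l^{(k)})\ge\lambda$; likewise set $u:=u_\Omega$ and $\Gamma_l:=\{u=l\}$, which are well defined because $\Omega$ is a bounded convex domain with $\overline K\subset\Omega$, so problem $(P_\Omega)$ has a solution that is continuous up to $\partial K$ and $\partial\Omega$ by Corollary~\ref{cor-holder-estimates}. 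The goal is to show $\dd(x,\Gamma_l)\ge\lambda$ for every $x\in\partial\Omega$.

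First I would record the comparison $u\le u_k$ in $\Omega\setminus\overline K$: since $\Omega\subset\Omega_k$, the restriction of $u_k$ to $\Omega\setminus\overline K$ is $p$-harmonic there, equals $1$ on $\partial K$ and is $\ge 0=u$ on $\partial\Omega$, so the comparison principle (Theorem~\ref{thm-classical-regularity}) gives the claim; consequently $\Gamma_l\subset\{u\ge l\}\subset\{u_k\ge l\}=:C_k$ for every $k$, and this is the only place the equation is used. Next I would show that every point of the convex set $C_k$ lies at distance at least $\lambda$ from $\partial\Omega_k$. Note $C_k$ is closed and disjoint from $\partial\Omega_k$ (where $u_k=0<l$), hence $C_k\subset\Omega_k$, and $\partial C_k\subset\{u_k=l\}=\Gamma_l^{(k)}$, so $\dd(\partial C_k,\partial\Omega_k)\ge\lambda$. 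Given $z\in C_k$, pick a nearest point $w\in\partial\Omega_k$; by convexity $[z,w]\subset\overline{\Omega_k}$, and since it runs from $C_k$ to the complement of $C_k$ it meets $\partial C_k$ at a point $z'$, whence $|z-w|=|z-z'|+|z'-w|\ge\dd(z',\partial\Omega_k)\ge\lambda$. Thus for $z\in C_k$ the open ball $B_\lambda(z)$ avoids $\partial\Omega_k$, and being connected with center in $\Omega_k$ it satisfies $B_\lambda(z)\subset\Omega_k$.

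Finally I would pass to the limit: for $z\in\Gamma_l$ the comparison step gives $z\in C_k$ for all $k$, hence $B_\lambda(z)\subset\bigcap_k\Omega_k$ by the previous step. Since $\bigcap_k\Omega_k$ is convex with nonempty interior (it contains $\overline K$), its interior equals the interior of its closure, which is $\Omega$ by hypothesis; therefore the open ball $B_\lambda(z)$ lies in $\Omega$, so $|x-z|\ge\lambda$ for all $x\in\partial\Omega$, and taking the infimum over $z\in\Gamma_l$ concludes $\Omega\in\BB$. I do not anticipate a serious obstacle: the two convexity facts used (that the $\lambda$-shell of an inner convex body stays inside the outer convex body, and that interior-of-closure does not enlarge a convex set with interior points) are elementary, and the simultaneous availability of the $u_k$ near $\partial K$ is exactly Corollary~\ref{cor-holder-estimates}. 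The alternative, more in the spirit of Beurling's method, would be to first prove $u_k\downarrow u_\Omega$ locally uniformly (monotonicity together with the $\mathcal C^{1,\alpha}$ estimates of Theorem~\ref{thm-classical-regularity} and a barrier for the convex domain $\Omega$), then the Hausdorff convergence $\Gamma_l^{(k)}\to\Gamma_l$ via convexity of level sets (Proposition~\ref{convexity-level-sets}), and then pass to the limit in $\dd(\partial\Omega_k,\Gamma_l^{(k)})\ge\lambda$; in that route identifying the boundary values of the monotone limit would be the one genuinely technical point.
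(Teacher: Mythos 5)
Your argument is correct, and it takes a genuinely different and more elementary route than the paper. The paper proves the lemma by analyzing the monotone limit of the potentials: it shows $u_k\downarrow u$ locally in $\mathcal C^{1,\alpha}$, identifies the limit as $u_\Omega$ (the delicate point being non-degeneracy, i.e.\ that $u$ does not vanish on a set strictly inside $\Omega$, handled by a Harnack chain along a segment joining $\partial\Omega$ to $\Gamma_l$), and then passes to the limit in the distance inequality using Hausdorff convergence of $\partial\Omega_n$ and of $\{u_n=l\}$. You bypass all of that: the single comparison $u_\Omega\le u_k$ places $\Gamma_l$ inside every superlevel set $C_k=\{u_k\ge l\}$; the supersolution property of $\Omega_k$ plus convexity of $\Omega_k$ (so that the segment from $z\in C_k$ to its nearest boundary point stays in $\overline{\Omega_k}$ and must cross $\partial C_k\subset\Gamma_l^{(k)}$) shows $\dd(C_k,\partial\Omega_k)\ge\lambda$; and the identity $\mathrm{int}(C)=\mathrm{int}(\overline C)$ for a convex set with interior points converts $B_\lambda(z)\subset\bigcap_k\Omega_k$ into $B_\lambda(z)\subset\Omega$. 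Each step checks out (in particular $\partial C_k\subset\{u_k=l\}$ because $u_k$ is continuous and $C_k$ avoids $\partial\Omega_k$ where $u_k=0$), and the only PDE input is the comparison principle. What the paper's longer route buys is the collateral information it produces -- the convergence $u_k\to u_\Omega$ and the Hausdorff convergence $\{u_k=l\}\to\Gamma_l$, $\partial\Omega_k\to\partial\Omega$ -- which is reused later (e.g.\ in the convergence to the classical Bernoulli problem and in the interior analogue); your proof establishes the stated membership $\Omega\in\BB$ more cheaply but would need to be supplemented by those convergence facts where they are invoked elsewhere. The sketch of the alternative route in your last sentences is an accurate description of what the paper actually does.
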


\begin{proof}
Let $u_k$ be the $p$-capacitary potential of $\Omega_k\backslash \overline K$. By construction, $0\leq u_k\leq 1$, and it is a decreasing sequence.
Then we can easily show that on every compact subset of $\Omega:=\mbox{Interior}(\overline{\cap \Omega_k})$, $u_k$ converges in $\mathcal C^{1,\alpha}$ norm to a $p$-harmonic function $u$.
However, the lack of control of the gradients at the boundary does not allow to show higher regularity up to the boundary, but only $\mathcal C^\alpha$.

 We first prove that $u$ is precisely the solution to the $p$-capacitary problem in $\Omega\backslash \overline K$, denoted by $u_\Omega$. Clearly, $u\equiv 1$ in $\overline K$.
  We also need to check that $u\equiv 0$ outside $\Omega$. For that, let $x\not\in\Omega$.
Since $K$ is convex, $\Omega_k\backslash\overline K$ automatically satisfies the uniform exterior cone condition, and one may use Corollary \ref{cor-holder-estimates}
to estimate
\begin{align}\label{u_Null}
0 &\leq u_k(x)\leq \dd(x,\R^N \setminus \Omega_k)^\alpha\sup \|u_k\|_{\mathcal C^{\alpha}(\overline \Omega\setminus K)}\\
 &\leq M \dd(x,\R^N \setminus \Omega_k)^\alpha\to 0,\nonumber
\end{align}
as $k\to\infty$. This implies that $u(x)=0$, as desired.

Since the sequence of functions is decreasing, the set $\{u_k=l\}$ converges to the set where $\{u=l\}$. However the boundary of the set $\Omega_k$ may not converge to the set $\{u=0\}$: that may happen if the gradient of $u$ vanishes at some point of $\partial\Omega$ so that the limit function flattens out, and in this case the distance property would be violated. As a consequence, we need to check that $u(x)>0$ for all $x\in\Omega$.
By the minimum principle, $u$ cannot vanish in the interior, but it could happen that it vanishes at the boundary of some set $\tilde\Omega\subset\Omega$.  Hence assume by contradiction that $\tilde\Omega\neq\Omega$; consider a point $x^0\in\partial\tilde\Omega$, $x^0\not\in\Omega$ and  let $P\in\{u=l\}$ be the point where distance between $x^0$ and $\{u=l\}$ achieves its minimum. It can happen that $x^0=P$ but this is not a problem in the argument.
Note that the ball $B_{\eps/2}(x^0)$ is completely inside $\Omega_k$ for all $k$ (otherwise, the distance property for $u_k$ would be violated). Moreover $u_k(x)\to 0$ as $k\to\infty$ for all $x\in B_{\eps/2}(x^0)\setminus(B_{\eps/2}(x^0) \cap \tilde\Omega) $.
Cover the segment $\overline{x^0 P}$ by a finite number of balls of radius $\eps/2$, and define $S=\cup_{i=0}^N B_{\eps/2}(x^i)$. In the set $S$ the Harnack inequality applied to the function $u_k$ implies
$$l=u(P)\leq u_k(P)\leq \sup_S u_k(x)\leq C \inf_S u_k(x),$$
where $C$ depends only on $S$. But this is a contradiction since, as $k\to\infty$, it holds
$$l\leq C\inf_S u_k(x)\to 0.$$
We have shown that $\tilde\Omega=\Omega$,  so that $u>0$ on
$\Omega$. Then $u=u_\Omega$ is the $p$-capacitary potential for the
set $\Omega\backslash \overline K$. Moreover the set $\Omega$
belongs to $\BB$ since
\begin{align}\label{conv-level-set}
\lambda \le \dd(\partial \Omega_n , \{u_n=l\}) \to \dd(\partial \Omega , \{u=l\}) \quad \textrm{as}\; n\to +\infty.
\end{align}
Note here that the passage to the limit is justified because  $\partial\Omega_n\to\partial\Omega$ and  $\{u_n=l\}\to \{u=l\}$ in the Hausdorff distance sense.
\end{proof}

The previous two lemmas allow us to find a minimal set $\Omega$ in $\mathcal B$, i.e., the solution $u$ will be the minimal supersolution,
 that stays above the strict subsolution we have found in Lemma \ref{lemma-sub-supersolution}. More precisely we have the following corollary:

\begin{cor} \label{cor-exists-minimal}
Assume that there exist two domains $\Omega^0\in\AA_0$ and $\Omega^1\in\BB$, with $\Omega^0\subset\Omega^1$. Set
$$\SS:=\{\Omega\in\BB \,:\, \Omega^0\subset \Omega\}.$$
Then there exists a domain $\Omega$ in the class $\SS$ which is minimal for the inclusion.
\end{cor}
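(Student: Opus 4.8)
The plan is to use Zorn's lemma on the class $\SS$ partially ordered by reverse inclusion, so that a maximal element for reverse inclusion is precisely a minimal element for inclusion. The only thing to verify is that every chain (totally ordered subfamily) in $(\SS,\supseteq)$ has an upper bound in $\SS$, i.e.\ a member of $\SS$ contained in every element of the chain. A first reduction: it suffices to handle \emph{countable} decreasing chains. Indeed, given any chain $\{\Omega_i\}_{i\in I}$, the infimum of the volumes $\inf_i |\Omega_i|$ is approached along a countable subset; choosing $\Omega_{i_1}\supset\Omega_{i_2}\supset\cdots$ with $|\Omega_{i_k}|\downarrow \inf_i|\Omega_i|$ and passing to the intersection of this subsequence gives a convex set whose volume equals $\inf_i |\Omega_i|$, and by convexity this forces it to be contained (up to the boundary) in every $\Omega_i$ of the chain. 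So I would first argue this measure-theoretic reduction, then treat a decreasing sequence $\Omega_1\supset\Omega_2\supset\cdots$ in $\SS$.

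For a decreasing sequence, set $\Omega:=\mathrm{Interior}(\overline{\cap_k\Omega_k})$. The key point is to check the hypotheses of Lemma \ref{lemma-convergence-exterior}: that $\Omega$ is convex, bounded, open, and contains $\overline K$. Convexity is immediate since an intersection of convex sets is convex and the interior of the closure of a convex set is convex; boundedness is clear since $\Omega\subset\Omega_1$. The containment $\overline K\subset\Omega$ is where $\Omega^0$ enters: each $\Omega_k\in\SS$ satisfies $\Omega^0\subset\Omega_k$, hence $\overline{\Omega^0}\subset\overline{\cap_k\Omega_k}$; since $\Omega^0$ is open and contained in the interior, and $\Omega^0\supset\overline K$ (as $\Omega^0\in\AA_0\subset\CC$ by the hypotheses of the corollary), we get $\overline K\subset\Omega^0\subset\Omega$. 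One should be a little careful that $\overline K$ is strictly inside $\Omega$ and not touching $\partial\Omega$; this again follows because $\Omega^0$ is an \emph{open} set squeezed between $\overline K$ and $\Omega$, so $\dd(K,\partial\Omega)\ge\dd(K,\partial\Omega^0)>0$. With these hypotheses in place, Lemma \ref{lemma-convergence-exterior} yields $\Omega\in\BB$, and since $\Omega^0\subset\Omega$ we conclude $\Omega\in\SS$; moreover $\Omega\subset\Omega_k$ for every $k$, so $\Omega$ is the desired upper bound in the chain.

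I expect the main obstacle to be the reduction from an arbitrary chain to a countable one while staying inside the convex category: one must make sure the convex set obtained from the countable subsequence really is contained in \emph{every} member of the original (possibly uncountable) chain, which uses the volume-minimality together with the fact that two nested convex bodies with equal volume coincide up to boundary. A secondary technical point is verifying that the limit set $\Omega$ is genuinely open with $\overline K\Subset\Omega$, so that Lemma \ref{lemma-convergence-exterior} applies verbatim; this is handled by the buffer set $\Omega^0$ as above. Everything else is a direct invocation of Zorn's lemma and of Lemma \ref{lemma-convergence-exterior}.
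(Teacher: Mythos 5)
Your proof is correct, but it takes a genuinely different route from the paper. The paper does not invoke Zorn's lemma at all: it defines $\tilde\Omega$ directly as the intersection of \emph{all} domains in $\SS$, sets $\Omega=\mathrm{Interior}(\overline{\tilde\Omega})$, extracts a countable subfamily $\{U_k\}\subset\SS$ with $\cap_k U_k=\tilde\Omega$ (a Lindel\"of-type reduction), turns it into a decreasing sequence by setting $\Omega_{k+1}=\Omega_k\cap U_{k+1}$ — which stays in $\BB$ precisely because of the intersection-stability Lemma \ref{lemma-intersection} — and then applies Lemma \ref{lemma-convergence-exterior}. You instead handle arbitrary chains via a volume argument (nested convex bodies of equal volume coincide up to their boundaries), which lets you bypass Lemma \ref{lemma-intersection} entirely since a chain is already totally ordered. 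Both reductions to a countable decreasing sequence are sound, and your verification of the hypotheses of Lemma \ref{lemma-convergence-exterior} (in particular that the buffer $\Omega^0$ keeps $\overline K$ strictly inside the limit set) is exactly the point that makes the corollary nontrivial. The one substantive difference in the conclusions: the paper's construction produces the \emph{minimum} of $\SS$ (contained in every element), which is what is actually used later, e.g.\ in the convergence argument of Section 3.2 where $\Omega_n$ must contain every element of $\BB_m$; Zorn only hands you a \emph{minimal} element. This matches the literal statement of the corollary, and the gap is harmless because $\SS$ is downward directed by Lemma \ref{lemma-intersection}, so any minimal element is automatically the minimum — but you should say this explicitly if the minimum property is to be used downstream.
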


\begin{proof}
Let $\tilde\Omega$ be the intersection of all domains in the class $\SS$ and set $\Omega$ to be the interior of the closure of $\tilde\Omega$, which is still convex. To prove $\Omega\in\BB$, we select a sequence of domains $\{U_k\}_{k=1}^\infty$ in $\SS$ such that $\cap U_k=\tilde\Omega$. Consider the sequence of domains defined by $\Omega_1=U_1$ and $\Omega_{k+1}=\Omega_k \cap U_{k+1}$ for $k\geq 1$. Each $\Omega_k$ is convex and because of Lemma \ref{lemma-intersection} belongs to $\BB$. Lemma \ref{lemma-convergence-exterior} completes the proof of the corollary.
\end{proof}

In the following, we always denote by $\Omega$  the (non-trivial)
minimal element in the class $\SS$ defined in the previous
proposition. To finalize the proof of the theorem  we need to check
that this minimal set satisfies the distance property so it is
indeed the solution of $(P_E)$.

\vspace{3mm} \noindent {\bf Proof of Theorem 3.1:}\newline We need
to show that $\Omega$,  the minimal set of $\BB$, and its
corresponding $p$-capacitor potential  $\uO$ satisfy the distance
property:
$$\dd( x,\{ {\uO}=l \}) =\lambda\quad\mbox{for all}\quad x\in\partial\Omega.$$
The proof for the standard Bernoulli problem uses crucially the convexity assumption. Our problem may be solved through a different and much simpler argument, which may be extended to more complex geometries, for instance, the star-shaped case. By contradiction, suppose that there is a point $x^0\in \partial \Omega$ such that
$$
\dd( \{ {\uO}=l \}, \; x^0)  = \lambda+\eps,\quad \eps>0.
$$
Set $\mathcal L:=\{\uO>l\}$. Note that $\mathcal L\backslash \overline K$ is a convex ring. Define $\tilde\Omega$ as the set
$$\tilde\Omega:=\{x\in\Omega : \dd(x,\mathcal L)<\lambda\},$$
with $\tilde u$ the $p$-capacitary potential of $\tilde\Omega\backslash\overline K$. By the hypothesis, $\tilde \Omega\subsetneqq \Omega$, and hence the comparison principle gives that $\tilde u\leq u$. One may estimate, for  $x\in\partial\tilde\Omega$, that
$$\dd(x,\{\tilde u=l\})\geq \dd(x,\{u=l\})\geq\lambda.$$
Since $\partial \tilde\Omega$ lies at fixed distance $\lambda$ from
$\mathcal L$, and $\mathcal L$ is a convex bounded set, we can
conclude that  also $\tilde\Omega$ is convex. Hence $\tilde\Omega$
belongs to the set $\BB$; but this contradicts the fact that
$\Omega$ is the minimal set in $\BB$.

\begin{lemma}\label{lemma_uniq}
Assume that $K$ is star-shaped. If  problem $(P_E)$ admits a solution, then it is unique.
\end{lemma}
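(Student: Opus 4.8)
The plan is to exploit the fact that a solution of $(P_E)$ is, tautologically, both a subsolution and a supersolution, and then to read off uniqueness from the comparison Lemma~\ref{lemma-sub-super} applied twice. So I would not need any new construction: the statement should be a soft consequence of the sub/supersolution machinery already in place.

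Concretely, let $(u_1,\Omega_1)$ and $(u_2,\Omega_2)$ be two regular solutions of $(P_E)$; by definition of $(P_E)$ each $\Omega_i$ is convex, open, bounded and contains $\overline K$, hence $\Omega_i\in\CC$. For a solution the distance identity $\dd(x,\Gamma_l)=\lambda$ holds at \emph{every} boundary point $x\in\partial\Omega_i$, so $\sup_{x\in\partial\Omega_i}\dd(x,\Gamma_l)=\lambda\le\lambda$ and $\inf_{x\in\partial\Omega_i}\dd(x,\Gamma_l)=\lambda\ge\lambda$; thus $\Omega_1,\Omega_2\in\AA\cap\BB$ simultaneously. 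First I would apply Lemma~\ref{lemma-sub-super} with supersolution $(u_2,\Omega_2)\in\BB$ and subsolution $(u_1,\Omega_1)\in\AA$, getting $\Omega_1\subseteq\Omega_2$ and $u_1\le u_2$ on $\Omega_1\setminus\overline K$. Then I would exchange the roles of the two solutions to obtain $\Omega_2\subseteq\Omega_1$ and $u_2\le u_1$. Hence $\Omega_1=\Omega_2=:\Omega$, and consequently $u_1=u_2$ — either directly from the two inequalities, or because both are the unique $p$-capacitary potential of the fixed ring $\Omega\setminus\overline K$.

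The one place where the hypothesis that $K$ is star-shaped genuinely enters is inside Lemma~\ref{lemma-sub-super}: its proof rescales a subsolution by $u_s^\eps(x)=u_s(x/\eps)$ with $\eps<1$ and needs the dilated obstacle $\eps K$ to be contained in $K$, which is precisely star-shapedness with respect to the centre of dilation. In the present convex setting this is automatic after placing the origin inside $K$, so the assumption is harmless here; I would state it anyway with an eye toward the star-shaped regime of Section~\ref{exterior-star}, where the same proof goes through verbatim. Beyond invoking Lemma~\ref{lemma-sub-super}, there is no real obstacle; the only routine verification I would keep explicit is that a ``solution'' of $(P_E)$ is an admissible competitor in $\CC$ and that, having $\dd(x,\Gamma_l)\equiv\lambda$, it lies in both $\AA$ and $\BB$, so that the sub/supersolution language applies to it.
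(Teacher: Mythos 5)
Your proposal is correct and is essentially the paper's own argument: the paper proves this lemma in one line by observing that a solution is simultaneously a sub- and a super-solution and invoking Lemma~\ref{lemma-sub-super} in both directions. Your additional remark correctly pinpoints that the star-shapedness hypothesis is what makes the Lavrent'ev rescaling $u_s^\eps(x)=u_s(x/\eps)$ in Lemma~\ref{lemma-sub-super} legitimate (so that $K^\eps\subset K$).
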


\begin{proof}
It is a consequence of Lemma \ref{lemma-sub-super}, since a solution is at the same time a sub- and a super-solution.
\end{proof}

\begin{rem}
The uniqueness result together with the constructive construction of $\Omega$ as intersection of convex sets give that the solution $\Omega$ of problem $(P_E)$ must be convex.
\end{rem}

We look now at regularity. It is clear that $u\in\mathcal C^{\infty}(\Omega\backslash\overline K)$ and
even analytic in the interior of $\Omega\backslash K$, but what about the regularity of the free boundary?
First note that $\{u=l\}$ is level set of an analytic function, hence it is analytic. Moreover all the normal curvatures at the level set are positive (Lemma \ref{convexity-level-sets}). The level set  $\{u=0\}$ is at a fixed distance $\lambda$ from  $\{u=l\}$ and its curvatures are of the form $\frac{\mu_i}{1 + \lambda \mu_i}$, where $\mu_i$ are the curvatures of $\{u=l\}$; hence we can conclude \cite{Foote:distance-function} that $\{u=0\}$ is also $\mathcal C^\infty$ and analytic.\\

The proof of Theorem \ref{thm-exterior} is thus completed.

\subsection{Convergence to the Bernoulli problem}\label{sec:uniqueness}

Finally, we show that the discrete Bernoulli problem converges, at the limit, to the usual Bernoulli problem:

\begin{theo}
Let $l_n$ and $\lambda_n$ be two decreasing sequences of positive numbers such that $l_n\to 0$, $\lambda_n\to0$ as $n\to +\infty$ and $ l_n = \lambda_n\omega$.  Let $(u_n,\Omega_n)$ be the solution of the exterior problem $(P_E)$ with constants $l =l_n $ and $ \lambda=\lambda_n$. As $n\to \infty$, $\{\Omega_n\}$ is a decreasing sequence of sets converging in Hausdordff distance to some convex set $\Omega$ and the function $u_n$ converges in $\mathcal C^{1,\alpha}_{\mbox{loc}}(\Omega\backslash \overline K)\cap\mathcal C^{\alpha}(\overline{\Omega\backslash K})$ to the (unique) solution $u$ of the classical Bernoulli-type problem
\begin{equation*}
\left\{\begin{split}
&\Delta_p u = 0 \hbox{ in } \Omega\setminus \overline{K},  \quad u =1 \textrm{ in } \overline{K}, \quad u =0 \textrm{ on }\partial\Omega, \\
&|\nabla u| =\omega,  \quad\textrm{for all }\; x\in\partial \Omega.
\end{split}\right.\leqno ({P_B})
\end{equation*}
The boundary condition $|\nabla u| =\omega$ is to be understood in the following sense:
\begin{equation}\label{Bernoulli-condition}
\liminf_{y\to x, y\in\Omega} |\nabla u(y)|=\limsup_{y\to x,y\in\Omega} |\nabla u(u)|=\omega, \quad  \mbox{for every }x\in\partial\Omega.
\end{equation}
\end{theo}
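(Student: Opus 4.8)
The plan is to run Beurling's sub/supersolution machinery across the whole sequence, built around the one structural feature that makes the discrete problem rigid: if $u_n$ solves $(P_E)$ with data $(l_n,\lambda_n)$, then the identity $\dd(x,\{u_n=l_n\})=\lambda_n$ on $\partial\Omega_n$ together with the convexity of the super-level set $L_n:=\{u_n\ge l_n\}$ (Proposition \ref{convexity-level-sets}) forces $\overline{\Omega_n}$ to coincide with the closed $\lambda_n$-neighbourhood $L_n\oplus\overline{B_{\lambda_n}(0)}$ of $L_n$. In particular any $w\in\Omega_n$ with $\delta:=\dd(w,\partial\Omega_n)<\lambda_n$ has the form $w=z+(\lambda_n-\delta)\nu$, where $z\in\partial L_n$ is the nearest point of $L_n$ to $w$ and $\nu$ is the outer unit normal of $L_n$ at $z$. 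The first, and workhorse, step is the affine barrier: sliding the supporting hyperplane $H=\{\,\nu\cdot(\cdot-z)=0\,\}$ of $L_n$ at $z$, the affine (hence $p$-harmonic) function $v(\xi)=\omega\big(\lambda_n-\nu\cdot(\xi-z)\big)$ dominates $u_n$ on $\partial\big(\Omega_n\cap\{\nu\cdot(\cdot-z)>0\}\big)$ — on the part lying in $\partial\Omega_n$ because $\nu\cdot(\xi-z)\le\dd(\xi,L_n)=\lambda_n$, and on the part lying in $H$ because $\{u_n>l_n\}\subset\{\nu\cdot(\cdot-z)\le0\}$ forces $u_n\le l_n=v$ there; since this region avoids $\overline K$, the comparison principle (Theorem \ref{thm-classical-regularity}) gives $u_n(w)\le v(w)=\omega\delta$. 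Thus
$$u_n\le\omega\,\dd(\cdot,\partial\Omega_n)\qquad\text{on }\ \{\,\dd(\cdot,\partial\Omega_n)<\lambda_n\,\},$$
a quantitative one-sided gradient bound at the free boundary.

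From this, monotonicity is immediate: since $l_{n+1}<l_n=\omega\lambda_n$, the bound gives $\{u_n\ge l_{n+1}\}\subset\{\dd(\cdot,\partial\Omega_n)\ge l_{n+1}/\omega=\lambda_{n+1}\}$, so $\dd(x,\{u_n=l_{n+1}\})\ge\lambda_{n+1}$ for all $x\in\partial\Omega_n$; that is, $\Omega_n$ is a supersolution for the $(l_{n+1},\lambda_{n+1})$-problem, and Lemma \ref{lemma-sub-super} then yields $\Omega_{n+1}\subset\Omega_n$. The sequence $\{\Omega_n\}$ is also uniformly confined — the radial supersolution of Lemma \ref{lemma-sub-supersolution} is constructed using only that $\lambda$ is bounded above, so one fixed ball $B_R\supset\Omega_n$ works for all $n$ — and uniformly non-degenerate near $K$, because the strict subsolution $U_\varepsilon=\{u_R>1-\varepsilon\}\cap U$ of that same lemma is a strict subsolution for $(l_n,\lambda_n)$ as soon as $\varepsilon<\alpha/\omega$, a condition independent of $n$, so $\Omega_n\supset U_\varepsilon\Supset K$ for every $n$. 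Hence $\Omega:=\mathrm{int}\big(\bigcap_n\overline{\Omega_n}\big)$ is a bounded open convex set with $\overline K\subset\Omega$, $\dd(\partial\Omega,K)>0$, and $\partial\Omega_n\to\partial\Omega$ in Hausdorff distance.

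Next I would identify the limit. Being decreasing and bounded in $[0,1]$, the $u_n$ converge on compact subsets of $\Omega\setminus\overline K$ in $\mathcal C^{1,\alpha}_{\mathrm{loc}}$ (Theorem \ref{thm-classical-regularity}) to a $p$-harmonic $u$, with $u\le u_n$ on $\Omega$. Arguing exactly as in Lemma \ref{lemma-convergence-exterior} — using the uniform Hölder bound of Corollary \ref{cor-holder-estimates}, whose constant depends only on $K$ and $\dd(\partial\Omega,K)$, to get $u\equiv1$ on $\overline K$ and $u\equiv0$ off $\Omega$, and a Harnack-chain argument to get $u>0$ in $\Omega$ — one concludes that $u=u_\Omega$ is the $p$-capacitary potential of $\Omega\setminus\overline K$ and that $u_n\to u$ also in $\mathcal C^{\alpha}(\overline{\Omega\setminus K})$; since $\dd(\partial\Omega_n,\{u_n=l_n\})=\lambda_n\to0$ and $\partial\Omega_n\to\partial\Omega$, the level sets $\{u_n=l_n\}$, equivalently the convex bodies $L_n$, converge in Hausdorff distance to $\partial\Omega$ (resp. $\overline\Omega$). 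The upper half of the boundary condition \eqref{Bernoulli-condition} now falls out: for fixed $l>0$ and $n$ so large that $l<l_n$, the affine barrier gives $\{u_n\ge l\}\subset\{\dd(\cdot,\partial\Omega_n)\ge l/\omega\}$, and $u\le u_n$ upgrades this to $\{u\ge l\}\subset\{\dd(\cdot,\partial\Omega_n)\ge l/\omega\}$; letting $n\to\infty$, so that $\dd(\cdot,\partial\Omega_n)\to\dd(\cdot,\partial\Omega)$ locally uniformly, we obtain $u\le\omega\,\dd(\cdot,\partial\Omega)$ on $\Omega$, which together with $u=0$ on $\partial\Omega$ and the boundary regularity of $\nabla u$ yields $\limsup_{y\to x,\,y\in\Omega}|\nabla u(y)|\le\omega$ for every $x\in\partial\Omega$.

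The remaining point — the matching lower bound $\liminf_{y\to x,\,y\in\Omega}|\nabla u(y)|\ge\omega$ — is the one I expect to cost real work, because Hausdorff convergence of the domains is a zeroth-order statement and cannot by itself detect a first-order quantity; a genuine lower barrier is needed. The scheme: given $x\in\partial\Omega$, choose $x_n\in\partial\Omega_n\to x$ with nearest point $P_n\in\partial L_n$ (so $|x_n-P_n|=\lambda_n$, $P_n\to x$); by convexity of $L_n$ the shell $\Omega_n\setminus L_n$ contains, around the normal segment $[x_n,P_n]$, a slab of thickness $\lambda_n$ whose tangential extent is governed by the curvature radius of $\partial L_n$ at $P_n$, and comparing $u_n$ from below on that slab with the affine profile $\omega\,\dd(\cdot,\partial\Omega_n)$ (the function vanishing on $\partial\Omega_n$ and equal to $l_n$ on the hyperplane through $P_n$ parallel to the tangent plane of $\partial\Omega_n$ at $x_n$) gives $u_n\ge(\omega-o(1))\,\dd(\cdot,\partial\Omega_n)$ near $P_n$ — provided the curvatures of $\partial L_n$ near $P_n$ stay bounded as $n\to\infty$. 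Securing that uniform curvature (equivalently, uniform interior-ball) bound for the convex bodies $L_n=\{u_n\ge l_n\}$, from the $\mathcal C^{2,\alpha}$ regularity for $p$-capacitary level sets in convex rings (Proposition \ref{convexity-level-sets}) together with $L_n\to\overline\Omega$, or else circumventing it by a blow-up argument at $x$, is the technical heart. Once $\liminf_{y\to x}|\nabla u(y)|\ge\omega$ is established, it combines with the upper bound to give \eqref{Bernoulli-condition}, so $(u,\Omega)$ solves $(P_B)$; uniqueness for the classical convex Bernoulli problem (\cite{HH00,HH000}) then both identifies $u$ and — since every subsequential Hausdorff limit of $\{\Omega_n\}$ is forced to be this same $\Omega$ — forces the whole sequence to converge, completing the proof.
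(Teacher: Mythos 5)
The first two thirds of your argument are sound and, despite the different packaging, essentially reproduce the paper's proof: your sliding affine barrier $v(\xi)=\omega\big(\lambda_n-\nu\cdot(\xi-z)\big)$ on the cap of $\Omega_n$ beyond a supporting hyperplane of $L_n=\{u_n\ge l_n\}$ proves exactly the inequality that the paper obtains by comparing $u_n$ with the rescaled distance function $d_n(x)=l_n\big[1-\dd(x,\{u_n=l_n\})/\lambda_n\big]$ on the collar $S_n=\{0\le u_n\le l_n\}$ (there $d_n$ is concave, hence $p$-superharmonic, and shares boundary values with $u_n$). Both routes give $u_n\le\omega\,\dd(\cdot,\partial\Omega_n)$ on the collar, and from this the monotonicity $\Omega_{n+1}\subset\Omega_n$, the uniform confinement and non-degeneracy, the identification of the limit as the $p$-capacitary potential of $\Omega\setminus\overline K$, and the one-sided bound $\limsup_{y\to x}|\nabla u(y)|\le\omega$ all follow as you say; this part matches the paper step for step.

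The genuine gap is the lower half of \eqref{Bernoulli-condition}, which you yourself flag as unfinished. Your proposed route --- a lower affine barrier in the slab between $\partial L_n$ and $\partial\Omega_n$, contingent on a uniform interior-ball (curvature) bound for the convex bodies $L_n$ --- is not justified and is unlikely to be salvageable as stated: Proposition \ref{convexity-level-sets} gives smoothness and strict convexity of each individual level set but nothing uniform in $n$, and since $\partial\Omega_n$ is the outer $\lambda_n$-parallel body of $\partial L_n$, a uniform curvature bound on $L_n$ is essentially equivalent to the regularity of the limiting free boundary, i.e.\ to what one is trying to prove. The paper sidesteps this entirely. From $u_n\le d_n$ in $S_n$ with equality on the inner boundary $\{u_n=l_n\}$, Hopf's lemma applied at that smooth surface gives $|\nabla u_n|\ge|\nabla d_n|=\omega$ on $\{u_n=l_n\}$ --- an estimate your barrier already puts in your hands but which you do not exploit. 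The missing ingredient is then Lemma \ref{lemma-gradient-below} (Lemma 2.2 of \cite{HH000}): in a convex ring the boundary gradient on the inner component dominates the one on the outer component, and this propagates $|\nabla u_n|\ge\omega$ from the surface $\{u_n=l_n\}$ to the whole region $\{l_n\le u_n<1\}$. Since $\{u_n=l_n\}\to\partial\Omega$ in Hausdorff distance and $\nabla u_n\to\nabla u$ locally uniformly in $\Omega\setminus\overline K$, one gets $|\nabla u(y)|\ge\omega$ at \emph{every} interior point $y\in\Omega\setminus\overline K$, hence the $\liminf$ condition at $\partial\Omega$, with no curvature control whatsoever. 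Without this (or an equivalent) propagation mechanism your verification of the Bernoulli condition, and therefore the identification of the limit with the solution of $(P_B)$, is incomplete.
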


\begin{rem} It is shown in \cite{HH000} that the solution for the Bernoulli problem $({P_B})$ exists, it is unique if $K$ is bounded, and has $\mathcal C^{2,\alpha}$ boundary $\partial\Omega$.
\end{rem}

\begin{proof}
Define the rescaled distance function
$$d_n(x) = l_n[1 - \dd(x, \{u_n=l_n\})/\lambda_n].$$
Clearly, when $x \in \partial \Omega_n \cup\{u_{n}=l_n\}$ we have that $d_n(x) = u_n(x).$ Define also $S_n : = \{ 0\le u_n \le l_n\}$, a narrow ring. Since the level sets of $u_n$ are convex surfaces, the function $d_n$ is $p$-superharmonic in the set $S_n$, i.e,
$$-\Delta_p d_n \geq 0.$$
Since $d_n$ and $u_n$ have the same boundary values on $\partial S_n$, comparison principle yields that $u_{n}\leq d_n$ everywhere in $S_n$.
 In particular, one obtains that for $x\in\partial\Omega_{n},$ and $l_m\leq l_n$,
$$\dd(x,\{u_{n}=l_m\})\geq \dd(x,\{d_n=l_m\}).$$
On the other hand for any $x\in \;\{0\leq d_n \leq l_n\}$ it holds $|\nabla d_n(x)|=\omega$. Since $0\leq l_m\leq l_n$,
$$\dd(x,\{d_n=l_m\})\geq \frac{l_m}{\sup|\nabla d_n|}=\frac{l_m}{\omega}=\lambda_m\quad \mbox{for all }\quad m\geq n.$$
Thus, $u_{n}$ is a supersolution for every problem $(P_E)$ with constants $(\lambda_m,l_m)$, $m\geq n$, i.e., $\Omega_{n}\in\BB_m$ where
\begin{equation*}
\BB_m:= \{ \Omega\in\CC \;|\;   \inf\limits_{x\in\partial\Omega}\dd( x,\{\uO=l_m\}) \ge \lambda_m \},
\end{equation*}
for all $m\geq n$. Because of our construction of the solution for the exterior problem with constants $(\lambda_m,l_m)$ as the minimal set in $\BB_m$,
this immediately gives that
$\Omega_{n}$ contains all the $\Omega_m$ for $m\geq n$. Consequently we have shown that the sequence of $\{\Omega_n\}_{n=0}^{\infty}$ is decreasing and from maximum principle it follows that $\{u_n\}$ is a decreasing sequence.

Since $u_{n}\leq d_n$ on $S_n$, using Hopf's boundary lemma we arrive at  $|\nabla u_{n}| \leq |\nabla d_n| = \omega$ on $\partial\Omega_n$. This tells us that $u_n$ is a supersolution for the Bernoulli problem with constant $\omega$ in the sense indicated in \cite{HH000}.

As a consequence, the sequence of sets $\Omega_n$ converges (for example in the Hausdorff measure) as $n\to \infty$ towards a convex domain $\Omega\supset \overline K$. Moreover $u_n$ converges in $\mathcal C^{1,\alpha}_{loc}$-norm to a $p$-harmonic function $u$: by Theorem 3.2 in \cite{HH000} we know that $u$ solves the $p$-capacitary problem in $\Omega \setminus K$ and is a supersolution, i.e., $|\nabla u|\leq \omega$ for all $x$ in $\partial\Omega$.

The last step consists of showing that $(u,\Omega)$ solves the Bernoulli problem $(P_B)$ with constant $\omega$ in the sense indicated in \eqref{Bernoulli-condition}. Use again comparison to the distance function, we obtain that $|\nabla u_n| \geq \omega$  on the level set $\{u_n=l_n\}$. But Lemma \ref{lemma-gradient-below} gives that the inequality $|\nabla u_n| \geq \omega$ is also true for every point in  $\{l_n \leq u < 1\}$. Since we have uniform convergence of the gradients away from the boundary and the sequence of sets $\Omega_n\to \Omega$ is decreasing, we are able to pass to the limit and thus $|\nabla u(y)|\geq \omega$ for every point $y\in\Omega\backslash \overline K$. The proof of the Theorem is completed.
\end{proof}

\section{The interior problem in convex setting}\label{section-interior}
\setcounter{equation}{0}
\setcounter{theo}{0}

We consider now the following problem: given a convex open bounded set $\Omega\subset \R^N$ for $N\ge 2$ and  $l\in(0,1)$, $\lambda>0$, find a function $u$ and a convex open bounded domain $K \subset \Omega$ such that
\begin{equation*}\left\{
\begin{split}
&\Delta_p u = 0  \hbox{ in } \Omega\setminus \overline{K},  \quad u =0 \textrm{ in } \overline{K}, \quad u=1 \textrm{ in } \partial\Omega, \\
&\dd( x,\Gamma_l) =\lambda,  \quad \textrm{for all}\; x\in\partial K.
\end{split}\right.\leqno ({P_I})
\end{equation*}

Our first main result for the interior case is the following.

\begin{theo}\label{thm-interior}
There exists a constant $\lambda_{\Omega,max}$ that depends only on $l$ and $\Omega$  such that for any $\lambda \le \lambda_{\Omega,max}$ problem $(P_I)$ has a smooth solution $(u,K)$ with $K$ a convex set and $\partial K \in \mathcal C^{0,1}$.
\end{theo}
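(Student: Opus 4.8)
The plan is to mirror the Beurling sub/super-solution scheme of Section 3, but with the inclusion order reversed: now the free boundary $\partial K$ sits \emph{inside} $\Omega$, so the roles of sub- and super-solutions and of "minimal/maximal set" are swapped. Fix $\Omega$ convex and $l\in(0,1)$. For a convex $K\Subset\Omega$ let $u_K$ be the $p$-capacitary potential of $\Omega\setminus\overline K$ (with $u_K=0$ on $K$, $u_K=1$ on $\partial\Omega$), and set $\Gamma_l^K:=\{u_K=l\}$. I would call $K$ a \emph{subsolution} if $\sup_{x\in\partial K}\dd(x,\Gamma_l^K)\le\lambda$ and a \emph{supersolution} if $\inf_{x\in\partial K}\dd(x,\Gamma_l^K)\ge\lambda$; the monotonicity of $u_K$ in $K$ (larger $K$ pushes the potential down, by comparison) makes the distance $\dd(\partial K,\Gamma_l^K)$ behave monotonically, which is what drives the iteration. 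First I would record the analogue of Lemma 3.5: if $K_s$ is a subsolution and $K_S$ a supersolution then $K_S\subset K_s$ and $u_{K_s}\le u_{K_S}$, proved by a Lavrent'ev-type rescaling of the \emph{inner} domain about an interior point, exactly as in Lemma 3.5. Next, the analogue of Lemma 3.6 (stability under the relevant set operation): here one takes \emph{convex hulls of unions} $\mathrm{conv}(K_1\cup K_2)$ of supersolutions rather than intersections, and checks via the maximum principle that $u_{\mathrm{conv}(K_1\cup K_2)}\ge\min\{u_{K_1},u_{K_2}\}$ on the common region, so the distance constraint $\ge\lambda$ is preserved. Then the analogue of Lemma 3.7: an increasing sequence of convex supersolutions $K_1\subset K_2\subset\cdots$ whose (closed convex hull of the) union $K$ is still convex and compactly contained in $\Omega$ converges, with $u_{K_n}\to u_K$ in $\mathcal C^{1,\alpha}_{loc}\cap\mathcal C^\alpha$, and $K\in\BB$; the Hausdorff convergence of $\partial K_n\to\partial K$ and $\Gamma_l^{K_n}\to\Gamma_l^K$ lets one pass to the limit in the distance inequality, just as in \eqref{conv-level-set}.

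The genuinely new ingredient — and the reason for the restriction $\lambda\le\lambda_{\Omega,max}$ — is the existence of a nonempty pair of a subsolution inside a supersolution. For a supersolution, take $K$ to be a tiny ball $B_\rho(x_0)$ with $x_0$ a point of $\Omega$ and $\rho$ small: as $\rho\to0$ the potential $u_{B_\rho(x_0)}$ converges (after the obvious normalization) to the $p$-capacitary-type Green's function, so the level set $\{u=l\}$ retreats toward $x_0$ and $\dd(\partial B_\rho(x_0),\{u_{B_\rho}=l\})$ can be made as large as one wishes — in particular $\ge\lambda$ — for any prescribed $\lambda$; here one uses the explicit radial profiles from the proof of Lemma 3.3 as barriers. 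For a subsolution one must instead start from $K$ very \emph{close} to $\partial\Omega$, say $K=\{u_\Omega\le 1-\eps\}$ where $u_\Omega$ is the potential of $\Omega\setminus\{x_0\}$ (or of any fixed small inner hole), rescaled to $u_\eps=(u_\Omega-(1-\eps))/\eps$ on $\{u_\Omega>1-\eps\}$; Corollary 2.6 (uniform gradient bound from above near $\partial\Omega$, using that $\Omega$ is convex, hence $\mathcal C^{1,1}$ after a smoothing or directly by the interior ball condition) gives $|\nabla u_\eps|\le M/\eps\cdot\eps = M$... more precisely $|\nabla u_\eps|\le M$, so $\dd(x,\{u_\eps=l\})\ge l/M$, which is \emph{bounded below independently of }$\eps$. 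This is the crux: the subsolution can only certify $\dd\ge l/M$, not $\dd\le\lambda$ for arbitrarily small $\lambda$; conversely, making $\eps$ small does not help because $\Omega$ is a fixed bounded set. Hence one only gets a subsolution when $\lambda$ is not too large relative to the geometry of $\Omega$ and the value $l$ — and one sets $\lambda_{\Omega,max}$ to be the supremum of $\lambda$ for which such a sub/super pair with the correct inclusion exists. I expect the careful bookkeeping here — showing that for $\lambda\le\lambda_{\Omega,max}$ one genuinely has $K^{\rm sub}\supset K^{\rm super}$ with both convex, possibly by a continuity/connectedness argument in $\eps$ and $\rho$ — to be the main obstacle, more delicate than in the exterior case where $\lambda$ could be taken arbitrary.

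With the pair in hand, define $\SS:=\{K\in\BB:\ K\subset K^{\rm sub}\}$ (so $K^{\rm super}\in\SS$, nonempty) and let $K$ be the closed convex hull of the union of all members of $\SS$ — the \emph{maximal} element for inclusion, constructed as an increasing limit via the union/convergence lemmas above; it lies in $\SS$, hence is a convex supersolution contained in the subsolution $K^{\rm sub}$. Finally one shows the distance property $\dd(x,\Gamma_l^K)=\lambda$ for all $x\in\partial K$ by the same contradiction argument as in the proof of Theorem 3.1: if at some $x^0\in\partial K$ one had $\dd(x^0,\{u_K>l\}^c\dots)$ — more precisely if $\dd(x^0,\mathcal L)>\lambda$ where $\mathcal L:=\{u_K<l\}\supset K$ is convex — then enlarging $K$ to $\hat K:=\{x:\dd(x,\mathcal L^{c}?)\dots\}$; concretely one replaces $K$ by the convex set $\hat K$ of all points at distance $\le\lambda$ from... the appropriate level surface, obtaining a strictly larger convex set still in $\BB$ and still inside $K^{\rm sub}$ (using Lemma 3.5), contradicting maximality. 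The regularity statement $\partial K\in\mathcal C^{0,1}$ then follows: $\Gamma_l^K$ is an analytic convex surface with positive principal curvatures by Proposition 2.9, and $\partial K$, being at fixed distance $\lambda$ on the \emph{concave} side, has curvatures $\mu_i/(1-\lambda\mu_i)$, which stay finite as long as $\lambda<1/\max_i\mu_i$ — guaranteed precisely by the smallness $\lambda\le\lambda_{\Omega,max}$ — so by the results on distance functions (\cite{Foote:distance-function}) $\partial K$ is at least Lipschitz, and $u\in\mathcal C^{1,\alpha}(\overline{\Omega\setminus K})$ follows. (Note that in the interior case one cannot hope for $\mathcal C^\infty$ in general, since the offset surface may develop focal points; this is why the statement only claims $\mathcal C^{0,1}$.) Uniqueness under an extra star-shapedness hypothesis would follow from the sandwiching Lemma 3.5 exactly as in Lemma 3.8, though it is not asserted in this theorem.
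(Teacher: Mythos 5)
Your overall skeleton (closure of the supersolution class under convex hulls of unions, stability under increasing limits, extraction of a maximal element, and the contradiction argument for the distance property) does match the paper. But the heart of the theorem --- where the constant $\lambda_{\Omega,max}$ comes from and how a supersolution is produced --- is wrong in your proposal. You claim that a tiny ball $B_\rho(x_0)$ is a supersolution for any prescribed $\lambda$ because the level set $\{u=l\}$ ``retreats toward $x_0$''. It does retreat, but so does $\partial B_\rho(x_0)$, and for $p\le N$ the explicit radial computation (Lemma~\ref{lemma_ball}) gives $\dd(\partial B_r,\{u_r=l\})=\Lambda(r)$ with $\Lambda(r)\to 0$ as $r\to 0$ (e.g.\ $\Lambda(r)=R^l r^{1-l}-r$ for $p=N$): a small ball has small $p$-capacity, so the $l$-level set collapses onto it and the distance vanishes. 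The function $\Lambda$ attains a finite maximum $\lambda_{max}=\Lambda(r_{max})$ as in \eqref{Dmax}, and this is precisely the obstruction: supersolutions fail to exist for $\lambda$ \emph{large}, not subsolutions for $\lambda$ small as you assert (``the subsolution can only certify $\dd\ge l/M$\dots This is the crux''). The paper defines $\lambda_{\Omega,max}:=\sup\{\lambda:\BB_\lambda\neq\emptyset\}$ as in \eqref{lambda-max}, produces a supersolution for $\lambda\le\lambda_{max}$ by taking $K=B_{r_{\lambda,2}}$, a ball of \emph{definite} radius inscribed in $\Omega$, and comparing with the explicit annulus solution (Lemma~\ref{lemma_ball_supers}); it never uses subsolutions at all: non-degeneracy of the maximal set follows because it contains this explicit ball, and its distance $\ge\lambda$ from $\partial\Omega$ is automatic.

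Second, your ``analogue of Lemma~\ref{lemma-sub-super}'' ($K_S\subset K_s$ for any supersolution/subsolution pair) is false for the interior problem: in the ball case with $\lambda<\lambda_{max}$ there are two distinct radial solutions $B_{r_{\lambda,1}}\varsubsetneq B_{r_{\lambda,2}}$, each of which is simultaneously a sub- and a supersolution, so the claimed inclusion fails for one of the two orderings; the Lavrent'ev rescaling breaks down here because dilating the inner set $K$ about an interior point does not leave the fixed outer boundary $\partial\Omega$ invariant. (This is consistent with the paper's observation that the interior problem is genuinely non-unique.) Two smaller issues: your subsolution computation uses the \emph{upper} gradient bound to conclude $\dd\ge l/M$, which is the wrong-way inequality for certifying a subsolution; and the $\mathcal C^{0,1}$ regularity of $\partial K$ is not obtained from an offset-surface curvature formula $\mu_i/(1-\lambda\mu_i)$ (nothing guarantees $\lambda\le\lambda_{\Omega,max}$ forces $\lambda<1/\max_i\mu_i$) --- it is simply the Lipschitz regularity of the boundary of a convex set, and the paper explicitly warns in Remark~\ref{remark1} that corners may occur.
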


For each convex set $K$, one can define  its associated $p$-capacitary potential $u_K$ as the solution of the following problem:
\begin{equation*}
\left\{
\begin{split}
&\Delta_p u_K = 0  \hbox{ in } \Omega\setminus \overline{K},  \\
&u_K =0 \textrm{ in }\overline{K},  \\
&u_K =1  \textrm{ on } \partial\Omega.
\end{split}\right.\leqno ({\text{P}}_K)
\end{equation*}
Note that,  without risk of confusion, we have switched the notation from problem $(P_{\Omega})$ in the exterior case.

The proof of Theorem \ref{thm-interior} goes, as in the exterior case, by considering the maximal element in the class:
\begin{equation}\label{supersolutions}
\begin{split}\BB_\lambda = \{& K \;{\textrm{convex bounded open set}},\\
 &\; \Omega \supset K \; | \;   \inf_{x\in\partial K}\dd( x,\{u_K=l\}) \ge \lambda \}.
\end{split}
\end{equation}
In the following  we show that, fixed $l\in (0,1)$, the set $\BB$ is non-empty if the value of $\lambda$ is smaller than a certain critical value, which will be denoted by $\lambda_{\Omega,max}$. We also show that problem $(P_I)$ does not necessarily have a unique solution for certain values of $\lambda$.

\subsection{A minimal supersolution and the Bernoulli constant}

 As a motivation, consider the simple case that $\Omega$ is a ball: such particular example shows, as in the case of the classical Bernoulli problem, the existence of a constant $\lambda_{\Omega,max}$ such that no solution exists for $\lambda > \lambda_{\Omega,max}$, one or two solutions can occur for $\lambda<  \lambda_{\Omega,max}$ and only one solution exists if $\lambda =\lambda_{\Omega,max}$. Using Beurling's terminology, we call the unique solution that corresponds to the value $\lambda = \lambda_{\Omega,max}$ of {\em{parabolic}} type, the smallest solution for $\lambda <\lambda_{\Omega,max}$ of {\em{hyperbolic}} type and the biggest solution for $\lambda <\lambda_{\Omega,max}$ of {\em{elliptic}} type. The elliptic solutions form a decreasing family, while the hyperbolic solutions form a increasing family of solutions.

\begin{lemma} \label{lemma_ball}
Let $\Omega = B_R(0)\subset \mathbb R^N$ be a ball with radius $R$. Fixed $l\in(0,1)$ and $p>1$, we have that:
\begin{itemize}
\item[\emph{i.}] If $p\le N$, there exists a constant $\lambda_{max}$ depending on $l$ and $R$ such that problem $(P_I)$ has a unique solution for $\lambda = \lambda_{max}$, two solutions if $0<\lambda< \lambda_{max}$ and no solutions if $\lambda>\lambda_{max}$.
\item[\emph{ii.}] If $p> N$, then there exist constants $\lambda_{max}$ and $\lambda_{min}$ depending on $l$, $p$ and $R$ such that problem $(P_I)$ has a unique solution for $\lambda\in(0,\lambda_{min})\cup \{\lambda_{max}\}$, two solutions if $\lambda\in(\lambda_{min},\lambda_{max})$ and no solutions if $\lambda>\lambda_{max}$.
\end{itemize}
\end{lemma}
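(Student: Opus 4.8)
The plan is to reduce everything to an explicit one-variable computation, exploiting radial symmetry: by uniqueness (Lemma \ref{lemma_uniq}) and the rotational invariance of $(P_I)$ when $\Omega=B_R(0)$, any solution $K$ must itself be a ball $B_\rho(0)$ for some $0<\rho<R$. So the whole problem collapses to: for which $\rho$ does the $p$-capacitary potential $u_\rho$ of the annulus $B_R(0)\setminus\overline{B_\rho(0)}$ satisfy $\dd(\partial B_\rho,\{u_\rho=l\})=\lambda$? First I would write down $u_\rho$ explicitly. For $p=N$ it is the logarithmic profile $u_\rho(|x|)=\log(|x|/\rho)/\log(R/\rho)$; for $p\ne N$ it is $u_\rho(|x|)=\big(|x|^{\gamma}-\rho^{\gamma}\big)/\big(R^{\gamma}-\rho^{\gamma}\big)$ with $\gamma=\frac{p-N}{p-1}$. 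In every case the level set $\{u_\rho=l\}$ is the sphere of radius $s(\rho)$ obtained by inverting the profile at height $l$, and since $s(\rho)>\rho$ the distance constraint becomes simply
$$
\Phi(\rho):=s(\rho)-\rho=\lambda.
$$
Thus the lemma is exactly a statement about the number of preimages of $\lambda$ under the function $\Phi$ on $(0,R)$.

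Next I would analyze $\Phi$ on the interval $(0,R)$. The key qualitative facts to establish are the boundary behaviour and the shape. At $\rho\to R^-$ one has $s(\rho)\to R$ as well, so $\Phi(\rho)\to 0$. The behaviour at $\rho\to 0^+$ is where the two cases split: when $p\le N$ the exponent $\gamma\le 0$ (or we are in the logarithmic case), the inner boundary degenerates to a point, and a short computation shows $s(\rho)\to 0$, hence $\Phi(\rho)\to 0$ as well — so $\Phi$ vanishes at both endpoints and, being positive and continuous in between, attains an interior maximum; call its value $\lambda_{max}$. When $p>N$ the exponent $\gamma\in(0,1)$, a single point has positive $p$-capacity, and $s(\rho)$ tends to a strictly positive limit $s(0)=l^{1/\gamma}R$ as $\rho\to0^+$, so $\Phi(\rho)\to \lambda_{min}:=l^{1/\gamma}R>0$; combined with $\Phi(R^-)=0$ and positivity, $\Phi$ again has an interior maximum $\lambda_{max}$. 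In both regimes I would then verify that $\Phi$ is \emph{unimodal}: strictly increasing up to a unique critical point $\rho^*$ and strictly decreasing afterwards. Granting unimodality, counting preimages of a horizontal line $\{\Phi=\lambda\}$ is immediate and yields precisely the stated trichotomy — no solution for $\lambda>\lambda_{max}$; the single solution $\rho=\rho^*$ at $\lambda=\lambda_{max}$; exactly two solutions for $\lambda$ strictly between the two endpoint values; and in case (ii) a single solution for $0<\lambda<\lambda_{min}$ (only the branch on the increasing side survives, since the decreasing branch would require $\rho$ below the range where $\Phi>\lambda_{min}$ is attained on that side — here one must check the geometry of the level curves of $\Phi$ carefully).

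The main obstacle, and the only genuinely computational part, is proving unimodality of $\Phi$, i.e. that $\Phi'$ changes sign exactly once. One cannot avoid differentiating the explicit formula for $s(\rho)$. For $p\ne N$, solving $u_\rho(s)=l$ gives $s(\rho)=\big(l R^{\gamma}+(1-l)\rho^{\gamma}\big)^{1/\gamma}$, so $s'(\rho)=(1-l)\big(\rho/s(\rho)\big)^{\gamma-1}$, and $\Phi'(\rho)=(1-l)(\rho/s)^{\gamma-1}-1$. Since $s(\rho)/\rho$ is itself monotone in $\rho$ (decreasing when $\gamma<1$, which covers both $p<N$ and $p>N$), the quantity $(\rho/s)^{\gamma-1}$ is monotone, hence $\Phi'$ crosses zero at most once; together with the sign information at the endpoints (using $\Phi>0$ in the interior and the endpoint limits computed above) this forces exactly one crossing, giving strict unimodality. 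The logarithmic case $p=N$ is handled by the analogous direct differentiation of $s(\rho)=\rho^{1-l}R^{l}$, for which $\Phi(\rho)=\rho^{1-l}R^l-\rho$ is manifestly concave on $(0,R)$ with $\Phi(0)=\Phi(R)=0$, so unimodality is transparent. With unimodality in hand the proof is complete; I would also remark that the same monotone-branch structure (elliptic family decreasing, hyperbolic family increasing in $\rho$) matches the Beurling terminology introduced before the lemma, which provides a useful consistency check but is not needed for the statement.
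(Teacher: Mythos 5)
The computational core of your argument is essentially the paper's: both reduce to the explicit radial profile, set $\Phi(\rho)=s(\rho)-\rho$ (the paper's $\Lambda(r)$), compute the endpoint limits $\Phi(R^-)=0$ and $\Phi(0^+)=0$ for $p\le N$ versus $\Phi(0^+)=l^{(p-1)/(p-N)}R=\lambda_{min}$ for $p>N$, and count preimages of $\lambda$ using unimodality. Your monotonicity argument for $\Phi'$ via the monotonicity of $s(\rho)/\rho$ is in fact a cleaner justification of unimodality than the paper's ``elementary computations'' remark, and the formulas agree.

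However, there is a genuine gap in your very first step, the reduction to radial $K$. You invoke ``uniqueness (Lemma \ref{lemma_uniq}) and rotational invariance'' to conclude that any solution $K$ of $(P_I)$ with $\Omega=B_R(0)$ is a centered ball. Lemma \ref{lemma_uniq} is a statement about the \emph{exterior} problem $(P_E)$; for the interior problem uniqueness is false in general --- indeed the very lemma you are proving asserts that two distinct solutions coexist for a range of $\lambda$ --- so the rotation-invariance-plus-uniqueness argument is circular and does not apply. Rotational invariance of the problem only says the solution \emph{set} is rotation-invariant; a non-radial solution would simply generate a continuum of rotated solutions, which is not a priori excluded. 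The paper closes this step by citing the moving-plane symmetry result of Reichel for overdetermined boundary value problems on annuli, which shows directly that every solution of the overdetermined radial problem is a centered ball; you need that (or an equivalent symmetry argument), not uniqueness. A second, much smaller point: in case (ii), for $\lambda\in(0,\lambda_{min})$ the surviving solution lies on the \emph{decreasing} branch $(\rho^*,R)$ (the increasing branch only attains values in $(\lambda_{min},\lambda_{max})$), so your parenthetical identifying the increasing branch is backwards, though the count of one solution is correct.
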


\begin{proof}
It has been shown in \cite{Reichel95} that all the solutions of the $p$-capacitary problem with $\Omega = B_R(0)$ are balls centered in the origin. We deal first with the case $N=p$. Given any $r$ with $0<r<R$, the unique solution $u_r$ of the $p$-capacitary problem with $\Omega = B_R(0)$ and $ K = B_r(0)$ reads as follows:
$$
u_r(|x|) = 1 - \frac{ \log(R)- \log(|x|)}{\log(R) - \log(r)}.
$$
It holds
$$
u_r(|x|) = l \quad \textrm{if and only if} \quad |x| = R^l\; r^{1-l},
$$
and consequently
$$
\Lambda(r) := \; \dd( x,\{u_r=l\}) = r \; \left( \left( \tfrac{R}{r}\right)^l -1\right)  \quad\textrm{for all}\; x\in\partial B_r(0).
$$
We note here that $\Lambda(r)+r\leq R$ for $r\in(0,R)$ so $u_r$ is an admissible solution for the $p$-capacitary problem in the annulus.

With elementary computations one can check that $\Lambda(r)$ reaches its maximum value at
\begin{align}\label{Dmax}
r_{max} := R \; ( 1-l) ^{\frac{1}{l}} \quad \textrm{with} \quad \Lambda(r_{max} ) = R\frac{ ( 1-l) ^{1/l}}{ \left(1/l -1\right)}=:\lambda_{max}.
\end{align}
We may easily compute $\Lambda(0)=0$, $\Lambda(R)=0$, and $\Lambda(r)$ is strictly increasing for $0< r <r_{max}$, strictly decreasing for $ r_{max} <r<R$. This implies that for any $\lambda \in (0,\; \lambda_{max} ) $ there exist two values $r_{\lambda,1} < r_{max} < r_{\lambda,2}<R $ such that both solutions $u_i$, $i=1,2$ of the $p$-capacitary problems in $B_R(0)\setminus \overline{B_{r_{\lambda,i}}(0)}$
satisfy $\dd( x,\{u_i=l\}) =\lambda  \quad\textrm{for all}\; x\in\partial B_{r_i}(0)$ for $i=1,2$. Moreover, there exists a unique solution for $\lambda_{max}$, and no solutions in the remaining case. This completes the proof of the lemma for the case $p=N$.\\

On the other hand, when $p\neq N$ the $p$-capacitary potential is given by
$$u_r(|x|)=\frac{r^\frac{p-N}{p-1}-|x|^{\frac{p-N}{p-1}}}{r^\frac{p-N}{p-1}-R^{\frac{p-N}{p-1}}},$$
and, keeping the same notation as above,
\begin{align*}\Lambda(r)=\left[ (1-l)r^{\frac{p-N}{p-1}}+l R^\frac{p-N}{p-1}\right]^{\frac{p-1}{p-N}}-r.
\end{align*}
But again, $\Lambda(r)+r\leq R$, so $u_r$ is indeed an admissible solution in the annulus.

Next, $\Lambda(r)$ reaches its maximum value at
\begin{align*}
r_{max} := {R}\left[ \frac{l}{(1-l)^{\frac{N-p}{N-1}}-(1-l)}\right]^{\frac{p-1}{p-N}}.
\end{align*}
Note that $\Lambda$ is an increasing function in the interval $(0,r_{max})$, decreasing at $(r_{max},R)$, $\Lambda(R)=0$ and $\Lambda(0)=l^{\frac{p-1}{p-N}}R>0$ if $p>N$ and $\Lambda(0)=0$ for $p<N$.

We set  $\lambda_{min}:=\Lambda(0)$ and
\begin{equation}\label{lambda-max2}\lambda_{max}:=\Lambda(r_{max})
=R
\left[\left(\tfrac{l}{1-(1-l)^{\frac{p-1}{N-1}}}\right)^{\frac{p-1}{p-N}}
-\left(\tfrac{l}{(1-l)^{\frac{N-p}{N-1}}-(1-l)}\right)^{\frac{p-1}{p-N}}\right].
\end{equation}
This completes the proof of the lemma.
\end{proof}


Now we go back to the  problem $(P_I)$ for a general convex bounded set $\Omega$ in $\mathbb R^N$ and a fixed $l\in(0,1)$ (although we may not write it, the dependence in $l$ will be always implicit). Consider the set of supersolutions as defined in \eqref{supersolutions}. Since $\BB_{\lambda_1} \subset \BB_{\lambda_2}$ for $\lambda_1 > \lambda_2$, we can define the Bernoulli constant for our problem as
\begin{align}\label{lambda-max}
\lambda_{\Omega, max} := \sup \{ \lambda >0 \; | \; \BB_\lambda \; \textrm{ is not empty} \}.
\end{align}

A possible bound from below for $\lambda_{\Omega, max} $ is given by the next lemma.

\begin{lemma}\label{lemma_ball_supers}
Let $l\in(0,1)$ and $\Omega$ be any open bounded convex set, then
$$\lambda_{\Omega, max}  \geq \lambda_{max},$$
where  $\lambda_{max}$ is defined in \eqref{Dmax} if $p=N$ and \eqref{lambda-max2} if $p\neq N$, and $R$ is  the radius of the largest ball inscribed in $\Omega$.
\end{lemma}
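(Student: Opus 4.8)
The plan is to reduce the case of a general convex $\Omega$ to the ball case treated in Lemma \ref{lemma_ball} by an inclusion and comparison argument. Let $R$ be the radius of the largest ball $B_R(z_0)$ inscribed in $\Omega$; after a translation assume $z_0 = 0$, so $B_R(0)\subset\Omega$. For the ball $B_R(0)$, Lemma \ref{lemma_ball} produces a convex set $K_{max}=B_{r_{max}}(0)$ (the parabolic solution) whose $p$-capacitary potential $v$ in $B_R(0)\setminus\overline{K_{max}}$ satisfies $\dd(x,\{v=l\})=\lambda_{max}$ for all $x\in\partial K_{max}$. The goal is to show this very same $K_{max}$ belongs to $\BB_{\lambda_{max}}$ when we work inside $\Omega$ instead of $B_R(0)$, i.e.\ that $\inf_{x\in\partial K_{max}}\dd(x,\{u_{K_{max}}=l\})\ge\lambda_{max}$, where $u_{K_{max}}$ now denotes the $p$-capacitary potential of $\Omega\setminus\overline{K_{max}}$. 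This would immediately give $\lambda_{\Omega,max}\ge\lambda_{max}$ by the definition \eqref{lambda-max}.

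The key comparison is the following: since $B_R(0)\subset\Omega$ and both potentials vanish on $\partial K_{max}$ and equal $1$ on the respective outer boundaries, the maximum principle in the annulus $B_R(0)\setminus\overline{K_{max}}$ gives $u_{K_{max}}\le v$ there (the larger outer domain $\Omega$ pushes the potential down). Hence for every level $l\in(0,1)$,
\begin{equation*}
\{u_{K_{max}}=l\}\cap \big(B_R(0)\setminus\overline{K_{max}}\big)\ \text{lies at least as far from } \partial K_{max} \text{ as } \{v=l\},
\end{equation*}
more precisely $\{u_{K_{max}}\ge l\}\supset\{v\ge l\}$ inside $B_R(0)$. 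I would then argue that the point realizing $\dd(x,\{u_{K_{max}}=l\})$ for $x\in\partial K_{max}$ can be taken along an inward normal direction staying inside $B_R(0)$: since $K_{max}=B_{r_{max}}(0)$ is a ball and $B_R(0)$ is a concentric ball containing it, the segment from $x\in\partial K_{max}$ toward the origin of length $\lambda_{max}$ (which is the distance in the ball case, and satisfies $r_{max}+\lambda_{max}\le R$ as noted after \eqref{Dmax} and \eqref{lambda-max2}) stays in $\overline{B_R(0)}\subset\overline\Omega$. Along that segment $u_{K_{max}}\le v$, so $u_{K_{max}}<l$ on the open segment up to the point where $v=l$, which is at distance exactly $\lambda_{max}$. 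Therefore $\dd(x,\{u_{K_{max}}=l\})\ge\lambda_{max}$ for all $x\in\partial K_{max}$, i.e.\ $K_{max}\in\BB_{\lambda_{max}}$, establishing the bound.

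The main obstacle I anticipate is the step where one must be sure the nearest point of $\{u_{K_{max}}=l\}$ to a boundary point $x\in\partial K_{max}$ is controlled by the radial comparison — a priori the level set $\{u_{K_{max}}=l\}$ could come closer to $\partial K_{max}$ from a direction that exits $B_R(0)$, where no comparison with $v$ is available. This is handled by the observation that $\{u_{K_{max}}\ge l\}$ is convex (Proposition \ref{convexity-level-sets}) and contains the concentric ball $\{v\ge l\}=\overline{B_{\rho}(0)}$ with $\rho = R^l r_{max}^{1-l}$ (in the case $p=N$; similarly for $p\ne N$), hence $\dd(x,\{u_{K_{max}}=l\})\ge \dd(x,\overline{B_\rho(0)}) = r_{max}-\rho$ only when $\rho<r_{max}$, which is exactly $\Lambda(r_{max})=\lambda_{max}$ — wait, rather one uses $\dd(x,\partial\{v\ge l\})$ from outside, giving the clean inequality $\dd(x,\{u_{K_{max}}=l\})\ge\dd(x,B_\rho(0)) = \lambda_{max}$ directly, with no direction issue at all since $B_\rho(0)$ is a ball centered at $0$ and $x\in\partial B_{r_{max}}(0)$. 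So in fact the convexity/containment of the super-level set is what makes the argument robust, and I would lead with that rather than with a normal-segment argument.
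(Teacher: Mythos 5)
Your overall strategy is the right one and is essentially the paper's: inscribe the largest ball $B_R(z_0)$ in $\Omega$, take as candidate inner set a ball from the radial analysis of Lemma \ref{lemma_ball}, and compare the $p$-capacitary potential of $\Omega\setminus\overline{K}$ with the explicit radial one via the maximum principle. (The paper runs this with the elliptic radius $r_{\lambda,2}$ for every $\lambda\le\lambda_{max}$; you do it only for $\lambda=\lambda_{max}$ with $r_{max}$, which suffices, since $\BB_{\lambda_{max}}\neq\emptyset$ already gives $\lambda_{\Omega,max}\ge\lambda_{max}$ by \eqref{lambda-max} and the monotonicity of the classes $\BB_\lambda$.)

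However, the execution of the key step fails because you have the orientation of the \emph{interior} problem reversed throughout. Here $u_{K_{max}}=0$ on $\overline{K_{max}}$ and $u_{K_{max}}=1$ on $\partial\Omega$, so the potentials increase outward: the level set $\{v=l\}$ is the sphere $\partial B_\rho(0)$ with $\rho=R^l\,r_{max}^{1-l}>r_{max}$ (for $p=N$), lying \emph{outside} $K_{max}$; consequently $\{v\ge l\}=\{\rho\le|x|\le R\}$, not $\overline{B_\rho(0)}$; the minimizing segment from $x\in\partial K_{max}$ points \emph{away} from the origin, not toward it; and from $u_{K_{max}}\le v$ one gets $\{u_{K_{max}}\ge l\}\cap B_R(0)\subset\{v\ge l\}$, the opposite inclusion to the one you assert. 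Your concluding inequality $\dd(x,\{u_{K_{max}}=l\})\ge\dd(x,B_\rho(0))=\lambda_{max}$ is false as written, since $x\in\partial B_{r_{max}}(0)\subset B_\rho(0)$ forces $\dd(x,B_\rho(0))=0$. (Also, for the interior problem it is the sublevel sets $\{u_{K_{max}}\le l\}\cup\overline{K_{max}}$ that are convex, not the superlevel sets --- but no convexity is needed at all.) The correct version of your argument is short: if $y\in\{u_{K_{max}}=l\}$ and $y\in B_R(0)$, then $v(y)\ge u_{K_{max}}(y)=l$, hence $|y|\ge\rho$; if $y\notin B_R(0)$, then $|y|\ge R\ge\rho$ (using $r_{max}+\lambda_{max}\le R$). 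Thus the entire level set lies in $\{|y|\ge\rho\}$, so $\dd(x,\{u_{K_{max}}=l\})\ge\rho-r_{max}=\lambda_{max}$ for every $x\in\partial K_{max}$, i.e.\ $K_{max}\in\BB_{\lambda_{max}}$. With these corrections your proof coincides with the paper's.
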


\begin{proof}
According to Theorem \ref{thm-interior} and the definition in \eqref{lambda-max}, problem $ ({\text{P}}_I)$ is solvable for given constants $\lambda>0$, $l\in(0,1)$, provided that  $\BB_{\lambda}\neq\emptyset$.
Following the discussion above, for any $ \lambda \le \Lambda(r_{max} )$ there exists a unique constant $\;r_{\lambda,2}$, with $r_{max} \le  r_{\lambda,2}<R$ such that the solution of the $p$-capacitary problem $u_{r_{\lambda,2}}$ in $B_R(0)\setminus \overline{B_{r_{\lambda,2}}(0)}$ satisfies the distance property:
\begin{align*}
 \dd( x,\{u_{r_{\lambda,2}}=l\}) =\lambda  \quad\textrm{for all}\; x\in\partial B_{r_{\lambda,2}}(0).
 \end{align*}
 Note that if $ \lambda = D(r_{max} )$ then $r_{max} =  r_{\lambda,2}$.

  For any  $ \lambda \le D(r_{max} )$ consider now the solution $\bar{u}$ of the $p$-capacitary problem in $\Omega \setminus \overline{B_{r_{\lambda,2}}(0)}$. By comparison principle applied to the functions $u_{r_{\lambda,2}}$ and $\bar u$, the (minimum) distance between the sets $\{ \bar{u} =l\}$ and $\partial B_{r_{\lambda,2}}(0)$ is greater or equal $\lambda$. This shows that $\bar{u}$ is a supersolution for $(P_I)$, as stated.
\end{proof}

\begin{cor}
Fix $l\in(0,1)$ and $\Omega$ a convex bounded domain in $\mathbb R^N$. Then, for each $\lambda\leq \lambda_{\Omega,max}$, there exists a supersolution $(u,K)$ such that $K$ belongs to the class $\BB_\lambda$.
\end{cor}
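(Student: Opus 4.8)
The plan is to observe that the statement is essentially vacuous for $\lambda<\lambda_{\Omega,max}$ and reduces, at the endpoint $\lambda=\lambda_{\Omega,max}$, to showing that the supremum in \eqref{lambda-max} is attained. I would first record that $\lambda_{\Omega,max}\ge\lambda_{max}>0$ by Lemma \ref{lemma_ball_supers}, and that $\BB_{\mu}\subset\BB_{\nu}$ whenever $\mu\ge\nu$, since the defining inequality is then stronger. Hence, if $\lambda<\lambda_{\Omega,max}$, the definition of the supremum provides some $\mu\in(\lambda,\lambda_{\Omega,max}]$ with $\BB_\mu\neq\emptyset$, and any $K\in\BB_\mu\subset\BB_\lambda$ gives the desired supersolution $(u_K,K)$; for $\lambda\le\lambda_{max}$ one may instead simply take the explicit supersolution constructed in the proof of Lemma \ref{lemma_ball_supers}.

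For the endpoint $\lambda=\lambda_{\Omega,max}$ I would pick $\lambda_n\uparrow\lambda_{\Omega,max}$ and $K_n\in\BB_{\lambda_n}$ (nonempty by the previous step) and pass to a limit. The first observation is that the constraint $\dd(\partial K_n,\{u_{K_n}=l\})\ge\lambda_n$ forces $\dd(\overline{K_n},\R^N\setminus\Omega)\ge\lambda_n$: along any segment from $x\in\partial K_n$ to its nearest point on $\partial\Omega$, convexity of $\Omega$ and of $K_n$ shows that the portion beyond the last exit from $\overline{K_n}$ stays in $\Omega\setminus\overline{K_n}$ and hence crosses the level surface $\{u_{K_n}=l\}$, while $\dd(\cdot,\R^N\setminus\Omega)$, being concave on $\Omega$, attains its minimum over $\overline{K_n}$ on $\partial K_n$. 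Thus all the $\overline{K_n}$ lie in a fixed compact subset of $\Omega$, and Blaschke's selection theorem extracts a subsequence $K_n\to K_\infty$ in Hausdorff distance with $K_\infty$ compact convex, $\overline{K_\infty}\subset\Omega$. Granting that $K_\infty$ has nonempty interior (discussed below), the potentials $1-u_{K_n}$ are $p$-capacitary potentials of the rings $\Omega\setminus\overline{K_n}$, which satisfy a uniform interior cone condition, so Corollary \ref{cor-holder-estimates} supplies a uniform $\mathcal C^\alpha$ bound up to $\partial\Omega\cup\partial K_n$ and Theorem \ref{thm-classical-regularity} supplies interior $\mathcal C^{1,\alpha}$ bounds. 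Arguing as in Lemma \ref{lemma-convergence-exterior} (correct boundary values off $\Omega\setminus\overline{K_\infty}$, no flattening via Harnack) I would conclude $u_{K_n}\to u_{K_\infty}$, the $p$-capacitary potential of $\Omega\setminus\overline{K_\infty}$; by Proposition \ref{convexity-level-sets} this limit is analytic with non-vanishing gradient on $\{u_{K_\infty}=l\}$, whence $\{u_{K_n}=l\}\to\{u_{K_\infty}=l\}$ in Hausdorff distance. Since $\partial K_n\to\partial K_\infty$ as well and the distance between two compact sets is continuous under Hausdorff convergence,
\[
\dd\bigl(\partial K_\infty,\{u_{K_\infty}=l\}\bigr)=\lim_{n\to\infty}\dd\bigl(\partial K_n,\{u_{K_n}=l\}\bigr)\ge\lim_{n\to\infty}\lambda_n=\lambda_{\Omega,max},
\]
so $K_\infty\in\BB_{\lambda_{\Omega,max}}$, which is exactly what is needed.

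The step I expect to be the main obstacle is the \emph{non-degeneracy} of the Hausdorff limit $K_\infty$: a priori a maximizing sequence could thin out to a lower-dimensional convex set, which is not an admissible competitor and for which Corollary \ref{cor-holder-estimates} is unavailable. When the limit has zero $p$-capacity this cannot occur, since then $u_{K_n}\to 1$ locally uniformly on $\Omega\setminus K_\infty$, which squeezes $\{u_{K_n}=l\}$ onto $K_\infty$ and hence onto $\partial K_n$, contradicting $\dd(\partial K_n,\{u_{K_n}=l\})\ge\lambda_n\to\lambda_{\Omega,max}>0$. To exclude the remaining positive-capacity, lower-dimensional limits I would not use an arbitrary maximizing sequence but a \emph{monotone} one: for $\lambda$ close to $\lambda_{\Omega,max}$ one works with the extremal supersolution of $\BB_\lambda$ produced by the construction in the proof of Theorem \ref{thm-interior}; these extremal sets are genuine convex bodies and, as in the model computation of Lemma \ref{lemma_ball}, they depend monotonically on $\lambda$, so their monotone limit as $\lambda\uparrow\lambda_{\Omega,max}$ contains a fixed convex body $K_{\lambda_0}$ and is therefore itself a genuine convex body. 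Running the limiting argument above along this monotone family --- now exactly parallel to Lemma \ref{lemma-convergence-exterior} --- would then give an element of $\BB_{\lambda_{\Omega,max}}$ and finish the proof.
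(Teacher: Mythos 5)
Your route differs from the paper's. The paper's proof of this corollary is a one-line appeal to the explicit radial construction of Lemma \ref{lemma_ball_supers}: take the largest inscribed ball $B_R(x_0)$ and use $K=B_{r_{\lambda,2}}(x_0)$. That construction, however, only produces elements of $\BB_\lambda$ for $\lambda\le\lambda_{max}$ (the Bernoulli constant of the inscribed ball), and says nothing for $\lambda\in(\lambda_{max},\lambda_{\Omega,max}]$; your observation that for $\lambda<\lambda_{\Omega,max}$ the claim follows tautologically from the definition of the supremum in \eqref{lambda-max} together with the monotonicity $\BB_\mu\subset\BB_\lambda$ for $\mu\ge\lambda$ is the correct (and cleaner) argument in the subcritical range, and your reduction of the whole statement to attainment of the supremum at $\lambda=\lambda_{\Omega,max}$ is exactly the point the paper defers to the ``Conclusion of the proof of Theorem \ref{thm-interior}.''

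At the endpoint, though, your non-degeneracy step has a genuine gap. You assert that the decreasing family of extremal sets depends monotonically on $\lambda$ and that its limit ``contains a fixed convex body $K_{\lambda_0}$.'' Monotonicity of $\lambda\mapsto\KK_\lambda$ is fine (it follows from $\BB_{\lambda_2}\subset\BB_{\lambda_1}$ for $\lambda_1\le\lambda_2$), but a fixed $K_{\lambda_0}\in\BB_{\lambda_0}$ need not lie in $\BB_\lambda$ for $\lambda>\lambda_0$, hence need not be contained in $\KK_\lambda$, so nothing prevents $\bigcap_n\KK_{\lambda_n}$ from being lower-dimensional a priori. Your zero-capacity squeezing argument does exclude collapse when the limit has zero $p$-capacity (in particular for $p\le N-1$, where every lower-dimensional convex set is polar), and in fact a slab argument shows more: if the $K_n$ flatten onto a hyperplane piece of \emph{zero} capacity, the level surfaces $\{u_{K_n}=l\}$ are forced into an $o(1)$-neighborhood of $\partial K_n$, contradicting the distance constraint. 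But for $p>N-1$ a lower-dimensional convex set can have positive $p$-capacity, its capacitary potential has a genuinely full-dimensional $l$-level surface, and the constraint $\dd(\partial K_n,\{u_{K_n}=l\})\ge\lambda_n$ alone does not prevent thinning. What is needed (and not supplied, here or in the paper, which simply asserts that the Hausdorff limit of the $K_k$ is a convex domain) is a uniform lower bound on the inradius of the chosen $K_n$, valid for all $p\in(1,\infty)$. So: the subcritical part of your proof is correct and sharper than the paper's; the endpoint part identifies the right obstacle but does not actually close it.
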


\begin{proof}
Indeed, we have just seen that if $x_0\in\Omega$ and $R$ the radius of the largest ball centered at $x_0$ inscribed in $\Omega$, then for $\lambda\leq \lambda_{\Omega,max}$ the solution $\bar u$ constructed in Lemma \ref{lemma_ball} is a supersolution of the problem and indeed its zero set $K:=B_{r_{\lambda,2}}(0)$ belongs to the class $\BB_\lambda$.
\end{proof}

\begin{rem} Because of the distance property, for any  $K\in\BB_\lambda$, we have that
$$\dd(x, \partial \Omega) \ge \lambda\quad \textrm{for any}\; x\in \partial K,$$
which shows that any element in the class $\BB_\lambda$ is strictly contained in $\Omega$ and thus, non-degenerate.
\end{rem}

We have just shown  that for any $\lambda \le \lambda_{\Omega, max} $ the set $\BB_{\lambda}$ is not empty. Obviously it holds that $\BB_{\lambda_2} \subset \BB_{\lambda_1}$ if $\lambda_1\le \lambda_2$. We first define the \emph{maximal set} as
\begin{align}\label{maxK}
\KK_\lambda :=\overline{ \mathscr{C}\lp {\bigcup}_{ K\in \BB_\lambda} K \rp},
\end{align}
and will prove that $\KK_\lambda$ satisfies the distance problem $(P_I)$ with constants $(\lambda,l)$. Here $ \overline{ \mathscr{C}(X)}$ denotes the closed convex hull of the set $X$.

It is an interesting open question to see if for general convex domains $\Omega$ the set of all solutions of  problem $(P_I)$ follows the same structure as the ball case, i.e. {\em{parabolic, elliptic}} and {\em{hyperbolic}} case; in particular to prove that the family of {\em{maximal}} convex solutions defined above is an {\em{elliptic}} (i.e. decreasing and continuous) family of solutions and that there exists a unique solution corresponding to the value $\lambda = \lambda_{\Omega,max}$.
These questions have been already addressed for the classical Bernoulli problem and proven to be true for the linear case $p=2$, \cite{A89, Card_Tah02} and for $p\neq 2$ more recently in \cite{BS09}.\\


Before we prove Theorem \ref{thm-interior}, let us show a couple of preparatory lemmas:

\begin{lemma} Let $K_1$ and $K_2$ be two elements of $\BB_\lambda$. It holds that
$$ K^* \in \BB_\lambda,$$
where $K^*$ is the convex hull of the set $K_1 \cup K_2$.
\end{lemma}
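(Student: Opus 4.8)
The goal is to show that if $K_1, K_2 \in \BB_\lambda$, then their convex hull $K^* := \overline{\mathscr{C}(K_1 \cup K_2)}$ also lies in $\BB_\lambda$; that is, $\dd(x, \{u_{K^*} = l\}) \ge \lambda$ for all $x \in \partial K^*$. This is the interior-problem analogue of Lemma \ref{lemma-intersection} for the exterior case, but the logic runs in the opposite direction: larger obstacles make the capacitary potential larger, and we want the $l$-level set to stay far from $\partial K^*$.

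The plan is as follows. First I would observe that since $K_1 \subset K^*$, by the comparison principle applied in $\Omega \setminus \overline{K^*}$ versus $\Omega \setminus \overline{K_1}$ (extending $u_{K^*}$ by $0$ on $K^*$), one gets $u_{K^*} \ge u_{K_1}$ on $\Omega \setminus \overline{K^*}$, and similarly $u_{K^*} \ge u_{K_2}$. Hence the $l$-level set of $u_{K^*}$ is ``outside'' those of $u_{K_1}$ and $u_{K_2}$: more precisely $\{u_{K^*} \ge l\} \supset \{u_{K_1} \ge l\} \cup \{u_{K_2} \ge l\}$, and since by Proposition \ref{convexity-level-sets} the superlevel set $\LL_i := \{u_{K_i} > l\}$ is convex, $\{u_{K^*} \ge l\}$ contains the convex hull of $\LL_1 \cup \LL_2$. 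Next, fix $x^0 \in \partial K^*$; I want to show $\dd(x^0, \{u_{K^*} = l\}) \ge \lambda$, equivalently that the open ball $B_\lambda(x^0)$ does not meet $\{u_{K^*} = l\}$, which (since $x^0 \in \partial K^* \subset \{u_{K^*} \le 0\}$ and $\{u_{K^*} \ge l\}$ is a closed convex set) amounts to showing $B_\lambda(x^0) \cap \{u_{K^*} \ge l\} = \emptyset$.

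The geometric heart of the argument uses the convexity of $K^*$. Pick a supporting hyperplane $H$ of $K^*$ at $x^0$, with $K^*$ lying in the closed halfspace $H^-$. Since $K_1, K_2 \subset K^*$, both obstacles lie in $H^-$; and since each $\LL_i \subset \Omega \setminus \overline{K_i}$ wait — that is not quite what I want, because $\LL_i$ need not lie in $H^-$. Let me instead argue at the level of points: any point $y$ of $\{u_{K^*} \ge l\}$ lies in the convex hull of $\LL_1 \cup \LL_2$, hence is a convex combination $y = t y_1 + (1-t) y_2$ with $y_i \in \overline{\LL_i}$, $y_i \in \{u_{K_i} \ge l\}$. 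Now $\dd(x^0, \{u_{K_i} = l\}) $ — hmm, but $x^0$ is a point of $\partial K^*$, not of $\partial K_i$, so the hypothesis $K_i \in \BB_\lambda$ does not directly bound $\dd(x^0, \LL_i)$. The fix: because $K^*$ is convex and $x^0 \in \partial K^*$, the supporting hyperplane $H$ at $x^0$ separates $x^0$ from all of $K^* \supset K_1 \cup K_2$; moreover $\overline{\LL_i} \subset \{u_{K_i} \ge l\}$ and, crucially, for each $i$ the set $\overline{\LL_i}$ together with $\overline{K_i}$ is itself convex and contained in... I would rather run the comparison the other way: project $x^0$ onto $K_i$. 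Let $\pi_i(x^0) \in \partial K_i$ be the nearest point of $\overline{K_i}$ to $x^0$; then $x^0 - \pi_i(x^0)$ is an outward normal to $K_i$ at $\pi_i(x^0)$, the segment $[\pi_i(x^0), x^0]$ lies in $(K^* \setminus K_i) \cup \{x^0\}$, and for any $z \in \{u_{K_i} = l\}$, convexity of $\{u_{K_i} \ge l\} \supset K_i$ gives $|x^0 - z| \ge |\pi_i(x^0) - z| \ge \dd(\pi_i(x^0), \{u_{K_i}=l\}) \ge \lambda$ since $\pi_i(x^0) \in \partial K_i$ and $K_i \in \BB_\lambda$. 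Thus $\dd(x^0, \{u_{K_i} = l\}) \ge \lambda$, i.e. $\dd(x^0, \LL_i) \ge \lambda$, for $i = 1, 2$. Finally, distance to a convex set is a convex function, so $\dd(x^0, \cdot)$ being $\ge \lambda$ on both $\overline{\LL_1}$ and $\overline{\LL_2}$ forces it to be $\ge \lambda$ on $\mathscr{C}(\LL_1 \cup \LL_2)$ — more directly, for $y = t y_1 + (1-t) y_2$ as above, $|x^0 - y| \ge$ ... actually $\dd(x^0, \{u_{K^*}\ge l\})$: since $\{u_{K^*} \ge l\}$ is convex and contains $\overline{\LL_1} \cup \overline{\LL_2}$ but we need the reverse containment. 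We have $\{u_{K^*} \ge l\} \supset \mathscr{C}(\overline{\LL_1}\cup\overline{\LL_2})$; for the lower bound on distance we instead want: every $y$ with $u_{K^*}(y) = l$ is at distance $\ge \lambda$ from $x^0$. Using $u_{K^*} \ge \max(u_{K_1}, u_{K_2})$ the wrong sign appears, so here is the clean way: the function $w := $ the $p$-capacitary potential of $\Omega \setminus \overline{\mathscr{C}(\overline{\LL_1} \cup \overline{\LL_2})}$ with the roles ``$0$ on the convex hull, $1$ on $\partial\Omega$'' satisfies, by comparison in each $\Omega\setminus\overline{\LL_i}$, $w \le$ the rescaled $u_{K_i}$ on the level annulus, and one checks $\{w = l'\}$ stays at distance $\ge\lambda$; then one transfers to $u_{K^*}$ by comparison since $\mathscr{C}(\overline{\LL_1}\cup\overline{\LL_2}) \subset \{u_{K^*}\ge l\}$.

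I expect the main obstacle to be exactly this last transfer: phrasing the separation/convexity argument so that the distance bound $\dd(x^0, \{u_{K_i}=l\}) \ge \lambda$ at the \emph{new} boundary point $x^0 \in \partial K^*$ is legitimately inherited from the hypothesis at $\partial K_i$, and then converting ``$\{u_{K^*} = l\}$ far from $x^0$'' into the statement $K^* \in \BB_\lambda$. The projection argument above handles the first half cleanly (it only uses convexity of the superlevel sets from Proposition \ref{convexity-level-sets} and nonexpansiveness of nearest-point projection onto a convex set). For the second half the key observation is that $\{u_{K^*} \ge l\}$ is a \emph{convex} set whose ``inner boundary'' data is pinched between those of $u_{K_1}$ and $u_{K_2}$, so that a point realizing $\dd(\partial K^*, \{u_{K^*}=l\})$ can be connected back, via a nearest-point projection onto whichever of $\overline{\LL_1}, \overline{\LL_2}$ is relevant, to a configuration already controlled by the hypothesis. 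I would write it up by (i) establishing $\dd(x^0,\{u_{K_i}=l\})\ge\lambda$ for all $x^0\in\partial K^*$ via projection, (ii) noting $\{u_{K^*}\ge l\}$ is convex and contains each $\overline{\LL_i}$ hence equals the closure of a convex region ``between'' them, and (iii) concluding $\dd(x^0, \{u_{K^*}=l\}) = \dd(x^0, \{u_{K^*}\ge l\}) \ge \min_i \dd(x^0, \overline{\LL_i}) \ge \lambda$ using convexity of the distance function. This gives $K^* \in \BB_\lambda$.
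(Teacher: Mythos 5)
There is a genuine gap, and it begins at the very first step: the comparison principle is applied in the wrong direction. For the interior normalization ($u_K=0$ on $\overline K$, $u_K=1$ on $\partial\Omega$), enlarging the obstacle enlarges the zero set and therefore \emph{decreases} the potential, so $u_{K^*}\le\min\{u_{K_1},u_{K_2}\}$, not $u_{K^*}\ge u_{K_i}$. The correct inequality is precisely what drives the proof: it gives $\{u_{K^*}=l\}\subset\{u_{K_1}\ge l\}\cap\{u_{K_2}\ge l\}$, and any point of $\{u_{K_i}\ge l\}$ is at distance $\ge\lambda$ from $\partial K_i$ (a segment from a point of $\partial K_i$, where $u_{K_i}=0$, to a point where $u_{K_i}\ge l$ must cross $\{u_{K_i}=l\}$, which by hypothesis is at distance $\ge\lambda$ from $\partial K_i$). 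Hence every point of $\{u_{K^*}=l\}$ is at distance $\ge\lambda$ from $\partial K_1\cup\partial K_2$. Your reversed containment $\{u_{K^*}\ge l\}\supset\{u_{K_1}\ge l\}\cup\{u_{K_2}\ge l\}$ is false and, were it true, would push the level set \emph{toward} $\partial K^*$, i.e.\ in the wrong direction for the conclusion.

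The second, independent problem is your step (i): the claim that $\dd(x^0,\{u_{K_i}=l\})\ge\lambda$ for \emph{every} $x^0\in\partial K^*$ and \emph{both} $i$ is simply false. If $K_1$ and $K_2$ sit in different parts of $\Omega$, the set $K_2$ may lie inside $\{u_{K_1}>l\}$, and then the ``bridge'' portion of $\partial K^*$ crosses $\{u_{K_1}=l\}$, so that distance is $0$ there. The projection inequality $|x^0-z|\ge|\pi_i(x^0)-z|$ is not a consequence of nonexpansiveness, because $z\in\{u_{K_i}=l\}$ does not belong to the convex set $\overline{K_i}$ onto which you project (and $\{u_{K_i}\ge l\}$ does not contain $K_i$; for the interior normalization the convex level region containing $K_i$ is $\{u_{K_i}\le l\}$). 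What is actually needed, and what your sketch never supplies, is the passage from the boundary points of $K^*$ lying on $\partial K_1\cup\partial K_2$ to the remaining ones: the extremal points of $K^*$ lie on $\partial K_1\cup\partial K_2$ and are therefore at distance $\ge\lambda$ from $\{u_{K^*}=l\}$ by the first step, and for a convex combination $x^1=\sum_i t_i y_i$ of such points one uses that $\{u_{K^*}\le l\}$ is convex (Proposition \ref{convexity-level-sets}), so that $y\mapsto\dd(y,\{u_{K^*}=l\})$ is concave there and $\dd(x^1,\{u_{K^*}=l\})\ge\sum_i t_i\,\dd(y_i,\{u_{K^*}=l\})\ge\lambda$. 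The closing sketch with the auxiliary potential $w$ is left at the level of ``one checks'' and does not repair either issue.
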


\begin{proof}
Let $u^*$ be the solution of the $p$-capacitary problem in $\Omega \setminus K^* $.  Since the set $K^*$ is strictly contained in $\Omega$, the level set $\{u^*=l\}$ is well defined.
Let $x^0\in \{u^*=l\}$ and $x^1\in\partial K^*$ be the points where the minimum distance between  the sets $\partial K^*$ and $\{u^*=l\}$ is attained. We would like to show that $\dd(x^0,x^1)\geq \lambda.$

First, by application of the maximum principle we can quickly conclude that
$$u^*(x)  \le \min\{u_{K_1 }(x),u_{K_2 }(x)\}, \quad \textrm{for each} \; x\in \Omega,$$
which, in particular, implies that
$$l=u^*(x^0)\leq \min\{u_{K_1}(x^0),u_{K_2}(x^0)\}.$$
Because, by initial hypothesis, $u_{K_1}$ and $u_{K_2}$ are supersolutions of the problem $(P_I)$ in the sense of \eqref{supersolutions}, one knows right away that every point in $\{u^*=l\}$  is far from both boundaries $\partial K_1$ and $\partial K_2$ a distance greater or equal than $\lambda$, and in particular the same is true for $x^0$. We have just shown that if we set $K:=K_1\cup K_2$, then
\begin{equation}\label{formula10}\dd(x^0,\partial K)\geq \lambda.\end{equation}

 On the other hand, we have constructed $K^*$ as the convex hull of $K$. If $x^1$ is an extremal point of $K^*$, then it belongs also to $\partial K$ which implies that $\dd(x^0,x^1)\geq \lambda$ by \eqref{formula10}. If $x^1$ is not an extremal point of $K^*$, it can be written as a linear combination of points $\{y_i\}_{i=1}^n$ that are extremal for $K$. For such points we have already shown that  $\dd(y_i,\{u^*=l\})\geq\lambda$. But because the set $\{u^*=l\}$ is the boundary of a convex set by Proposition \ref{convexity-level-sets} one automatically obtains that $\dd(x^1,x^0) \geq\lambda$ too.
\end{proof}

\begin{lemma}
Let $K_1 \subset K_2\subset\ldots$
be a increasing sequence of convex domains in $\BB_\lambda$, and suppose that the set $K$, defined
 as interior of the closure $\overline{\cup K_n}$, is convex, bounded, open and $\Omega\supset K$. Then $K\in\BB_\lambda$.
\end{lemma}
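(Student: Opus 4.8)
## Proof plan

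The plan is to show that the $p$-capacitary potentials $u_{K_n}$ of the annuli $\Omega\setminus\overline{K_n}$ converge, as $n\to\infty$, to the $p$-capacitary potential $u_K$ of $\Omega\setminus\overline{K}$, and that the defining distance inequality passes to the limit. The key point, which makes the interior case genuinely easier than the exterior one in Lemma \ref{lemma-convergence-exterior}, is that here the \emph{outer} domain $\Omega$ is fixed and convex, while the inner obstacles $K_n$ are growing; consequently we have good control on the geometry near $\partial\Omega$, and the only possible degeneration would occur near the limiting inner boundary $\partial K$. Since the $K_n$ are increasing, $\{u_{K_n}\}$ is an increasing sequence of functions bounded above by $1$ (apply the comparison principle on $\Omega\setminus\overline{K_2}\subset\Omega\setminus\overline{K_1}$ after extending $u_{K_1}$ by $0$ on $K_2\setminus K_1$), hence it converges pointwise to some $u$ with $0\le u\le 1$.

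First I would establish interior convergence: by the local $\mathcal C^{1,\alpha}$ estimates of Theorem \ref{thm-classical-regularity}(i), together with the remark that the $K_n$ eventually contain any fixed compact subset of $K$, the sequence $u_{K_n}$ converges in $\mathcal C^{1,\alpha}_{loc}(\Omega\setminus\overline{K})$ to a $p$-harmonic function $u$. Clearly $u\equiv 0$ on $\overline{K}$ (each $u_{K_n}$ vanishes on $K_n\supset K_1$, and the limit vanishes on the interior of $\overline{\cup K_n}$ by monotone convergence; the values on $\partial K$ follow by continuity up to that boundary once we know $u$ is the capacitary potential). That $u\equiv 1$ on $\partial\Omega$, and that $u$ attains this boundary value in the right (Hölder-continuous up to $\partial\Omega$) sense, follows from barriers at $\partial\Omega$: since $\Omega$ is convex and fixed, it satisfies a uniform interior cone (indeed ball) condition, so Lemma \ref{lemma-holder-estimate} gives a uniform $\mathcal C^\alpha$-bound near $\partial\Omega$ that is independent of $n$, whence $u$ attains the boundary value $1$ continuously. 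This identifies $u=u_K$ as the $p$-capacitary potential of $\Omega\setminus\overline{K}$.

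Next I would check that the level set $\{u_K=l\}$ is nondegenerate and that the sets converge in Hausdorff distance. Because $\{u_{K_n}>l\}$ is an increasing family of convex sets by Proposition \ref{convexity-level-sets}, and $\{u_K>l\}=\bigcup_n\{u_{K_n}>l\}$ (by the monotone pointwise convergence and $\mathcal C^{1,\alpha}_{loc}$ convergence, together with the strong maximum principle ruling out a flat plateau at height $l$), the level surfaces $\Gamma_l^{(n)}:=\{u_{K_n}=l\}$ converge to $\Gamma_l:=\{u_K=l\}$ in Hausdorff distance; likewise $\partial K_n\to\partial K$ in Hausdorff distance since the $K_n$ increase to $K$. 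Then for each $n$ and each $x\in\partial K_n$ we have $\dd(x,\Gamma_l^{(n)})\ge\lambda$, and passing to the limit along the Hausdorff convergence of both $\partial K_n$ and $\Gamma_l^{(n)}$ gives $\dd(x,\Gamma_l)\ge\lambda$ for all $x\in\partial K$, i.e. $K\in\BB_\lambda$. Here one uses the standard fact that Hausdorff convergence of compact sets implies convergence of the distance-to-set functions, uniformly on compacts.

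The main obstacle is the same subtlety flagged in the exterior case: a priori the gradient of $u_{K_n}$ could degenerate to $0$ somewhere near $\partial K$, which would let $u_K$ ``flatten out'' and would make $\{u_K=l\}$ either empty or stuck against $\partial K$, destroying the distance property. This is ruled out precisely by the distance constraint $K_n\in\BB_\lambda$ itself: since every point of $\partial K_n$ is at distance $\ge\lambda$ from $\{u_{K_n}=l\}$, and all the $K_n$ (hence their convex hull $K$) are contained in a fixed $\Omega$, a Harnack-chain argument (as in the proof of Lemma \ref{lemma-convergence-exterior}) along a segment joining a point of $\{u_K=l\}$ to $\partial K$ forces $u_K$ to stay bounded below by $l$ at distance $\lambda$, so $\{u_K=l\}$ genuinely separates $\partial K$ from $\{u_K>l\}$ at distance exactly controlled from below by $\lambda$. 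The remaining assertion $\partial K\in\mathcal C^{0,1}$ (rather than smoother) is automatic: $K$ is convex, so $\partial K$ is locally Lipschitz; no further regularity is claimed at this stage, and the distance property plus convexity is all that is needed for $K$ to be a legitimate element of $\BB_\lambda$.
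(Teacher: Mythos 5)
Your overall strategy is the same as the paper's: monotone convergence of the capacitary potentials, uniform H\"older estimates up to the boundary to identify the limit as the capacitary potential of $\Omega\setminus\overline K$, a Harnack-chain argument to rule out a degenerate zero set, and Hausdorff convergence of $\partial K_n$ and of the $l$-level sets to pass the distance inequality to the limit. However, there is a concrete error that propagates through the write-up: the monotonicity is reversed. In the interior problem the potential satisfies $u_{K}=0$ on $\overline K$ and $u_K=1$ on $\partial\Omega$, so enlarging $K$ enlarges the zero set and, by the comparison principle on $\Omega\setminus\overline{K_{n+1}}$ (where $u_{K_n}\ge 0=u_{K_{n+1}}$ on $\partial K_{n+1}$ and both equal $1$ on $\partial\Omega$), the sequence $\{u_{K_n}\}$ is \emph{decreasing}, not increasing. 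Your justification (``extending $u_{K_1}$ by $0$ on $K_2\setminus K_1$'') is not meaningful, since $u_{K_1}$ is already defined and strictly positive on $K_2\setminus\overline{K_1}$. Correspondingly, the sets $\{u_{K_n}>l\}$ are decreasing, not increasing, and they are not the convex level sets: in the interior normalization it is the sublevel sets $\{u_{K_n}\le l\}$ (which contain $K_n$) that are convex by Proposition 2.8, so the identity $\{u_K>l\}=\bigcup_n\{u_{K_n}>l\}$ is not available as stated.

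This is not merely cosmetic, because the direction of monotonicity is load-bearing in the Harnack step. The contradiction there runs $l=u(P)\le u_n(P)\le \sup_S u_n\le C\inf_S u_n\to 0$, and the first inequality requires $u\le u_n$, i.e., the decreasing direction. With your increasing sequence one only gets $u_n(P)\le l$, from which no contradiction with $\inf_S u_n\to 0$ follows. The fix is immediate (flip all the monotonicity statements and replace superlevel sets by sublevel sets where convexity is invoked), after which your argument coincides with the paper's. Two smaller points: the uniform boundary H\"older estimate near $\partial\Omega$ comes from the \emph{exterior} cone condition for $\Omega\setminus\overline{K_n}$ at $\partial\Omega$ (automatic by convexity of $\Omega$; a convex domain need not satisfy a uniform interior ball condition), and the degeneration one must exclude is that $\{u=0\}$ strictly contains $\overline K$, which is what the Harnack chain rules out; your phrasing of what the chain ``forces'' should be tightened to that statement.
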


\begin{proof}
Let $u_n$ be the $p$-capacitary potential of $\Omega\backslash K_n$. By construction, $0\leq u_n\leq 1$, and $\{u_n\}$ is a decreasing sequence.
Then we can easily show that on every compact subset of $\Omega$, $u_n$ converges in $\mathcal C^{1,\alpha}$ norm to a $p$-harmonic function $u$.
 We first prove that $u$ is precisely the solution to the $p$-capacitary problem in $\Omega\backslash K$, denoted by $u_K$. Clearly, $u\equiv 1$
 on $\partial\Omega$. We also need to check that $u\equiv 0$ in $K$. For that, let $x\in K\setminus \overline{K_n}$.
Since $K$ is convex, $\Omega\backslash\overline{K_n}$ automatically satisfies the uniform exterior cone condition, and one may use Corollary \ref{cor-holder-estimates} to get a uniform (in $n$) $\mathcal C^{\alpha}$ estimate up to the boundary, hence
$$
0 \leq u_n(x)\leq \dd(x,\overline{K_n})^\alpha\sup \|u_n\|_{\mathcal C^{\alpha}(\overline \Omega\setminus K)} \leq M
\dd(x,\overline{K_n})^\alpha\to 0$$
as $n\to \infty$.
This implies that $u(x)=0$ for all $x\in K$, as desired.

Note that, because we have a decreasing sequence of functions, the set $\{u_n=l\}$ converges to the set where $\{u=l\}$.
However, the boundary of the set $K_n$ may not converge to the set $\{u=0\}$, and this  may happen if the gradient of $u_n$ grows in some neighborhood of $\partial K$.
Hence we need to check that $u(x)>0$ for all $x\in\Omega \setminus \overline K$. Assume that $K \subset\tilde K$ and $u=0$ in $\tilde K$. This is done again by a Harnack chain argument: let $x^0\in\partial\tilde K$ and $P\in\{u=l\}$ be two points such that
$$\dd(x^0,P)\leq \lambda-\eps\quad\mbox{for some }\eps>0.$$
Since $K_n$ satisfies the exterior ball condition, the ball $B_{\eps/2}(x^0)$ is completely outside $K_n$ for all $n$.
Moreover, $u_n(x_n) \to 0$ for all $x\in B_{\eps/2}(x^0)$ as $n\nearrow\infty$.

Cover the segment $\overline{x^0 P}$ by a finite number of balls of radius $\eps/2$, and define $S=\cup_{i=1}^N B_{\eps/2}(x_i)$.
In the set $S$, Harnack inequality applied to the function $u_n$ implies
$$l=u(P)\leq u_n(P)\leq \sup_S u_n(x)\leq C \inf_S u_n(x),$$
where $C$ depends only on $S$. But this is a contradiction, since as $n\nearrow\infty$,
$$l\leq C\inf_S u_n(x)\to 0 .$$
We have shown that, necessarily, $\tilde K= K$,  so that $u>0$ on $\Omega \setminus \overline K$. Then $u=u_K$ is the $p$-capacitor potential
for the set $\Omega\backslash K$. Moreover $K$ belongs to $\BB_\lambda$; the proof follows the same argument as in (\ref{conv-level-set}) with $\Omega_n$ replaced by $K_n$.
\end{proof}

Now we check that the maximal set found in the previous lemma satisfies the distance property. Such maximal set provides the solution for problem $(P_I)$,
and thus the proof of Theorem \ref{thm-interior} is almost complete.

\begin{lemma}
Assume that $\BB_\lambda$ is non-empty. Let $K$  be the maximal set of $\BB_\lambda$ found in the previous lemma and let  $u_K$ be the corresponding solution of
the $p$-capacitor problem for $\Omega\backslash \overline K$. It holds
$$\dd( x,\{ u_K=l \}) =\lambda\quad\mbox{for all}\quad x\in\partial K.$$
\end{lemma}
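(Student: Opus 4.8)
The plan is to argue by contradiction, following the same strategy used in the exterior case (the proof of Theorem~\ref{thm-exterior}), but pushing the free boundary \emph{outward} rather than inward. Suppose that $K$ is the maximal element of $\BB_\lambda$ and that its $p$-capacitary potential $u_K$ does not satisfy the distance identity at some point $x^0\in\partial K$; since $K\in\BB_\lambda$ the only possibility is the strict inequality $\dd(x^0,\{u_K=l\})=\lambda+\eps$ for some $\eps>0$. I will then enlarge $K$ to a strictly bigger convex set that still lies in $\BB_\lambda$, contradicting maximality.

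First I would set $\mathcal L:=\{u_K>l\}$, which by Proposition~\ref{convexity-level-sets} is an open convex set with $\overline K\subset\mathcal L\Subset\Omega$ and analytic boundary $\{u_K=l\}$ of strictly positive curvature; since $\dd(x^0,\mathcal L)=\lambda+\eps>0$ we also have $K\subsetneqq\mathcal L$. Next, define the ``thickened'' set
\begin{equation*}
\tilde K:=\{x\in\Omega:\dd(x,\mathcal L)\le\lambda\}^{\mathrm{c}}\cap\Omega\quad\text{— more precisely }\tilde K:=\Omega\setminus\{x\in\overline{\Omega\setminus\mathcal L}:\dd(x,\mathcal L)>\lambda\},
\end{equation*}
i.e.\ $\tilde K$ is the set of points of $\Omega$ lying within distance $\lambda$ of $\overline K$ in the inward sense; the clean way to say it is: $\tilde K$ is the largest set whose boundary sits at distance exactly $\lambda$ \emph{outside} of $\mathcal L$ is wrong — instead I take $\tilde K$ to be the inner parallel body, $\tilde K = \{x : B_\lambda(x)\subset \mathcal L\text{ fails}\}$... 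Let me instead take the cleaner route mirroring the exterior proof: let $\tilde K$ be such that $\partial\tilde K=\{x\in\Omega:\dd(x,\mathcal L)=\lambda,\ x\notin\mathcal L\}$ is wrong in sign too. The honest construction: $\tilde K := \big(\mathcal L^{(-\lambda)}\big)$, the inner parallel set at distance $\lambda$? No. I will simply define $\tilde K$ as the closed convex hull of $\overline K$ together with all points $x\in\Omega$ satisfying $\dd(x,\mathcal L)\le \lambda$; since every point of $\partial K$ is within distance $\lambda$ of $\mathcal L$ while $x^0$ is at distance $\lambda+\eps$, the set $\tilde K$ strictly contains $K$ near $x^0$. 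Its convexity follows because $\mathcal L$ is convex and the inner neighborhood of a fixed width of the complement of a convex set, intersected with $\Omega$, is convex (equivalently, $\tilde K$ is the convex hull construction applied to a geometric object controlled by $\mathcal L$). Let $\tilde u=u_{\tilde K}$ be the corresponding $p$-capacitary potential on $\Omega\setminus\overline{\tilde K}$.

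Now the verification: since $\overline K\subset\overline{\tilde K}$ (with strict containment), the comparison principle gives $u_K\le\tilde u$ on $\Omega\setminus\overline{\tilde K}$ (the larger hole produces a steeper potential), hence $\{\tilde u=l\}$ lies on the $\mathcal L$-side of $\{u_K=l\}$, so that $\mathcal L\subset\{\tilde u>l\}$ and consequently $\dd(x,\{\tilde u=l\})\ge\dd(x,\mathcal L)\ge\lambda$ for every $x\in\partial\tilde K$ that was added by the thickening. For the remaining boundary points (those already on $\partial K$) we use $\dd(x,\{\tilde u=l\})\ge\dd(x,\{u_K=l\})\ge\lambda$. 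Therefore $\tilde K\in\BB_\lambda$, but $\tilde K\supsetneqq K$, contradicting the maximality of $K$. This forces $\dd(x,\{u_K=l\})=\lambda$ for all $x\in\partial K$, as claimed.

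The main obstacle is the geometric bookkeeping in the definition of $\tilde K$: one must make sure (i) the enlarged set is genuinely convex — which relies on $\mathcal L=\{u_K>l\}$ being convex with positive curvatures (Proposition~\ref{convexity-level-sets}), so that the locus at fixed distance $\lambda$ outside $\mathcal L$ bounds a convex region, exactly as in the exterior argument where ``$\partial\tilde\Omega$ at fixed distance $\lambda$ from a convex set $\mathcal L$'' was used; (ii) the enlarged set still satisfies $\tilde K\Subset\Omega$, which holds because $\dd(x^0,\partial\Omega)\ge\lambda$ for any admissible $K$ and the thickening only moves points by $\lambda$ while staying within $\Omega$ by construction; and (iii) the strict containment $\tilde K\supsetneqq K$, which is immediate from the assumed gap $\eps>0$ at $x^0$. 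Everything else is a routine application of the comparison principle and the distance monotonicity already exploited earlier in the paper.
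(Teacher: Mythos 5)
Your overall strategy is the right one and is exactly the paper's: argue by contradiction at a point $x^0\in\partial K$ with $\dd(x^0,\{u_K=l\})=\lambda+\eps$, replace $K$ by the convex body bounded by the surface at distance $\lambda$ from the level set $\{u_K=l\}$ on the $K$-side, and use the comparison principle to show this strictly larger set is still in $\BB_\lambda$, contradicting maximality. However, the execution has genuine errors that would make the written argument fail. First, the orientation of the level sets is wrong for the \emph{interior} problem: here $u_K=0$ on $\overline K$ and $u_K=1$ on $\partial\Omega$, so the convex set containing $\overline K$ is $\{u_K<l\}$, not $\mathcal L=\{u_K>l\}$; the latter is the annular region hugging $\partial\Omega$, it is not convex, and it cannot simultaneously contain $\overline K$ and lie at distance $\lambda+\eps$ from $x^0\in\partial K$ as you assert. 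Second, the definition of $\tilde K$ you finally settle on --- the convex hull of $\overline K$ together with $\{x\in\Omega:\dd(x,\mathcal L)\le\lambda\}$ --- is the wrong set: that set contains all of $\overline{\mathcal L}$ (where the distance is zero), so its convex hull is essentially $\overline\Omega$ and is not an admissible hole $\tilde K\Subset\Omega$; moreover, by your own observation $x^0$ satisfies $\dd(x^0,\mathcal L)=\lambda+\eps>\lambda$, so this set adds nothing new near $x^0$ and the claimed strict enlargement there does not follow. The correct competitor is the \emph{inner parallel body} $\tilde K=\{x\in\{u_K\le l\}:\dd(x,\{u_K=l\})\ge\lambda\}$, which is convex because it is the inner parallel set of the convex set $\{u_K\le l\}$ (not because "neighborhoods of complements of convex sets are convex", which is false).

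Third, the comparison step is reversed: enlarging the zero set gives $\tilde u\le u_K$ on $\Omega\setminus\overline{\tilde K}$ (the paper's "$\tilde u<u_K$"), not $u_K\le\tilde u$, and consequently $\{\tilde u>l\}\subset\{u_K>l\}$ rather than the containment you state; your sentence even asserts both that $\{\tilde u=l\}$ lies on the $\mathcal L$-side of $\{u_K=l\}$ and that $\mathcal L\subset\{\tilde u>l\}$, which contradict each other. The inequality you ultimately need, $\dd(x,\{\tilde u=l\})\ge\dd(x,\{u_K=l\})\ge\lambda$ for $x\in\partial\tilde K$, is true, but it follows from the \emph{correct} comparison $\tilde u\le u_K$, not from the chain you wrote. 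With the set $\tilde K$ and the inequalities corrected as above, the argument closes exactly as in the paper.
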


\begin{proof}
We know that both the sets $\partial K$ and $\{u=l\}$ are convex (the second one is convex by Lemma \ref{convexity-level-sets}).
Then one just needs to understand the distance between two convex sets. By construction, we know that for all $x\in\partial K$.
\begin{equation}\label{equation1}\dd( \{ u_K =l \}, \; x)  \geq \lambda.\end{equation}

Suppose that there is a point $x^0\in \partial K$ such that
\begin{equation*}\dd( \{ u_K =l \}, \; x^0)  = \lambda+\eps.\end{equation*}
Then one can consider the convex set $\tilde K=\{x\in\Omega \,:\,\dd(x,\{u_K=l\})\}=\lambda$.



 Let $\tilde u$ be the $p$-capacitary solution in $\tilde K $; clearly $\tilde u < u_K$ by the strong maximum principle. Then, if $x\in\partial \tilde K$,
$$\dd(x,\{\tilde u=l\})\geq \dd (x,\{u_K=l\})\geq\lambda,$$
which shows that $\tilde K$ is in the class $\BB_\lambda$, which
contradicts the maximality of $K$.
\end{proof}

\emph{Conclusion of the proof of Theorem \ref{thm-interior}:} We have already seen that for any $\lambda<\lambda_{\Omega,{max}}$,
the set $\BB_\lambda$ is nonempty and thus there exists a solution for $(P_I)$. It remains to check that this is so for $\lambda=\lambda_{\Omega,max}$.

Let $\lambda_k$ be  a sequence converging to $\lambda_{\Omega,max}$ from below. We have just shown that for each $\lambda_k$,
 there exists a convex solution $K_k$. One can subtract a subsequence, still denoted by $K_k$, which converges to a convex domain $K$ in the
  Hausdorff topology. Then, by Arzela-Ascoli, the $p$-capacitary potential $u_k$ of $\Omega\backslash  \overline{K_k}$ converges to $u$, the $p$-capacitary potential
  of $\Omega\backslash \overline K$. We have also shown that the distance from the $l$-level set of $u_k$ is at a distance $\lambda_k$,
   then passing to the limit we also see that the same holds for $u$ (similar argument as before).

\begin{rem}\label{remark1}
In contrast to the exterior problem studied before, in the interior discrete Bernoulli problem the free boundary may have corners; consequently $\partial K$ is only $\mathcal C^{0,1}$ and the solution is $\mathcal C^{0,\alpha}$ up to the boundary.
\end{rem}


\subsection{Convergence to the Bernoulli problem}
Before we state the main theorem of the section, let us recall what is known for the interior Bernoulli problem:
\begin{theo}[\cite{HH00}]\label{thm-bernoulli-interior}
Given a convex domain $\Omega$, there exists a constant $\omega_0$ depending on $\Omega$ and $p$ such that for all $\omega\geq \omega_0$ the Bernoulli-type problem
\begin{equation*}
\left\{\begin{split}
&\Delta_p v = 0  \hbox{ in } \Omega\setminus \overline{D},\quad v =0 \textrm{ in } \overline{D}, \quad v =1 \textrm{ in } \partial\Omega,\\
&|\nabla v| =\omega  \quad\textrm{for all}\; x\in\partial D,
\end{split}\right.\leqno ({P_{B_I}})
\end{equation*}
has a solution $(v,D)$ (not necessarily unique). Moreover $D$ is convex. The boundary condition $|\nabla v|=\omega$ is again understood in the sense of \eqref{Bernoulli-condition}.
\end{theo}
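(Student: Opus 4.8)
Theorem~\ref{thm-bernoulli-interior} is due to Henrot and Shahgholian \cite{HH00}; I would reprove it by Beurling's sub/supersolution method, running it in close parallel with the treatment of the discrete problem $(P_I)$ above. Fix $p$ and $\omega>0$. For a bounded convex open $D$ with $\overline D\subset\Omega$ let $u_D$ denote the $p$-capacitary potential of $\Omega\setminus\overline D$ (vanishing on $D$, equal to $1$ on $\partial\Omega$); by the last proposition of Section~\ref{sec:preliminary} (Theorem~1.3 of \cite{HH02}) the modulus $|\nabla u_D|$ has a non-tangential boundary value along $\partial D$, so the requirement ``$|\nabla u_D|\le\omega$ on $\partial D$'' is meaningful, and I call such a $D$ a \emph{supersolution}, writing $\BB_\omega$ for the class of convex supersolutions. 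The plan is: (i) produce $\omega_0=\omega_0(\Omega,p)$ with $\BB_\omega\neq\emptyset$ for all $\omega\ge\omega_0$; (ii) show $\BB_\omega$ is stable under convex hulls of finite unions and under increasing limits; (iii) take its maximal element $D_\omega$, which is then automatically convex and $\Subset\Omega$; (iv) prove $|\nabla u_{D_\omega}|\equiv\omega$ on $\partial D_\omega$, so that $(u_{D_\omega},D_\omega)$ solves $(P_{B_I})$ in the sense of \eqref{Bernoulli-condition}. The solution thus produced is the maximal convex one; uniqueness is not expected, in line with the ball case of Lemma~\ref{lemma_ball}.

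For (i) I argue as in Lemma~\ref{lemma_ball_supers}. Let $B_R(x_0)\subseteq\Omega$ be a largest inscribed ball. The potential $u_r$ of the spherical shell $B_R\setminus\overline{B_r(x_0)}$ is explicit (Lemma~\ref{lemma_ball}), and $g(r):=|\nabla u_r|$ on $\partial B_r$ blows up as $r\to0^+$ and as $r\to R^-$, so it attains a finite positive minimum $\omega_0:=\min_{(0,R)}g$, depending only on $R$ and $p$, hence only on $\Omega$ and $p$. For $\omega\ge\omega_0$ pick a root $r_2=r_2(\omega)\in(0,R)$ of $g=\omega$. Comparing $u_{r_2}$ with the potential $u_{B_{r_2}(x_0)}$ of $\Omega\setminus\overline{B_{r_2}(x_0)}$ on $B_R\setminus\overline{B_{r_2}}$ (they agree --- and vanish --- on $\partial B_{r_2}$, and $u_{B_{r_2}(x_0)}\le u_{r_2}$ there by the maximum principle) and comparing inward normal derivatives at the common zero set $\partial B_{r_2}$ gives $|\nabla u_{B_{r_2}(x_0)}|\le g(r_2)=\omega$ on $\partial B_{r_2}$. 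Thus $B_{r_2(\omega)}(x_0)\in\BB_\omega$, so $\BB_\omega\ne\emptyset$ for every $\omega\ge\omega_0$.

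For (ii), given $D_1,D_2\in\BB_\omega$ put $D^*=\mathrm{conv}(D_1\cup D_2)$. The maximum principle gives $u_{D^*}\le\min(u_{D_1},u_{D_2})$ on $\Omega\setminus\overline{D^*}$, so at an \emph{extremal} point $x$ of $D^*$ (where $x\in\partial D_i$ for some $i$) comparison at the common zero value yields $|\nabla u_{D^*}(x)|\le|\nabla u_{D_i}(x)|\le\omega$; at a non-extremal point $x$ one uses that $x$ lies on a flat face of $\partial D^*$ together with convexity of the level set $\{u_{D^*}<l\}$ (Proposition~\ref{convexity-level-sets}) to carry the bound over from the extremal endpoints of that face, exactly the device used for the corresponding lemma in the study of $(P_I)$. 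Next, for an increasing sequence $D_1\subseteq D_2\subseteq\cdots$ in $\BB_\omega$ with convex limit $D=\mathrm{int}\,\overline{\bigcup D_n}\Subset\Omega$, one has $u_{D_n}\downarrow u_D$ in $\mathcal C^{1,\alpha}_{\mathrm{loc}}$, $u_D$ is the potential of $\Omega\setminus\overline D$ (that $u_D\equiv0$ on $\overline D$ uses Corollary~\ref{cor-holder-estimates}, that $u_D>0$ off $\overline D$ a Harnack-chain argument, both verbatim as for $(P_I)$), and $|\nabla u_D|\le\omega$ on $\partial D$ passes to the limit by semicontinuity of the boundary gradient. Because $|\nabla u_D|$ on $\partial D$ blows up as $\partial D$ approaches $\partial\Omega$ (a barrier estimate, explicit already in the concentric-ball case), $\BB_\omega$ is confined to convex sets at a fixed positive distance from $\partial\Omega$; exhausting $\bigcup_{D\in\BB_\omega}D$ by a suitable increasing sequence of convex hulls of finitely many elements of $\BB_\omega$ and invoking (ii), the closed convex hull $D_\omega:=\overline{\mathrm{conv}}\bigl(\bigcup_{D\in\BB_\omega}D\bigr)$ lies in $\BB_\omega$, is its maximal element, is convex, and satisfies $D_\omega\Subset\Omega$.

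Finally (iv), I must upgrade $|\nabla u_{D_\omega}|\le\omega$ to equality. If $|\nabla u_{D_\omega}|<\omega$ at some $x^0\in\partial D_\omega$ (in the $\liminf$ sense), I enlarge $D_\omega$ slightly near $x^0$ --- e.g. take the convex hull of $D_\omega$ with a small outward cap at $x^0$, or perform a localized Lavrent'ev-type dilation as in Lemma~\ref{lemma-sub-super} --- obtaining a strictly larger convex domain which, by continuous dependence of the boundary gradient on $D$ for convex rings (\cite{HH02}), still satisfies $|\nabla u|\le\omega$ on its boundary, contradicting the maximality of $D_\omega$. Hence $|\nabla u_{D_\omega}|\equiv\omega$ on $\partial D_\omega$, and $\liminf=\limsup=\omega$ in \eqref{Bernoulli-condition} follows from the non-tangential continuity of the gradient up to $\partial D_\omega$. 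The two genuinely delicate points, which I expect to be the main obstacle, are: controlling $|\nabla u_{D^*}|$ along flat faces of the convex hull in step (ii), and verifying in step (iv) that a small enlargement preserves the supersolution inequality; both rest on the fine boundary behaviour of $p$-capacitary potentials in convex rings recalled in Section~\ref{sec:preliminary}. (An alternative, which would also avoid citing \cite{HH00}: Theorem~\ref{thm-interior} already gives convex solutions $(u_{\lambda,l},K_{\lambda,l})$ of $(P_I)$, and, choosing $l_n\to0$ with $\lambda_n=l_n/\omega$ --- admissible by Lemma~\ref{lemma_ball_supers} once $\omega$ is large enough --- one passes to the limit as in the convergence theorem for $(P_E)$, comparing $u_n$ with the $p$-subharmonic function $\tfrac{l_n}{\lambda_n}\,\dd(\cdot,K_n)$ on the narrow shell $\{0\le u_n\le l_n\}$ to obtain $|\nabla v|=\omega$ on $\partial D$.)
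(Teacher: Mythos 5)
This statement is imported verbatim from \cite{HH00}; the paper gives no proof of it, so there is nothing internal to compare your argument against. What you have written is a reasonable reconstruction of the Beurling sub/supersolution scheme that \cite{HH00} actually uses (and that the present paper imitates for the discrete problems $(P_E)$ and $(P_I)$), so in spirit you are on the reference's track rather than on a genuinely different one.

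Two points deserve flags. First, in step (i) the claim that $|\nabla u_r|$ on $\partial B_r$ blows up as $r\to 0^+$ fails for $p>N$ (points then have positive $p$-capacity and the gradient stays bounded; compare the explicit formulas in Lemma \ref{lemma_ball}); the infimum over $r\in(0,R)$ is still positive, so $\omega_0$ can be defined, but your "finite positive minimum" need not be attained. Second, and more seriously, step (iv) leans on a ``continuous dependence of the boundary gradient on $D$ for convex rings'', which is not among the facts recalled in Section \ref{sec:preliminary} and is not how \cite{HH00} closes the argument; there the equality $|\nabla v|=\omega$ is first forced at extremal points by a local perturbation whose admissibility is checked via the gradient-monotonicity statement of Lemma \ref{lemma-gradient-below} together with the convexity and analyticity of the level sets (Proposition \ref{convexity-level-sets}), and then propagated to non-extremal points by a concavity argument along flat faces. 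As written, your step (iv) is a gap, which you partly acknowledge. Finally, the ``alternative'' route via Theorem \ref{thm-interior} and the limit $l_n\to 0$ would be circular inside this paper: Theorem \ref{conv_int} invokes Theorem \ref{thm-bernoulli-interior} precisely to guarantee that $\cap_n K_n$ does not degenerate (its Claim 4). It can be rescued by bounding $K_n$ from below by the explicit ball supersolutions of Lemma \ref{lemma_ball_supers} and tracking the limit of their radii, but that non-degeneracy argument must be supplied independently.
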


\begin{rem}\label{remark2} We remind the reader that the solution $(v,D)$ given in the previous theorem is the one constructed as the extremal supersolution for problem $(P_{B_I})$, although there may be others. The set of supersolutions for each fixed $\omega$ is defined in \cite{HH00} as the class of Lipschitz functions in $\Omega$ such that $v=1$ on $\partial \Omega$, $\Delta_p v\leq 0$ in $\{v>0\}\cap \Omega$ and $|\nabla v|\leq \omega$ on $\partial\{v>0\}\cap\Omega$.
\end{rem}

Now, the main theorem of this section is:
\begin{theo}\label{conv_int}\
Let $\Omega$ be a bounded convex domain in $\mathbb R^N$. Let $\omega>\omega_0$, where $\omega_0$ is the Bernoulli constant for the domain $\Omega$ given in Theorem \ref{thm-bernoulli-interior}. Fix two decreasing sequences  $\{l_n\}$ and $\{\lambda_n\}$ of positive numbers such that $l_n\to 0$, $\lambda_n\to 0$ as $n\to +\infty$ and $l_n =\lambda_n\omega$. Let also $\lambda_{\Omega,max}^n$ be the corresponding quantity \eqref{lambda-max} to each $l_n$. Then:
\begin{itemize}
\item There exists a solution $(v,D)$ for the interior Bernoulli problem $({P_{B_I}})$ with constant $\omega$.
\item There exists a sequence of solutions $(u_n,K_n)$ of problem $(P_I)$ for each $l =l_n $, $\lambda=\lambda_n$.
\item  The sequence of convex sets $\{K_n\}$ is decreasing and as $n\to \infty$ converges in Hausdorff distance to the convex set $D$.
\item In addition, the function $u_n$ converges in $\mathcal C_{loc}^{1,\alpha}(\Omega\backslash \overline D)\cap \mathcal C^{\alpha}(\overline{\Omega\backslash D})$ to $v$.
\end{itemize}
 \end{theo}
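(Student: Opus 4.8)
The proof I have in mind runs Beurling's machinery just as for the exterior convergence theorem $(P_E)\to(P_B)$, with the distance‑function comparison transported to the interior geometry; the only genuinely new point is that monotonicity of $\{K_n\}$ is no longer free. For each $n$ set $S_n:=\{u_n\le l_n\}\setminus\overline{K_n}$ and $d_n(x):=l_n\bigl(1-\dd(x,\Gamma_{l_n})/\lambda_n\bigr)=l_n-\omega\,\dd(x,\Gamma_{l_n})$. Since the sublevel sets $\{u_n\le t\}$ are convex (Proposition~\ref{convexity-level-sets}, applied to $1-u_n$) and $S_n$ lies inside the convex set $\{u_n\le l_n\}$, the function $\dd(\cdot,\Gamma_{l_n})$ is concave on $S_n$, so $d_n$ is $p$‑subharmonic there; as $d_n=u_n$ on $\partial S_n=\partial K_n\cup\Gamma_{l_n}$, the comparison principle gives $d_n\le u_n$ on $S_n$. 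With $|\nabla d_n|\equiv\omega$, Hopf's lemma then yields $|\nabla u_n|\ge\omega$ on $\partial K_n$ and $|\nabla u_n|\le\omega$ on $\Gamma_{l_n}$; feeding the latter into Lemma~\ref{lemma-gradient-below} (which, with the maximum principle for $|\nabla u_n|$, makes $t\mapsto\max_{\{u_n=t\}}|\nabla u_n|$ nonincreasing on $(0,1)$) upgrades it to $|\nabla u_n|\le\omega$ on all of $\{l_n\le u_n<1\}$.

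The same reasoning applied to the Bernoulli solution $(v,D)$ --- which exists by Theorem~\ref{thm-bernoulli-interior} since $\omega>\omega_0$ --- gives $|\nabla v|\le\omega$ on $\Omega\setminus\overline D$ (with equality on $\partial D$). Hence for $x\in\partial D$ and its nearest point $y$ on $\{v=l_n\}$ one has $l_n=v(y)-v(x)\le\omega|x-y|$, i.e. $\dd(x,\{v=l_n\})\ge l_n/\omega=\lambda_n$, so $D$ lies in the supersolution class \eqref{supersolutions} associated to the constants $(\lambda_n,l_n)$; denote this class $\BB^{(n)}$. In particular $\BB^{(n)}\ne\emptyset$ for every $n$ (equivalently $\lambda_n\le\lambda^n_{\Omega,max}$), so Theorem~\ref{thm-interior} provides a solution $(u_n,K_n)$, which I take to be the maximal one, $K_n=\overline{\mathscr C\bigl(\bigcup_{K\in\BB^{(n)}}K\bigr)}$, and $D\subseteq K_n$ for all $n$.

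For the monotonicity I would show $K_{n+1}\in\BB^{(n)}$, i.e. $\dd(x,\{u_{n+1}=l_n\})\ge\lambda_n$ for every $x\in\partial K_{n+1}$; since $K_n$ is maximal in $\BB^{(n)}$ this gives $K_{n+1}\subseteq K_n$. Fix $x\in\partial K_{n+1}$, let $y$ be its nearest point on $\{u_{n+1}=l_n\}$, and let $z$ be a point of $[x,y]$ on $\Gamma_{l_{n+1}}$ (it exists, as $u_{n+1}$ runs from $0$ at $x$ to $l_n>l_{n+1}$ at $y$); then $|x-y|=|x-z|+|z-y|$ with $|x-z|\ge\dd(x,\Gamma_{l_{n+1}})=\lambda_{n+1}$ by the distance property of $K_{n+1}$. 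Letting $z'$ be the last point of $[z,y]$ with $u_{n+1}(z')\le l_{n+1}$, one has $u_{n+1}(z')=l_{n+1}$ and $l_{n+1}\le u_{n+1}\le l_n$ on $[z',y]$, so the bound $|\nabla u_{n+1}|\le\omega$ on $\{l_{n+1}\le u_{n+1}\le l_n\}$ gives $l_n-l_{n+1}=u_{n+1}(y)-u_{n+1}(z')\le\omega|z'-y|\le\omega|z-y|$, whence $|z-y|\ge\lambda_n-\lambda_{n+1}$ and $|x-y|\ge\lambda_n$. Therefore $\{K_n\}$ is a decreasing chain of convex bodies, all containing $D$ and contained in $K_1\Subset\Omega$, and (by the maximum principle) $\{u_n\}$, extended by $0$ on $K_n$, is nondecreasing.

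Finally, the $K_n$ decrease in Hausdorff distance to a convex body $K_\infty$ with $D\subseteq K_\infty\Subset\Omega$; interior estimates give $u_n\to u$ in $\mathcal C^{1,\alpha}_{loc}(\Omega\setminus\overline{K_\infty})$, a standard argument as in Section~\ref{section-interior} (using Corollary~\ref{cor-holder-estimates} for the boundary value and the monotonicity of $\{u_n\}$ for positivity) shows $u=u_{K_\infty}$ and gives the $\mathcal C^{\alpha}$‑bound up to $\partial K_\infty$. Passing the bound $|\nabla u_n|\le\omega$ to the limit on each level set $\{u_{K_\infty}=t\}$, $t\in(0,1)$ (compactly inside $\Omega\setminus\overline{K_\infty}$), and letting $t\downarrow0$ with the semicontinuity of $\nabla u$ up to $\partial K_\infty$ (\cite{HH02}), gives $\limsup_{y\to x}|\nabla u(y)|\le\omega$ for all $x\in\partial K_\infty$; hence $u_{K_\infty}$ is a supersolution of $(P_{B_I})$ with constant $\omega$, so $K_\infty\subseteq D$ by the maximality of $D$. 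Together with $D\subseteq K_\infty$ this forces $K_\infty=D$, whence $u=u_{K_\infty}=u_D=v$ and, since $v$ already solves $(P_{B_I})$, the limit satisfies $|\nabla u|=\omega$ on $\partial D$ in the sense of \eqref{Bernoulli-condition}, which is the assertion. The one delicate step is the monotonicity: in the exterior problem it was automatic because there the constructed solution was the \emph{minimal} supersolution and $u_n$ a supersolution, whereas here $K_n$ is the \emph{maximal} supersolution while $u_n$ is only a subsolution, so they are a priori not comparable --- and the bound $|\nabla u_n|\le\omega$ away from $\partial K_n$ (morally: a $p$‑capacitary gradient on a convex ring is largest on the inner free boundary, Lemma~\ref{lemma-gradient-below}) is exactly what rescues the comparison.
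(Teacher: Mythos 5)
Your proposal is correct and follows essentially the same route as the paper's proof: existence of $(u_n,K_n)$ via $D\in\BB^{(n)}$, the distance-function comparison $d_n\le u_n$ in $S_n$ combined with Hopf's lemma and Lemma \ref{lemma-gradient-below} to obtain $|\nabla u_n|\le\omega$ above the level $l_n$, maximality of $K_n$ in the supersolution class to force monotonicity, and the supersolution property of the limit (in the sense of Remark \ref{remark2}) to conclude $K_\infty=D$. The only, harmless, deviation is in the monotonicity step: the paper derives $\dd(x,\{u_n=l_m\})\ge\lambda_m$ for all $m\le n$ from the pointwise inequality $u_n\le d_n$ on $\{l_n\le u_n\le 1\}$, whereas you integrate the gradient bound along the segment to the nearest point of $\{u_{n+1}=l_n\}$ — an equivalent (and arguably cleaner) version of the same estimate.
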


The proof of the theorem will be a consequence of the following claims: \\

\emph{Claim 1:} The existence of a solution for the Bernoulli problem with constant $\omega>\omega_0$ is the statement of Theorem \ref{thm-bernoulli-interior}.

\emph{Claim 2:} For each $n$, there exists a solution $(u_n,K_n)$ to the interior problem $(P_I)$ with constants $\lambda=\lambda_n$ and $l=l_n$.

The proof of this claim uses the following lemma on the relation between the solvability of the distance problem and the Bernoulli one:

\begin{lemma} \label{relate-Bernouilli-distance} Given a convex bounded domain $\Omega$ and $l\in(0,1)$, the Bernoulli constant defined in  \eqref{lambda-max} for the distance problem satisfies
\begin{equation}\label{relation}\lambda_{\Omega,max}\geq  \frac{l}{\omega_0},\end{equation}
where $\omega_0$ is the Bernoulli constant from Theorem \ref{thm-bernoulli-interior}.
\end{lemma}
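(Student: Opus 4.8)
The plan is to relate a supersolution of the Bernoulli problem $(P_{B_I})$ with constant $\omega_0$ directly to a supersolution of the distance problem with constants $(\lambda, l)$ where $\lambda = l/\omega_0$, thereby showing $\BB_\lambda\neq\emptyset$ and hence $\lambda_{\Omega,\max}\geq l/\omega_0$. Let $(v,D)$ be the extremal convex solution of $(P_{B_I})$ with constant $\omega_0$ provided by Theorem \ref{thm-bernoulli-interior}, so $v$ is $p$-harmonic in $\Omega\setminus\overline D$, vanishes on $\overline D$, equals $1$ on $\partial\Omega$, and $|\nabla v|\le \omega_0$ on $\partial D$ in the sense of \eqref{Bernoulli-condition}. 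I would take $K := \{v < l\}$, a convex open set (its convexity follows from Proposition \ref{convexity-level-sets} applied to the convex ring $\Omega\setminus\overline D$, since the superlevel sets of the $p$-capacitary potential, and hence the sublevel sets within $\Omega\setminus\overline D$, are convex). Then $\overline D\subset K\Subset\Omega$, and the $p$-capacitary potential $u_K$ of $\Omega\setminus\overline K$ is precisely the normalized restriction $u_K = (v-l)/(1-l)$ on $\Omega\setminus\overline K$; in particular $\{u_K = l\}$ corresponds to the level set $\{v = l + l(1-l)\}$... more carefully, I should instead just argue via comparison.

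The cleaner route: take $K=\{v<l\}$ and let $u_K$ be the $p$-capacitary potential of $\Omega\setminus\overline K$. Since $v$ restricted to $\Omega\setminus\overline K$ is $p$-harmonic, equals $l$ on $\partial K$ and $1$ on $\partial\Omega$, while $u_K$ equals $0$ on $\partial K$ and $1$ on $\partial\Omega$, the comparison principle gives $u_K \le \frac{v-l}{1-l}$ on $\Omega\setminus\overline K$; equivalently $v \ge l + (1-l)u_K \ge l u_K$ there — but what I actually need is a \emph{lower} bound on $\dd(x,\{u_K=l\})$ for $x\in\partial K$. For $x\in\partial K=\partial\{v<l\}$ and any $y$ with $u_K(y)=l$, I want $|x-y|\ge l/\omega_0$. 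Since $u_K(y) = l$ and $v \ge l + (1-l) u_K$ wherever both are defined, we get $v(y) \ge l + (1-l)l > l$, so $y$ lies strictly inside the region $\{v>l\} = \Omega\setminus\overline K$ — consistent but not yet quantitative. The quantitative input must come from the gradient bound: along the segment (or any path) from $x\in\partial K$ to $y$, the function $v$ rises from $l$ to at least $l+(1-l)l$; combined with the Lipschitz bound $|\nabla v|\le \omega_0$ near $\partial D$... but $\partial K$ is not $\partial D$.

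The key step — and the main obstacle — is transferring the gradient bound $|\nabla v|\le\omega_0$ on $\partial D$ to a gradient bound, or more precisely a distance estimate, at the level set $\partial K = \{v=l\}$. Here I would use that for the $p$-capacitary potential of a convex ring, $|\nabla v|$ is comparable across level sets in a controlled way; more directly, I claim $|\nabla v| \le \omega_0$ holds not just on $\partial D$ but throughout $\{v < l\}$'s complement up to $\partial D$ — no: instead, I would compare $v$ with the affine function of $\dd(\cdot,D)$. Since $D$ is convex, the function $w(x) := \omega_0\,\dd(x,D)$ satisfies $\Delta_p w \le 0$ in $\Omega\setminus\overline D$ (distance to a convex set is concave, hence $p$-superharmonic) and $w=0$ on $\partial D$; moreover $|\nabla w|=\omega_0$. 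On $\partial D$, $v$ and $w$ agree (both zero) and $|\nabla v|\le\omega_0=|\nabla w|$, so by Hopf/comparison $v \le w$ in a neighborhood of $\partial D$, and then globally in $\Omega\setminus\overline D$ by the comparison principle (both $p$-harmonic resp. $p$-superharmonic, $v\le w$ where $w$ reaches $\omega_0\,\dd(\partial\Omega,D)$ — need $w\ge 1$ on $\partial\Omega$, which holds precisely when $\omega_0\,\dd(\partial\Omega, D)\ge 1$; this is where the definition of $\omega_0$ as a Bernoulli constant enters, since the extremal solution has its free boundary at the appropriate distance). Granting $v\le \omega_0\,\dd(\cdot,D)$, for $x\in\partial K=\{v=l\}$ we get $\dd(x,D)\ge l/\omega_0$, and since $\{u_K=l\}$ lies... hmm, I still need the distance from $x$ to $\{u_K=l\}$, not to $D$.

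So the final assembly is: with $K=\{v<l\}$, I have $D\subset K$ and for $x\in\partial K$, $\dd(x,D)\ge l/\omega_0$. Now $\{u_K = l\}$ is a level set \emph{between} $\partial K$ and $\partial\Omega$, so it is farther from $\partial K$ than $D$ is in a suitable sense — but that is the wrong direction. The correct fix is to choose $K$ differently: set $K := \{v \le 0\} = \overline D$ itself won't work since then the distance constraint is about $\{u_K=l\}$ vs $\partial D$. Take $K=\overline D$, $u_K = v$. Then for $x\in\partial K = \partial D$ and $y\in\{v=l\}$: the bound $v\le\omega_0\dd(\cdot,D)$ gives $l = v(y)\le \omega_0\dd(y,D)\le\omega_0|y-x|$, hence $|y-x|\ge l/\omega_0$, so $\dd(x,\{u_K=l\})\ge l/\omega_0$ for all $x\in\partial D$, i.e. $D\in\BB_{l/\omega_0}$, giving $\lambda_{\Omega,\max}\ge l/\omega_0$ exactly as claimed. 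Thus the heart of the argument is the barrier comparison $v\le\omega_0\,\dd(\cdot,D)$, using convexity of $D$ (so $\dd(\cdot,D)$ is concave and $p$-superharmonic), the boundary gradient bound $|\nabla v|\le\omega_0$ on $\partial D$ with Hopf's lemma, and the defining property of $\omega_0$ to control $w$ on $\partial\Omega$; the main obstacle will be making the comparison rigorous at the free boundary $\partial D$ where $v$ is only known to be $\mathcal C^{1}$ non-tangentially and where the ``$|\nabla v|\le\omega_0$'' is in the $\liminf/\limsup$ sense of \eqref{Bernoulli-condition}, which should be handled by working on slightly shrunk level sets $\{v = \delta\}$ and letting $\delta\to 0$.
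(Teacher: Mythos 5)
Your overall strategy coincides with the paper's: take the Bernoulli solution $(v,D)$ with constant $\omega_0$, set $K=\overline D$ so that $u_K=v$, and show $\dd(x,\{v=l\})\ge l/\omega_0$ for every $x\in\partial D$, whence $D\in\BB_{l/\omega_0}$ and $\lambda_{\Omega,\max}\ge l/\omega_0$. The final assembly is fine, but the mechanism you propose for the key inequality $v\le\omega_0\,\dd(\cdot,D)$ is broken. The distance to a convex set $D$, evaluated at points \emph{outside} $D$, is a \emph{convex} function of $x$ (for $x_i$ with projections $y_i\in\overline D$, the point $ty_1+(1-t)y_2$ lies in $\overline D$ and realizes the convexity inequality); concavity holds only for the distance to the boundary measured from \emph{inside} a convex set. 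Consequently $w=\omega_0\,\dd(\cdot,D)$ satisfies $\Delta_p w\ge 0$, i.e.\ it is a \emph{subsolution}, and the comparison principle runs in the wrong direction: with matching boundary data it could only give $w\le v$, never $v\le w$. The local patch via Hopf's lemma does not rescue this: knowing $v=w=0$ on $\partial D$ together with $|\nabla v|\le|\nabla w|$ \emph{on} $\partial D$ is a first-order tangency statement and yields no inequality on a fixed neighborhood; and the bound $w\ge 1$ on $\partial\Omega$ that you invoke is not a consequence of the definition of $\omega_0$ except through the very inequality you are trying to prove.

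The ingredient you explicitly considered and set aside is precisely the one the paper uses: by Lemma \ref{lemma-gradient-below} (Lemma 2.2 of \cite{HH000}), i.e.\ the fact that the gradient of a $p$-capacitary potential of a convex ring attains its supremum at the inner boundary, the Bernoulli condition $|\nabla v|=\omega_0$ on $\partial D$ upgrades to $|\nabla v|\le\omega_0$ throughout $\overline{\Omega\setminus D}$. With this global Lipschitz bound one simply integrates $\nabla v$ along the segment from $x\in\partial D$ to any $y\in\{v=l\}$ (restricting, if necessary, to the portion of the segment after its last exit from $\overline D$) to obtain $l=v(y)-v(x)\le\omega_0\,|y-x|$, hence $\dd(x,\{v=l\})\ge l/\omega_0$; no barrier is needed. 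Your target inequality $v\le\omega_0\,\dd(\cdot,D)$ is in fact true, but it \emph{follows from} this gradient bound (integrate from the projection of the point onto $\overline D$) rather than implying it, and it cannot be obtained by the comparison argument you describe.
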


\begin{proof}
As in the proof of Lemma \ref{lemma_ball_supers}, it is enough to show that any solution $(v,D)$ for the Bernoulli problem $ ({\text{P}}_B)$ with constant $\omega_0$ is also a supersolution for $(P_I)$ for $\lambda \le \frac{l}{\omega_0}$. Indeed, the Bernoulli condition for $v$ tells us that $|\nabla v|=\omega_0$ on $\partial D$, which implies that $|\nabla v|\leq \omega_0$ in $\overline{\Omega\backslash D}$ because of Lemma \ref{lemma-gradient-below}. Then, if $x\in\partial D$,
\begin{equation}\label{equation20}\dd(x,\{v=l\})\geq \frac{l}{\sup|\nabla v|}=\frac{l}{\omega_0} \ge \lambda,\end{equation}
which implies $D\in \BB_{\lambda}$.
\end{proof}

Using the lemma for the domain $\Omega$ and the level $l_n$ we may
estimate
$$\lambda_{\Omega,max}^n\geq \frac{l_n}{\omega_0}\geq \frac{l_n}{\omega}=\lambda_n,$$
which tells us that the distance problem $(P_I)$ with constants $(\lambda_n,l_n)$ may be solved
 (Theorem \ref{thm-interior}). This proves Claim 2.\\

\emph{Claim 3:}  $K_n$ is a decreasing sequence of sets. In particular, we have that $u=\sup_n u_n$ and $K=\cap_n K_n$, where $(u_n,K_n)$ is the solution to the distance problem $(P_I)$ with constants $\lambda=\lambda_n$, $l=l_n$.

Fix $m\leq n$, so that $l_n\leq l_m$ and $\lambda_n\leq \lambda_m$. Let us check first that $u_n$ belongs to the set of supersolutions $\BB_m$, defined as
$$
\BB_m := \{ K \subset \Omega \; | \; \inf_{x\in \partial K}  \textrm{dist}(x, \{ u_K = l_m\}) \ge \lambda_m\}.
$$

For that, consider the distance function $d_n$ between two level sets of $u_n$, rescaled by a factor $\omega$,
 so that $d_n$ and $u_n$ have the same boundary values in the set $S_n := \{ 0\le u_n\le l_n \}$. More precisely,
 \begin{equation}\label{rescaled-distance}d_n(x) = l_n[1 - \dd(x, \{u_n=l_n\})/\lambda_n].\end{equation}
Since the level set $\{u_n=l_n\}$ is smooth, the distance function is smooth (a reference for the regularity of distance functions may be found in \cite{Foote:distance-function}). Note that in order to have $d_n$ well defined outside $S_n$, we need to take the distance function to the level set $\dd(x, \{l_n=u_n\})$ with a change of sign once we pass the level set.

Next, since the level sets of $u_n$ are convex surfaces (see Lemma \ref{convexity-level-sets}), the distance function defined in the set $S_n$ is $p$-subharmonic, i.e.,
$$\Delta_p d_n \ge 0.$$
Moreover, since $d_n$ and $u_n$ share the same boundary values on both boundaries of the set $S_n$, we can use Hopf's lemma on the smooth curve $\{ u_n = l_n \}$,
thus obtaining that
$$\omega = |\nabla d_n | \ge | \nabla u_n|,\quad \forall \; x\in \{ u_n = l_n \}.$$
Define now the set  $A_n := \{ l_n\le u_n\le 1 \}$. From Hopf's lemma we can deduce that the function $f_n := d_n - u_n$ is strictly positive in a annulus $M_n$ (a neighborhood of $\{ u_n = l_n \}$) inside $A_n$.  Suppose, by contradiction, that $M_n$ is strictly contained in the neighborhood $A_n$; consider $f_n$ in the domain $\tilde M_n:=S_n \cup M_n$: it holds that $f_n =0$ on $\partial K_n$, $f_n>0$ in  $\{0 < u_n <l_n \}$, $f_n =0$ on $\{u_n = l_n \}$, $f_n >0$ in $M_n$ and $f_n =0$ on $ \partial M_n$. But this is a contradiction since $f_n$ is super-harmonic and should reach its maximum value at the boundary of $\tilde M_n$. Hence $M_n = A_n$.
Then in the set $A_n$ one has that $u_n \leq d_n$. As a consequence,  for $m\leq n$, i.e., $l_m\geq l_n$, and $x\in\partial K_n=\{u_n=0\}$,
$$\dd(x,\{u_n=l_m\})\geq \dd(x,\{d_n=l_m\})=\lambda_m,$$
and $u_n\in\BB_m$ as claimed. In particular, since $u_m$ is constructed as the extremal supersolution in the set $\BB_m$, we have that $u_m\leq u_n$, which in particular implies that $K_n\subset K_m$. This is, $K_n\to K$ is a decreasing sequence of sets, and the claim is proved.\\

\emph{Claim 4: } Let  $ K : = \cap_n K_n$. Then $K \supseteq D$ and is convex. In particular, it is not degenerate.

 Following the steps of Lemma \ref{relate-Bernouilli-distance}, one can show that the solution $(v,D)$ of problem $({P_{B_I}})$ is a supersolution to the $(\lambda_n,l_n)$ distance problem. Hence $K_n\supseteq D$ for any $n\in \N$.

\emph{Claim 5}:  There exists a function $u$ such that $u_n \to u$ in the space $\mathcal C_{loc}^{1,\alpha}(\Omega\backslash \overline K)\cap C^{\alpha}(\Omega\backslash K)$.

\emph{Claim 6: } In addition, $u =0$ on $\partial K$ and $\partial K\in \mathcal C^{0,\alpha}$, so that $u$ is the $p$-capacitary potential in the annulus $\Omega\backslash K$.

Claims 5 and 6 follow similarly as the exterior case as a consequence of uniform $\mathcal C^\alpha$ estimates up to the boundary for a convex annulus $\Omega\backslash K$, so we will not make further comment.\\

Now we remark that the the regularity of the boundary $\partial K$ is slightly worse than in the exterior case (see Remark \ref{remark1}), indeed it is only $\mathcal C^{0,1}$, and we may not have gradient bounds up to the boundary. However, we still have from the proof of Claim 3 that in the set $A_n := \{ l_n\leq u_n< 1\}$ it holds that
\begin{equation}\label{gradient-bound-1}
|\nabla u_n| \le \omega.
\end{equation}
Now we may pass to the limit $n\to \infty$. It is easy to see that $|\nabla u_n|$ converges uniformly to $|\nabla u|$ on compact sets inside $\Omega\backslash \overline K$. As a consequence of \eqref{gradient-bound-1} we get the following claim:

\emph{Claim 7: } $|\nabla u| \leq \omega$ on $\partial K$. From Lemma \ref{lemma-gradient-below} we also get that $|\nabla u|\leq \omega$ in $\overline{\Omega\backslash K}$.\\

To complete the proof of Theorem \ref{conv_int} it suffices to show that $D=K$ and $u=v$.  First, in Claim 4 we have shown that $D\subset K$. On the other hand, in Claim 7 we have shown that $|\nabla u|\leq \omega$ on $\partial K$. This in particular implies that $(u,K)$ is a supersolution for the Bernoulli problem $(P_{B_I})$ in the sense of Remark  \ref{remark2}. As a consequence, one gets that $K\subset D$ (see \cite{HH00}), as desired.  \\

\begin{rem}
If $\Omega$ is a ball, relation \eqref{relation} can be written explicitly. Indeed for $\lambda_{max}$ in Lemma \ref{lemma_ball} via the limit
\begin{equation*}
\lim_{l\to 0} \frac{l}{\lambda_{max}}=\left\{\begin{split}
e/R\quad&\mbox{if}\quad N=p, \\
\frac{1}{R}\left(\frac{p-1}{N-1}\right)^{\frac{N-1}{p-N}}\quad&\mbox{if}\quad N\neq p,
\end{split}\right.
\end{equation*}
one recovers the usual Bernoulli constant for the ball calculated in \cite{HH00}.
\end{rem}

\section{The star-shaped case}\label{exterior-star}

A domain $U\subset \mathbb R^N$ is called star-shaped with respect to the point $x_0\in U$ if for all $x\in U$ the segment joining $x$ and $x_0$ is contained in $U$. Moreover the set $U$ is called star-shaped with respect to the  ball $B\subset U$ if $U$ is star-shaped with respect to every point $x_0\in B$.

Let $U$ be a star-shaped domain with respect to the point $x_0$. For $\theta\in\mathbb S^{N-1}$, where $\mathbb S^{N-1}$ is the unit sphere, set
$$\phi(\theta)=\sup\{\rho>0 : x_0+\rho \theta\in U\}.$$
Then we can write
$$U=\{x_0=\rho\theta : \theta\in\mathbb S^{N-1}, 0\leq \rho<\phi(\theta)\}.$$
It is well known \cite{Burenkov} that:

\begin{lemma}
Let $U$ be a bounded domain in $\mathbb R^N$ which is star-shaped with respect to the point $x_0\in U$. Then it is star-shaped with respect to  a ball centered at $x_0$ if and only if the function $\phi$ satisfies the Lipschitz condition on $\mathbb S^{N-1}$.
\end{lemma}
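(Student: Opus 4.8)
The plan is to work in polar coordinates centred at $x_0$; after a translation we may assume $x_0=0$, so $U=\{\rho\theta:\theta\in\mathbb S^{N-1},\ 0\le\rho<\phi(\theta)\}$, and since $U$ is open and bounded $0<\phi(\theta)<\infty$ everywhere. I would use two elementary facts throughout. (a) If $B\subset U$ is a ball with respect to which $U$ is star-shaped and $q\in U$, then $[b,q]\subset U$ for every $b\in B$, hence $\textrm{conv}(B\cup\{q\})=\bigcup_{b\in B}[b,q]\subset U$. (b) As soon as $\phi$ is Lipschitz it is continuous, so $m:=\min_{\mathbb S^{N-1}}\phi>0$ and $D:=\max_{\mathbb S^{N-1}}\phi<\infty$, and then $\overline U=\{\rho\theta:\rho\le\phi(\theta)\}$ and $\partial U=\{\phi(\theta)\theta:\theta\in\mathbb S^{N-1}\}$, while $B_s(0)\subset U$ for every $s<m$.

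($\Rightarrow$) Assume $U$ is star-shaped with respect to $B_r(0)$. Fix $\theta_1,\theta_2\in\mathbb S^{N-1}$, $\rho<\phi(\theta_1)$ and $r'<r$. By (a), $\textrm{conv}(\{b:|b|\le r'\}\cup\{\rho\theta_1\})\subset U$. A one-line computation identifies a point of this ``ice-cream cone'' on the ray $\{t\theta_2\}$: choosing $s=\tfrac{\rho|\theta_1-\theta_2|}{r'+\rho|\theta_1-\theta_2|}$ and $b=\tfrac{(1-s)\rho(\theta_2-\theta_1)}{s}$ one has $|b|=r'$ and $(1-s)\rho\theta_1+sb=t\theta_2$ with $t=\tfrac{r'\rho}{r'+\rho|\theta_1-\theta_2|}$, so $\phi(\theta_2)\ge t$. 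Letting $r'\uparrow r$ and $\rho\uparrow\phi(\theta_1)$,
\[
\phi(\theta_1)-\phi(\theta_2)\ \le\ \phi(\theta_1)-\frac{r\,\phi(\theta_1)}{r+\phi(\theta_1)|\theta_1-\theta_2|}\ =\ \frac{\phi(\theta_1)^2|\theta_1-\theta_2|}{r+\phi(\theta_1)|\theta_1-\theta_2|}\ \le\ \frac{D^2}{r}\,|\theta_1-\theta_2|,
\]
and exchanging $\theta_1,\theta_2$ gives the Lipschitz estimate with constant $D^2/r$ (Lipschitz in the chordal distance on $\mathbb S^{N-1}$ being equivalent to Lipschitz in the geodesic one).

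($\Leftarrow$) Assume $\phi$ is $L$-Lipschitz; I claim $U$ is star-shaped with respect to $B_r(0)$ for $r=r(L,m,D)>0$ small. It suffices to show that for every $y\in B_r(0)$ and every $z=\phi(\theta_0)\theta_0\in\partial U$ the open segment $(y,z)\subset U$: indeed any $x\in U$ lies strictly between $y$ and the point where the ray from $y$ through $x$ meets $\partial U$, so $[y,x]\subset U$ then follows. Write $w(t)=(1-t)y+t\phi(\theta_0)\theta_0$ and split $y=y_\parallel\theta_0+y_\perp$, $|y_\parallel|,|y_\perp|\le r$. For $t\le\sqrt r$ one has $|w(t)|\le(1-t)r+tD\le r+\sqrt r\,D<m\le\phi\big(w(t)/|w(t)|\big)$ once $r$ is small, so $w(t)\in U$. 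For $t\ge\sqrt r$ the component of $w(t)$ along $\theta_0$ is at least $tm/2$ (again for $r$ small), so the angle between $w(t)$ and $\theta_0$ is at most $\tfrac{2(1-t)r}{tm}\le\tfrac{2(1-t)\sqrt r}{m}$; combining $\phi\big(w(t)/|w(t)|\big)\ge\phi(\theta_0)-\tfrac{2L(1-t)\sqrt r}{m}$ with $|w(t)|\le t\phi(\theta_0)+(1-t)r$ yields
\[
\phi\big(w(t)/|w(t)|\big)-|w(t)|\ \ge\ (1-t)\Big(m-r-\tfrac{2L\sqrt r}{m}\Big)\ >\ 0
\]
for $r$ small, so again $w(t)\in U$. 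This proves the claim.

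The routine ingredients are the two scalar estimates; the only real subtlety lies in the converse near the endpoint $z\in\partial U$, where $|w(t)|$ and $\phi$ evaluated along the perturbed direction both tend to $\phi(\theta_0)$, and one must check the defining gap $\phi(w(t)/|w(t)|)-|w(t)|$ stays positive — it does because it is $(1-t)$ times a quantity bounded below uniformly, which is exactly what forces $r$ to be small in terms of $L$ and $m$, and why one interpolates between the ``$|w(t)|$ stays below $m$'' regime and the ``direction stays close to $\theta_0$'' regime at the threshold $t\asymp\sqrt r$ (any rate with $t\to0$ slower than $r$ works).
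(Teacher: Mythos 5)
Your proof is correct. Note that the paper does not actually prove this lemma --- it is stated as ``well known'' with a citation to Burenkov's book --- so there is no in-paper argument to compare against; your two-sided argument (the ice-cream-cone estimate $\phi(\theta_2)\ge r\rho/(r+\rho|\theta_1-\theta_2|)$ for necessity, and the explicit positivity of $(1-t)\bigl(m-r-2L\sqrt r/m\bigr)$ along segments for sufficiency, with the $t\asymp\sqrt r$ interpolation handling the region near the center) is the standard one and is complete, including the reduction to segments ending on $\partial U$.
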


We call such domains star-shaped Lipschitz. It is clear that star-shaped Lipschitz sets satisfy both the interior and exterior cone condition. One may also consider star-shaped Lipschitz rings: they are of the form $U=\Omega\backslash \overline K$, where $\Omega$ and $K$ are star-shaped Lipschitz with respect to the same center.

Given a bounded, star-shaped Lipschitz domain $K\subset\mathbb R^N$, we consider the exterior distance problem
\begin{equation*}
\left\{
\begin{split}
&\Delta_p u = 0  \hbox{ in } \Omega\setminus\overline{K}, \quad u =1 \textrm{ in }\overline{K}, \quad u =0 \textrm{ on } \partial\Omega, \\
&\dd( x,\Gamma_l) =\lambda  \quad\textrm{for all}\; x\in\partial \Omega,
\end{split}\right.\leqno (P_*)
\end{equation*}
for some function $u$ and a star-shaped domain $\Omega\subset \mathbb R^N$, with the same center as $K$. Our main result in this section is stated in the following theorem:

\begin{theo}\label{thm-star-shaped}
Problem $(P_*)$ has a unique solution $(u,\Omega)$  with $\partial \Omega \in \mathcal C^{0,\alpha}$ and $\mathcal C^{1,1}$ from inside.
\end{theo}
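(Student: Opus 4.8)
The strategy follows the same Beurling sub/super-solution machinery used for the convex exterior problem $(P_E)$ in Theorem~\ref{thm-exterior}, but now carried out in the class of star-shaped Lipschitz domains (with respect to a fixed common center, which we take to be the origin $0\in K$). Concretely, I would set
$$\CC_*:=\{\Omega \text{ star-shaped Lipschitz w.r.t. }0,\ \Omega\supset\overline K\},$$
and define the families of subsolutions $\AA_*$, strict subsolutions $\AA_{0,*}$, and supersolutions $\BB_*$ by the same inequalities on $\sup_{\partial\Omega}\dd(x,\Gamma_l)$ and $\inf_{\partial\Omega}\dd(x,\Gamma_l)$ relative to $\lambda$ as in Section~\ref{section-exterior}. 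The key structural input that replaces ``convexity of level sets'' is the rearrangement result of Kawohl \cite{Kawohl:starshaped}: the $p$-capacitary potential of a star-shaped Lipschitz ring has star-shaped Lipschitz super-level sets, so $\mathcal L:=\{u_\Omega>l\}$ and $\Gamma_l=\partial\mathcal L$ are themselves star-shaped Lipschitz, and in particular the level set is still a closed boundary of a set containing $\overline K$. I would organize the proof as: (1) nonemptiness of $\BB_*$ and $\AA_{0,*}$; (2) the ordering lemma (every subsolution sits inside every supersolution, via Lavrent'ev rescaling); (3) closure of $\BB_*$ under a suitable ``intersection''-type operation and under monotone limits; (4) extraction of a minimal supersolution; (5) verification that the minimal element satisfies the distance equality, hence solves $(P_*)$; (6) uniqueness; (7) the claimed regularity $\partial\Omega\in\mathcal C^{0,\alpha}$ and $\mathcal C^{1,1}$ from inside.

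\textbf{Steps (1)--(2).} For nonemptiness I would reuse verbatim the construction in Lemma~\ref{lemma-sub-supersolution}: a large ball $B_R(0)$ is a supersolution (formula \eqref{formula_used}), and rescaling the $p$-capacitary potential in $B_R\setminus\overline K$ near $\partial K$ using the lower gradient bound of Lemma~\ref{lem-gradient-below} produces a strict subsolution; balls are star-shaped, so nothing changes. The ordering Lemma~\ref{lemma-sub-super} used only that $K$ is star-shaped (so that $0\in K$ and the dilation $u_s^\eps(x)=u_s(x/\eps)$ keeps $K^\eps\subset K$), plus the strong comparison principle and the elementary distance inequality $\dd(x^0,\{u_s^\eps=l\})\ge\dd(x^0,\{u_S=l\})$ coming from $u_s^\eps\le u_S$; all of this survives in the star-shaped setting, so $\Omega_s\subset\Omega_S$ for any sub/supersolution pair. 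Note uniqueness (step 6) is then immediate exactly as in Lemma~\ref{lemma_uniq}: a solution is simultaneously a sub- and a supersolution.

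\textbf{Step (3)--(4): the delicate point.} Here lies the main obstacle. In the convex case one intersects two supersolutions and stays convex; for star-shaped domains the right analogue of ``$\Omega_1\cap\Omega_2$'' must be taken \emph{radially}: with $\Omega_i=\{\rho\theta:\rho<\phi_i(\theta)\}$ one sets $\Omega_{\min}:=\{\rho\theta:\rho<\min(\phi_1,\phi_2)(\theta)\}=\Omega_1\cap\Omega_2$, which is again star-shaped Lipschitz since $\min$ of two Lipschitz functions on $\mathbb S^{N-1}$ is Lipschitz. The proof that $\Omega_{\min}\in\BB_*$ copies Lemma~\ref{lemma-intersection}: $u_{\Omega_{\min}}\le\min(u_{\Omega_1},u_{\Omega_2})$ by the maximum principle, so the point where $\dd(\partial\Omega_{\min},\Gamma_l^{\min})$ is realized lies at level $l$ for $u_{\Omega_{\min}}$, hence at value $\ge l$ for both $u_{\Omega_i}$, hence at distance $\ge\lambda$ from each $\partial\Omega_i$, and at the realizing boundary point $x^0\in\partial\Omega_{\min}$ one has $x^0\in\partial\Omega_1\cup\partial\Omega_2$, giving $\ge\lambda$. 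Closure under monotone decreasing limits (the analogue of Lemma~\ref{lemma-convergence-exterior}) is where care is needed: the Hölder estimate up to the boundary (Corollary~\ref{cor-holder-estimates}) requires an exterior cone condition, which star-shaped Lipschitz rings do satisfy with a uniform opening depending only on the Lipschitz constant of $K$; the Harnack-chain argument ruling out ``flattening'' of $u$ at $\partial\Omega$ goes through unchanged because it uses only interior Harnack and the exterior-ball-type localization $B_{\eps/2}(x^0)\subset\Omega_k$. The one genuine subtlety is that a decreasing limit of star-shaped Lipschitz domains with Lipschitz constants bounded by $L$ is again star-shaped Lipschitz with constant $\le L$ (pointwise limit / infimum of uniformly Lipschitz $\phi_k$), so the limiting radial profile is Lipschitz and $\Omega$ stays in $\CC_*$; I would need to check that the subsolution constraint $\Omega^0\subset\Omega$ is not lost, but that is automatic by monotonicity. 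With intersection-closure and limit-closure in hand, the existence of a minimal element of $\SS:=\{\Omega\in\BB_*:\Omega^0\subset\Omega\}$ follows exactly as in Corollary~\ref{cor-exists-minimal}.

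\textbf{Step (5)--(7).} For the distance equality, I would argue by contradiction as in the proof of Theorem~3.1: if $\dd(x^0,\mathcal L)=\lambda+\eps$ at some $x^0\in\partial\Omega$, form $\tilde\Omega:=\{x\in\Omega:\dd(x,\mathcal L)<\lambda\}$; this is a $\lambda$-inner parallel set of a star-shaped Lipschitz region and is therefore \emph{still} star-shaped with respect to the common center (star-shapedness is preserved by taking $\lambda$-neighborhoods and by this inner construction relative to a star-shaped $\mathcal L$, which in turn is star-shaped by \cite{Kawohl:starshaped}), and Lipschitz; by comparison $\tilde u\le u_\Omega$ on $\tilde\Omega\setminus\overline K$, so $\dd(x,\{\tilde u=l\})\ge\dd(x,\Gamma_l)\ge\lambda$ on $\partial\tilde\Omega$, forcing $\tilde\Omega\in\BB_*$ and contradicting minimality. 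Finally, for regularity: $\{u=l\}$ is the boundary of a star-shaped Lipschitz set and, being a level set of a $p$-harmonic function with non-vanishing gradient near it (Lemma~\ref{lem-gradient-below}), is $\mathcal C^{1,\alpha}$; but star-shaped level sets of $p$-capacitary potentials in star-shaped rings are known to be $\mathcal C^{1,1}$ — more precisely $\mathcal L$ satisfies a uniform interior ball condition, giving $\Gamma_l\in\mathcal C^{1,1}$. Then $\partial\Omega=\{x:\dd(x,\mathcal L)=\lambda\}$ is the outer $\lambda$-parallel surface of a $\mathcal C^{1,1}$ convex-from-inside-at-scale-$\lambda$ set: the outer parallel set of \emph{any} closed set enjoys an interior ball condition of radius $\lambda$, hence $\partial\Omega$ is $\mathcal C^{1,1}$ from inside; and since $\mathcal L$ has bounded Lipschitz (in fact $\mathcal C^{1,1}$) boundary, the distance function $x\mapsto\dd(x,\mathcal L)$ is $\mathcal C^{0,\alpha}$ (indeed locally semiconcave) up to $\partial\Omega$, yielding $\partial\Omega\in\mathcal C^{0,\alpha}$; the outer parallel surface cannot in general be smoother than $\mathcal C^{1,1}$ because the curvatures $\mu_i/(1-\lambda\mu_i)$ of $\Gamma_l$ may not match continuously, which is exactly why one only claims ``$\mathcal C^{1,1}$ from inside.'' The corresponding regularity for $u$ up to $\partial\Omega$ (namely $\mathcal C^{\alpha}$, and better from inside) follows from Lemmas~\ref{lemma-holder-estimate}--\ref{lemma-boundary-estimates} applied on the $\mathcal C^{1,1}$-from-inside domain. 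I expect step (3) — establishing that the relevant operations preserve star-shapedness \emph{with uniformly controlled Lipschitz constants} so that compactness/limit arguments close — to be the real technical heart of the proof.
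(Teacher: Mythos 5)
Your proposal follows essentially the same route as the paper: Beurling's sub/super-solution scheme transplanted to the star-shaped class, with Kawohl's rearrangement result (Proposition \ref{starshaped-level-sets}) supplying star-shapedness of $\mathcal L=\{u>l\}$ in place of convexity, the Lipschitz-dependence of the H\"older estimates replacing the convex cone condition, and the same contradiction argument with $\tilde\Omega=\{x\in\Omega:\dd(x,\mathcal L)<\lambda\}$ for the distance equality. Two remarks on the points where you diverge. First, the uniform-Lipschitz issue you correctly flag as the technical heart is resolved in the paper by a small but essential change in the definition of the class: one works with domains star-shaped with respect to a fixed small ball $B_\delta$ (chosen so that $K$ is star-shaped with respect to $B_\delta$), not merely with respect to the point $0$; together with the uniform diameter bound coming from the supersolution $B_R(0)$, star-shapedness with respect to $B_\delta$ forces a uniform Lipschitz bound on the radial profiles of all competitors, so intersections and decreasing limits stay in the class and the uniform exterior-cone/H\"older estimates of Corollary \ref{cor-holder-estimates} apply. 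With your class defined only relative to the point $0$, an infimum of radial profiles could in principle develop a cusp, so you should adopt the fixed-ball normalization. Second, your assertion that $\tilde\Omega$ is again star-shaped should not be left as a bare claim -- it is exactly the step the paper labels as the delicate one and proves via a tangent-plane argument -- but it admits a one-line proof that is simpler than the paper's: if $x\in B_\lambda(y)$ with $y\in\mathcal L$ and $t\in[0,1]$, then $ty\in\mathcal L$ by star-shapedness and $|tx-ty|=t\,|x-y|<\lambda$, so the $\lambda$-neighborhood of $\mathcal L$ is star-shaped with respect to the same center (and, by the same computation applied to each point of $B_\delta$, with respect to $B_\delta$), hence so is its intersection with $\Omega$. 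Everything else in your outline (nonemptiness, the Lavrent'ev ordering, the Harnack-chain nondegeneracy, uniqueness via Lemma \ref{lemma_uniq}, and the interior-ball-of-radius-$\lambda$ regularity of $\partial\Omega$) matches the paper's proof.
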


Before we give the proof of the theorem, let us briefly recall what is known for the (exterior) classical Bernoulli problem $({P_B})$ when $K$ star-shaped with respect to a ball. For the case $p=2$  Kawohl proved in \cite{Kawohl:starshaped} existence and uniqueness of a (variational) solution via rearrangement method. In addition he proved that such solution has star-shaped Lipschitz level sets.
Also Acker-Meyer \cite{Acker-Meyer:free-boundary-problem} showed that, still for $p=2$, the solution is regular and that $\partial \Omega \in \mathcal C^\infty$ is also starlike with respect to the same ball.

Now we are ready for the proof of Theorem \ref{thm-star-shaped}: it follows the lines of the problem in the convex case, but some additional ingredients are needed to compensate for the lack of convexity. First, note that uniqueness was already shown in Lemma \ref{lemma_uniq}. 

Note that the regularity results of Theorem \ref{thm-classical-regularity} and Lemma \ref{lemma-holder-estimate} still hold since they only depend on the Lipschitz constant of the boundary of the domain. Moreover, Lemma \ref{lem-gradient-below} is true for star-shaped rings as it was shown in \cite{Lewis-Nystrom:starshaped}: indeed, a  solution of the $p$-Laplacian equation is smooth, $\mathcal C^{1,\alpha}$, and satisfies a type of Harnack inequality, from which a bound from below for the gradient follows. Moreover, we have the following proposition:

\begin{prop}[\cite{Kawohl:starshaped}]\label{starshaped-level-sets}
Let $l\in [0,1)$. Given a (Lipschitz) star-shaped ring $\Omega\backslash \overline K$ and its $p$-capacitary function $u_\Omega$ described as above, the level sets $\{u_\Omega>l\}$ are (Lipschitz) star-shaped.
\end{prop}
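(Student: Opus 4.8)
The plan is to prove star-shapedness of the superlevel sets by comparing $u_\Omega$ with its homothetic dilations centred at the points of the ball $B$ with respect to which $\Omega$ and $K$ are star-shaped, exploiting two facts: the $p$-Laplacian is invariant under the homotheties $H_{x_0,t}(x):=x_0+t(x-x_0)$ (if $u_\Omega$ is $p$-harmonic then so is $x\mapsto u_\Omega(H_{x_0,t}(x))$, by a direct chain-rule computation), and $p$-harmonic functions obey the weak comparison principle (Theorem~\ref{thm-classical-regularity}). The key point I want to extract from this comparison is the radial monotonicity $u_\Omega(H_{x_0,t}(x))\le u_\Omega(x)$ for $t\ge 1$, which says that $u_\Omega$ does not increase as one moves outward along rays issuing from $x_0$; star-shapedness of $\{u_\Omega>l\}$ with respect to $x_0$ is then immediate, since $u_\Omega(x)>l$ forces $u_\Omega(H_{x_0,s}(x))\ge u_\Omega(x)>l$ for every $s\in(0,1]$.

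Concretely, I would fix a centre $x_0\in B$ and a parameter $t>1$ close to $1$, and set $u_t(x):=u_\Omega(H_{x_0,t}(x))$. Star-shapedness of $\Omega$ and $K$ with respect to $x_0$ gives the inclusions $H_{x_0,1/t}(\Omega)\subset\Omega$ and $H_{x_0,1/t}(K)\subset K$, so that $u_t$ is a $p$-harmonic function on the smaller ring $A:=H_{x_0,1/t}(\Omega)\setminus\overline K$ (for $t$ near $1$ one still has $\overline K\subset H_{x_0,1/t}(\Omega)$ since $\overline K\subset\Omega$). On the inner boundary $\partial K$ one has $u_\Omega=1$ while $u_t=u_\Omega(H_{x_0,t}(x))<1$, because $H_{x_0,t}$ pushes $\partial K$ strictly outside the star-shaped set $K$; on the outer boundary $H_{x_0,1/t}(\partial\Omega)$ one has $u_t=0\le u_\Omega$. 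Thus $u_t\le u_\Omega$ on $\partial A$, and the comparison principle yields $u_t\le u_\Omega$ throughout $A$, i.e. the desired monotonicity for $t$ near $1$; composing homotheties extends it to all $t\ge1$ for which the rings are defined. Running this argument for every $x_0\in B$ shows that $\{u_\Omega>l\}$ is star-shaped with respect to each point of $B$; since $u_\Omega\equiv1>l$ on $K\supset B$, the set $\{u_\Omega>l\}$ contains $B$ and is therefore star-shaped with respect to the whole ball $B$. Finally, the characterisation of star-shaped Lipschitz domains recalled above (following \cite{Burenkov}) upgrades this to Lipschitz regularity of the radial graph of $\{u_\Omega>l\}$, giving the ``(Lipschitz)'' in the statement.

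The main obstacle is bookkeeping rather than analysis: one must verify the domain inclusions and the two boundary inequalities carefully, so that the comparison is genuinely set on a ring with the correct inequality on all of $\partial A$, and one must check that the boundary comparison holds uniformly in the centre $x_0\in B$ so that the conclusion is valid simultaneously for every centre (this uniformity, together with $B\subset\{u_\Omega>l\}$, is exactly what lets one pass from ``star-shaped with respect to each point of $B$'' to ``star-shaped with respect to the ball $B$''). By contrast, the scale-invariance of the $p$-Laplacian and the comparison principle are routine, and — in pleasant contrast to the convexity statement of Proposition~\ref{convexity-level-sets} — no gradient information on $u_\Omega$ is needed here, since star-shapedness is a purely set-theoretic property of the superlevel sets.
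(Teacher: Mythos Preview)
The paper does not give its own proof of this proposition: it is stated with a citation to Kawohl \cite{Kawohl:starshaped} and used as a black box. Your scaling-and-comparison argument is correct and provides a self-contained proof; it is precisely the Lavrent'ev rescaling method that the paper itself employs elsewhere (Lemma~\ref{lemma-sub-super}, Lemma~\ref{lemma_uniq}), specialised to the star-shaped geometry. Kawohl's original route is different: he introduces the \emph{star-shaped rearrangement} of a function and shows that it does not increase the $p$-Dirichlet energy, whence the $p$-capacitary potential must coincide with its own rearrangement and therefore have star-shaped superlevel sets. Your approach is more elementary and uses only the comparison principle; Kawohl's buys additional variational information (energy monotonicity under symmetrisation) at the cost of more machinery.

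Two small points of bookkeeping. First, the strict inequality $u_t<1$ on $\partial K$ need not hold at points where the ray from $x_0$ is tangent to $\partial K$ (a flat piece of $\partial K$ can be mapped into itself by $H_{x_0,t}$), but you only need $u_t\le 1=u_\Omega$ there, which is automatic since $0\le u_\Omega\le 1$. Second, the uniformity in $x_0\in B$ that you flag as an obstacle is not actually needed: for each fixed $x_0$ the iteration of homotheties already gives the monotonicity for \emph{all} $t\ge 1$ (extending $u_\Omega$ by $1$ on $\overline K$ and $0$ outside $\overline\Omega$ makes the inequality $u_\Omega\circ H_{x_0,t}\le u_\Omega$ global), so star-shapedness of $\{u_\Omega>l\}$ with respect to every point of $B$ follows centre by centre, and that is exactly the definition of star-shaped with respect to $B$.
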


Since, by hypothesis, the initial domain $K$ is star-shaped Lipschitz with respect to some point, then there exists some small ball $B_\delta$ centered at that point such that $K$ is star-shaped with respect to that ball. Consider the class
\begin{align*}\CC_*:= \{ \Omega \;{\mbox{open, bounded, star-shaped with respect to $B_\delta$}}, \; \Omega \supset \overline K \},\end{align*}
and define the set of sub- and super-solutions as follows:
\begin{equation*}\begin{array}
{ll}
\AA &= \{ \Omega \in\CC_* \; | \;   \sup\limits_{x\in\partial\Omega}\dd( x,\Gamma_l) \le \lambda \},\\
\AA_0 &= \{ \Omega \in \CC_* \; | \;   \inf\limits_{x\in\partial\Omega}\dd( x,\Gamma_l) <\lambda \}, \\
\BB &= \{ \Omega \in \CC_* \; | \;   \inf\limits_{x\in\partial\Omega}\dd( x,\Gamma_l) \ge \lambda \}.
\end{array}
\end{equation*}

The construction of super- and sub-solutions in Lemma \ref{lemma-sub-supersolution} is valid also for $K$ star-shaped Lipschitz domains; this implies that the sets $\AA_0$ and $\BB$ are nonempty. Lemma \ref{lemma-intersection} holds as well.
Consider now a set $\Omega_0\in\AA_0$ and set $\Omega\subset\mathbb R^N$ to be the intersection of all domains in the class $\BB$ containing $\Omega_0$.  By construction, $\Omega$ is star-shaped with respect to the same ball $B_\delta$. Let $u$ to be the $p$-capacitary potential of $\Omega\backslash \overline K$. Then one may conclude as in Lemma \ref{lemma-convergence-exterior} that $\Omega\in\BB$; the only tricky point is to check that there exists a uniform H\"older estimate in equation \eqref{u_Null}. But this is true because $\mathcal C^\alpha$-estimates up to the boundary from Corollary \ref{cor-holder-estimates} only depend on the Lipschitz constant of the boundary.
Moreover, we know by construction that
$$\inf\limits_{x\in\partial\Omega}\dd( x,\Gamma_l) \ge \lambda.$$
This implies that the equivalent of Lemma \ref{lemma-convergence-exterior} and Corollary \ref{cor-exists-minimal} for star-shaped domains are proven.

Finally, one needs to check that the function $u$ is indeed a solution for the distance problem $(P_*)$, i.e.
\begin{align}\label{ssdistance}
\dd( x,\Gamma_l) = \lambda\quad\mbox{for all }x\in\partial\Omega.
\end{align}
This follows from a simple argument: let $\Omega$ be the extremal set as constructed above, and set $\mathcal L:=\{u>l\}$. Note that $\mathcal L$ is a star-shaped annulus. By contradiction, assume that there exists $x_0\in\partial\Omega$ such that $\dd(x_0,\partial\mathcal L)>\lambda$. Define $\tilde\Omega$ as
$$\tilde\Omega:=\{x\in\Omega : \dd(x,\mathcal L)<\lambda\},$$
 and let $\tilde u$ be the $p$-capacitary potential of $\tilde\Omega\backslash\overline K$. By hypothesis, $\tilde \Omega\subsetneqq \Omega$. Comparison principle implies that $\tilde u\leq u$ and consequently
$$
\dd(x,\{\tilde u=l\})\geq \dd(x,\{u=l\})\geq\lambda,\quad \textrm{for  $x\in\partial\tilde\Omega$}.
$$

Since $\partial \tilde\Omega$ has a fixed distance $\lambda$ from $\Gamma_l$, and $\Gamma_l $ is a star-shaped surface, we conjecture that $\tilde\Omega$ is also star-shaped with respect to the ball $B_\delta$.
We proceed by contradiction: suppose that $\tilde\Omega$ is not star-shaped with respect to a point in $B_\delta$. Without loss of generality we can assume this point to be the origin $x=0$. Note that both sets $\mathcal L:=\{ l\le u \le 1\}$ and $\tilde\Omega$ are smooth. Then there exists a point $z\in\partial\tilde\Omega$ such that the tangent plane $\Pi$ to $\partial\tilde\Omega$ at $z$ passes through the origin.

 By construction the point $z$ lies at a distance $\lambda$ from $\Gamma_l$; in particular there exists $x_0\in\Gamma_l$ where this distance is attained. Since all the sets here are smooth, we know that the line joining $x_0$ to $z$ is normal to the plane $\Pi$. Then one may just work with the intersection of $\tilde\Omega$ with the plane $\Pi_1$, where $\Pi_1$ is the plane that contains the origin, the point $z$ and the normal vector $\vec\nu$ (see Figure \ref{mmm}).

 Consider now the line $s$ passing through the points $0$ and $z$ and the line $t$ passing through $0$ and $x_0$. Since the set $\LL$ is star-shaped with respect to $0$, the curve $\Gamma_l\cap \Pi_1$ has to cross the line $t$ exactly at $x_0$, and after crossing, it lies on the half-plane bounded by $t$ that does not contain the point $z$. Hence there exist some points in $\tilde \Omega\cap \Pi_1$ which are on the half-plane bounded by $s$ (which does not contain $x_0$) that lie at a distance strictly greater than $\lambda$ from $\Gamma_l$. But this contradicts the definition of $\tilde\Omega$.

 Hence $\tilde\Omega$ is star-shaped and belongs to $\BB$; by the minimality of $\Omega$ if follows that $\tilde\Omega = \Omega$ and \eqref{ssdistance} is proven.

\begin{figure}[h]
\includegraphics[width=270pt,height=150pt]{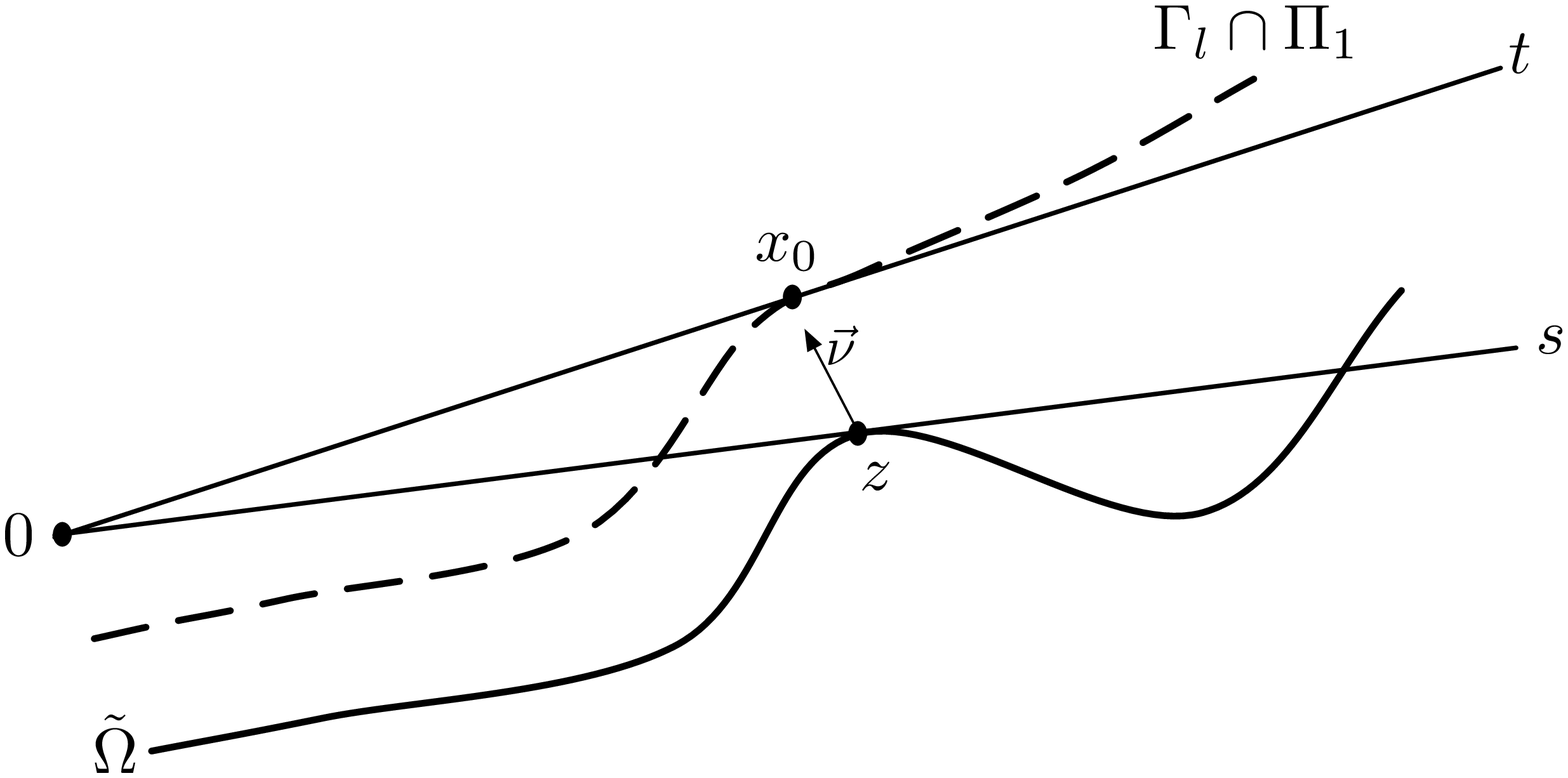}
\caption{Star-shaped domain}
\label{mmm}
\end{figure}

For the regularity, just note that the distance property implies that $\partial\Omega$ is $\mathcal C^{1,1}$ from inside, but further regularity is unknown. The proof of Theorem \ref{thm-star-shaped} is thus completed.


\section{Generalizations}\label{sec:generalization}

\subsection{Non-constant distance property - convex case}\label{subsection:nonconstant}

The next question one may ask is what happens when we replace the constant $\lambda$ in the distance property by a non-constant function $\lambda(x)$. For the Bernoulli problem, this problem was considered in \cite{HH02}. We concentrate on the exterior problem here; the interior problem follows the same line with appropriate modifications.

\begin{theo}
 Let $K\subset \R^N$, $N\ge 2$, be a convex open bounded domain, $l\in(0,1)$ and  $\lambda(x)$ a continuous function satisfying $0<c_0\leq \lambda(x)\leq c_1$ in $\mathbb R^{N}$.  Suppose moreover that $\lambda(x)$ is locally concave in $\mathbb R^N\backslash K$ (i.e., concave on each line segment contained in the set $\mathbb R^N\backslash K$).

 Then there exist a function $u\in\mathcal C^{1,\alpha}(\overline\Omega)$ and a convex open bounded domain $\Omega\subset\mathbb R^N$, $\Omega \supset \overline K$, $\partial\Omega\in \mathcal C^2$ which solve the following problem:
\begin{equation*}\left\{
\begin{split}
&\Delta_p u = 0\hbox{ in } \Omega\setminus\overline{K}, \quad u =1 \textrm{ in } \overline{K}, \quad u =0  \textrm{ on }\partial\Omega, \\
&\dd( x,\Gamma_l) =\lambda(x)  \quad\textrm{for all}\; x\in\partial \Omega.
\end{split}\right.\leqno (P'_E)
\end{equation*}
Moreover if the function
\begin{equation}\label{hypothesis-uniqueness}
t\in[0,1]\mapsto t\lambda((x-z_0)/t)
\end{equation}
is strictly increasing for all $x\,\in\mathbb R^N$ for some $z_0\in K$, then $\Omega$ is unique.
\end{theo}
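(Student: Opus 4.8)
The plan is to run Beurling's method exactly as in the proof of Theorem~\ref{thm-exterior}, keeping track of the only two places where the hypotheses on $\lambda$ are actually used: \emph{local concavity}, needed so that the minimizing procedure stays inside the convex class, and the monotonicity \eqref{hypothesis-uniqueness}, needed for the Lavrent'ev rescaling in the uniqueness part. Working in the class $\CC$ of convex bounded open sets containing $\overline K$, set
\[
\AA=\Big\{\Omega\in\CC:\ \sup_{x\in\partial\Omega}\big(\dd(x,\Gamma_l)-\lambda(x)\big)\le 0\Big\},\qquad
\BB=\Big\{\Omega\in\CC:\ \inf_{x\in\partial\Omega}\big(\dd(x,\Gamma_l)-\lambda(x)\big)\ge 0\Big\},
\]
the sub- and super-solutions of $(P'_E)$, with $\AA_0$ the strict subsolutions; a solution is precisely a member of $\AA\cap\BB$.

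\emph{Existence machinery.} A large ball $B_R(0)\supset K$ belongs to $\BB$, exactly as in Lemma~\ref{lemma-sub-supersolution}, since $\dd(x,\{u_R=l\})$ can be made $\ge c_1\ge\lambda(x)$ by choosing $R$ large. Moreover every $\Omega\in\BB$ contains the fixed nondegenerate set $K_{c_0}:=\{x:\dd(x,\overline K)<c_0\}$: since $\{u_\Omega=l\}$ separates $\partial K$ from $\partial\Omega$ one has $\dd(\partial\Omega,\overline K)\ge\dd(\partial\Omega,\{u_\Omega=l\})\ge c_0$, forcing $K_{c_0}\subset\Omega$. The two structural lemmas carry over: Lemma~\ref{lemma-intersection} ($\BB$ is closed under intersection) is unchanged because $\lambda$ is evaluated at the boundary point and $u_{\Omega_1\cap\Omega_2}\le\min\{u_{\Omega_1},u_{\Omega_2}\}$ pushes the $l$-level set inward; and Lemma~\ref{lemma-convergence-exterior} (decreasing sequences in $\BB$ have limit in $\BB$) goes through, the only new ingredient being that one passes to the limit in $\inf_{\partial\Omega_n}(\dd(\cdot,\{u_n=l\})-\lambda)\ge 0$ using Hausdorff convergence of $\partial\Omega_n$ and of $\{u_n=l\}$ together with the uniform continuity of $\lambda$. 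Hence the interior of the closure of $\bigcap_{\Omega\in\BB}\Omega$ is a minimal element $\Omega$ of $\BB$ (Corollary~\ref{cor-exists-minimal}), nondegenerate because it contains $K_{c_0}$.

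\emph{The minimal element solves $(P'_E)$ --- the crux, and the main obstacle.} Suppose $\dd(x^0,\{u_\Omega=l\})=\lambda(x^0)+\eps$ at some $x^0\in\partial\Omega$. Put $\LL:=\{u_\Omega\ge l\}$, convex by Proposition~\ref{convexity-level-sets}, and define $\tilde\Omega:=\{x\in\Omega:\ \dd(x,\LL)<\lambda(x)\}$. The function $g:=\dd(\cdot,\LL)-\lambda$ is convex along any segment contained in $\R^N\setminus K$ (the distance to a convex set is a convex function, and $-\lambda$ is convex there), while $g\le-c_0<0$ on $\overline K\subset\LL$; splitting a segment with endpoints in $\tilde\Omega$ at $\partial K$ and using convexity on the outer pieces shows $g<0$ along the whole segment, so $\tilde\Omega$ is convex. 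Since $\LL\subseteq\tilde\Omega\subsetneq\Omega$ (the strict inclusion because $g\ge\eps/2$ near $x^0$ by continuity), $\tilde\Omega\in\CC$; and for $x\in\partial\tilde\Omega\cap\Omega$ one has $\dd(x,\LL)=\lambda(x)$, so by comparison ($\tilde u\le u_\Omega$ on $\tilde\Omega\setminus\overline K$) $\dd(x,\{\tilde u=l\})\ge\dd(x,\partial\LL)=\lambda(x)$, i.e. $\tilde\Omega\in\BB$, contradicting minimality. This convexity of $\tilde\Omega$ is the only genuinely new step and is exactly where local concavity of $\lambda$ enters; without it the minimization could leave $\CC$. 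For regularity, $\{u_\Omega=l\}$ is a real-analytic hypersurface with positive principal curvatures, $\dd(\cdot,\LL)$ is smooth near $\partial\Omega$ (a convex set has no exterior cut locus), so $\partial\Omega=\{g=0\}$ inherits the regularity of $\lambda$, cf.~\cite{Foote:distance-function}, and then $u\in\mathcal C^{1,\alpha}(\overline\Omega)$ by Lemma~\ref{lemma-boundary-estimates}.

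\emph{Uniqueness under \eqref{hypothesis-uniqueness}.} It suffices to prove the analogue of Lemma~\ref{lemma-sub-super}: every $\Omega_s\in\AA$ is contained in every $\Omega_S\in\BB$; then a solution, being in $\AA\cap\BB$, satisfies $\Omega_s=\Omega_S$, and $u=u_\Omega$ follows. Translate so that $z_0=0$ and assume $\Omega_s\not\subseteq\Omega_S$. For $\eps<1$ rescale $u_s^\eps(x):=u_s(x/\eps)$; since $0=z_0\in K$ and $K$ is convex, $K_\eps:=\eps K\subseteq K$, and for $x\in\partial\Omega_s^\eps$
\[
\dd\big(x,\{u_s^\eps=l\}\big)=\eps\,\dd\big(x/\eps,\{u_s=l\}\big)\le\eps\,\lambda(x/\eps)<\lambda(x),
\]
the last inequality being precisely \eqref{hypothesis-uniqueness} at the point $x$ with $t=\eps<1$ (this is the analogue of "$\eps\lambda<\lambda$" from the constant case). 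Taking the largest $\eps_0\in(0,1)$ with $\Omega_s^{\eps_0}\subseteq\Omega_S$ and a touching point $x^0\in\partial\Omega_s^{\eps_0}\cap\partial\Omega_S$, the strong comparison principle gives $u_s^{\eps_0}\le u_S$, hence $\dd(x^0,\{u_s^{\eps_0}=l\})\ge\dd(x^0,\{u_S=l\})\ge\lambda(x^0)$, contradicting the displayed strict inequality. Everything else is a faithful transcription of Section~\ref{sec:existence} with $\lambda$ now a function of the boundary point.
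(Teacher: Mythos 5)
Your proposal is correct and follows the paper's overall Beurling skeleton (sub/supersolutions, closure of $\BB$ under intersection and decreasing limits, minimal element), but it resolves the crux step --- showing the minimal $\Omega\in\BB$ actually attains $\dd(x,\Gamma_l)=\lambda(x)$ --- by a genuinely different argument. The paper distinguishes extremal from non-extremal points of $\partial\Omega$: at an extremal point $x_0$ where the distance exceeds $\lambda(x_0)$ it chops off a convex neighborhood to produce a smaller element of $\BB$, and then propagates the equality to non-extremal points $x_0=\sum t_i x_i$ via the chain $\dd(x_0,\Gamma_l)\le\sum t_i\,\dd(x_i,\Gamma_l)=\sum t_i\lambda(x_i)\le\lambda(x_0)$, which is where the local concavity of $\lambda$ enters for the paper. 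You instead transplant the constant-$\lambda$ competitor $\tilde\Omega=\{x\in\Omega:\dd(x,\LL)<\lambda(x)\}$ and prove it is convex as a sublevel set of $g=\dd(\cdot,\LL)-\lambda$, using convexity of the distance to the convex set $\LL$ together with local concavity of $\lambda$ on segments avoiding $K$, and handling segments through $K$ by splitting at $\partial K$ where $g\le-c_0<0$; this is where concavity enters for you. Both arguments are sound and rest on the same two facts (convexity of $x\mapsto\dd(x,\LL)$, concavity of $\lambda$); yours avoids the extremal-point dichotomy and reads as a cleaner direct generalization of the constant case, while the paper's version perturbs only locally and so is somewhat more robust (it is reused verbatim in the mean-curvature and two-phase sections). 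Your uniqueness argument via Lavrent'ev rescaling, with $t\mapsto t\lambda((x-z_0)/t)$ supplying the strict inequality $\eps\lambda(x/\eps)<\lambda(x)$, is the same as the paper's. One small caveat: your closing remark that $\partial\Omega=\{g=0\}$ ``inherits the regularity of $\lambda$'' does not by itself yield $\partial\Omega\in\mathcal C^2$ for merely continuous $\lambda$; the paper obtains this from the curvature formula $\mu_i/(1+\lambda(x)\mu_i)$ for the surface at distance $\lambda(x)$ from the analytic, uniformly convex level set $\Gamma_l$, and you should quote that computation rather than a generic level-set regularity statement.
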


\begin{proof}

The proof of the above theorem is analogous to one of Theorem \ref{thm-exterior}, with small modifications that we indicate below. First, the sets $\AA$, $\BB$ and $\AA_0$ are defined as in Section \ref{section-exterior}. Super and subsolutions are constructed as in Lemma \ref{lemma-sub-supersolution} by choosing, for the supersolution, $R$ big enough such that
$$
\dd(x, \{u_R =l\}) \ge c_1
$$
and, for the subsolution, $\varepsilon$ small enough such that
$$
\frac{\varepsilon \; l}{\alpha} \le c_0.
$$
Next, Lemma \ref{lemma-intersection} and \ref{lemma-convergence-exterior} follow similarly. The existence of a neighborhood of radius $\varepsilon$ contained in each $\Omega_k$ for all $k$ where one can apply Harnack inequality is a consequence of the fact that $\lambda(x)$ is a continuous function.

Corollary \ref{cor-exists-minimal} holds as well for $\lambda = \lambda(x)$ and the proof follows exactly the one for $\lambda = const.$ Hence we obtain $(\Omega,u)$ minimal in the class $\BB$. $\Omega$ is convex by construction, and thus, $\partial\Omega$ is Lipschitz.

It remains to check that this solution satisfies the distance property, i.e.,
$$\dd(x,\{u=l\})=\lambda(x)\quad \mbox{for all }x\in\partial\Omega.$$
Contrary to the constant case, convexity for $\Omega$ is a crucial hypothesis. By contradiction, assume that the above is not true, i.e. there exists a point $x_0\in\partial\Omega$ such that
$$\dd(x_0,\{u=l\})>\lambda(x_0).$$
By continuity one can find a neighborhood $V \ni x_0$ such that $\dd(x,\{u=l\})>\lambda(x)$ for all $x\in V$. Let $E_\Omega$ be the set of extremal points of $\Omega$. If $x_0\in E_\Omega$, then one may find a convex set $\tilde\Omega$ such that $\Omega\backslash V\varsubsetneq\tilde\Omega\varsubsetneq \Omega$ and $x_0\not\in \tilde\Omega$. Let $\tilde u$ be the $p$-capacitary function of $\tilde \Omega$. Comparison principle implies that $\tilde u\leq u$; then for all $x\in\partial\tilde\Omega$,
$$\dd(x,\{\tilde u=l\})\geq \dd(x,\{u=l\})\geq\lambda(x).$$
We have just shown that $\tilde\Omega\in\BB$. Contradiction with the minimality of $\Omega$.

If $x_0\not\in E_\Omega$, then $x_0$ can be written as a linear combination of extremal points, i.e.,
\begin{equation}\label{non-extremal-point}x_0=\sum_{i=1}^n t_n x_n,\quad \mbox{where } \sum_{i=1}^n t_i=1, x_i\in \overline{E_\Omega}, i=1,\ldots,n.\end{equation}
By hypothesis $\dd(x_i,\{u=l\})=\lambda(x_i)$ for every $i=1,\ldots, n$ and
$$\dd(x_0,\{u=l\})\leq \sum_{i=1}^n t_i \dd(x_i,\{u=l\})=\sum_{i=1}^n t_i \lambda(x_i)\leq\lambda(x_0),$$
where we have used the local concavity assumption on $\lambda$ for the last inequality.\\

Next, to show that that the solution is unique we follow the same rescaling method as in Lemma \ref{lemma_uniq}. Suppose, by contradiction, that $(P'_E)$ has two solutions $(u_1,\Omega_1)$, $(u_2,\Omega_2)$. Without loss of generality assume that in hypothesis \eqref{hypothesis-uniqueness} we have that $z_0=0\in K$. Rescale $u_2^\varepsilon(x)=u_2(x/\varepsilon)$. Then $\Omega^\varepsilon_2\subset \Omega_1$ and one may consider the biggest $\varepsilon(<1)$ such that $\Omega_2^\varepsilon$ touches $\Omega_1$ from inside. Let $x_0^\varepsilon\in \partial \Omega_2^\varepsilon \cap \Omega_1$ and $x_0=x_0^\varepsilon/\varepsilon$. It holds
\begin{equation}\label{formula20}
\dd(x_0^\varepsilon,\{u_2^\varepsilon=l\})=\varepsilon \dd(x_0,\{u_2=l\})=\varepsilon \lambda(x_0)=\varepsilon \lambda( x_0^\varepsilon/\varepsilon)<\lambda(x_0^\varepsilon),
\end{equation}
thanks to \eqref{hypothesis-uniqueness} for the last inequality. On the other hand, by comparison principle, we easily have that $u_2^\varepsilon\leq u_1$; in particular,
\begin{equation}\label{formula21}
\dd(x^\varepsilon_0,\{u_2^\varepsilon=l\})\geq \dd(x^\varepsilon_0,\{u_1=l\})=\lambda(x^\varepsilon_0).
\end{equation}
Then equations \eqref{formula20} and \eqref{formula21} give the desired contradiction.

We look now at the regularity: since $\Omega$ is convex the surface $\{u=l\}$ is analytic by Lemma \ref{convexity-level-sets}. Then $\partial\Omega$ is the set of points at distance $\lambda(x)$ from this level set, that will have principal curvatures given by the formula
$$\frac{\mu_i}{1+ \lambda(x)\mu_i}.$$
If $\lambda$ is continuous, then $\partial\Omega$ will have continuous principal curvatures, which implies that $\partial\Omega\in\mathcal C^2$. Higher regularity on $\lambda$ will imply higher regularity for $\partial\Omega$. The proof of the theorem is thus completed.

\end{proof}

\subsection{The distance problem with mean curvature condition}

 We shall now consider the case where $\lambda=\lambda(x,\kappa(x))$, with $\kappa(x)$ being the mean curvature of $\partial\Omega$ at the point $x$. The standard Bernoulli problem for harmonic functions (case $p=2$) with non-constant gradient condition depending on the mean curvature has been studied in \cite{Athanasopoulos-Caffarelli-Kenig-Salsa,Mazzone,Mikayelyan-Shahgholian} through a variational formulation for the functional
$$\int_{D} |\nabla u|^2\,dx + \Per(\{v>0\}).$$
Here we follow the sub/supersolution approach.

We first recall the definition of mean curvature in the viscosity sense:

\begin{defin}
For $U$ a bounded convex domain, one may define the {\em{(interior)}} mean curvature  of $\partial U$ in the viscosity sense as follows: assume that $0\in\partial U$ and that the interior normal $\nu$ to $\partial\Omega$ at the origin is in the direction of the $e$-axis. We define
$$\kappa(\partial U)(0):=\inf_{A\in\mathfrak{U}} \kappa (S_A)(0),$$
where $S_A=\{(x,e)\, :\,e=\langle Ax,x\rangle\}$ and $\mathfrak{U}$ is the set of all symmetric matrices $A$ such that the set $S_A$ (the graph of a quadratic polynomial) locally touches $\partial U$ at $0$ from inside.

If the set $\mathfrak U$ is empty (i.e., no paraboloid touches $\partial U$ at $0$ from inside), we define
$$\kappa(\partial U)(0)=\infty.$$
\end{defin}

We also define the {\em{exterior}} mean curvature

\begin{defin}
Let $U$ be as in the previous definition:
We define the  {\em{exterior}} mean curvature $\underline \kappa(\partial U)(0)$ as

$$\underline \kappa(\partial U)(0):=\sup_{A\in\mathfrak{U}} \kappa (S_A)(0),$$
where $S_A=\{(x,e)\, :\,e=\langle Ax,x\rangle\}$ and $\mathfrak{U}$ is the set of all symmetric matrices $A$ such that the set $S_A$ (the graph of a quadratic polynomial) locally touches $\partial U$ at $0$ from outside.

If the graph of any quadratic polynomial locally touches $\partial U$ at $x=0$ from outside, we say that
$$\underline\kappa(\partial U)(0)=\infty.$$
\end{defin}

Clearly for any point $x_0 \in \partial U$ where $\partial U \in \mathcal C^2$ in a neighborhood of $x_0$ it holds that $\underline\kappa (\partial U)(x_0)= \kappa(\partial U)(x_0)$. \\


In this section we study the exterior discrete Bernoulli problem $(P'_E)$ with distance condition depending on the mean curvature. More precisely, given a convex bounded domain $K$ with $\partial K\in \mathcal C^{1,1}$ we prescribe the condition
 \begin{equation}\label{distance-mean-curvature}\dd(x,\Gamma_l) \; = \frac{\tilde l}{\kappa(\partial \Omega)(x)}=:\lambda_{\kappa,\tilde l}(x),\quad x\in\partial\Omega,\end{equation}
 with $\tilde l\in (0,1)$. We do not include $\tilde l = 1$ since it is not an admissible value for the case when $K$ is a ball, as explained in Lemma \ref{lemma:curvature_super}.

The main result is summarized in the following theorem.

\begin{theo}\label{theo-curvature}
 Let $K\subset \R^N$, $N\ge 2$, be a convex open bounded set and $l,\tilde l \in(0,1)$. There exist a function $u\in\mathcal C^{1,\alpha}(\Omega)$ and a convex open bounded domain $\Omega\subset\mathbb R^N$, $\Omega\supset \overline K$, with $\partial\Omega\in \mathcal C^{0,1}$ which solve the following problem:

\begin{equation*}\left\{
\begin{split}
&\Delta_p u = 0\hbox{ in } \Omega\setminus\overline{K}, \quad u =1 \textrm{ in } \overline{K}, \quad u =0  \textrm{ on }\partial\Omega, \\
&\dd( x,\Gamma_l) \geq\lambda_{\kappa,\tilde l}(x)  \quad\textrm{for all}\; x\in\partial\Omega,\\
&\dd( x,\Gamma_l) =\lambda_{\kappa,\tilde l}(x)  \quad\textrm{for all}\; x\in\partial \Omega_\kappa,
\end{split}\right.\leqno (P'_K)
\end{equation*}
where $\lambda_{\kappa,\tilde l}(x)$ is defined in (\ref{distance-mean-curvature}) and $\partial \Omega_\kappa$ is the set of all points $x\in \partial\Omega$ for which at least one of the following assumptions is satisfied:
\begin{enumerate}
\item[\emph{$(A1)$}] $x$ is an extremal point of $\Omega$ and $\partial\Omega$ is $\mathcal C^2$-smooth at $x$,
\item[\emph{$(A2)$}] $x$ is not an extremal point, but it can be written as a linear combination of extremal points satisfying \emph{$(A1)$}.
\end{enumerate}

Moreover the set $\partial\Omega\in \mathcal C^{0,1} $ does not contain points $x$ of the form
\begin{enumerate}
\item[\emph{$(A3)$}] $x$ is an extremal point of $\Omega$ and $\kappa(\partial\Omega)(x)=\underline \kappa(\partial\Omega)(x)=\infty$,
\item[\emph{$(A4)$}] $x$ is not an extremal point, but it can be written as a linear combination of extremal points satisfying \emph{$(A3)$}.
\end{enumerate}
\end{theo}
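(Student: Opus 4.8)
The plan is to run Beurling's sub- and super-solution scheme exactly as for Theorem \ref{thm-exterior} and for the non-constant-$\lambda$ problem of Section \ref{subsection:nonconstant}, the one genuinely new feature being the way the curvature-dependent constraint interacts with the limiting and surgery steps. Inside $\CC=\{\Omega\text{ convex, bounded, open},\ \Omega\supset\overline K\}$ I would call $\Omega$ a supersolution if $\dd(x,\Gamma_l)\ge\lambda_{\kappa,\tilde l}(x)$ for every $x\in\partial\Omega$ and a strict subsolution if the strict reverse inequality holds everywhere on $\partial\Omega$, and write $\BB$, $\AA_0$ for the corresponding subclasses. Two remarks drive the whole argument: at a corner of $\partial\Omega$ one has $\kappa=\infty$, hence $\lambda_{\kappa,\tilde l}=0$ and the supersolution inequality is vacuous there; conversely $\Omega\in\BB$ forces $\kappa(\partial\Omega)(x)>0$ at every boundary point (otherwise $\lambda_{\kappa,\tilde l}(x)=+\infty>\dd(x,\Gamma_l)$), so the minimal supersolution is automatically uniformly convex.

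First I would check that $\AA_0,\BB\neq\emptyset$ (this is Lemma \ref{lemma:curvature_super}). For a supersolution take a large ball $B_R\supset\overline K$ and the $p$-capacitary potential $u_R$ of $B_R\setminus\overline K$: comparing with the explicit radial potential of $B_R\setminus\overline{B_r(0)}$ shows $\dd(\{u_R=l\},\partial B_R)$ grows linearly in $R$ and eventually exceeds $\lambda_{\kappa,\tilde l}(\partial B_R)=\tilde l\,\kappa(\partial B_R)^{-1}$, precisely because $\tilde l<1$ (this is why $\tilde l=1$ is excluded). For a strict subsolution take, as in Lemma \ref{lemma-sub-supersolution}, the level domain $\Omega_\eps=\{u_R>1-\eps\}$; its boundary is analytic with strictly positive principal curvatures by Proposition \ref{convexity-level-sets}, the hypothesis $\partial K\in\mathcal C^{1,1}$ keeps those curvatures bounded as $\eps\downarrow 0$, and the gradient lower bound of Lemma \ref{lem-gradient-below} gives $\dd(x,\{u_\eps=l\})\le\eps l/\alpha$, so for $\eps$ small $\eps l/\alpha<\tilde l\,\kappa(\partial\Omega_\eps)^{-1}$ uniformly and $\Omega_\eps\in\AA_0$.

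Next I would establish the two stability properties of $\BB$. Closure under finite intersection follows as in Lemma \ref{lemma-intersection}: at $x\in\partial\Omega_1$ interior to $\Omega_2$ one has $\kappa(\partial(\Omega_1\cap\Omega_2))(x)=\kappa(\partial\Omega_1)(x)$ and $u_{\Omega_1\cap\Omega_2}\le u_{\Omega_1}$, so $\{u_{\Omega_1\cap\Omega_2}=l\}$ lies on the $K$-side of $\{u_{\Omega_1}=l\}$ and $\dd$ only grows, while at a point of $\partial(\Omega_1\cap\Omega_2)$ lying on both $\partial\Omega_1$ and $\partial\Omega_2$ the curvature of the intersection is $\ge\max_i\kappa(\partial\Omega_i)(x)$ (and $=\infty$ at a true edge), so the inequality survives. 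The harder statement is that a decreasing sequence $\Omega_n\downarrow\Omega$ in $\BB$ has limit in $\BB$ (with $\Omega=\mathrm{int}\,\overline{\cap\Omega_n}$ convex, bounded, $\supset\overline K$): exactly as in Lemma \ref{lemma-convergence-exterior}, $u_n\to u_\Omega$ in $\mathcal C^{1,\alpha}_{loc}$, the uniform boundary $\mathcal C^\alpha$-bound of Corollary \ref{cor-holder-estimates} plus a Harnack chain make $u_\Omega$ the capacitary potential of $\Omega\setminus\overline K$ with $\{u_n=l\}\to\{u_\Omega=l\}$ in Hausdorff distance, so $\dd(x_n,\{u_n=l\})\to\dd(x,\Gamma_l)$; for the curvature, fix $x\in\partial\Omega$ and a paraboloid $S_A$ touching $\partial\Omega$ from inside at $x$, translate it slightly in the outward normal direction until it touches $\partial\Omega_n$ from inside at a point $x_n\to x$ (possible since $\Omega\subset\Omega_n$ and $\Omega_n\to\Omega$), invoke translation-invariance of curvature and $\Omega_n\in\BB$ to get $\dd(x_n,\{u_n=l\})\ge\tilde l\,\kappa(S_A)(x)^{-1}$, let $n\to\infty$, and take the supremum over such $A$, obtaining $\dd(x,\Gamma_l)\ge\tilde l\,\kappa(\partial\Omega)(x)^{-1}$. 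This upper semicontinuity of the interior viscosity mean curvature along decreasing convex sets is, together with the surgery below, the main obstacle of the proof. Fixing a strict subsolution $\Omega_0$ lying below a member of $\BB$ and letting $\Omega$ be the interior of the closure of the intersection of all $\Omega'\in\BB$ with $\Omega'\supset\Omega_0$, these two facts yield the minimal element $\Omega\in\BB$, with $\partial\Omega$ Lipschitz and $u:=u_\Omega\in\mathcal C^{1,\alpha}(\Omega\setminus\overline K)$ by Theorem \ref{thm-classical-regularity}; this already gives the first boundary line of $(P'_K)$.

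It remains to identify where equality holds and to exclude the bad points. For an (A1)-point $x^0$ (extremal, $\partial\Omega\in\mathcal C^2$ nearby, hence of positive finite curvature) suppose $\dd(x^0,\Gamma_l)=\lambda_{\kappa,\tilde l}(x^0)+\varepsilon$, $\varepsilon>0$; continuity of $\kappa$ and $\dd(\cdot,\Gamma_l)$ on the $\mathcal C^2$ piece propagates the strict inequality to a neighbourhood $W\subset\partial\Omega$ of $x^0$. Replacing $\partial\Omega$ on $W$ by its inner parallel surface at small distance $t$, smoothly patched to the unchanged $\partial\Omega\setminus W$, gives a convex $\tilde\Omega\subsetneq\Omega$: on $\partial\tilde\Omega\setminus W$ the supersolution inequality is inherited ($\tilde u\le u$, so $\Gamma_l$ moved toward $K$ and $\dd$ grew, $\kappa$ unchanged), while on the shifted piece each principal curvature $\mu_i$ becomes $\mu_i/(1-t\mu_i)\ge\mu_i$, so $\lambda_{\kappa,\tilde l}$ drops at rate $O(t)$ whereas $\dd(\cdot,\Gamma_l)$ drops by at most $t$; because $\tilde l<1$, taking $t$ small (indeed $t\lesssim\varepsilon/(1-\tilde l)$) keeps $\tilde\Omega\in\BB$, contradicting minimality. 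Hence equality at every (A1)-point; for an (A2)-point $x^0=\sum_i t_i x_i$ with the $x_i$ of type (A1), equality at the $x_i$, convexity of $x\mapsto\dd(x,\Gamma_l)$ on the convex set $\{u<l\}$, and the fact that on the relative interior of a face the curvature is governed by that at the extremal endpoints give $\lambda_{\kappa,\tilde l}(x^0)\le\dd(x^0,\Gamma_l)\le\sum_i t_i\dd(x_i,\Gamma_l)=\sum_i t_i\lambda_{\kappa,\tilde l}(x_i)\le\lambda_{\kappa,\tilde l}(x^0)$, forcing equality. Finally, if $\Omega$ had an extremal point $x^0$ with $\kappa(\partial\Omega)(x^0)=\underline\kappa(\partial\Omega)(x^0)=\infty$, round the corner: cutting $\Omega$ near $x^0$ by a small spherical cap of radius $r\ll\dd(x^0,\Gamma_l)$ produces a convex $\tilde\Omega\subsetneq\Omega$ whose new boundary piece stays at distance $\ge\dd(x^0,\Gamma_l)-Cr>0$ from $\Gamma_l$ yet has curvature $\sim 1/r$, so $\lambda_{\kappa,\tilde l}\sim\tilde l\,r$ is as small as we wish and $\tilde\Omega\in\BB$, again contradicting minimality; rounding simultaneously the finitely many extremal points in the representation of an (A4)-point excludes those too. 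Since $\Omega$ is a convex body $\partial\Omega\in\mathcal C^{0,1}$ automatically, and $u=u_\Omega$ is the $p$-capacitary potential of $\Omega\setminus\overline K$ by construction, which completes the proof.
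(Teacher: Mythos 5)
Your proposal is correct and follows essentially the same route as the paper: Beurling's method with the class $\BB$ of curvature--supersolutions, closure under intersection and under decreasing limits (via the same upper semicontinuity of the interior viscosity mean curvature, obtained by translating a touching paraboloid outward until it touches $\partial\Omega_n$), a minimal element above a fixed strict subsolution, and local surgeries at extremal points to force equality at $(A1)$/$(A2)$ points and to exclude $(A3)$/$(A4)$ corners. The only substantive differences are cosmetic --- you perturb by inner parallel surfaces and spherical rounding where the paper uses graph perturbations $\tilde\varphi=\max\{\varphi,(1-\eps)\varphi+\eps^2\}$ --- and your $(A2)$ inequality $\sum_i t_i\lambda_{\kappa,\tilde l}(x_i)\le\lambda_{\kappa,\tilde l}(x^0)$ is precisely the local concavity of $1/\kappa(\partial\Omega)$ along $\partial\Omega$ for convex $\Omega$, which the paper justifies by citing Lemma 12 of \cite{Mikayelyan-Shahgholian} rather than by the vaguer ``governed by the extremal endpoints'' you invoke; you should cite or prove that fact explicitly.
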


Recall that extremal points are defined in Definition \ref{extr-points}. We first show a technical lemma that will be used later:

\begin{lemma}
Let $\Omega_n$ be a decreasing sequence of convex bounded domains converging in Hausdorff measure to $\Omega$. Let $x_0$ be a point on $\partial\Omega$; it holds
\begin{equation}\label{outer-curvature}
\liminf_{\{x_n\}} \kappa(\partial\Omega_n)(x_n)\leq \kappa(\partial\Omega)(x_0),
\end{equation}
where the $\liminf$ is taken for all sequences $\{x_n\}$ satisfying $x_n\in\partial\Omega_n$ and $x_n\to x_0$.
\end{lemma}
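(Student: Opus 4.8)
\emph{Proof plan.} If no paraboloid touches $\partial\Omega$ at $x_0$ from inside then $\kappa(\partial\Omega)(x_0)=\infty$ and the inequality is trivial, so I would assume $\mathfrak{U}(\Omega,x_0)\neq\emptyset$ (the set of symmetric matrices $A$ with $S_A$ touching $\partial\Omega$ at $x_0$ from inside). Fix such an $A$; by the definition, choosing coordinates centered at $x_0$ with the $e$-axis along an interior normal $\nu$, we have $\langle Ax,x\rangle\ge g(x)$ on a small horizontal disk $\{|x|\le\rho'\}$, where $\partial\Omega$ is locally the graph of a convex function $g\ge 0$ with $g(0)=0$; note $A\succeq 0$. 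Replace $A$ by $A_\epsilon:=A+\epsilon I$, so that $S_{A_\epsilon}$ still lies in $\overline\Omega$ near $x_0$ but now $\langle A_\epsilon x,x\rangle>g(x)$ \emph{strictly} for $0<|x|\le\rho'$, while $\kappa(S_{A_\epsilon})(x_0)\to\kappa(S_A)(x_0)$ as $\epsilon\to0$. Since $\Omega_n\supseteq\Omega$ (the sequence is decreasing with Hausdorff limit $\Omega$), the graph $\{(x,\langle A_\epsilon x,x\rangle):|x|\le\rho'\}$ already lies in $\overline{\Omega_n}$; I would then slide it in the direction $-\nu$, setting $h_n:=\sup\{h\ge0:\{(x,\langle A_\epsilon x,x\rangle-h):|x|\le\rho'\}\subseteq\overline{\Omega_n}\}\ge 0$, and let $\xi_n=(\bar x_n,\langle A_\epsilon\bar x_n,\bar x_n\rangle-h_n)\in\partial\Omega_n$ be a resulting contact point.

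The next step is to show $h_n\to0$ and $\xi_n\to x_0$. For $h_n\to0$: for any $\delta_0>0$ the point $(0,-\delta_0)$ lies at positive distance from $\overline\Omega$ (it is on the outward side of the supporting hyperplane $\{e=0\}$), hence for $n$ large it is not in $\overline{\Omega_n}$, forcing $h_n<\delta_0$. Since also every $(x,\langle A_\epsilon x,x\rangle)$ with $|x|=\rho'$ is interior to $\Omega\subseteq\Omega_n$, the contact for $n$ large occurs with $|\bar x_n|<\rho'$, i.e.\ genuinely from inside and tangentially. Then along any subsequence $\bar x_n\to\bar x$ with $|\bar x|\le\rho'$ and $\xi_n\to(\bar x,\langle A_\epsilon\bar x,\bar x\rangle)$, which must lie in $\partial\Omega$ (the limit of $p_n\in\partial\Omega_n$ cannot be interior to $\Omega$, using $\Omega\subseteq\Omega_n$); by the strict inequality $\langle A_\epsilon x,x\rangle>g(x)$ off the origin this forces $\bar x=0$, so $\xi_n\to x_0$. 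Now $S_{A_\epsilon}-h_n\nu$ touches $\partial\Omega_n$ at $\xi_n$ from inside, so after re-centering at $\xi_n$ along a supporting normal there, and bounding the (slightly rotated) quadratic graph from below by a genuine quadratic graph $S_{A'}$ with $\mathrm{tr}(A')$ arbitrarily close, we get $\kappa(\partial\Omega_n)(\xi_n)\le\kappa(S_{A'})(\xi_n)$; since $\xi_n\to x_0$ the contact-point gradient $2A_\epsilon\bar x_n\to0$ and the mean-curvature operator for graphs depends continuously on the gradient with $D^2$ fixed, so $\limsup_n\kappa(\partial\Omega_n)(\xi_n)\le\kappa(S_{A_\epsilon})(x_0)$. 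Therefore
$$\inf_{\{x_n\}}\ \liminf_{n\to\infty}\kappa(\partial\Omega_n)(x_n)\ \le\ \liminf_{n\to\infty}\kappa(\partial\Omega_n)(\xi_n)\ \le\ \kappa(S_{A_\epsilon})(x_0),$$
and letting $\epsilon\to0$ and then taking the infimum over all admissible $A$ gives $\le\inf_A\kappa(S_A)(x_0)=\kappa(\partial\Omega)(x_0)$.

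I expect the main obstacle to be bookkeeping rather than a conceptual difficulty, concentrated in two places. First, making the "slide down" well posed and identifying its first contact point as an honest interior, tangential touching of $\partial\Omega_n$ — this rests on convexity together with the Hausdorff convergence $\Omega_n\to\Omega$ (equivalently, on the local graphs converging uniformly) and on the strictness gained from the $A\mapsto A+\epsilon I$ perturbation. Second, handling the viscosity definition at the moving point $\xi_n$, where the sliding hyperplane is only \emph{one} supporting hyperplane of the convex set $\Omega_n$ and the re-centered, re-oriented paraboloid is no longer exactly the graph of a quadratic; this is resolved by a routine $\mathcal C^2$ approximation (touching the rotated paraboloid from below by a slightly more convex genuine quadratic graph, whose trace converges to that of $2A_\epsilon$). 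Once these points are in place, the convergence of contact points and of mean curvatures follows immediately from the uniform convergence of the boundary graphs and the explicit form of the mean-curvature operator.
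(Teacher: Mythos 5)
Your argument is correct and takes essentially the same route as the paper's own proof: touch $\partial\Omega$ from inside at $x_0$ by a paraboloid of nearly optimal curvature, translate it along the outward normal until it first touches $\partial\Omega_n$, and use the contact points as the sequence $x_n$. In fact your write-up is more careful than the paper's on the two points it glosses over, namely the $A\mapsto A+\epsilon I$ perturbation that forces the contact points to converge to $x_0$, and the re-centering/re-orientation of the touching paraboloid needed to invoke the viscosity definition of $\kappa(\partial\Omega_n)$ at the moving contact point.
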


\begin{proof}
First note that if $\kappa(\partial\Omega)(x_0) = +\infty$ inequality \eqref{outer-curvature} holds.
Let $x_0$ be a regular point with curvature $0\leq k_0:=\kappa(\partial\Omega)(x_0) < +\infty$. Without loss of generality assume that $x_0 =0$. For any $\varepsilon>0$, there exists a paraboloid $P_{k_0+\varepsilon}$ with curvature ${\kappa_0+\varepsilon}$ at the point $x=0$ that touches $\partial\Omega$ from inside at $0$ in a neighborhood $U_\varepsilon\ni 0$ (see Figure \ref{mov_curv}).

\begin{figure}[h]
\includegraphics[width=260pt,height=200pt]{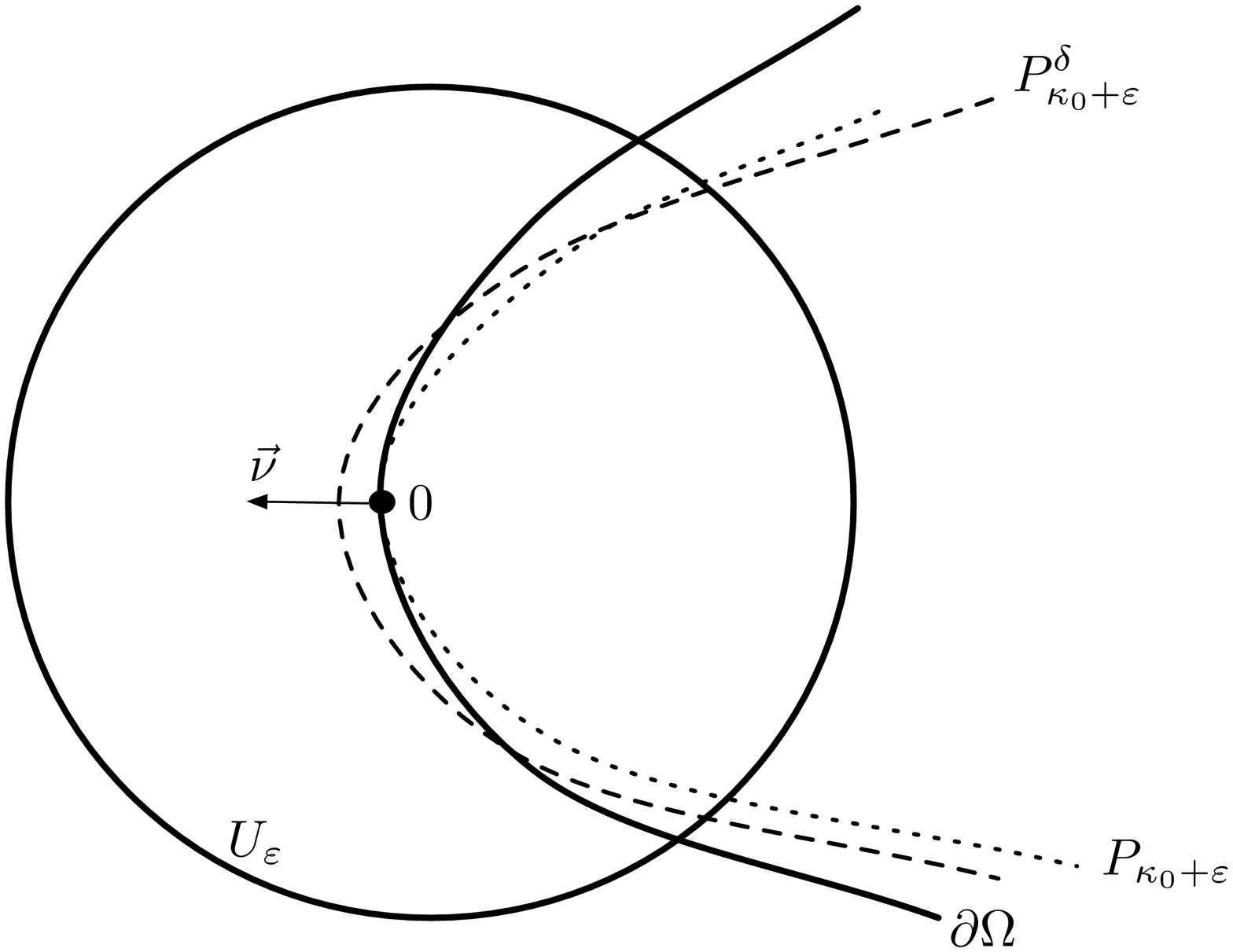}
\caption{}
\label{mov_curv}
\end{figure}

 Shift now the paraboloid $P_{k_{0}+\varepsilon}$ by a small positive distance $\delta$ in the direction of the outward normal $\vec{\nu}$ to $\partial\Omega$ at $0$; denote this shifted ball by  $P_{k_0+\varepsilon}^\delta$.  Clearly, if $\delta$ is small enough, $P_{k_0+\varepsilon}^\delta \cap \partial \Omega \subset U_\varepsilon$. Since $\Omega_n$ are convex sets and $\{\partial\Omega_n \}$ converges to $\partial \Omega$ as $n\nearrow \infty$, there exists at least a set $\Omega_{n_\delta}$, with $n_\delta$ dependent on $\delta$, such that $ \Omega\cup (P_{k_0+\varepsilon}^\delta\cap U_\varepsilon) \subseteq \Omega_{n_\delta}$ and  $\partial \Omega_{n_\delta} \cap \partial P_{k_0+\varepsilon}^\delta = \{x_{n_\delta}\}$; moreover $P_{k_0+\varepsilon}^\delta$ touches tangentially $\partial \Omega_{n_\delta}$ at the point $x_{n_\delta}$. Inequality \eqref{outer-curvature} follows since $\varepsilon$ and $\delta$ can be taken arbitrarily small.
\end{proof}

The supersolutions are given by those $\Omega$ such that
\begin{equation}\label{condition10}\dd(x,\Gamma_l){{\ge }}\;  \lambda_{\kappa,\tilde l}(x),\end{equation}
and hence we define the set of all supersolutions as
$$\BB:=\{\Omega\subset \mathbb R^N,\mbox{ convex, bounded}, \; K\Subset \Omega : u_\Omega \mbox{ satisfies }\eqref{condition10} \}.$$

Note that if $x_0\in\partial\Omega$ is a point where $\kappa(\partial \Omega)(x) = +\infty$ then the distance property \eqref{condition10} does not impose any restriction.
One may define analogously $\AA$ (and $\AA_0$) to be the set of subsolutions (strict subsolutions)   through condition
\begin{equation}\label{condition11}\dd(x,\Gamma_l){{\le }} (<)\; \lambda_{\kappa,\tilde l}(x).
\end{equation}

\begin{lemma}
If $\Omega_1,\Omega_2\in\BB$, then $\Omega_1\cap\Omega_2\in \BB$.
\end{lemma}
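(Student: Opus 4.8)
The plan is to follow the proof of Lemma~\ref{lemma-intersection} almost verbatim, the only genuinely new ingredient being a monotonicity property of the (viscosity) interior mean curvature under intersection of convex bodies.

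First I would set $\Omega:=\Omega_1\cap\Omega_2$, which is a bounded convex open set still containing $\overline K$, and let $u:=u_\Omega$ be its $p$-capacitary potential in $\Omega\setminus\overline K$. Applying the maximum principle in each $\Omega_i$ gives $u\le\min\{u_{\Omega_1},u_{\Omega_2}\}$ on $\Omega\setminus\overline K$, so in particular $\{u=l\}$ is a compact subset of $\Omega\setminus\overline K$ and the constraint \eqref{condition10} makes sense for $u_\Omega$. Fix $x_0\in\partial\Omega$. If $\kappa(\partial\Omega)(x_0)=+\infty$ then $\lambda_{\kappa,\tilde l}(x_0)=0$ and \eqref{condition10} holds trivially, so I may assume $\kappa(\partial\Omega)(x_0)<+\infty$. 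Since $\partial(\Omega_1\cap\Omega_2)\subset\partial\Omega_1\cup\partial\Omega_2$, after relabelling I may assume $x_0\in\partial\Omega_1$.

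The two facts to establish are then: (i) $\kappa(\partial\Omega)(x_0)\ge\kappa(\partial\Omega_1)(x_0)$, i.e. shrinking the domain cannot decrease the interior curvature; and (ii) $\dd(x_0,\{u=l\})\ge\dd(x_0,\{u_{\Omega_1}=l\})$. For (i) I would argue directly from the definition: any quadratic graph $S_A$ that touches $\partial\Omega$ from inside at $x_0$ lies locally in $\overline\Omega\subset\overline{\Omega_1}$, hence, since $x_0\in\partial\Omega_1$ as well, it touches $\partial\Omega_1$ from inside at $x_0$; thus the admissible family shrinks, $\mathfrak{U}(\partial\Omega,x_0)\subset\mathfrak{U}(\partial\Omega_1,x_0)$, and taking infima reverses the inclusion, giving (i) (and in particular $\kappa(\partial\Omega_1)(x_0)<+\infty$). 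For (ii) I would pick $P\in\{u=l\}$ realizing the distance from $x_0$; since $P\in\Omega\subset\overline{\Omega_1}$, convexity of $\Omega_1$ puts the segment $\overline{x_0P}$ in $\overline{\Omega_1}$, where $u_{\Omega_1}$ is continuous (Corollary~\ref{cor-holder-estimates}) with $u_{\Omega_1}(x_0)=0<l\le u_{\Omega_1}(P)$ (using $u_{\Omega_1}\ge u$); the intermediate value theorem then produces a point of $\{u_{\Omega_1}=l\}$ on $\overline{x_0P}$ within distance $|x_0-P|$ of $x_0$, whence (ii). Combining (i), (ii) with the hypothesis $\Omega_1\in\BB$,
\[
\dd(x_0,\{u=l\})\ \ge\ \dd(x_0,\{u_{\Omega_1}=l\})\ \ge\ \frac{\tilde l}{\kappa(\partial\Omega_1)(x_0)}\ \ge\ \frac{\tilde l}{\kappa(\partial\Omega)(x_0)}\ =\ \lambda_{\kappa,\tilde l}(x_0),
\]
which is exactly \eqref{condition10} at $x_0$; since $x_0\in\partial\Omega$ was arbitrary, $\Omega_1\cap\Omega_2\in\BB$.

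The routine part is step (ii), which is essentially the distance comparison already used in Lemma~\ref{lemma-intersection}. I expect the only real care to be needed in step (i): one must make sense of the inclusion $\mathfrak{U}(\partial\Omega,x_0)\subset\mathfrak{U}(\partial\Omega_1,x_0)$ at boundary points that need not be $\mathcal C^2$ — for instance at corners created by the intersection, where the touching family may be empty and $\kappa=+\infty$ — and keep the $+\infty$ conventions of the curvature definition consistent throughout. Once that monotonicity is in hand, the argument closes exactly as in the constant-$\lambda$ case.
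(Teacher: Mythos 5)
Your proof is correct and follows essentially the same route as the paper's, whose entire argument is the distance comparison $\dd(x_0,\{u_{\Omega_1\cap\Omega_2}=l\})\ge\max\{\dd(x_0,\{u_{\Omega_1}=l\}),\,\dd(x_0,\{u_{\Omega_2}=l\})\}$ obtained from the maximum principle. In fact you are more complete: the paper leaves implicit the monotonicity $\kappa(\partial(\Omega_1\cap\Omega_2))(x_0)\ge\kappa(\partial\Omega_i)(x_0)$ of the viscosity interior curvature under intersection, which is genuinely needed to compare the right-hand sides of \eqref{condition10}, and your step (i) supplies it correctly via the inclusion of the families of touching paraboloids.
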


\begin{proof}
The proof is a simple consequence of the fact that
$$\dd(x_0,\{u_{\Omega_1\cap\Omega_2}=l\})\geq \max\left\{\dd(x,\{u_{\Omega_1}=l\}), \dd(x_0,\{u_{\Omega_2}=l\})\right\}.$$
\end{proof}

\begin{lemma}\label{lemma:curvature_super}
 The sets $\BB$ and $\AA$ are non empty. Moreover, any supersolution is greater or equal than every admissible subsolution.
 \end{lemma}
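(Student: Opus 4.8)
The plan is to reproduce the scheme of Lemmas~\ref{lemma-sub-supersolution} and~\ref{lemma-sub-super}, the only genuinely new feature being that the constant $\lambda$ is now the curvature weight $\lambda_{\kappa,\tilde l}(x)=\tilde l/\kappa(\partial\Omega)(x)$. For the supersolution I would take, exactly as in Lemma~\ref{lemma-sub-supersolution}, a large concentric ball: translate so $0\in K$, fix $r$ with $K\subset B_r(0)$, and for $R\gg r$ let $u_R$ be the $p$-capacitary potential of $B_R(0)\setminus\overline K$. Comparison with the explicit radial potential $\bar u$ of $B_R(0)\setminus\overline{B_r(0)}$ gives $u_R\le\bar u$, hence $\{u_R=l\}\subset\{\bar u\ge l\}=\overline{B_{\rho_l(R)}(0)}$, so $\dd(x,\Gamma_l)\ge R-\rho_l(R)$ for all $x\in\partial B_R(0)$. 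The explicit formula for $\bar u$ shows $R-\rho_l(R)\ge c\,R$ for $R$ large, with $c=c(p,N,l)>0$ (one may take $c$ arbitrarily close to $1$ for $p\le N$; for $p>N$ one has $c\to1$ as $p\to N^+$ and $c\ge l$ always). Since $\partial B_R$ is a sphere of constant curvature $(N-1)/R$ in the sense of the viscosity definitions above, $\lambda_{\kappa,\tilde l}\equiv\tilde l R/(N-1)$ on $\partial B_R$, so $B_R(0)\in\BB$ as soon as $c\ge\tilde l/(N-1)$ — which holds for $\tilde l$ in the admissible range and fails precisely at $\tilde l=1$ when $K$ is a ball, explaining that exclusion. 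Note that $\partial B_R$ has everywhere finite, positive curvature, so the constraint \eqref{condition10} is effectively imposed at each of its points.

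For a strict subsolution I would again follow Lemma~\ref{lemma-sub-supersolution}: by Lemma~\ref{lem-gradient-below}, $|\nabla u_R|\ge\alpha>0$ near $\partial K$; for small $\eps>0$ set $\Omega_\eps:=\{u_R>1-\eps\}$ and $u_\eps:=(u_R-(1-\eps))/\eps$, the $p$-capacitary potential of $\Omega_\eps\setminus\overline K$, so that $\dd(x,\{u_\eps=l\})\le\eps l/\alpha$ for $x\in\partial\Omega_\eps$. On the other hand $\partial\Omega_\eps=\{u_R=1-\eps\}$ is, by Proposition~\ref{convexity-level-sets}, an analytic convex surface, and (using $\partial K\in\mathcal C^{1,1}$) its principal curvatures stay bounded by a constant $C_0$ uniform in small $\eps$; hence $\lambda_{\kappa,\tilde l}\ge\tilde l/C_0$ on $\partial\Omega_\eps$, and choosing $\eps$ with $\eps l/\alpha<\tilde l/C_0$ places $\Omega_\eps$ in $\AA_0$ (a fortiori in $\AA$).

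For the comparison statement, let $(u_S,\Omega_S)$ be a supersolution and $(u_s,\Omega_s)$ a strict subsolution, and suppose $\Omega_s\not\subset\Omega_S$. As in Lemma~\ref{lemma-sub-super} I would use Lavrent'ev rescaling: with $0\in K$, put $u_s^\eps(x)=u_s(x/\eps)$, $\Omega_s^\eps=\eps\Omega_s$; since curvature scales by $\eps^{-1}$ under dilation by $\eps$, the weight transforms homogeneously of degree one, $\lambda^{(\eps\Omega_s)}_{\kappa,\tilde l}(\eps y)=\eps\,\lambda^{(\Omega_s)}_{\kappa,\tilde l}(y)$, just like $\dd(\eps y,\eps S)=\eps\,\dd(y,S)$, so every $\Omega_s^\eps$ is again a strict subsolution (here $\lambda^{(\Omega)}_{\kappa,\tilde l}$ denotes the weight attached to $\Omega$). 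Take the largest $\eps_0<1$ with $\Omega_s^{\eps_0}\subset\Omega_S$ and a touching point $x^0\in\partial\Omega_s^{\eps_0}\cap\partial\Omega_S$. The comparison principle gives $u_s^{\eps_0}\le u_S$, hence $\dd(x^0,\{u_s^{\eps_0}=l\})\ge\dd(x^0,\{u_S=l\})$; and since the nested convex bodies $\Omega_s^{\eps_0}\subset\Omega_S$ share a supporting hyperplane at $x^0$, the viscosity curvature of the inner body dominates that of the outer one, so $\lambda^{(\Omega_s^{\eps_0})}_{\kappa,\tilde l}(x^0)\le\lambda^{(\Omega_S)}_{\kappa,\tilde l}(x^0)$ (and a strict subsolution has no infinite-curvature boundary point, so all these quantities are finite and positive). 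Combining the four relations yields
\[
\lambda^{(\Omega_S)}_{\kappa,\tilde l}(x^0)\ \le\ \dd(x^0,\{u_S=l\})\ \le\ \dd(x^0,\{u_s^{\eps_0}=l\})\ <\ \lambda^{(\Omega_s^{\eps_0})}_{\kappa,\tilde l}(x^0)\ \le\ \lambda^{(\Omega_S)}_{\kappa,\tilde l}(x^0),
\]
where the strict inequality is the strictness of the subsolution, a contradiction; hence $\Omega_s\subset\Omega_S$ and $u_s\le u_S$ by the maximum principle. If the subsolution is only non-strict, the same chain forces equality throughout, and the strong comparison principle then forces $u_s^{\eps_0}\equiv u_S$ on the overlap, so $\Omega_s$ is a dilate of $\Omega_S$ and in fact a solution — consistent with ``greater or equal''.

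The step I expect to be the real obstacle is precisely this comparison: because the condition $\dd(x,\Gamma_l)=\lambda_{\kappa,\tilde l}(x)$ is invariant under dilations, Lavrent'ev rescaling no longer creates a quantitative gap the way it does for constant $\lambda$, so the argument must be carried by the strict subsolutions of $\AA_0$ (not by $\AA$), and one must separately exclude boundary points of infinite curvature. The other place that needs honest, if elementary, work is verifying the supersolution property of the large ball — balancing the linear lower bound $cR$ for $\dd(x,\Gamma_l)$ against $\tilde l R/(N-1)$ — and it is exactly that balance which pins down the admissible range of $\tilde l$.
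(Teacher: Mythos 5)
Your proposal is correct and follows essentially the same route as the paper: the large concentric ball as supersolution, the thin level-set annulus $\{u_R>1-\eps\}$ as strict subsolution, and Lavrent'ev rescaling combined with the monotonicity of the viscosity curvature at the interior touching point (inner body's curvature dominates) to close the comparison chain. The only cosmetic discrepancy is your normalization $\kappa(\partial B_R)=(N-1)/R$, whereas the paper's own computation $(1-(r/R)^l)\ge\tilde l$ implicitly uses $\kappa(\partial B_R)=1/R$, which is what makes $\tilde l=1$ the exact threshold; with your convention the threshold statement should read $\tilde l=N-1$, but this does not affect the nonemptiness claim for $\tilde l\in(0,1)$.
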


\begin{proof}
A possible supersolution for our problem is the function $u_R$
constructed in Lemma \ref{lemma-sub-supersolution}: this depends on
the fact that  there exists $R$ big enough such that
$$
\dd(x,\Gamma_l) \ge \tilde l\,R \quad \forall x\in \partial B_R(0)\quad \textrm{and} \quad \tilde l\in (0,1).
$$
Note that if $K$ is the ball $B_r(0)$ and $\bar u(|x|)$ the capacitary function in the annulus $B_R(0) \setminus B_r(0)$, condition (\ref{formula_used}) becomes
 $$
 \left( 1 - \left(\frac{r}{R}\right)^l\right) \ge \tilde l.
 $$
Clearly there exists some finite value for $R$ (dependent on $\tilde l$) that satisfies the inequality above if $ \tilde l\in(0,1)$ but none if $\tilde l = 1$.

For the subsolution consider the function $u_\varepsilon$ constructed in Lemma \ref{lemma-sub-supersolution}; it holds
$$
\dd(x,\{u_{\varepsilon} =l\})\le \frac{\varepsilon \tilde l}{\alpha}, \quad \mbox{for all }x\in \{u_\varepsilon =0\},
$$
and the desired property \eqref{condition11} is satisfied by choosing
$$\varepsilon \le \frac{\alpha}{ \max_{x\in \{u_R = 1 - \varepsilon\}} \kappa(x)},$$
where $\kappa(x)$ is the mean curvature of the level set $\{u_R = 1 - \varepsilon\}$.
Note that  $\max_{x\in \{u_R = 1 - \varepsilon\}} \kappa(x) < +\infty$ since this level set is a analytic surface (see Proposition \ref{convexity-level-sets}).

Lastly we show that any supersolution is larger than every subsolution. We proceed as in Lemma \ref{lemma-sub-super} by assuming that there exists a subsolution, denoted by $(\Omega_S, u_S)$ which is not everywhere larger than a subsolution, denoted by $(\Omega_s, u_s)$. We rescale the subsolution until $\partial\Omega_s^\varepsilon$ touches $\partial\Omega_S$ at a point $x_0 \in \partial\Omega_s^\varepsilon\cap \partial\Omega_S$. Then
$$
 \kappa(\partial\Omega_s^\varepsilon)(x_0) \ge  \kappa(\partial\Omega_S)(x_0).
$$
Using strong comparison principle we arrive at the following contradiction:
$$
\frac{\tilde l}{\kappa(\partial\Omega_s^\varepsilon)(x_0)}  \ge \dd(x_0 ,\; \{u_s^\eps=l\}) >  \dd(x_0,\;\{u_S=l\}) \ge \frac{\tilde l}{\kappa(\partial\Omega_S)(x_0)}.
 $$
This concludes the proof of the lemma.
\end{proof}

\begin{lemma}\label{lemma-conv-B}
Let $\{\Omega_n\}$ be a decreasing sequence of bounded convex
domains in $\BB$, and define $\Omega:=\mbox{Interior}(\overline{\cap
\Omega_k})$. Then the $p$-capacitary potential of
$\Omega\backslash\overline K$, denoted by $u_\Omega$, satisfies
\eqref{condition10} and thus, $\Omega\in \BB$. Moreover, there
exists a positive constant $c_0$ such that
$\kappa(\partial\Omega)(x)\geq c_0$ for all $x\in\partial\Omega$.
\end{lemma}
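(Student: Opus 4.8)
The plan is to run the argument of Lemma~\ref{lemma-convergence-exterior} almost verbatim, the only genuinely new point being the passage to the limit in the curvature-dependent constraint \eqref{condition10}, which is not stable under Hausdorff convergence and must be handled through the one-sided bound \eqref{outer-curvature}. \emph{Convergence:} exactly as in Lemma~\ref{lemma-convergence-exterior}, the $p$-capacitary potentials $u_n$ of $\Omega_n\backslash\overline K$ form a decreasing sequence with $0\le u_n\le 1$ converging in $\mathcal C^{1,\alpha}_{loc}(\Omega\backslash\overline K)$ to the $p$-capacitary potential $u_\Omega$ of $\Omega\backslash\overline K$; the uniform H\"older bound of Corollary~\ref{cor-holder-estimates} forces $u_\Omega\equiv 0$ off $\Omega$, and a Harnack-chain argument rules out the flattening of $\partial\Omega_n$, so that $\Omega$ is a bona fide bounded open convex domain with $\overline K\subset\Omega$ and $u_\Omega>0$ on $\Omega\backslash\overline K$. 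In particular $\partial\Omega_n\to\partial\Omega$ and $\Gamma_l^n:=\{u_n=l\}\to\Gamma_l:=\{u_\Omega=l\}$ in the Hausdorff distance, and, the distance being $1$-Lipschitz in both arguments, $\dd(y_n,\Gamma_l^n)\to\dd(y_0,\Gamma_l)$ whenever $y_n\to y_0$.

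Next I would establish a uniform positive lower bound for the curvatures along the sequence. Set $D_1:=\mathrm{diam}(\Omega_1)<\infty$. Since the sequence decreases, $\Gamma_l^n\subset\Omega_n\subseteq\Omega_1$ and $\partial\Omega_n\subset\overline{\Omega_1}$, so $\dd(x,\Gamma_l^n)\le D_1$ for every $x\in\partial\Omega_n$; as $\Omega_n\in\BB$, condition \eqref{condition10} gives $\dd(x,\Gamma_l^n)\ge \tilde l/\kappa(\partial\Omega_n)(x)$ (automatic where $\kappa=+\infty$), hence $\kappa(\partial\Omega_n)(x)\ge \tilde l/D_1=:c_0>0$ for all $x\in\partial\Omega_n$ and all $n$. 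This positivity is exactly what will keep the right-hand side $\tilde l/\kappa$ finite when we pass to the limit.

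Now for the distance condition for $\Omega$. Fix $x_0\in\partial\Omega$; if $\kappa(\partial\Omega)(x_0)=+\infty$ then \eqref{condition10} holds at $x_0$ with nothing to prove, so let $k_0:=\kappa(\partial\Omega)(x_0)<+\infty$. By \eqref{outer-curvature} I may pick $x_n\in\partial\Omega_n$ with $x_n\to x_0$ and $\liminf_n\kappa(\partial\Omega_n)(x_n)\le k_0$; passing to a subsequence, $\kappa(\partial\Omega_n)(x_n)\to\kappa_*$ with $c_0\le\kappa_*\le k_0$ by the previous step, so $\kappa(\partial\Omega_n)(x_n)$ is finite for $n$ large and $\Omega_n\in\BB$ gives $\dd(x_n,\Gamma_l^n)\ge \tilde l/\kappa(\partial\Omega_n)(x_n)$. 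Letting $n\to\infty$,
\[
\dd(x_0,\Gamma_l)=\lim_n\dd(x_n,\Gamma_l^n)\ \geq\ \frac{\tilde l}{\kappa_*}\ \geq\ \frac{\tilde l}{k_0}=\lambda_{\kappa,\tilde l}(x_0),
\]
so $u_\Omega$ satisfies \eqref{condition10} and $\Omega\in\BB$. Finally, for $x\in\partial\Omega$ with $\kappa(\partial\Omega)(x)<+\infty$ this inequality together with $\dd(x,\Gamma_l)\le D_1$ yields $\kappa(\partial\Omega)(x)\ge \tilde l/D_1=c_0$, and $\kappa\ge c_0$ is trivial where $\kappa=+\infty$; this gives the curvature estimate.

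Everything outside the third paragraph is the convex constant-$\lambda$ argument and should go through unchanged; the delicate point is that the curvature entering \eqref{condition10} is merely semicontinuous under Hausdorff convergence of convex bodies, so one cannot simply ``evaluate in the limit''. The argument works only because \eqref{outer-curvature} points in the favorable direction — along a suitably chosen approximating sequence the quantity $\tilde l/\kappa$ can only \emph{increase} in the limit — and because the trivial diameter bound supplies the uniform positivity $\kappa\ge c_0$ that makes $\tilde l/\kappa_*$ meaningful. A secondary point, handled exactly as in Lemma~\ref{lemma-convergence-exterior}, is the non-degeneracy of the limit domain, i.e. that $\partial\Omega=\{u_\Omega=0\}$ and $\overline K\subset\Omega$.
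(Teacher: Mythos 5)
Your handling of the main novelty — the passage to the limit in the curvature\hyp{}dependent constraint — is exactly the paper's argument: for each $x_0\in\partial\Omega$ with finite curvature you select the approximating sequence furnished by \eqref{outer-curvature}, apply $\Omega_n\in\BB$ along it, and pass to the limit using Hausdorff convergence of $\partial\Omega_n$ and of the level sets; the uniform lower curvature bound $\kappa(\partial\Omega_n)(x)\ge \tilde l/\mathrm{diam}(\Omega_1)$ obtained from the trivial diameter bound, and its transfer to $\partial\Omega$ via \eqref{outer-curvature}, also coincide with the paper's inequality \eqref{bound-above-dist} and the sentence following it.

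The one place where you are too quick is the assertion that the non-degeneracy of the limit (that $\overline K\Subset\Omega$ and that $u_\Omega$ does not vanish on a set strictly inside $\Omega$) ``goes through unchanged'' from Lemma \ref{lemma-convergence-exterior}. The Harnack-chain argument there leans on the \emph{constant} lower bound $\dd(x,\{u_n=l\})\ge\lambda>0$ to guarantee that balls of a fixed radius fit inside every $\Omega_n$ and that the ring $\{0\le u_n\le l\}$ has uniformly positive width. In the present setting \eqref{condition10} only gives $\dd(x,\{u_n=l\})\ge \tilde l/\kappa(\partial\Omega_n)(x)$, which carries no positive lower bound (it is vacuous wherever $\kappa=+\infty$), so the constant-$\lambda$ argument cannot be cited verbatim. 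The paper supplies the missing uniform bound by a different route: the subsolution of Lemma \ref{lemma:curvature_super} lies inside every $\Omega_n$, so Corollary \ref{lemma-gradient-above} yields a gradient bound $|\nabla u_n|\le M$ independent of $n$, whence $\dd(x,\{u_n=l\})\ge l/M$ for all $x\in\partial\Omega_n$ — this is \eqref{bound-below-dist} — and this constant replaces $\lambda$ both in showing $\overline K\Subset\Omega$ and in the Harnack-chain step. Adding this observation closes the gap; the rest of your proof is complete and follows the paper's route.
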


\begin{proof}
Since each domains $\Omega_n$ is convex, we can start as in the proof of Lemma \ref{lemma-convergence-exterior}. Let $u_n$ be the $p$-capacitary potential of $\Omega_n\backslash\overline K$. The sequence  $\{u_n\}$ converges in $\mathcal C^{1,\alpha}$ norm to a $p$-harmonic function $u$. Convexity immediately implies that $u\equiv 0$ outside $\Omega$, as it was done in \eqref{u_Null}. In addition it holds that the set $\{u_n=l\}$ converges to the set $\{u=l\}$ and that $\partial\Omega_n$ converges to $\partial\Omega$ in Hausdorff distance.

We show first that $K\Subset \Omega$: the existence of a subsolution (found in Lemma \ref{lemma:curvature_super}) implies, using Lemma \ref{lemma-gradient-above}, that $|\nabla u_n|\leq M$ for a constant $M$ independent of $n$.  Consequently, for any $n\in \N$, it holds
\begin{align}
\label{bound-below-dist}
\dd(x,\{u_n =l\})\geq \frac{l}{M},\quad \forall x\in\partial\Omega_n.
\end{align}
This implies that $K\Subset \Omega$. Unfortunately inequality (\ref{bound-below-dist}) does not give any information on uniform bounds from above for $\kappa(\partial \Omega_n)$.

Instead we show that $\kappa(\partial\Omega)$ is uniformly bounded from below by a positive constant. By construction we have that $\Omega_n\subset \Omega_0$ for every $n$. Then, for every point $x_n\in\partial\Omega_n$,
\begin{align}
\label{bound-above-dist}
\frac{\tilde l}{\kappa(\partial \Omega_n)(x_n)}\leq\dd(x_n, \{u_n=l\}) \le diam(\Omega_0), \quad \forall x_n\in\partial\Omega_n.
\end{align}
In particular $\partial \Omega_n$ cannot have flat parts. Moreover, if we take  a sequence $x_n\in\partial\Omega_n$ such that $x_n\to x_0$ as $n\nearrow \infty$ with $x_0\in\partial\Omega$, from  (\ref{bound-above-dist}) and \eqref{outer-curvature} (maybe after passing to a subsequence) we get
$$\frac{\tilde l}{\kappa(\partial \Omega)(x_0)} \le diam(\Omega_0), \quad \forall x_0\in\partial\Omega,$$
which shows that $\partial \Omega$ cannot have flat parts either.

Two things are now to be proven: $(i)$ the function $u$ is the $p$-capacitary potential in $\Omega \backslash {\overline K}$ and $(ii)$ the function $u$ satisfies \eqref{condition10}.
Suppose that $\tilde\Omega :=\{supp(u)\}\subset \Omega$; then there exists a point $x_0\in \partial\tilde\Omega\backslash \partial\Omega$ with a neighborhood of radius $\varepsilon$, denoted by $B_\varepsilon(x_0)$, such that $\tilde\Omega\cup B_\varepsilon(x_0) \subset \Omega$.
Let $P\in \{u=l\}$ be the closest point of $\{u=l\}$ to $x_0$ and cover the segment $\overline{x_0,P}$ with a finite number of balls if radius $\varepsilon$. Note that this is possible because the segment $\overline{x_0,P}$ has a non-negative finite length whose bounds are given in (\ref{bound-below-dist}) and (\ref{bound-above-dist}). By mimicking the argument in Lemma \ref{lemma-convergence-exterior} one can show that Harnack inequality leads to a contradiction. Hence $(i)$ is proven.

Consider now a point $x_0\in \partial\Omega$ and any sequence $\{x_n\in\partial\Omega_n\}$ such that $x_n\to x_0$ as $n \nearrow\infty$; it holds
$$
\dd(x_n,\{u_n=l\}) \to \dd(x_0,\{u=l\}), \quad \textrm{as}\; n\nearrow\infty.
$$
On the other hand, taking $\limsup_{n\to +\infty}$ on both sides of \eqref{condition10} and using \eqref{outer-curvature} we obtain
\begin{align}\label{problem}
\lim_{n\to +\infty} \dd(x_n,\{u_n=l\}) &\ge \limsup_{n\to +\infty} \frac{\tilde l}{\kappa(\partial \Omega_n)(x_n)} \geq  \frac{\tilde l}{{\kappa}(\partial \Omega)(x_0)}. \nonumber
\end{align}
The proof is completed.
\end{proof}

{\em{\bf Proof of Theorem \ref{theo-curvature}}}.
It remains to prove that the $p$-capacitary function $(u_\Omega,\Omega)$ constructed in Lemma \ref{lemma-conv-B} satisfies the distance property
 \begin{equation}\label{rep-distance}
 \dd( x,\Gamma_l) =\lambda_{\kappa,\tilde l}(x),
 \end{equation}
 for every $x$ that satisfies either \emph{$(A1)$} and \emph{$(A2)$} and that $\partial\Omega$ does not have points that satisfy \emph{$(A3)$} nor \emph{$(A4)$}.


Suppose, by contradiction, that there exists an extremal point $x_0\in \partial\Omega$ such that
$$\dd( x_0, \{u_\Omega=l\}) \ge \frac{\tilde l}{ \kappa(\partial\Omega)(x_0)} + \delta,\quad \delta >0.$$
Then there exists a neighborhood $U$ of $x_0$ (depending on $\delta$) such that for any $x\in U\cap \partial\Omega$ it holds
\begin{equation}\label{formula-uffa}
\dd(x,\{u_\Omega=l\}) \ge \frac{\tilde l}{ \kappa(\partial\Omega)(x_0)} + \frac{\delta}{2}.
\end{equation}

We first assume that $\partial\Omega$ is $\mathcal C^2$ in a neighborhood of $x_0$, i.e. assumption \emph{$(A1)$} is satisfied. We apply translations and rotations so that $x_0=0$ and choose coordinates $x=(x',x_n)$ in such a way that $\Omega\subset \{x_n>0\}$ and parameterize the surface $\partial\Omega$ near $x_0$ as a graph $x_n=\varphi(x')$, $x'\in \R^{N-1}$, such that $\varphi(0)=0$, $\nabla \varphi(0)=0$. Defined a new function $\varphi_1(x'):=(1-\varepsilon)\varphi(x')+\varepsilon^2$. For $\varepsilon$ small enough the intersection set $V_\varepsilon:= \{ (x',x_n), \; | \; \varphi(x') = \varphi_1(x')\}$ is strictly contained in the small neighborhood $U$. Consider the new function $\tilde \varphi:=\max\{\varphi,\varphi_1\}$ and define the set
$$\tilde \Omega=(\Omega \cap U\cap \{(x',x_n) \; | \; x_n>\tilde\varphi(x')\}) \cup (\Omega \cap U^c).$$
By construction it holds that $\tilde\Omega\varsubsetneq \Omega$; moreover the mean curvature of $\tilde\Omega$ can be estimated in terms of the mean curvature or $\partial\Omega$ at $x_0$, except maybe at the points in $V_\varepsilon\cap \partial \Omega$, in which it may be infinity. Hence let $\tilde u$ be the $p$-capacitary potential of $\tilde\Omega\backslash \overline K$. Comparison principle and \eqref{formula-uffa} yield, for $\varepsilon$ small enough,
 $$\dd(x_1,\{\tilde u =l\}) \ge \dd(x_1,\{u_\Omega=l\})\ge \frac{\tilde l}{ \kappa(\partial\Omega)(x_0)} + \frac{\delta}{2} \geq \frac{\tilde l}{ \kappa(\partial\Omega)(x_1)},$$
for $x_1\in\partial\tilde\Omega\cap U$. Note that we may obtain the same conclusion of $x_1\in V_\varepsilon$ even if the curvature at those points is ninfinity. The existence of such $\tilde\Omega\varsubsetneq \Omega$ contradicts the minimality of $\Omega$.

Let us assume now that the extremal point $x_0$ satisfies \emph{$(A3)$}.
Inequality (\ref{formula-uffa}) reduces to
$$\dd( x, \{u_\Omega=l\}) \ge \frac{\delta}{2}.$$
We perturb $\Omega$ to $\tilde \Omega$ in the following way: consider, as before, $\partial\Omega$ written as the graph $x_n=\varphi(x')$. By hypothesis, $\overline \kappa(\partial\Omega)(x_0)=\infty$, so for each constant $M>0$, there exists a paraboloid $m(x')$ with curvature equal $M$ at $x_0$ that touches $\partial\Omega$ from outside at $x_0$. Define a new function $\varphi_1(x'):=(1-\varepsilon)m(x')+\varepsilon^2$ and consider $\tilde\varphi:=\max\{\varphi,\varphi_1\}$. Define the set $\tilde\Omega$ as in the previous case; contradiction follows similarly. Hence in such point $x_0$ we have that $\dd(x_0,\{u_\Omega=l\})=0$; but this is not possible because we have uniform gradient bounds for $u_\Omega$ (see Corollary \ref{lemma-gradient-above}).

Next we consider the case of $x_0$ not being an extremal point of $\partial\Omega$: Lemma 12 in \cite{Mikayelyan-Shahgholian} shows that $\frac{1}{\kappa(\partial\Omega)(x)}$ is a locally concave function if $\Omega$ is convex. Then we may proceed as in \eqref{non-extremal-point} and following arguments in order to conclude that property (\ref{rep-distance}) is satisfied for each points of the form  \emph{$(A2)$}  and that $\partial\Omega$ does not contain points of the form  \emph{$(A4)$}.
\qed



\begin{rem}
Open problems:
\begin{itemize}
\item[{\em(i)}] Full regularity of $\partial\Omega$ for $(P_K')$. For the classical Bernoulli problem full regularity of $\partial\Omega$ is obtained via the well known regularity theory for almost minimal surfaces (see \cite{Almgren76}). It is an interesting open question to show that such theory can be applied also for $(P_K')$.
\item[{\em(ii)}]  One may also consider the problem with more general distance functions $\lambda_{\kappa,\tilde l}=\lambda(x,\kappa(x))$ in $(P_K')$.
    \end{itemize}
\end{rem}


\subsection{Two- and Multi-phase problems}

The multi-phase Bernoulli problem (c.f.
\cite{Acker:multilayer,Acker:multilayer2,Laurence-Stredulinsky} for
$p=2$, \cite{AHPS} for general $p$), has many applications in
studying interfaces, for instance. In a similar flavor we formulate
the multi-phase discrete Bernoulli problem. Fix $m\in\mathbb N \cup
\{0\}$ and let $K_1,K_{m+2}$ be two  convex bounded domains such
that $\overline K_1 \subset K_{m+2}$. Moreover consider a sequence
of real numbers $-1\leq a_i\leq 1$, $i=2,\ldots,m+1$ with
$a_i>a_{i+1}$ and continuous functions $g_i:(K_{m+2}\backslash
K_1)\times \mathbb R_+\to \mathbb R_+$, $i=2,\ldots,m+1$. We seek a
sequence $\{K_i: i=2,\ldots,m+1\}$ of convex domains such that
$K_1\Subset K_2\Subset \ldots\Subset K_{m+1}\Subset K_{m+2}$
solve the multi-layer distance problem. Let $u_i$ be the
$p$-capacitary potential of the set $K_{i+1}\backslash \overline
K_i$, i.e., solution of
$$\left\{
\begin{array}{lll}
\Delta_p\; u_i = 0  \;\hbox{ in }\; K_{i+1}\setminus \overline{K_i},  && \\
u_i =a_i \quad \textrm{on}\; \partial K_i, && \\
u_i = a_{i+1}\quad \textrm{on}\; \partial K_{i+1}.&&
\end{array}\right.\leqno (P_M)
$$
The functions $u_i$ must satisfy the following (nonlinear) joining conditions: for $0<l<\min\{|a_i-a_{i+1}|\}$ the level sets
\begin{equation*}
\Gamma^i_{+l}:=\{u_i=a_{i+1}+l\},\quad \Gamma^{i+1}_{-l}:=\{u_{i+1}=a_{i+1}-l\}
\end{equation*}
satisfy
\begin{equation}\label{joining-F}
F_i\lp x,\dd(x,\Gamma^i_{+l}),\dd(x,\Gamma^{i+1}_{-l})\rp=0,\quad \mbox{for all }x\in \partial K_{i+1},
\end{equation}
for given functions $F_i: K\times \mathbb R_+ \times \mathbb R_+\to \mathbb R$, $i=1,\ldots,m$.

We assume that $F_i(x,p,q)$ is a continuous function, and that $F_i(x,p,q)$ is strictly decreasing as a function of the variable $p$ for all $x,q$. Then via the implicit function theorem one can write the joining condition \eqref{joining-F} as
$$\dd(x,\Gamma^i_{+l})=g\lp x,\dd(x,\Gamma_{-l}^{i+1})\rp,\quad \mbox{on } \partial K_{i+1},$$
for some functions $g:K\times \mathbb R_+ \to \mathbb R_+$.

We assume the following initial hypothesis on the functions $g_i$, $i=1,\ldots,m$:
\begin{itemize}
\item[(H1)] $g_i$ is continuous and bounded from below by a positive constant.
\item[(H2)] $g_i$ is non-decreasing with respect to the second argument.
\item[(H3)] The function $x\mapsto g_i\lp x,q(x)\rp$ is a concave function whenever $q(x)$ is a concave function.
\item[(H4)] For any given value $y_0>0$, $i=1,\ldots,m$, there exist constants $0<c_1<c_2$ such that $c_1\leq g_i(x,y)/y\leq c_2$, uniformly for all $x\in K$ and all $y\geq y_0$.
\end{itemize}

Although condition (H4) may seem artificial at first, it is the classical hypothesis that assures convexity for the level sets of the solution to the Bernoulli problem (c.f. \cite{Laurence-Stredulinsky}).  The classical example is
$$g(x,q)=\frac{1}{(a(x)^\alpha+q^\alpha)^{1/\alpha}},$$
for a function $a$ such that $1/a(x)$ is concave. Existence, uniqueness and convexity of solutions for a general $g$ is an open problem. \\

The main results of this sections are in the following two theorems:

\begin{theo}\label{thm:two-phase}
Let $K_1,K_3\subset \mathbb R^N$ be two convex bounded domains such that $K_1\Subset K_3$. Then there exists a convex domain $K_2$ such that
$$K_1\Subset K_2\Subset K_3,\quad \partial K_2\in\mathcal  C^{1,1}, $$
and the $p$-capacitary potentials $u_1$ and $u_2$ of the sets $K_2\backslash \overline {K_1}$ and $K_3\backslash \overline{K_2}$, respectively, i.e., solutions of
\begin{equation}\label{capacitari-2-multi}
\left\{ \begin{array}{ll}
\Delta_p u_1=0 & \mbox{ in }K_2\backslash \overline{K_1},\\
u_1=1  &\mbox{ on }\partial K_1, \\
u_1=0 & \mbox{ on }\partial K_2,
\end{array}
\right.
\quad \left\{\begin{array}{ll}
\Delta_p u_2=0 & \mbox{ in }K_3\backslash \overline{K_2},\\
u_2=0 & \mbox{ on }\partial K_2,\\
u_2=-1  &\mbox{ on }\partial K_3,
\end{array}\right.
\end{equation}
satisfy the nonlinear joining condition
\begin{equation}\label{joining-condition}
\dd(x,\{u_1=l\})=g\lp x,\dd(x,\{u_2=-l\})\rp \quad\mbox{on }\partial K_2.
\end{equation}
\end{theo}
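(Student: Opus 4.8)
The plan is to run Beurling's sub- and super-solution method with $K_2$ as the free boundary, tracking how \emph{both} potentials respond to a perturbation of $K_2$: enlarging $K_2$ enlarges the inner ring $K_2\setminus\overline{K_1}$ carrying $u_1$ but shrinks the outer ring $K_3\setminus\overline{K_2}$ carrying $u_2$. Following Section \ref{section-interior}, I would call a convex $K_2$ with $K_1\Subset K_2\Subset K_3$ a \emph{supersolution} if
\[\dd\big(x,\{u_1=l\}\big)\ \ge\ g\big(x,\dd(x,\{u_2=-l\})\big)\qquad\text{for all }x\in\partial K_2,\]
a \emph{subsolution} (resp.\ \emph{strict subsolution}) if the reverse (resp.\ strict reverse) inequality holds, and denote the classes $\BB,\AA,\AA_0$. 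Non-emptiness: the function $u_\eps$ built in Lemma \ref{lemma-sub-supersolution}, whose inner-ring level set lies at distance $\le\eps l/\alpha$ from the outer free boundary (by the lower gradient bound of Lemma \ref{lem-gradient-below} near $\partial K_1$), is a strict subsolution for small $\eps$ since $\eps l/\alpha<c_0\le g$ by \emph{(H1)}; and a $K_2$ close to $K_3$ has $\dd(x,\{u_2=-l\})\to 0$ (thin outer ring) with $g$ bounded, while $\dd(x,\{u_1=l\})\to\dd(\partial K_3,\{u_1^{K_3}=l\})>0$, so $\BB\neq\emptyset$ provided $K_3$ is large enough relative to $K_1$ and $\inf g$ (a Bernoulli-constant threshold). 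Finally, as in Lemma \ref{lemma-sub-super}, Lavrent'ev rescaling shows every subsolution is contained in every supersolution, so a fixed $K_2^0\in\AA_0$ lies below all of $\BB$.

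The crux is that $\BB$ is closed under intersection. Given $K_2,K_2'\in\BB$, put $\hat K:=K_2\cap K_2'$; any $x_0\in\partial\hat K$ lies on $\partial K_2$ or $\partial K_2'$, say on $\partial K_2$. By comparison, the potential $\hat u_1$ of $\hat K\setminus\overline{K_1}$ satisfies $\hat u_1\le u_1^{K_2}$, so the convex bodies $\{\hat u_1\ge l\}\subseteq\{u_1^{K_2}\ge l\}$ (Proposition \ref{convexity-level-sets}), and $x_0$ being exterior to both gives $\dd(x_0,\{\hat u_1=l\})\ge\dd(x_0,\{u_1^{K_2}=l\})$; dually, applying Proposition \ref{convexity-level-sets} to $v=\hat u_2+1$ at level $1-l$, the body $\{\hat u_2\ge -l\}$ is convex, $\subseteq\{u_2^{K_2}\ge -l\}$, and $x_0$ interior to both gives $\dd(x_0,\{\hat u_2=-l\})\le\dd(x_0,\{u_2^{K_2}=-l\})$. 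With \emph{(H2)} and the supersolution inequality for $K_2$ at $x_0$ this yields $\hat K\in\BB$ --- and, crucially, no extremal-point argument is needed since $\partial\hat K\subseteq\partial K_2\cup\partial K_2'$. That a decreasing sequence in $\BB$ lying above $K_2^0$ has limit $K_2^\infty:=\mathrm{Interior}(\overline{\bigcap_n K_2^{(n)}})$ again in $\BB$ follows the template of Lemma \ref{lemma-convergence-exterior}: $u_i^{(n)}\to u_i^\infty$ in $\CC^{1,\alpha}_{loc}$, the uniform $\CC^\alpha$-bounds up to $\partial K_2^{(n)}$ (Corollary \ref{cor-holder-estimates}) force $u_1^\infty=0$ on $\partial K_2^\infty$, $K_1\Subset K_2^0\subseteq K_2^\infty$ plus a Harnack-chain argument rules out degeneration, and Hausdorff convergence of the boundaries and the two level sets plus continuity of $g$ passes the inequality to the limit. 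Corollary \ref{cor-exists-minimal} then yields a minimal $K_2^*\in\BB$ with $K_2^0\subseteq K_2^*$; since $K_2^0$ is a \emph{strict} subsolution the comparisons above show $K_2^0\Subset K_2^*$, and choosing the supersolution above so that it contains $K_2^*$ gives $K_1\Subset K_2^*\Subset K_3$.

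It remains to turn the supersolution inequality for $K_2^*$ into equality. If $\dd(x_0,\{u_1^*=l\})=g(x_0,\dd(x_0,\{u_2^*=-l\}))+\eps$ for some $x_0\in\partial K_2^*$, $\eps>0$, then by continuity the strict inequality persists (with margin $\eps/2$) on a neighbourhood $V$ of $x_0$ in $\partial K_2^*$. If $x_0$ is extremal, cut $K_2^*$ by a half-space through a point slightly interior to $x_0$ along an outward normal there, getting convex $\tilde K_2\subsetneq K_2^*$ with $K_2^0\subseteq\tilde K_2$ and $\partial\tilde K_2\subseteq V\cup(\partial K_2^*\setminus V)$; the comparison estimates (shrinking $K_2$ enlarges $\dd(\cdot,\{u_1=l\})$, shrinks $\dd(\cdot,\{u_2=-l\})$), the margin $\eps$ and continuity of $g$ give $\tilde K_2\in\BB$, contradicting minimality. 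If $x_0$ is not extremal, write $x_0=\sum_i t_i x_i$ with $x_i$ extremal (equality already holding at each $x_i$); since $x\mapsto\dd(x,\{u_1^*=l\})$ is convex (distance to a convex body from outside) while $q(x):=\dd(x,\{u_2^*=-l\})$ is concave (distance to the boundary of a convex body from inside), hypothesis \emph{(H3)} --- concavity of $x\mapsto g(x,q(x))$ --- yields
\[\dd\big(x_0,\{u_1^*=l\}\big)\ \le\ \sum_i t_i\,\dd\big(x_i,\{u_1^*=l\}\big)\ =\ \sum_i t_i\,g\big(x_i,q(x_i)\big)\ \le\ g\big(x_0,q(x_0)\big),\]
a contradiction. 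For the regularity, $\partial K_2^*$ is convex and is a parallel-type surface of the analytic, strictly convex level sets $\{u_1^*=l\},\{u_2^*=-l\}$ at distance $\ge c_0>0$ from $\{u_1^*=l\}$ on its inner side, so $\partial K_2^*\in\CC^{1,1}$ as in Section \ref{subsection:nonconstant} and \cite{Foote:distance-function}. The main obstacle I anticipate is this coupled bookkeeping --- keeping every comparison between $u_1^{K_2}$, $u_2^{K_2}$ and their level sets pointed the right way under a perturbation of the single free boundary, which moves $u_1$ and $u_2$ in opposite directions, with \emph{(H1)}--\emph{(H3)} supplying exactly the positivity, monotonicity and concavity needed; securing the sharp regularity $\CC^{1,1}$ is the other point needing care.
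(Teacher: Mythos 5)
Your architecture is the paper's: Beurling classes $\AA,\AA_0,\BB$ for the middle boundary, stability of $\BB$ under intersection (using the two oppositely-directed comparisons for $u_1$ and $u_2$ together with \emph{(H2)}), closure under decreasing limits via uniform $\mathcal C^\alpha$ bounds and a Harnack chain, a minimal element, the extremal-point perturbation plus the convexity of $x\mapsto\dd(x,\{u_1=l\})$, concavity of $x\mapsto\dd(x,\{u_2=-l\})$ and \emph{(H3)} at non-extremal points, and the interior-ball/convexity argument for $\mathcal C^{1,1}$. Your bookkeeping in the intersection lemma is in fact done carefully and in the correct direction ($\hat u_1\le u_1^{K_2}$ and $\hat u_2\le u_2^{K_2}$, so the inner distance grows and the outer distance shrinks), which matches what the paper actually needs.

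The one genuine gap is the non-emptiness of $\BB$. You construct a supersolution by pushing $K_2$ toward $K_3$ and then concede that this works only ``provided $K_3$ is large enough relative to $K_1$ and $\inf g$''; but the theorem carries no such largeness hypothesis, so as written you prove a strictly weaker statement. Moreover the quantitative step is not closed: making the outer ring thin drives $\dd(x,\{u_2=-l\})\to 0$, but \emph{(H1)} keeps $g$ bounded \emph{below} by $c_0>0$, so you still need $\dd(x,\{u_1=l\})\ge c_0$ on $\partial K_2$, which your construction does not deliver for an arbitrary nested pair. The paper's Lemma \ref{lemma:subsuper} avoids introducing geometry-dependent thresholds by foliating: it takes the single $p$-capacitary potential $u$ of $K_3\setminus\overline{K_1}$ with boundary values $+1$ and $-1$, sets $K_\varepsilon:=\{u=\varepsilon\}$, and observes that the affine rescalings $u_{1,\varepsilon}=(u-\varepsilon)/(1-\varepsilon)$ and $u_{2,\varepsilon}=(u-\varepsilon)/(1+\varepsilon)$ are \emph{exactly} the two capacitary potentials of the sub-rings (no comparison principle needed), so both distances are controlled by the gradient bounds for the one function $u$ (Corollary \ref{lemma-gradient-above} and Lemma \ref{lem-gradient-below}); hypothesis \emph{(H4)}, which your argument never invokes, is then what converts the resulting two-sided estimates $\dd(x,\{u_{1,\varepsilon}=l\})\gtrsim l(1-\varepsilon)$ and $\dd(x,\{u_{2,\varepsilon}=-l\})\lesssim l(1+\varepsilon)$ into membership of $K_\varepsilon$ in $\BB$ for $\varepsilon$ near $-1$ and in $\AA$ for $\varepsilon$ near $+1$. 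You should replace your supersolution construction by this one (and note that \emph{(H4)} is precisely the hypothesis that makes it work); the rest of your argument then goes through as in the paper.
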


\begin{theo}\label{thm:multi-phase}
The multi-phase version of Theorem \ref{thm:two-phase} holds.
\end{theo}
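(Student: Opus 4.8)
The plan is to run Beurling's method of sub‑ and supersolutions simultaneously on the whole nested chain, exactly as in the convex constant‑distance problems (Theorems \ref{thm-exterior} and \ref{thm-interior}), the non‑constant‑distance problem of Section \ref{subsection:nonconstant}, and the two‑phase case Theorem \ref{thm:two-phase}, now treating each intermediate domain $K_{i+1}$ both as an ``exterior'' free boundary for the layer $u_i$ and as an ``interior'' free boundary for the layer $u_{i+1}$. Fix the data $K_1\Subset K_{m+2}$, the levels $a_i$, and coupling functions $g_i$ satisfying $(H1)$--$(H4)$. For a tuple $\mathbf K=(K_2,\dots,K_{m+1})$ of bounded convex open sets with $K_1\Subset K_2\Subset\cdots\Subset K_{m+1}\Subset K_{m+2}$, let $u_i$ be the $p$‑capacitary potential of $K_{i+1}\setminus\overline{K_i}$ as in $(P_M)$, and call $\mathbf K$ a \emph{supersolution} if
$$\dd\bigl(x,\Gamma^i_{+l}\bigr)\ \ge\ g_i\bigl(x,\dd(x,\Gamma^{i+1}_{-l})\bigr)\qquad\text{for all }x\in\partial K_{i+1},\ i=1,\dots,m,$$
and a \emph{strict subsolution} if the reverse strict inequalities hold at every interface. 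The surfaces $\Gamma^i_{\pm l}$ are convex and analytic by Proposition \ref{convexity-level-sets}, their convexity for the coupled system being precisely what $(H4)$ is designed to guarantee (cf.\ \cite{Laurence-Stredulinsky}).

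First I would check both classes are non‑empty: a supersolution is produced by a family of nested concentric balls with sufficiently wide gaps, as in Lemma \ref{lemma-sub-supersolution}, where the bound $g_i(x,y)\le c_2 y$ for large $y$ from $(H4)$ lets one place the radii so that each inner level set is far enough from $\partial K_{i+1}$, while $(H1)$ bounds the right‑hand sides on the relevant compact range; a strict subsolution comes from squeezing all layers near the inner boundary, using the gradient lower bound of Lemma \ref{lem-gradient-below} together with the uniform lower bound $c_0$ on the $g_i$ from $(H1)$. Next, the class of supersolutions is stable under componentwise intersection (then pass to the interior of the closure to stay open and convex): applying the maximum principle layer by layer as in Lemma \ref{lemma-intersection}, the intersected potentials lie below the original ones, which pushes every left‑hand side $\dd(x,\Gamma^i_{+l})$ up and — by the monotonicity $(H2)$ of $g_i$ in its second argument — every right‑hand side down, so all $m$ inequalities survive. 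Convexity of each component is clear, and the key non‑degeneracy $\dd(\partial K_i,\partial K_{i+1})\ge c_0>0$ at a supersolution (each $\Gamma^i_{+l}$ separates $\partial K_i$ from $\partial K_{i+1}$ and lies at distance $\ge c_0$ from $\partial K_{i+1}$, by $(H1)$) keeps consecutive sets uniformly apart. Hence a decreasing chain of supersolutions dominating a fixed strict subsolution has a componentwise convex limit: the $p$‑capacitary potentials of convex rings converge in $\mathcal C^{1,\alpha}_{\mathrm{loc}}\cap\mathcal C^{\alpha}$ by the uniform Hölder bound of Corollary \ref{cor-holder-estimates}, the level sets converge in Hausdorff distance, and a Harnack‑chain argument as in Lemma \ref{lemma-convergence-exterior} rules out flattening of any $u_i$ at a boundary; so the limit is again a supersolution, and Zorn's lemma yields a minimal one $\mathbf K^*=(K_2^*,\dots,K_{m+1}^*)$.

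It remains to show that all joining conditions are tight for $\mathbf K^*$. Suppose interface $j$ is slack: $\dd(x_0,\Gamma^j_{+l})>g_j(x_0,\dd(x_0,\Gamma^{j+1}_{-l}))$ for some $x_0\in\partial K_{j+1}^*$. I would then perturb $K_{j+1}^*$ inward near $x_0$, replacing it by a slightly smaller convex set as in the proofs of Theorem \ref{thm-exterior} and of the non‑constant‑distance theorem, invoking $(H3)$ (concavity of $x\mapsto g_j(x,q(x))$ for concave $q$) to preserve convexity also at non‑extremal points, following \eqref{non-extremal-point}. Shrinking only $K_{j+1}^*$ thins the $j$‑th ring and thickens the $(j{+}1)$‑st, bringing the two sides of interface $j$ together; and it leaves the other $m-1$ inequalities intact, since it only moves $\Gamma^j_{-l}$ closer to $\partial K_j$ (so the right‑hand side of interface $j-1$ decreases, by $(H2)$) and moves $\Gamma^{j+1}_{+l}$ farther from $\partial K_{j+2}$ (so the left‑hand side of interface $j+1$ increases). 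The perturbed tuple is thus still a supersolution but strictly smaller in one component, contradicting minimality. Finally, once every joining condition holds with equality, each $\partial K_{i+1}$ is the set of points at the $x$‑dependent distance $g_i\bigl(x,\dd(x,\Gamma^{i+1}_{-l})\bigr)$ from the analytic convex surface $\Gamma^i_{+l}$; since the $g_i$ are continuous this locus is $\mathcal C^{1,1}$, via the parallel‑surface curvature formula $\mu/(1+\lambda\mu)$ of \cite{Foote:distance-function} exactly as in Section \ref{subsection:nonconstant}, so $\partial K_{i+1}\in\mathcal C^{1,1}$ and $u_i\in\mathcal C^{1,\alpha}$ up to the boundary; as in Theorem \ref{thm-interior}, uniqueness is not asserted unless a strict‑monotonicity hypothesis on the $g_i$ is added.

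The step I expect to be the main obstacle is verifying that the class of supersolutions genuinely forms a lattice with the stability used above: because the $m$ joining conditions are coupled along the chain, one must check with care that intersecting two full supersolutions, and that the local inward perturbation of a single component, do not violate the conditions at the two neighbouring interfaces — this is exactly where $(H2)$ and $(H3)$ enter essentially, and where the bookkeeping is appreciably heavier than in the single‑interface problems.
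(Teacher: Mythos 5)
Your global Beurling scheme on the whole chain is a legitimate route and is close in spirit to what the paper intends (the paper gives no detailed proof: it only remarks that the two-phase argument ``contains the fundamental ideas'' and that the result ``follows from Theorem \ref{thm:two-phase} and the uniform separation results of Theorem \ref{theo-separation}''). The main structural difference is that the paper's suggested route leans on the two-phase theorem applied interface by interface together with Theorem \ref{theo-separation} to keep the intermediate layers from collapsing, whereas you run the sub/supersolution machinery on tuples at once and replace the separation theorem by the cruder but sufficient observation that $(H1)$ forces $\dd(\partial K_i,\partial K_{i+1})\ge c_0$ at every supersolution. That substitution is fine for non-degeneracy of the limit configuration. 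Your monotonicity bookkeeping at the minimality step is correct: shrinking only $K_{j+1}$ lowers $u_j$ and $u_{j+1}$, which (via $(H2)$) improves or preserves the conditions at interfaces $j-1$, $j$ and $j+1$ exactly as you state.

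Two points need repair or elaboration. First, the supersolution cannot be built from ``nested concentric balls with sufficiently wide gaps as in Lemma \ref{lemma-sub-supersolution}'': that lemma concerns the exterior problem, where the outer domain is free and may be taken arbitrarily large, while here $K_{m+2}$ is prescribed and the total width of the chain is fixed, so wide gaps are simply not available; moreover a crude upper bound on the right-hand sides $g_i$ would demand each ring to have width at least $\sup g_i$, which need not fit inside $K_{m+2}\setminus K_1$. The construction must instead follow Lemma \ref{lemma:subsuper}: take the intermediate surfaces to be level sets $\{u=\varepsilon_i\}$ of the global potential of $K_{m+2}\setminus\overline{K_1}$ and choose the gaps $\varepsilon_i-\varepsilon_{i+1}$ recursively so that $(H4)$ (linear growth of $g_i$) makes each right-hand side dominated by the preceding ring's width. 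Second, in the intersection lemma every intermediate ring $K_{i+1}\setminus\overline{K_i}$ changes at \emph{both} boundaries, so neither ring contains the other and the naive comparison principle does not apply directly; one must first extend each $u_i$ by its maximal value $a_i$ across $\overline{K_i}$ (this extension is $p$-superharmonic) and compare the intersected potential with the extension. The inequalities you assert ($v_i\le u_i$, hence the left-hand distances increase and, via $(H2)$, the right-hand sides decrease) do come out of this, but the extension step is the actual content of the ``lattice'' verification you correctly identify as the main obstacle. With these two repairs the argument is complete, and the regularity statement ($\partial K_{i+1}\in\mathcal C^{1,1}$ from the uniform interior ball of radius $c_0$ plus convexity) is as in the two-phase case.
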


The proof of Theorem \ref{thm:two-phase} already contains the fundamental ideas for $m>2$: indeed Theorem \ref{thm:multi-phase} follows easily from Theorem \ref{thm:two-phase} and the uniform separation results of Theorem \ref{theo-separation}.

Consider the class of convex sets
$$\mathcal C:=\left\{\Omega\subset \mathbb R^N, \mbox{ convex bounded domain } : K_1\Subset \Omega \Subset K_3\right\},$$
and $u_1$, $u_2$ the $p$-capacitary potentials of $K\backslash \overline{K_1}$ and $K_3\backslash \overline K$, respectively.
Define the set of subsolutions and supersolutions as
\begin{eqnarray}\label{supersolution-multi}
\AA:=\left\{\Omega\in\mathcal C  \;|\; \dd(x,\{u_1=l\})\leq g\lp x,\{u_2=-l\}\rp\right\},\nonumber\\
\BB:=\left\{\Omega\in\mathcal C \;|\; \dd(x,\{u_1=l\})\geq g\lp x,\{u_2=-l\}\rp\right\},
\end{eqnarray}
for $x\in\partial K_2$.

We next show that the set $\BB$ is closed and stable under intersections:

\begin{lemma}
If $\Omega,\tilde \Omega\in \BB$, then $\Omega \cap \tilde \Omega\in\BB$.
\end{lemma}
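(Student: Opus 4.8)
The plan is to mimic Lemma~\ref{lemma-intersection} (the exterior convex case), adapted to the two‑phase structure. Write $\Omega_{*}:=\Omega\cap\tilde\Omega$; it is a bounded open convex domain with $K_{1}\Subset\Omega_{*}\Subset K_{3}$, hence $\Omega_{*}\in\mathcal C$, and for open sets $\partial\Omega_{*}\subset\partial\Omega\cup\partial\tilde\Omega$. Let $(v_{1},v_{2})$ be the $p$‑capacitary potentials associated with $\Omega_{*}$ as in \eqref{capacitari-2-multi} (that is, of $\Omega_{*}\setminus\overline{K_{1}}$ and of $K_{3}\setminus\overline{\Omega_{*}}$), and let $(u_{1},u_{2})$, $(\tilde u_{1},\tilde u_{2})$ be those of $\Omega$ and of $\tilde\Omega$. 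According to the definition \eqref{supersolution-multi} of $\BB$, I must check that $\dd(x,\{v_{1}=l\})\ge g\bigl(x,\dd(x,\{v_{2}=-l\})\bigr)$ for every $x\in\partial\Omega_{*}$.

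First I would record two comparison facts (the weak comparison principle for $p$‑harmonic functions, Theorem~\ref{thm-classical-regularity}, using that the annuli are convex so the potentials are continuous up to the boundary by Corollary~\ref{cor-holder-estimates}). Since $\Omega_{*}\subset\Omega$ and the boundary data agree on $\partial K_{1}$ while $v_{1}=0\le u_{1}$ on $\partial\Omega_{*}$, one gets $v_{1}\le u_{1}$ on $\Omega_{*}\setminus\overline{K_{1}}$, and likewise $v_{1}\le\tilde u_{1}$ there. On the other side, $v_{2}$ and $u_{2}$ are both $p$‑harmonic on $K_{3}\setminus\overline{\Omega}$, both equal $-1$ on $\partial K_{3}$, and on $\partial\Omega$ one has $u_{2}=0\ge v_{2}$ (the bound $-1\le v_{2}\le 0$ being the maximum principle); hence $v_{2}\le u_{2}$ on $K_{3}\setminus\overline{\Omega}$, and similarly $v_{2}\le\tilde u_{2}$ on $K_{3}\setminus\overline{\tilde\Omega}$.

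Now fix $x\in\partial\Omega_{*}$ and assume without loss of generality that $x\in\partial\Omega$, so $u_{1}(x)=u_{2}(x)=0$. The heart of the argument is the level‑set comparison
\begin{equation*}
\dd(x,\{v_{1}=l\})\ \ge\ \dd(x,\{u_{1}=l\}),\qquad \dd(x,\{v_{2}=-l\})\ \le\ \dd(x,\{u_{2}=-l\}).
\end{equation*}
For the first one I would take $q\in\{v_{1}=l\}$ nearest to $x$ (such a point exists: the level set is a nonempty compact set); since $l\in(0,1)$ we have $q\in\Omega_{*}\setminus\overline{K_{1}}$, so $u_{1}(q)\ge v_{1}(q)=l$. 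By convexity of $\Omega_{*}$ the segment $[x,q]$ lies in $\overline{\Omega_{*}}\subset\overline{\Omega}$; walking along it from $x$ until the first point $z$ where it meets $\overline{K_{1}}$ (taking $z=q$ if it never does — note $x,q\notin\overline{K_{1}}$), the function $u_{1}$ is defined and continuous on $[x,z]$, equals $0$ at $x$, and is $\ge l$ at $z$ (either $u_{1}(z)=1$ on $\partial K_{1}$, or $z=q$ with $u_{1}(q)\ge l$), so by the intermediate value theorem $u_{1}$ attains $l$ somewhere on $[x,q]$ and $\dd(x,\{u_{1}=l\})\le|x-q|$. The second inequality is symmetric: take $r\in\{u_{2}=-l\}$ nearest to $x$, note $r\in K_{3}\setminus\overline{\Omega}$ and $v_{2}(r)\le u_{2}(r)=-l$, use convexity of $K_{3}$ to keep $[x,r]\subset\overline{K_{3}}$, and walk from $r$ toward $x$ until the first point $z'$ where $[x,r]$ meets $\overline{\Omega_{*}}$ (such $z'$ lies on $\partial\Omega_{*}$, where $v_{2}=0$; it exists since $r\notin\overline{\Omega_{*}}$ and $x\in\partial\Omega_{*}$): the intermediate value theorem then yields a point of $[x,r]$ where $v_{2}=-l$, hence $\dd(x,\{v_{2}=-l\})\le|x-r|$.

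Finally I would combine these with the supersolution property $\Omega\in\BB$ at the point $x\in\partial\Omega$ and with monotonicity of $g$ in its second argument (hypothesis (H2)):
\begin{equation*}
\dd(x,\{v_{1}=l\})\ \ge\ \dd(x,\{u_{1}=l\})\ \ge\ g\bigl(x,\dd(x,\{u_{2}=-l\})\bigr)\ \ge\ g\bigl(x,\dd(x,\{v_{2}=-l\})\bigr),
\end{equation*}
which is precisely the inequality defining $\BB$ evaluated at $x\in\partial\Omega_{*}$. Since $x$ was arbitrary, $\Omega_{*}=\Omega\cap\tilde\Omega\in\BB$. The only point requiring care is keeping the walked segments inside the domains on which $u_{1}$, resp. $v_{2}$, are defined, which the ``stop at the first crossing of $\partial K_{1}$, resp. $\partial\Omega_{*}$'' device handles together with convexity of $\Omega_{*}$ and of $K_{3}$; note that Lewis' convexity of the level sets (Proposition~\ref{convexity-level-sets}) is not needed here, although it will be used afterwards when passing to limits of monotone families of sets and when dealing with extremal points.
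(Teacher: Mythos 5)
Your proof is correct and follows essentially the same route as the paper's: the comparison principle gives $v_1\le\min\{u_1,\tilde u_1\}$ on $(\Omega\cap\tilde\Omega)\setminus\overline{K_1}$ and $v_2\le u_2$ on $K_3\setminus\overline{\Omega}$, which you translate into the two distance inequalities and combine with $\Omega\in\BB$ and hypothesis (H2). The segment/intermediate-value argument you use to pass from the pointwise comparison of potentials to the comparison of distances is left implicit in the paper, and your inequality $v_2\le u_2$ is the correct orientation (the paper's displayed $v_2\ge\max\{u_2,\tilde u_2\}$ is a sign slip, though it draws the same consequence you do).
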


\begin{proof}
Let $u_1, u_2,\tilde u_1,\tilde u_2$ be the $p$-capacitary potentials of $\Omega\backslash \overline{K_1}$, $K_3\backslash \overline \Omega$, $\tilde\Omega\backslash \overline{K_1}$ and $K_3\backslash \overline{\tilde \Omega}$ respectively. By construction $\Omega\cap\tilde \Omega$ is a convex domain. Next, let  $v_1$ and $v_2$ be the $p$-capacitary potentials of $(\Omega\cap \tilde \Omega)\backslash \overline{K_1}$ and $K_3\backslash \overline{(\Omega\cap \tilde \Omega)}$ respectively. Let $x\in\partial (\Omega\cap \tilde \Omega)$; without loss of generality we assume that $x\in\partial \Omega$.

By comparison principle, $v_1\leq\min\{u_1,\tilde u_1\}$.  It holds
\begin{equation}\label{formula30}\dd(x,\{v_1=l\})\geq \dd(x,\{u_1=l\})\geq g\lp x,\dd(x,\{u_2=-l\})\rp,
\end{equation}
since $\Omega\in\BB$.

On the other hand, by comparison principle we have that $v_2\geq\max\{u_2,\tilde u_2\}$; thus
$$\dd(x,\{v_2=-l\})\leq \dd(x,\{u_2=-l\}).$$
Since $g$ is non-decreasing with respect to the second argument, it holds
\begin{equation}\label{formula31}g\lp x,\dd(x,\{v_2=-l\})\rp\leq g\lp x,\dd(x,\{u_2=-l\})\rp.
\end{equation}
Formula \eqref{formula30} together with \eqref{formula31} gives that
$$\dd(x,\{v_1=l\})\geq g\lp x,\dd(x,\{v_2=-l\})\rp,$$
which implies that $\Omega\cap\tilde \Omega\in\BB$. The proof of the lemma is completed.
\end{proof}

\begin{lemma}
Let $\Omega_1\supset\Omega_2\supset \ldots$ be a decreasing sequence of convex domains in $\BB$. Then  $\Omega:=\mbox{Interior}(\overline{\cap \Omega_k})$  $\Omega$ belongs to the class $\BB$.
\end{lemma}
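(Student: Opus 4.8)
The plan is to run the argument of Lemma~\ref{lemma-convergence-exterior}, now carrying along the \emph{two} $p$-capacitary potentials attached to each $\Omega_k$. Write $u_1^k$ and $u_2^k$ for the $p$-capacitary potentials of $\Omega_k\setminus\overline{K_1}$ and of $K_3\setminus\overline{\Omega_k}$ (a genuine ring, since $K_1\Subset\Omega_k\Subset K_3$), defined as in \eqref{capacitari-2-multi}. First I would observe that both sequences are monotone: because $\{\Omega_k\}$ is decreasing, the comparison principle gives $u_1^{k+1}\le u_1^k$ on $\Omega_{k+1}\setminus\overline{K_1}$ (using $u_1^{k+1}=0\le u_1^k$ on $\partial\Omega_{k+1}$) and $u_2^{k+1}\le u_2^k$ on $K_3\setminus\overline{\Omega_k}$ (using $u_2^{k+1}\le 0=u_2^k$ on $\partial\Omega_k$). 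Hence, by the interior $\mathcal C^{1,\alpha}$-estimates of Theorem~\ref{thm-classical-regularity}, $u_1^k\to u_1$ and $u_2^k\to u_2$ locally in $\mathcal C^{1,\alpha}$, with $u_1$, $u_2$ $p$-harmonic in $\Omega\setminus\overline{K_1}$ and in $K_3\setminus\overline{\Omega}$ respectively.

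Next I would check that $\Omega$ is admissible and identify the two limits. The set $\Omega$ is convex (an intersection of convex sets), open and bounded, and in fact $K_1\Subset\Omega\Subset K_3$: the supersolution property of $\Omega_k$ together with hypothesis (H1) gives $\dd(x,\{u_1^k=l\})\ge g\big(x,\dd(x,\{u_2^k=-l\})\big)\ge c_0>0$ for every $x\in\partial\Omega_k$, and since the level set $\{u_1^k=l\}$ separates $\partial K_1$ from $\partial\Omega_k$ inside the ring, this forces $\dd(\partial K_1,\partial\Omega_k)\ge c_0$ uniformly in $k$; thus $\Omega$ contains a fixed neighbourhood of $\overline{K_1}$, while $\overline\Omega\subset\overline{\Omega_1}\subset K_3$. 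I would then show that $u_1$ is the $p$-capacitary potential of $\Omega\setminus\overline{K_1}$: clearly $u_1\equiv1$ on $\overline{K_1}$; the identity $u_1\equiv0$ outside $\overline\Omega$ follows from the uniform $\mathcal C^\alpha$-estimate of Corollary~\ref{cor-holder-estimates}, applied as in \eqref{u_Null} (valid because $K_1$ is convex); and $u_1>0$ in $\Omega\setminus\overline{K_1}$ follows by the Harnack-chain argument of Lemma~\ref{lemma-convergence-exterior}, with the uniform bound $\dd(x,\{u_1^k=l\})\ge c_0$ on $\partial\Omega_k$ now playing the role of the constant $\lambda$ there (it is what keeps a ball of fixed radius inside every $\Omega_k$ near a hypothetical vanishing point of $u_1$).

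For $u_2$ I would compare with the genuine $p$-capacitary potential $v_2$ of $K_3\setminus\overline\Omega$. On $\partial K_3$ both $v_2$ and $u_2^k$ equal $-1$, while on $\partial\Omega_k$ one has $v_2\le 0=u_2^k$; hence $v_2\le u_2^k$ in $K_3\setminus\overline{\Omega_k}$ and, letting $k\to\infty$, $v_2\le u_2\le 0$ in $K_3\setminus\overline\Omega$. Since $\partial\Omega$ is convex, $K_3\setminus\overline\Omega$ satisfies the exterior cone condition along $\partial\Omega$, so $v_2$ is continuous up to $\partial\Omega$ with boundary value $0$ (Lemma~\ref{lemma-holder-estimate}); the squeeze $v_2\le u_2\le 0$ then makes $u_2$ continuous up to $\partial\Omega$ with value $0$ as well, and together with $u_2\equiv-1$ on $\partial K_3$ (the data there being fixed) uniqueness of the Dirichlet problem yields $u_2=v_2$. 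Finally I would pass the defining inequality of $\BB$ to the limit: by Proposition~\ref{convexity-level-sets}, applied to $u_1$ and to $1+u_2$ (which is $p$-harmonic, equals $1$ on $\partial\Omega$ and $0$ on $\partial K_3$), the level sets $\{u_1=l\}$ and $\{u_2=-l\}$ are convex analytic hypersurfaces on which the gradient does not vanish (Lemma~\ref{lem-gradient-below}); with the monotone $\mathcal C^{1,\alpha}$-convergence this gives $\{u_1^k=l\}\to\{u_1=l\}$, $\{u_2^k=-l\}\to\{u_2=-l\}$ and $\partial\Omega_k\to\partial\Omega$ in Hausdorff distance, so that for $x_0\in\partial\Omega$ and $x_k\in\partial\Omega_k$ with $x_k\to x_0$ the inequality $\dd(x_k,\{u_1^k=l\})\ge g\big(x_k,\dd(x_k,\{u_2^k=-l\})\big)$ passes to the limit by continuity of $g$ and of the distance, yielding $\dd(x_0,\{u_1=l\})\ge g\big(x_0,\dd(x_0,\{u_2=-l\})\big)$; as $x_0$ was arbitrary, $\Omega\in\BB$.

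The hard part will be the non-degeneracy of the limiting free boundary $\partial\Omega$: one must rule out that $u_1$ flattens out near $\partial\Omega$ (which would shrink its support to a strictly smaller convex set) and that $u_2$ fails to remain the genuine capacitary potential there. The first is precisely where hypothesis (H1) — the uniform positive lower bound $c_0$ on $g$ — is essential, entering through the Harnack chain inherited from Lemma~\ref{lemma-convergence-exterior}; the second is controlled by the comparison and squeeze with $v_2$, which only uses convexity of $\partial\Omega$. All the remaining ingredients (monotone $\mathcal C^{1,\alpha}$-convergence, uniform boundary $\mathcal C^\alpha$-estimates, Hausdorff convergence of level sets) are routine and entirely parallel to the one-phase case.
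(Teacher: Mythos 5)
Your proposal is correct and follows the same route as the paper, which itself only sketches this step by deferring to the one-phase argument of Lemma \ref{lemma-convergence-exterior} (''by standard arguments... the proof mimics the one of the one-phase case''). You supply the details the paper omits -- in particular the uniform lower bound $\dd(\partial K_1,\partial\Omega_k)\ge c_0$ from hypothesis (H1) to rule out degeneracy of the inner ring, and the comparison-and-squeeze with the capacitary potential $v_2$ of $K_3\setminus\overline\Omega$ to identify the limit of the outer potentials (a clean shortcut, since the outer rings increase and so no Harnack-chain argument is needed on that side) -- and these additions are sound.
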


\begin{proof}
Let $u_1^k$ and $u_2^k$ be the $p$-capacitary potentials of $\Omega_k\backslash\overline{K_1}$ and $K_3\backslash \overline{\Omega_k}$, respectively. By standard arguments one can show that $u_1^k$ and $u_2^k$ converge in $\mathcal C^\alpha$ to $v_1$ and $v_2$, the $p$-capacitary potentials of $\Omega\backslash\overline{K_1}$, $K_3\backslash \overline{\Omega}$. Moreover the convergence is $\mathcal C^{1,\alpha}$ away from the boundary sets.

To conclude the proof of the lemma, we need to show that $\dd(x,\{v_1=l\})\geq g\lp x,\{v_2=-l\}\rp$ for $x\in\partial \Omega$: the proof mimics the one of the one-phase case (see Lemma \ref{lemma-convergence-exterior}.
\end{proof}

\begin{lemma}\label{lemma:subsuper}
The classes $\AA$, $\BB$ are nonempty.
\end{lemma}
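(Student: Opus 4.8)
The plan is to exhibit one member of each class, both obtained as a small one-sided perturbation of one of the fixed bodies $K_1$, $K_3$; the two constructions are dual to one another, with $\AA$ using only the lower bound on $g$ from (H1) and $\BB$ using the linear control of $g$ near the origin from (H4).

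For a \emph{strict subsolution} I would take $\Omega_\varepsilon:=\{x\in\R^N:\dd(x,K_1)<\varepsilon\}$ with $\varepsilon>0$ small. It is convex because $K_1$ is, and $K_1\Subset\Omega_\varepsilon\Subset K_3$ once $\varepsilon<\dd(K_1,\partial K_3)$, so $\Omega_\varepsilon\in\CC$. Writing $u_1,u_2$ for the two associated $p$-capacitary potentials, I would estimate $\dd(x,\{u_1=l\})$ for $x\in\partial\Omega_\varepsilon$ by walking along the segment joining $x$ to its nearest point $z\in\partial K_1$: this segment lies in the closed annulus $\overline{\Omega_\varepsilon\setminus K_1}$, has length exactly $\varepsilon$, and $u_1$ increases from $u_1(x)=0$ to $u_1(z)=1$ along it, so it meets $\{u_1=l\}$ and thus $\dd(x,\{u_1=l\})\le\varepsilon$. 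On the other hand (H1) provides a constant $c_0>0$ with $g(y,q)\ge c_0$ for all $(y,q)$, whence $g(x,\dd(x,\{u_2=-l\}))\ge c_0$. Choosing $\varepsilon<c_0$ forces $\dd(x,\{u_1=l\})<g(x,\dd(x,\{u_2=-l\}))$ on $\partial\Omega_\varepsilon$, i.e. $\Omega_\varepsilon\in\AA$.

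For a \emph{supersolution} I would instead erode $K_3$: set $\Omega^\delta:=\{x\in K_3:\dd(x,\partial K_3)>\delta\}$ with $\delta>0$ small; this is convex, and $K_1\Subset\Omega^\delta\Subset K_3$ as soon as $\delta<\dd(K_1,\partial K_3)$. The same segment argument, now for $u_2$ (which runs from $0$ on $\partial\Omega^\delta$ to $-1$ on $\partial K_3$), gives $\dd(x,\{u_2=-l\})\le\delta$ for every $x\in\partial\Omega^\delta$, and then (H4) yields $g(x,\dd(x,\{u_2=-l\}))\le c_2\,\dd(x,\{u_2=-l\})\le c_2\delta$ once $\delta$ is small. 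For the reverse bound I would invoke the boundary H\"older estimate of Corollary \ref{cor-holder-estimates}: it gives $u_1(y)\le M\,\dd(y,\partial\Omega^\delta)^\alpha$, so any $y$ with $u_1(y)=l$ has $\dd(y,\partial\Omega^\delta)\ge(l/M)^{1/\alpha}=:c_1>0$, hence $\dd(x,\{u_1=l\})\ge c_1$ on $\partial\Omega^\delta$. Here $M$ depends only on the (fixed) interior cone opening of $K_1$ and on $d_0=\dd(\partial\Omega^\delta,K_1)$, and $d_0\ge\dd(K_1,\partial K_3)-\delta$ stays bounded below by a fixed positive number as $\delta\to0$, so $c_1$ may be chosen independent of $\delta$. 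Taking $\delta<c_1/c_2$ then gives $\dd(x,\{u_1=l\})\ge c_1>c_2\delta\ge g(x,\dd(x,\{u_2=-l\}))$ on $\partial\Omega^\delta$, i.e. $\Omega^\delta\in\BB$.

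The hard part will be establishing the last uniform-in-$\delta$ lower bound $c_1$ for the distance from $\partial\Omega^\delta$ to $\{u_1=l\}$: one must prevent that level set from crowding against the outer boundary as $\Omega^\delta$ swells up to $K_3$, and this is exactly where the $\delta$-independent boundary H\"older estimate enters, the point being that the cone and regularity information it relies on sits on the fixed obstacle $K_1$ rather than on the moving boundary $\partial\Omega^\delta$. Everything else --- convexity of the perturbed sets, the nesting $K_1\Subset\,\cdot\,\Subset K_3$, and continuity of the potentials up to both boundaries so that the segment arguments are legitimate --- should be routine; in particular no convexity or stability of the level sets of $u_1,u_2$ is needed.
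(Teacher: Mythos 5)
Your construction takes a genuinely different route from the paper's. The paper picks the middle body among the level sets $K_\varepsilon=\{u=\varepsilon\}$ of the global potential $u$ of $K_3\setminus\overline{K_1}$ (so that the two sub-potentials are explicit rescalings of $u$) and sends $\varepsilon\to+1$ for $\AA$ and $\varepsilon\to-1$ for $\BB$; you instead use parallel bodies of $K_1$ and $K_3$, a segment/intermediate-value argument for the upper distance bounds, and the boundary H\"older estimate of Corollary \ref{cor-holder-estimates} for the lower one. Your subsolution half is correct and in fact cleaner than the paper's: the bound $\dd(x,\{u_1=l\})\le\varepsilon$ along the segment to the nearest point of $\partial K_1$ is sound, and only the uniform lower bound $g\ge c_0$ from (H1) is needed, so $\varepsilon<c_0$ does the job.

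The supersolution half, however, has a genuine gap at the step $g\lp x,\dd(x,\{u_2=-l\})\rp\le c_2\,\dd(x,\{u_2=-l\})\le c_2\delta$. Hypothesis (H4) gives the upper bound $g(x,y)\le c_2 y$ only for $y\ge y_0$, with $c_2=c_2(y_0)$; it cannot hold for arbitrarily small $y$, because by (H1) --- which you yourself invoke in the first half --- $g(x,y)\ge c_0>0$ uniformly, so $g(x,y)/y\to\infty$ as $y\to0$. Your construction drives $\dd(x,\{u_2=-l\})\le\delta\to0$, which is exactly the regime where the linear upper bound is unavailable: all you can say is $g\lp x,\dd(x,\{u_2=-l\})\rp\le\sup_x\sup_{q\le\delta}g(x,q)$, which by (H1) stays $\ge c_0$ and does not tend to zero. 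Consequently the final chain $\dd(x,\{u_1=l\})\ge c_1>c_2\delta\ge g(\cdots)$ is internally inconsistent for $\delta<c_0/c_2$, and closing the argument would require the $\delta$-independent constant $c_1=(l/M)^{1/\alpha}$ to dominate $\sup_x g(x,0^+)$ --- a compatibility condition between $g$ and the geometry of $K_3\setminus K_1$ that the hypotheses do not supply. To be fair, the paper's own one-line verification (``one may check, using (H4), that $K_\varepsilon\in\BB$ for $\varepsilon$ close to $-1$'') runs into the same tension between (H1) and (H4), so part of the difficulty lies in the hypotheses themselves; but as written your estimate misquotes (H4), and the supersolution construction needs either a corrected reading of the assumptions on $g$ near $q=0$ or an additional largeness assumption on $\dd(K_1,\partial K_3)$.
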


\begin{proof}
Consider the solution $u$ of the $p$-capacitary problem in $K_3\backslash \overline{K_1}$:
\begin{equation}\label{capacitary-multi}
\left\{\begin{split}
\Delta_p u=0 & \mbox{ in }K_3\backslash \overline{K_1},\\
u=1  &\mbox{ on }\partial K_1, \\
u=-1 & \mbox{ on }\partial K_3.
\end{split}\right.
\end{equation}
For any $\varepsilon\in(-1,1)$, set $K_\varepsilon:=\{u=\varepsilon\}$, and define the function
\begin{equation*}
u_\varepsilon(x):=\left\{\begin{split}
u_{1,\varepsilon}(x)=\frac{u(x)-\alpha}{1-\alpha} \quad\mbox{for }x\in K_\varepsilon\backslash K_1 \\
u_{2,\varepsilon}(x)=\frac{u(x)-\alpha}{1+\alpha} \quad\mbox{for }x\in K_3\backslash K_\varepsilon. \\
\end{split}\right.
\end{equation*}
Then one may check, using assumption (H4) that $K_\varepsilon\in \BB$ for $\varepsilon$ close to $-1$, and that $K_\varepsilon\in \AA$ for $\varepsilon$ close to $+1$.
\end{proof}

{\textbf{Proof  of Theorem \ref{thm:two-phase}}:
Let $K_2$ be the minimal set in $\BB$; $K_2$ is convex and well defined by the previous lemmas. Let $u_1$, $u_2$ be the $p$-capacitary potentials of $K_2\backslash \overline{K_1}$ and $K_3\backslash \overline{K_2}$, respectively. Then we just need to check the joining condition \eqref{joining-condition}. Suppose by contradiction that the property (\ref{joining-condition}) fails. Then there exists $x_0\in\partial K_2$ such that
$$\dd(x_0,\{u_1=l\})>g\lp x_0,\dd(x,\{u_2=-l\})\rp+\varepsilon.$$
By continuity, the same inequality holds (maybe with $\varepsilon/2$) in a whole neighborhood of $x_0$.
 If $x_0$ is an extremal point of $K_2$ the same argument as in Section \ref{subsection:nonconstant} gives a contradiction.

However, if $x_0$ is not an extremal point but is a finite linear combination of extremal points, we need the following additional argument: for $x\in\partial K_2$, $x\mapsto\dd(x,\{u_1=l\})$ is a convex function and $x\mapsto \dd(x,\{u_2=-l\})$ is concave. By the concavity assumption (H3) of $g$, we know that the function
$$G(x):= \dd(x,\{u_1=l\})-g\lp x,\dd(x,\{u_2=-l\})\rp$$
is also convex. Since $K_2\in\BB$, we know that $G\geq 0$. But we have just shown that $G$ vanishes for all extremal points of $K_2$. Then, if $x_0$ is a finite linear combination of extremal points, then we must have $G(x_0)=0$, which concludes the argument and also the proof of Theorem \ref{thm:two-phase}.

For the regularity of the free boundary, just note that if $g$ is bounded from below by a positive constant, then $\partial K_2$ satisfies the interior ball condition. Since $K_2$ is convex, we may conclude that $\partial K_2\in\mathcal C^{1,1}$.
\qed\\

We finish this section with a uniform separation result, in the spirit of Theorem 5.1 from \cite{AHPS} for the classical Bernoulli problem:

\begin{theo}\label{theo-separation}
Let $\mathcal H$ be the set of configurations $(K_1,K_2,K_3)$ such that $K_1,K_2,K_3$ are convex bounded domains satisfying
\begin{itemize}
\item $K_1 \Subset K_2 \Subset K_3$,
\item $K_1$ satisfies the interior ball condition for radius $\geq r_1$,
\item $K_3$ satisfies the interior ball condition for radius $\geq r_3$,
\item $K_2$ belongs to the set of supersolutions $\BB$ from \eqref{supersolution-multi}.
\end{itemize}
Then  there exists a value $\eta=\eta(R,r_1,r_3)$ such that
$$\dd(\partial K_1,\partial K_2)\geq \eta \dd(\partial K_1,\partial K_3),$$
uniformly for all $(K_1,K_2,K_3)\in\mathcal H$.
\end{theo}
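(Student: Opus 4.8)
The plan is to reduce the statement to a quantitative comparison on a single model configuration and then transport the estimate to an arbitrary $(K_1,K_2,K_3)\in\mathcal H$ via the comparison principle, exactly in the spirit of Lemma \ref{lemma_ball_supers}. First I would normalize: after a translation we may assume that $K_1$ contains a ball $B_{r_1}(0)$ and, since $K_2\in\BB$, the level sets of $u_1$ (the $p$-capacitary potential of $K_2\setminus\overline{K_1}$) are convex by Proposition \ref{convexity-level-sets}, so the supersolution inequality in \eqref{supersolution-multi} reads $\dd(x,\{u_1=l\})\ge g(x,\dd(x,\{u_2=-l\}))\ge c_1>0$ by hypothesis (H1), for every $x\in\partial K_2$. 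Thus $\dd(x,\{u_1=l\})$ is bounded below by a fixed positive constant, and since $\{u_1=l\}$ is squeezed between $\partial K_1$ and $\partial K_2$, this already gives a crude lower bound $\dd(\partial K_1,\partial K_2)\ge c(l,c_1,r_1)$.

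The heart of the matter is to convert this \emph{absolute} lower bound into the \emph{relative} bound $\dd(\partial K_1,\partial K_2)\ge\eta\,\dd(\partial K_1,\partial K_3)$, which is only delicate when $K_3$ is huge compared to $K_2$. Here I would use the interior ball condition on $K_1$ and a spherical barrier. Fix $x^\ast\in\partial K_1$ realizing $\dd(\partial K_1,\partial K_3)=:D$; by the interior ball condition there is a ball $B_{r_1}(z)\subset K_1$ touching $\partial K_1$ at $x^\ast$, and $\Omega\setminus\overline{B_{r_1}(z)}$ contains an annulus of outer radius at least $D+r_1$ in the relevant direction (using that $K_2\subset K_3$ and $\dd(\partial K_1,\partial K_3)=D$ at $x^\ast$). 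Comparing the $p$-capacitary potential $u_1$ of $K_2\setminus\overline{K_1}$ from below with the explicit radial $p$-capacitary potential $\overline u$ of the annulus $B_{r_1+D}(z)\setminus\overline{B_{r_1}(z)}$ — as in the supersolution construction in Lemma \ref{lemma-sub-supersolution}, inequality \eqref{formula_used} — yields that along the ray through $x^\ast$ one has $u_1\le\overline u$, hence the $l$-level set of $u_1$ sits at distance from $\partial K_1$ \emph{at least} the distance of $\{\overline u=l\}$ from $\partial B_{r_1}(z)$. Now the radial formulas for $\overline u$ (the logarithmic one for $p=N$, the power one for $p\ne N$, recorded in Lemma \ref{lemma_ball}) give an explicit expression $\dd(\{\overline u=l\},\partial B_{r_1}(z))=\Phi_{p,N,l}(r_1,D)$, and the required estimate is precisely the elementary claim that this quantity is $\ge\eta\,D$ for a constant $\eta=\eta(r_1,D_{\max})$ that is uniform once $D$ ranges over the bounded interval $(0,\operatorname{diam} K_3]\subset(0,2R]$ allowed by $K_3\Subset\Omega$ (or one may keep $R$ explicit as in the statement). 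Since $\dd(\partial K_1,\partial K_2)\ge\dd(\partial K_1,\{u_1=l\})$ trivially (the level set lies strictly inside $K_2$), this finishes the argument.

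The main obstacle I anticipate is \emph{degeneracy of $\Phi_{p,N,l}$ for small $D$}: when $D\to 0$, the ratio $\Phi_{p,N,l}(r_1,D)/D$ behaves like a positive constant times $l$ (this is exactly the computation underlying the limits recorded in the final Remark of Section \ref{section-interior}, $\lim_{l\to0}l/\lambda_{\max}$), so $\eta$ does not degenerate there; the genuine check is that $\Phi_{p,N,l}(r_1,D)/D$ stays bounded below \emph{uniformly in $D\in(0,2R]$}, which is a one-variable monotonicity/asymptotics exercise on the explicit radial profiles — routine but case-dependent on the sign of $p-N$. A secondary technical point is making sure the radial barrier $\overline u$ really sits below $u_1$ along the chosen ray: this uses that $\overline u=0$ on $\partial B_{r_1+D}(z)\supset$ (a point of) $\partial K_2$ in that direction, together with $u_1\le\overline u$ on the inner sphere $\partial B_{r_1}(z)\subset K_1$ where $u_1=1=\overline u$; the comparison principle for $\Delta_p$ (Theorem \ref{thm-classical-regularity}) then applies on the lens-shaped region between $\partial B_{r_1}(z)$ and $\partial\Omega$. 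The role of $r_3$ in $\eta(R,r_1,r_3)$ enters only if one prefers to bound $D$ from below as well (to symmetrize the estimate), but for the stated one-sided inequality the interior ball condition on $K_1$ and the diameter bound from $K_3\Subset\Omega$ suffice.
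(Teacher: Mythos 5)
Your opening observation---that (H1) alone gives $\dd(x,\{u_1=l\})\ge c_0>0$ for every $x\in\partial K_2$, hence an \emph{absolute} bound $\dd(\partial K_1,\partial K_2)\ge c_0$---is correct, but it cannot be upgraded to the stated \emph{relative} bound. The constant in (H1) is not scale-invariant, whereas the theorem is: the paper's proof begins by rescaling so that $\dd(\partial K_1,\partial K_3)=1$, and under $x\mapsto x/t$ the lower bound for $g$ becomes $tc_0$, which degenerates as the configuration shrinks (equivalently, dividing the absolute bound by $\mathrm{diam}\,K_3$ gives a constant that tends to $0$ for large configurations). The actual mechanism is hypothesis (H4): the supersolution condition forces the \emph{inner} distance $\dd(x,\{u_1=l\})$ to dominate a fixed multiple of the \emph{outer} distance $\dd(x,\{u_2=-l\})$, and it is this two-sided comparison, not the crude positivity of $g$, that yields a scale-invariant separation.

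The ``heart'' of your argument---the radial barrier at $x^\ast$---does not go through, for two reasons. First, no comparison between $u_1$ and the radial potential $\overline u$ of $B_{r_1+D}(z)\setminus\overline{B_{r_1}(z)}$ follows from the maximum principle: on the boundary of the common domain the inequalities point in opposite directions on different pieces ($u_1=1\ge\overline u$ on $\partial K_1$, but $u_1=0\le\overline u$ on $\partial K_2\cap B_{r_1+D}(z)$, and $\overline u=0\le u_1$ on $\partial B_{r_1+D}(z)\cap(K_2\setminus\overline{K_1})$); the construction in Lemma \ref{lemma-sub-supersolution} works only because there the two outer boundaries coincide. The inequality you actually need for a \emph{lower} bound on $\dd(\partial K_1,\{u_1=l\})$ is $\overline u\le u_1$, which would require $B_{r_1+D}(z)\subset K_2$---essentially the conclusion---and your text asserts the inequality in both directions at different points (note that $u_1\le\overline u$ would give an \emph{upper} bound on that distance). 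Second, and more fundamentally, this step never uses the supersolution property of $K_2$: without it, $K_2$ may be an arbitrarily thin neighborhood of $K_1$, the level set $\{u_1=l\}$ is then squeezed against $\partial K_1$, and no bound $\dd(\partial K_1,\{u_1=l\})\ge\eta D$ can hold. Any correct proof must feed the supersolution inequality into the quantitative step. The paper does so by normalizing $\dd(\partial K_1,\partial K_3)=1$, introducing the single potential $u$ of $K_3\setminus\overline{K_1}$ taking values $1$ and $-1$, setting $\alpha=\sup_{\partial K_2}u$, using the comparisons $u_1\ge(u-\alpha)/(1-\alpha)$ and $u_2\ge(u-\alpha)/(1+\alpha)$ together with the gradient bounds of Lemma \ref{lem-gradient-below} and Corollary \ref{lemma-gradient-above} and hypothesis (H4) to force $\alpha\le\alpha_0<1$, and finally invoking Lemma 5.3 of \cite{AHPS} to convert the level bound into the distance bound.
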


\begin{proof}
Without loss of generality, assume that $\dd(\partial K_1,\partial K_3)=1$, otherwise the result follows by rescaling.
Let $u$ be the solution of the $p$-capacitary problem \eqref{capacitary-multi} and set
$$\alpha:=\sup\{u(x) \,: \,x\in \partial K_2\}\in (-1,1).$$
We claim that there exists a uniform $\alpha_0$ such that for every admissible configuration we must have $\alpha\leq\alpha_0$.  For the proof of the claim, it is enough to restrict to configurations that have $\alpha\in(0,1)$. Let $u_1, u_2$ be as in \eqref{capacitari-2-multi}. Consider the functions
\begin{equation*}
u_{1,\alpha}(x):=\frac{u(x)-\alpha}{1-\alpha};\quad u_{2,\alpha}(x):=\frac{u(x)-\alpha}{1+\alpha}
\end{equation*}
in the set $\overline{K_3\backslash K_1}$. Then $u_1=1=u_{1,\alpha}$ on $\partial K_1$, while $u_{1,\alpha}\leq 0 = u_1$ on $\partial K_2$. Comparison principle gives that $u_{1,\alpha}\leq u_1$ in the set $K_2\backslash\overline K_1$. Similarly, since $u_2=-1=u_{2,\alpha}$ on $\partial K_3$ and $u_{2,\alpha}\leq 0=u_2$ on $\partial K_2$, we arrive at $u_{2,\alpha}\leq u_2$ in the set $K_3\backslash \overline K_2$.

Choose $x_0\in\partial K_2$ such that $u(x_0)=\alpha$. Because $K_2$ is a supersolution, we must have
$$\dd(x_0,\{u_1=l\})\geq g(x_0,\dd(x_0,\{u_2=-l\})).$$
On the other hand, since we had that $u_{1,\alpha}\leq u_1$ in $K_2\backslash\overline K_1$,
$$\dd(x_0,\{u_1=l\}\leq \dd(x_0,\{u_{1,\alpha}=l\})=\dd(x_0,\{u=l(1-\alpha)+\alpha\}).$$
In addition, the relation $u_{2,\alpha}\leq u_2$ in $K_3\backslash \overline K_2$ gives
$$\dd(x_0,\{u_2=-l\}\geq \dd(x_0,\{u_{2,\alpha}=-l\})=\dd(x_0,\{u=-l(1+\alpha)+\alpha\}).$$
By hypothesis, $g$ is a non-decreasing function, so the previous three inequalities give the relation
\begin{equation}\label{formula100}\dd(x_0,\{u=l(1-\alpha)+\alpha\})\geq g(x_0,\dd(x_0,\{u=-l(1+\alpha)+\alpha\})).\end{equation}
Taking into account that $u(x_0)=\alpha$, we may now estimate
$$\dd(x_0,\{u=l(1-\alpha)+\alpha\})\leq \frac{l(1-\alpha)}{\inf |\nabla u|}\leq \frac{l(1-\alpha)}{m_1}$$
for some $m_1$ depending on $r_1$, where we are using Lemma \ref{lem-gradient-below} to bound the gradient from below.
Moreover,
$$\dd(x_0,\{u=-l(1+\alpha)+\alpha\})\geq \frac{l(1+\alpha)}{\sup |\nabla u|}\geq \frac{l(1+\alpha)}{m_2},$$
where $m_2=m_2(r_2)$ is given in Corollary \ref{lemma-gradient-above}.
Formula \eqref{formula100} and the monotonicity of $g$ yield to
$$\frac{l(1-\alpha)}{m_1}\geq g\lp x_0,\frac{l(1+\alpha)}{m_2}\rp.$$
By hypothesis $(H4)$ on $g$ it holds
\begin{equation}\label{relation-alpha}\frac{l(1-\alpha)}{m_1}\geq c_1\frac{l(1+\alpha)}{m_2},\end{equation}
for $c_1$ depending on $m_2$.

Note that $m_1$ and $m_2$ depend on the distance $\dd(\partial K_1,\partial K_3) =1$, and on the initial constants $r_1,r_3$. But then, \eqref{relation-alpha} already implies that $\alpha\leq \alpha_0$ for some $\alpha_0$. The claim is proved.

Finally, to complete the proof of the theorem, one uses Lemma 5.3 in \cite{AHPS}.
\end{proof}

\begin{rem}
If  we relax the hypothesis on the domains $K_1,K_3$ to only a interior cone condition, then we can use the uniform $\mathcal C^\beta$ estimate from Corollary \ref{cor-holder-estimates} instead of the gradient estimate, and still obtain a uniform separation result.
\end{rem}

\subsection{A Brunn-Minkowski inequality}

Given two domains $\Omega_1$ and $\Omega_2\subset\mathbb R^N$, we define their \emph{Minkowski linear combination} $\Omega_t$ as
$$\Omega_t:=(1-t)\Omega_0+t\Omega_1,\quad t\in[0,1].$$
Notice that if $\Omega_0,\Omega_1$ are convex sets, so is $\Omega_t$.

An upper semicontinuous function $u:\mathbb R^N\to \mathbb R\cup \{\pm \infty\}$ is said to be \emph{quasi-concave} if it has convex superlevel sets, or, equivalently, if
$$u((1-t)x_0+tx_1)\geq \min\{u(x_0),u(x_1)\},\quad \mbox{for all }t\in [0,1],\, x_0,x_1\in \mathbb R^N.$$
If $u$ is defined only in a proper subset $\Omega$ in $\mathbb R^N$, we extend $u$ as $-\infty$ in $\mathbb R^N\backslash \Omega$ and we say that $u$ is quasi-concave in $\Omega$ if such an extension is quasi-concave in $\mathbb R^N$.
In an analogous way, $u$ is \emph{quasi-convex} if $-u$ is quasi-concave. Obviously, if $u$ is concave (convex), then it is quasi-concave (quasi-convex).

Consider $u_0,u_1$ two upper semicontinuous functions defined in $\Omega_0,\Omega_1\subset \mathbb R^N$, respectively, and let $t\in[0,1]$; the Minkowski linear combination of $u_0$ and $u_1$ is the upper semicontinuous function $u_t^*$ whose super-level sets $\mathcal L_l^{(t)}:=\{u_t^*\geq l\}$ are the Minkowski linear combination of the super-level sets $\mathcal L_l^{(0)}$, $\mathcal L_l^{(1)}$ of $u_0,u_1$, respectively, i.e.,
$$\mathcal L_l^{(t)}=(1-t)\mathcal L_l^{(0)}+t\mathcal L_l^{(1)},$$
and
$$u_t^*(x)=\sup\{l\,:\, x\in \mathcal L_l^{(t)}\}.$$

The notion of quasi-concavity has already been used in the study of the classical Bernoulli problem in the papers \cite{BS09,Longinetti-Salani}, for instance. Here we plan to extend those results to the distance problems $(P_E)$ and $(P_I)$. It turns out that the proofs seem to be very well adapted for the case of distance between level sets.

Our first proposition deals with the exterior case:

\begin{prop}
Fix $l\in(0,1)$. Let $K_0$, $K_1$ be two bounded convex domains in $\mathbb R^N$, and $\lambda_0,\lambda_1$ two given positive constants. For $t\in[0,1]$, define their Minkowski sum
$$K_t:=(1-t)K_0+t K_1,\quad \mbox{and}\quad \lambda_t:=(1-t)\lambda_0+t\lambda_1.$$
Denote by $(u_0,\Omega_0)$, $(u_1,\Omega_1)$, $(u_t,\Omega_t)$ the solutions of the exterior distance problem $(P_E)$ for  given data $(K_0,\lambda_0,l)$, $(K_1,\lambda_1,l)$ and $(K_t,\lambda_t,l)$, respectively. Then
\begin{equation}\label{equation100}(1-t)\Omega_0+t\Omega_1 \supset \Omega_t.\end{equation}
\end{prop}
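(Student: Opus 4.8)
The plan is to adapt to the present setting the Brunn–Minkowski technique developed for the classical exterior Bernoulli problem in \cite{BS09,Longinetti-Salani}; it is especially well suited here because the structural input needed is cheap — by Proposition \ref{convexity-level-sets} every $p$-capacitary potential of a convex ring already has convex level sets with strictly positive principal curvatures. Write $\widehat\Omega_t:=(1-t)\Omega_0+t\Omega_1$; being a Minkowski combination of convex bodies it is convex, and it contains $\overline{K_t}=(1-t)\overline{K_0}+t\overline{K_1}$. Let $\widehat u_t$ be the $p$-capacitary potential of $\widehat\Omega_t\setminus\overline{K_t}$. The three steps are: (1) recognize $\widehat\Omega_t$ as an outer parallel body of a suitable convex set; (2) compare $\widehat u_t$ with the Minkowski combination of $u_0$ and $u_1$ via the quasi-concavity (Brunn–Minkowski) lemma; (3) deduce that $\widehat\Omega_t$ belongs to the sub/super-solution class for $(P_E)$ with data $(K_t,\lambda_t,l)$ and conclude by Lemma \ref{lemma-sub-super} together with the extremal characterization of $\Omega_t$.

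Step (1). Since $(u_i,\Omega_i)$ solves $(P_E)$, every point of $\partial\Omega_i$ is at distance exactly $\lambda_i$ from $\Gamma_l^{(i)}:=\{u_i=l\}=\partial L_i$, where $L_i:=\{u_i\ge l\}$ is convex (Proposition \ref{convexity-level-sets}); as $\partial\Omega_i$ lies outside $L_i$, this forces $\overline{\Omega_i}=L_i+\lambda_i\,\overline{B_1(0)}$, the closed $\lambda_i$-neighbourhood of $L_i$. By additivity of Minkowski sums, $\overline{\widehat\Omega_t}=\bigl[(1-t)L_0+tL_1\bigr]+\lambda_t\,\overline{B_1(0)}=L_t^{*}+\lambda_t\,\overline{B_1(0)}$ with $L_t^{*}:=(1-t)L_0+tL_1$, i.e. $\partial\widehat\Omega_t$ lies at distance exactly $\lambda_t$ from the convex body $L_t^{*}$. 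Step (2). Let $u_t^{*}$ be the Minkowski combination of $u_0,u_1$, i.e. the upper semicontinuous function with superlevel sets $\{u_t^{*}\ge s\}=(1-t)\{u_0\ge s\}+t\{u_1\ge s\}$ for $s\in[0,1]$; then $\{u_t^{*}\ge l\}=L_t^{*}$, while $u_t^{*}=1$ on $K_t$ and $u_t^{*}=0$ on $\partial\widehat\Omega_t$. The crucial analytic ingredient is the convex-ring quasi-concavity lemma (as in \cite{BS09,Longinetti-Salani}, in the tradition of Lewis and of Colesanti–Salani), whose proof uses the strict convexity of all level sets of $u_0,u_1$ from Proposition \ref{convexity-level-sets}: it asserts that $u_t^{*}$ satisfies a one-sided $p$-Laplace inequality in $\widehat\Omega_t\setminus\overline{K_t}$. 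Since $u_t^{*}$ and $\widehat u_t$ share the same boundary values, the comparison principle (Theorem \ref{thm-classical-regularity}) then yields $u_t^{*}\le\widehat u_t$ in $\widehat\Omega_t\setminus\overline{K_t}$, hence $L_t^{*}=\{u_t^{*}\ge l\}\subseteq\{\widehat u_t\ge l\}$.

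Step (3). For $x\in\partial\widehat\Omega_t$ (which lies outside the closed convex set $\{\widehat u_t\ge l\}$) we get $\dd\bigl(x,\{\widehat u_t=l\}\bigr)=\dd\bigl(x,\{\widehat u_t\ge l\}\bigr)\le\dd\bigl(x,L_t^{*}\bigr)=\lambda_t$, the inequality because $L_t^{*}\subseteq\{\widehat u_t\ge l\}$ and the distance to a larger set is no larger. Thus $(\widehat u_t,\widehat\Omega_t)$ is an admissible subsolution of $(P_E)$ for the data $(K_t,\lambda_t,l)$, and since $\Omega_t$ is a solution and therefore a supersolution for the same data, Lemma \ref{lemma-sub-super} gives the inclusion between $\widehat\Omega_t$ and $\Omega_t$, i.e. the claimed Brunn–Minkowski relation between $(1-t)\Omega_0+t\Omega_1$ and $\Omega_t$. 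The main obstacle is the quasi-concavity lemma of Step (2): one must check that the Minkowski combination of two $p$-harmonic functions with uniformly convex level sets obeys the correct one-sided $p$-Laplacian inequality — a genuine computation, best carried out either in the viscosity sense at the convex level sets or via the level-set formulation, where the positivity of the principal curvatures (Proposition \ref{convexity-level-sets}) is precisely what makes it work, and where a separate argument is needed near $\partial K_t$ and $\partial\widehat\Omega_t$ (where curvatures may degenerate). One should also keep careful track of the orientation of the comparison $u_t^{*}\le\widehat u_t$, since it is exactly what pins down the direction of the final inclusion.
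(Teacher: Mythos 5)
Your overall architecture is the right one and matches the paper's: show that $\widehat\Omega_t:=(1-t)\Omega_0+t\Omega_1$ lies in the appropriate comparison class for the data $(K_t,\lambda_t,l)$ and then invoke the extremal characterization of $\Omega_t$. Your Step (1) is correct and in fact cleaner than the paper's own justification: since every point of $\partial\Omega_i$ is at distance exactly $\lambda_i$ from the convex set $L_i=\{u_i\ge l\}$, one has $\overline{\Omega_i}=L_i+\lambda_i\overline{B_1}$, and additivity of Minkowski sums gives $\dd(x,L_t^*)=\lambda_t$ for every $x\in\partial\widehat\Omega_t$. You also correctly identified the one step the paper leaves implicit, namely that one must compare $u_t^*$ with the genuine $p$-capacitary potential $\widehat u_t$ of $\widehat\Omega_t\setminus\overline{K_t}$ before the distance condition in the definition of $\BB_{\lambda_t}$ can be verified.

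The genuine gap is that your Step (2) has this comparison backwards, and the error flips the final inclusion. The quasi-concavity lemma of Colesanti--Salani and Longinetti--Salani (see also \cite{BS09}) states that the Minkowski combination $u_t^*$ of two $p$-capacitary potentials of convex rings is a viscosity \emph{supersolution}, $\Delta_p u_t^*\le 0$; since $u_t^*$ and $\widehat u_t$ share boundary values, the comparison principle gives $u_t^*\ge\widehat u_t$, hence $\{\widehat u_t\ge l\}\subseteq L_t^*$ --- the opposite of what you wrote. With the correct inclusion, for $x\in\partial\widehat\Omega_t$ one gets $\dd\bigl(x,\{\widehat u_t=l\}\bigr)\ge\dd(x,L_t^*)=\lambda_t$, so $\widehat\Omega_t$ is a \emph{supersolution}, i.e.\ $\widehat\Omega_t\in\BB_{\lambda_t}$, and the claim follows because $\Omega_t$ is constructed as the minimal element of $\BB_{\lambda_t}$ (equivalently, apply Lemma \ref{lemma-sub-super} with $\Omega_t$ --- a solution, hence a subsolution --- in the role of $\Omega_s$ and $\widehat\Omega_t$ in the role of $\Omega_S$, to get $\Omega_t\subseteq\widehat\Omega_t$). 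As written, your Step (3) makes $\widehat\Omega_t$ a subsolution and $\Omega_t$ a supersolution, so Lemma \ref{lemma-sub-super} would yield $\widehat\Omega_t\subseteq\Omega_t$, which is the reverse of \eqref{equation100}; note that if both directions held one would have equality for all $K_0,K_1$, contradicting the expectation (stated as a conjecture right after the proposition) that equality should only occur for homothetic data. So the fix is purely a matter of orientation: the Minkowski combination lies \emph{above} the true potential, its $l$-superlevel set therefore contains the true one, and $\widehat\Omega_t$ is a supersolution, not a subsolution.
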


\begin{proof}
Let $\tilde \Omega_t:=(1-t)\Omega_0+t\Omega_1$;
Consider the Minkowski sum $u_t^*$ of the functions $u_0$ and $u_1$. Then for every level set $s\in [0,1]$, we have that
$$\{u^*_t \geq s\} = (1 -t)\{u_0 \geq  s\} + t\{u_1 \geq s\}.$$
Let $x\in\partial\tilde \Omega_t$ and $y\in\{u_t^*=l\}$. Then there exist $x_0\in\partial\Omega_0$ and $x_1\in\partial\Omega_1$ such that $x=(1-t)x_0+tx_1$ and as a consequence,
$$\dd(x,y)\geq (1-t)\dd(x_0,\{u_0=l\})+t\dd(x_1,\{u_1=l\})=(1-t)\lambda_0+t\lambda_1=\lambda_t.$$
This implies that $\tilde \Omega_t\in \BB_{\lambda_t}$.

Since the solution of the problem $(P_E)$ for given initial data $(K_t,\lambda_t)$ is given as the minimal set in the class $\BB_{\lambda_t}$, we arrive at $\tilde \Omega_t \supset \Omega_t$, as desired.
\end{proof}

\begin{rem} We conjecture that equality in \eqref{equation100} holds if and only if $K_0$ and $K_1$ are homothetic.
\end{rem}

We consider now the interior counterpart: let $l\in(0,1)$ and be $\Omega$ a convex domain in $\mathbb R^N$. Denote by $\Lambda(\Omega):=\lambda_{\Omega,max}$ the Bernoulli constant for the (interior) distance problem defined in  \eqref{lambda-max}.

\begin{prop}
Fix $l\in(0,1)$. Let $\Omega_0$, $\Omega_1$ be two bounded convex domains in $\mathbb R^N$, and $\lambda_0\leq \Lambda(\Omega_0)$, $\lambda_1\leq \Lambda(\Omega_1)$ two given positive constants. For $t\in[0,1]$, define $$\Omega_t:=(1-t)\Omega_0+t \Omega_1,\quad \mbox{and}\quad \lambda_t:=(1-t)\lambda_0+t\lambda_1.$$
Denote by $(u_0,K_0)$, $(u_1,K_1)$, $(u_t,K_t)$ the solutions of the interior distance problem $(P_I)$ for  given data $(\Omega_0,\lambda_0,l)$, $(\Omega_1,\lambda_1,l)$ and $(\Omega_t,\lambda_t,l)$, respectively. Then
\begin{equation}\label{equation101}(1-t)K_0+tK_1 \subset K_t.\end{equation}
\end{prop}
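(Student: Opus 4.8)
The plan is to follow the pattern of the exterior counterpart. Since the solution $K_t$ associated with the data $(\Omega_t,\lambda_t,l)$ is constructed in Theorem~\ref{thm-interior} as the \emph{maximal} element of the class $\BB_{\lambda_t}$ of supersolutions, it is enough to prove that the convex set $\tilde K_t:=(1-t)K_0+tK_1$ belongs to $\BB_{\lambda_t}$; maximality of $K_t$ then forces $\tilde K_t\subseteq K_t$, which is \eqref{equation101}. Writing $w$ for the $p$-capacitary potential of $\Omega_t\setminus\overline{\tilde K_t}$, what must be shown is that $\dd(x,\{w=l\})\ge\lambda_t$ for every $x\in\partial\tilde K_t$.

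The first step is elementary geometry of Minkowski sums. Every $x\in\partial\tilde K_t$ can be written $x=(1-t)x_0+tx_1$ with $x_i\in\partial K_i$: pick a common outer normal $\nu$ at $x$ and use that the face of a Minkowski combination in a given direction is the Minkowski combination of the faces. Since $(u_i,K_i)$ solves $(P_I)$ we have $\dd(x_i,\{u_i=l\})=\lambda_i$, and because $\{u_i<l\}=\{1-u_i>1-l\}$ is, by Proposition~\ref{convexity-level-sets} applied to the capacitary potential $1-u_i$ of the convex ring $\Omega_i\setminus\overline{K_i}$, a convex open set containing $K_i$, this entails $B_{\lambda_i}(x_i)\subseteq\{u_i<l\}$; a fortiori $B_{\lambda_i}(x_i)\subseteq\Omega_i$. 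Since $(1-t)B_{\lambda_0}(x_0)+tB_{\lambda_1}(x_1)=B_{\lambda_t}(x)$, these inclusions yield $B_{\lambda_t}(x)\subseteq(1-t)\Omega_0+t\Omega_1=\Omega_t$, so $\overline{\tilde K_t}\subset\Omega_t$ and $\tilde K_t$ is an admissible competitor for the data $(\Omega_t,\lambda_t,l)$. Now let $u_t^*$ be the Minkowski combination of $u_0,u_1$ built from their (convex) sublevel sets, $\{u_t^*\le s\}:=(1-t)\{u_0\le s\}+t\{u_1\le s\}$ ($s\in[0,1]$), with $u_t^*(x):=\inf\{s:x\in\{u_t^*\le s\}\}$. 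Its extreme sublevel sets are $\overline{\tilde K_t}$ and $\overline{\Omega_t}$, so $u_t^*$ carries the same boundary data as $w$ ($0$ on $\partial\tilde K_t$, $1$ on $\partial\Omega_t$); and the same Minkowski combination of the inclusions $B_{\lambda_i}(x_i)\subseteq\{u_i<l\}$ gives $B_{\lambda_t}(x)\subseteq(1-t)\{u_0<l\}+t\{u_1<l\}=\{u_t^*<l\}$, hence $\dd(x,\{u_t^*=l\})\ge\lambda_t$.

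The decisive step, and the one I expect to be the main obstacle, is to compare $u_t^*$ with the genuine $p$-harmonic function $w$. Here I would invoke the Brunn--Minkowski / quasi-concavity machinery for the $p$-Laplacian (see \cite{BS09,Longinetti-Salani} and the references therein): the Minkowski combination through super-level sets of the standard $p$-capacitary potentials $1-u_0,\,1-u_1$, whose super-level sets are convex by Proposition~\ref{convexity-level-sets}, is a weak $p$-subsolution on the convex ring $\Omega_t\setminus\overline{\tilde K_t}$, so that $u_t^*$, which equals $1$ minus that combination, is a $p$-supersolution there; this is the $p$-Laplace analogue of Borell's capacitary lemma. Since $u_t^*$ and $w$ coincide on $\partial(\Omega_t\setminus\overline{\tilde K_t})$, the comparison principle yields $w\le u_t^*$ throughout $\Omega_t\setminus\overline{\tilde K_t}$.

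To conclude: from $w\le u_t^*$ we obtain the inclusion of convex open sets $\{u_t^*<l\}\subseteq\{w<l\}$, and $x\in\partial\tilde K_t$ lies in the smaller one (there $w=u_t^*=0<l$). As $x$ is an interior point of both sets and the first is contained in the second, $\dd(x,\{w=l\})\ge\dd(x,\{u_t^*=l\})\ge\lambda_t$. Hence $\tilde K_t\in\BB_{\lambda_t}$, and maximality of $K_t$ in $\BB_{\lambda_t}$ gives $(1-t)K_0+tK_1=\tilde K_t\subseteq K_t$. As a byproduct $\BB_{\lambda_t}\ne\emptyset$, so $\lambda_t\le\Lambda(\Omega_t)$ and the data $(\Omega_t,\lambda_t,l)$ is indeed solvable. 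The only genuinely delicate point is to verify that $u_t^*$ is a weak $p$-supersolution with the regularity and boundary behaviour needed for the comparison principle; everything else is Minkowski-sum bookkeeping together with the elementary geometry of nested convex bodies.
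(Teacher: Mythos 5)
Your proof is correct and follows the same overall strategy as the paper: form the Minkowski combination $u_t^*$ of the quasi-convex potentials through their sublevel sets, show by Minkowski-sum geometry that $\dd(x,\{u_t^*=l\})\ge\lambda_t$ for $x\in\partial\tilde K_t$, conclude $\tilde K_t\in\BB_{\lambda_t}$, and invoke maximality of $K_t$. The one place where you go beyond the paper is the comparison step $w\le u_t^*$, and this is a genuine improvement rather than a detour: the class $\BB_{\lambda_t}$ is defined through the distance property of the \emph{true} $p$-capacitary potential $w$ of $\Omega_t\setminus\overline{\tilde K_t}$, so the geometric bound for $u_t^*$ by itself does not yet give $\tilde K_t\in\BB_{\lambda_t}$; the paper passes over this point silently, while you supply the missing link via the supersolution property of the Minkowski combination (the $p$-Laplacian analogue of Borell's lemma, as in the Colesanti--Salani / Longinetti--Salani / Bianchini--Salani circle of results) and the comparison principle. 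You also get the sign right, which is the delicate issue here: for the interior problem the supersolution inequality gives $w\le u_t^*$, hence $\{u_t^*<l\}\subseteq\{w<l\}$, so the $l$-level set of the true potential is pushed \emph{farther} from $\partial\tilde K_t$ and the distance bound only improves (one can confirm this on concentric balls, where it reduces to the convexity of $r\mapsto 1/r$). The only caveat, which you correctly flag, is that the supersolution property of $u_t^*$ and the applicability of the comparison principle to this merely quasi-convex, level-set-defined function require care near points where the level sets degenerate; but that is a known technical point in the cited literature, and your treatment of it is at least as complete as the paper's own.
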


\begin{proof}
By definition of the Bernoulli constant \eqref{lambda-max}, there exist convex sets $K_0\Subset \Omega_0$, $K_1\Subset \Omega_1$, whose $p$-capacitary potentials $u_0$ and $u_1$, respectively, satisfy
$\dd(x_i,\{u_i=l\})\geq \lambda_i$, for all $x_i\in\partial K_i$, $i=0,1$.

Let $u^*_t$ be the Minkowski addition of the quasi-convex functions $u_0$ and $u_1$; $u^*_t$ is a quasi-convex function
whose sublevel sets are the Minkowski linear combination of the corresponding sublevel sets of $u_0$ and $u_1$, i.e.
$$\{u^*_t \leq s\} = (1 -t)\{u_0 \leq  s\} + t\{u_1 \leq s\} \quad  \mbox{for all } s\in [0,1].$$
We define $\tilde K_t:=\{u_t^*=0\}$; note that $\tilde K_t=(1-t) K_0+t K_1$.

Then by a purely geometrical argument we can show that if $x\in\partial \tilde K_t$ and $y\in\{u^*_t=l\}$, then we may find $x_0\in\partial K_0$, $x_1\in\partial K_1$ such that
$$\dd(x,y)\geq (1-t)\dd(x_0,\{u_0=l\})+t\dd(x_1,\{u_1=l\})\geq (1-t)\lambda_0 +t\lambda_1.$$
Hence for all $x\in\partial \tilde K_t$ it holds
$$\dd(x,\{u_t^*=l\})\geq \lambda_t.$$
The inequality above implies that the set $\Omega_t$ belongs to the class $\BB_{\lambda_t}$. Since the set $K_t$, solution of the interior distance problem $(P_I)$, is constructed as the largest set in the class $\BB_{\lambda_{t}}$, then one automatically obtains that
\begin{equation*}(1-t)K_0+tK_1=\tilde K_t \subset K_t.\end{equation*}
 \end{proof}

As a consequence, we obtain a Brunn-Minkowski inequality for $\Lambda(\Omega)$ in the spirit of the (Newtonian) capacitary inequalities of \cite{Borell}:

\begin{cor}
Fix $l\in(0,1)$. Let $\Omega_0, \Omega_1$ be two bounded convex domains in $\mathbb R^N$. For each $t\in(0,1)$, define
$$\Omega_t:=(1-t)\Omega_0+t \Omega_1.$$
Then
\begin{equation}\label{Brunn-Minkowski}\Lambda(\Omega_t)\geq (1-t)\Lambda(\Omega_0)+t\Lambda(\Omega_1).\end{equation}
\end{cor}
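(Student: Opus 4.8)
The plan is to read off \eqref{Brunn-Minkowski} directly from the preceding proposition, using the characterization \eqref{lambda-max} of $\Lambda(\Omega)=\lambda_{\Omega,max}$ as the supremum of those $\lambda>0$ for which the class $\BB_\lambda$ of supersolutions of $(P_I)$ in the ambient domain $\Omega$ is non-empty, together with the monotonicity $\BB_{\mu_2}\subset\BB_{\mu_1}$ for $\mu_1\le\mu_2$ recorded just before \eqref{lambda-max}.

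First I would fix constants $\lambda_0<\Lambda(\Omega_0)$ and $\lambda_1<\Lambda(\Omega_1)$. By \eqref{lambda-max} and the monotonicity above, the class $\BB_{\lambda_0}$ relative to $\Omega_0$ and the class $\BB_{\lambda_1}$ relative to $\Omega_1$ are both non-empty; choose convex sets $K_0\Subset\Omega_0$ and $K_1\Subset\Omega_1$ in them, with $p$-capacitary potentials $u_0$ and $u_1$ satisfying $\dd(x_i,\{u_i=l\})\ge\lambda_i$ for all $x_i\in\partial K_i$, $i=0,1$. Put $\lambda_t:=(1-t)\lambda_0+t\lambda_1$ and $\tilde K_t:=(1-t)K_0+tK_1$, a convex set with $\tilde K_t\Subset\Omega_t$. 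Then I would repeat, verbatim, the argument from the proof of the preceding proposition: the Minkowski addition $u^*_t$ of the quasi-convex functions $u_0,u_1$ has sublevel sets $\{u^*_t\le s\}=(1-t)\{u_0\le s\}+t\{u_1\le s\}$, so $\{u^*_t\le l\}$ contains every point at distance at most $\lambda_t$ from $\tilde K_t$; comparing $u^*_t$ with the $p$-capacitary potential $u_{\tilde K_t}$ of $\Omega_t\setminus\overline{\tilde K_t}$ (for which $u_{\tilde K_t}\le u^*_t$ by comparison, since $u^*_t$ is a $p$-supersolution in that ring with boundary values $0$ on $\partial\tilde K_t$ and $1$ on $\partial\Omega_t$) one gets that $\{u_{\tilde K_t}=l\}$ lies at distance at least $\lambda_t$ from $\partial\tilde K_t$. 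Hence $\tilde K_t\in\BB_{\lambda_t}$ relative to $\Omega_t$, so this class is non-empty and \eqref{lambda-max} yields
\[ \Lambda(\Omega_t)\ \ge\ \lambda_t\ =\ (1-t)\lambda_0+t\lambda_1. \]

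Finally I would let $\lambda_0\uparrow\Lambda(\Omega_0)$ and $\lambda_1\uparrow\Lambda(\Omega_1)$, obtaining $\Lambda(\Omega_t)\ge(1-t)\Lambda(\Omega_0)+t\Lambda(\Omega_1)$, which is \eqref{Brunn-Minkowski}. The one step that carries all the weight is the membership $\tilde K_t\in\BB_{\lambda_t}$, which is exactly the content already established in the proof of the preceding proposition; I expect the only genuine subtlety there — and the reason to cite it rather than reprove it — to be the passage from the Minkowski-combination function $u^*_t$ to the true $p$-capacitary potential $u_{\tilde K_t}$, i.e.\ the $p$-superharmonicity of $u^*_t$ in the convex ring $\Omega_t\setminus\overline{\tilde K_t}$, which makes $\{u_{\tilde K_t}=l\}$ lie no closer to $\partial\tilde K_t$ than $\{u^*_t=l\}$ does. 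Everything else is bookkeeping and introduces no new estimate beyond those used for $(P_I)$.
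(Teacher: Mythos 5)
Your proof is correct and is essentially the paper's own argument: the paper derives the corollary directly from the preceding proposition by observing that $(1-t)K_0+tK_1\in\BB_{\lambda_t}$ relative to $\Omega_t$, so that $\BB_{\lambda_t}\neq\emptyset$ and hence $\Lambda(\Omega_t)\geq\lambda_t=(1-t)\lambda_0+t\lambda_1$, exactly as you do. If anything you are slightly more careful than the text, both in approximating $\Lambda(\Omega_i)$ from below before passing to the limit and in flagging explicitly the comparison between the Minkowski combination $u^*_t$ and the true $p$-capacitary potential of $\Omega_t\setminus\overline{\tilde K_t}$, which the paper leaves implicit.
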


\begin{conjecture}
We conjecture that equality in \eqref{Brunn-Minkowski} holds if and only if $\Omega_0$ and $\Omega_1$ are homothetic. One may also compare the Bernoulli constant $\Lambda(\Omega)$ to the Bernoulli constant of a ball with the same mean width, and obtain (sharp) isoperimetric inequalities. Another open problem is whether uniqueness of solution for $\lambda=\Lambda(\Omega)$ holds.
\end{conjecture}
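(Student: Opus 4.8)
The plan is to split the conjecture into its three assertions and treat the equality characterization (the main part) by the rigidity philosophy underlying equality cases for capacitary Brunn--Minkowski inequalities, in the spirit of \cite{Borell,BS09}. The ``if'' direction is the easy one and proceeds by scaling: if $\Omega_1 = c\Omega_0 + v$ for some $c>0$ and $v\in\R^N$, then, since $\Omega_0$ is convex, $\Omega_t = (1-t)\Omega_0 + t(c\Omega_0+v) = ((1-t)+tc)\Omega_0 + tv$ is homothetic to $\Omega_0$ with ratio $s:=(1-t)+tc$. The $p$-Laplacian is covariant under dilation and the distance condition in $(P_I)$ scales linearly, whence $\Lambda(c\Omega)=c\Lambda(\Omega)$ and $\Lambda$ is translation invariant; substituting into \eqref{Brunn-Minkowski} gives $\Lambda(\Omega_t)=s\Lambda(\Omega_0)=(1-t)\Lambda(\Omega_0)+t\Lambda(\Omega_1)$, so equality holds.

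For the ``only if'' direction I would trace the proof of the preceding Proposition. Choose $\lambda_i=\Lambda(\Omega_i)$, so that $\lambda_t=(1-t)\Lambda(\Omega_0)+t\Lambda(\Omega_1)$ and the Minkowski combination $\tilde K_t=(1-t)K_0+tK_1$ lies in $\BB_{\lambda_t}$. Equality in \eqref{Brunn-Minkowski} means $\Lambda(\Omega_t)=\lambda_t$, i.e. $\lambda_t$ is exactly the \emph{critical} Bernoulli value for $\Omega_t$. The strategy is then to show that at this critical value every intermediate inequality in the proof must be an equality, and in particular that the pointwise Minkowski estimate
$$\dd(x,y)\ \ge\ (1-t)\,\dd(x_0,\{u_0=l\})+t\,\dd(x_1,\{u_1=l\})$$
is saturated at the extremal points $x=(1-t)x_0+tx_1$ realizing the minimal distance to $\{u_t^*=l\}$. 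Saturation forces the outward normals at $x_0\in\partial K_0$ and $x_1\in\partial K_1$, and at the corresponding nearest level-set points, to be parallel, i.e. the support functions of $K_0$ and $K_1$ to be related affinely — which is exactly homothety of $K_0$ and $K_1$. I would then propagate this from the free boundaries to \emph{all} level sets $\{u_i=s\}$, showing each pair is homothetic with a common ($t$-independent) center and ratio, and finally conclude that $\Omega_0$ and $\Omega_1$ themselves are homothetic.

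The essential obstacle is the rigidity step: upgrading the saturation of a \emph{pointwise} Minkowski estimate to a \emph{global} homothety requires showing that equality forces the genuine $p$-capacitary potential $u_t$ of $\Omega_t\setminus\overline{K_t}$ to coincide with the Minkowski combination $u_t^*$ of $u_0$ and $u_1$. A Minkowski combination of two $p$-harmonic functions is itself $p$-harmonic only in very rigid configurations, and exploiting this for the $p$-Laplacian — where no superposition principle is available — is precisely where the known proofs of the pure $p$-capacity equality case become delicate; the coupling here with a moving level set and the distance constraint compounds the difficulty. Moreover the argument appears to need uniqueness of the critical ($\lambda=\Lambda(\Omega)$) solution, which is itself the third open question in the statement: I would attack it by proving that the elliptic and hyperbolic families observed in the ball case (Lemma \ref{lemma_ball}) persist for general convex $\Omega$ as a decreasing and an increasing continuous family of solutions, respectively, and that they coalesce into a single solution exactly at $\lambda=\Lambda(\Omega)$.

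Finally, the isoperimetric comparison of $\Lambda(\Omega)$ with the Bernoulli constant of the ball of equal mean width should follow from \eqref{Brunn-Minkowski} by an averaging argument over rotations: the Minkowski rotational mean of $\Omega$ is a ball whose radius equals the mean width of $\Omega$, and the concavity of $\Lambda$ under Minkowski combination, iterated and passed to the limit over $SO(N)$, yields that $\Lambda$ of that ball is at least $\Lambda(\Omega)$, giving the sharp inequality; by the homothety characterization above, equality would hold precisely when $\Omega$ is itself a ball.
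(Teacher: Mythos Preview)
The statement you are attempting to prove is not a theorem in the paper: it is explicitly labeled a \emph{Conjecture}, and the paper offers no proof whatsoever. The authors present it as a collection of open problems following the Brunn--Minkowski corollary, so there is no ``paper's own proof'' against which to compare your proposal.

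What you have written is a reasonable research outline rather than a proof, and you are candid about this yourself. The ``if'' direction via the scaling identity $\Lambda(c\Omega)=c\Lambda(\Omega)$ is correct and straightforward. For the ``only if'' direction, however, you identify precisely the obstruction the authors presumably had in mind: tracing equality through the proof of the preceding proposition requires that the Minkowski combination $u_t^*$ coincide with the genuine $p$-capacitary potential $u_t$, and for $p\neq 2$ there is no linearity to force this. Your proposed propagation argument from free boundaries to all level sets is heuristic, and --- as you note --- it appears to presuppose uniqueness at $\lambda=\Lambda(\Omega)$, which is itself the third open question. The rotational-averaging approach to the isoperimetric comparison is plausible in spirit but would require, at minimum, continuity of $\Lambda$ under Hausdorff convergence and a rigorous passage to the limit over $SO(N)$, neither of which is established here or in the paper. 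In short: your proposal is a sensible sketch of how one might attack the conjecture, but it does not resolve it, and the paper does not claim to either.
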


\section{Concluding Remarks}

\subsection{An alternative approach for Numerical computation}
One of the major problems in designing numerical algorithms for the (interior and exterior) classical Bernoulli problem, as well as for the formulation presented in this manuscript, is related to the fact that the set $\Omega$ (for the exterior case), or $K$ (for the interior case) is unknown and chosen by the solution. Consequently, to impose numerically the gradient or the distance condition to the solution itself at the unknown boundary becomes a non-trivial task.

Consider for a moment the exterior discrete Bernoulli problem (the interior can be treated similarly)
\begin{equation*}\left\{
\begin{split}
&\Delta_p u = 0 \hbox{ in } \Omega\setminus \overline{K},\quad
u =1  \textrm{ in } \overline{K},\quad u =0 \textrm{ on } \partial\Omega,  \\
&\dd( x,\{u=\lambda \omega\}) =\lambda  \quad\textrm{for all}\; x\in\partial \Omega,
\end{split}\right.\leqno (P_E)
\end{equation*}
and the classical one
\begin{equation*}
\hspace{-2.1cm}\left\{
\begin{split}
&\Delta_p u = 0  \hbox{ in } \Omega\setminus \overline{K}, \quad u =1  \textrm{ in } \overline{K}, \quad u =0 \textrm{ on } \partial\Omega, \\
& |\nabla u| =\omega\ \quad\textrm{for all}\; x\in\partial \Omega.
\end{split}\right.\leqno (P_B)
\end{equation*}

The numerical implementation of a distance condition between two level sets of a $p$-harmonic function might be easier than the implementation of a gradient condition on a level set.
 Moreover we have rigorously shown in the previous sections that solutions of $(P_E)$ converge to solutions of $(P_B)$ as
 $\lambda \to 0$.


Based on these two observations, we propose the following algorithm to numerically approximate solutions to problem $(P_E)$ and consequently solutions to $(P_B)$.  Fix the set $K \subset \R^N$ and
$(u_{n-1}, \Omega_{n-1})$ solution of $(P_\Omega)$ with $\Omega = \Omega_{n-1}$:
\begin{enumerate}
\item[\bf {1.}] Let $B_\lambda(x)$ be the ball of radius $\lambda$ and centrum in $x$;  define
$$
\Omega_{n} := \{ u_{n-1} \ge \lambda \omega \} \cup \bigcup_{ x\in \Gamma_{n-1}} B_\lambda(x)
$$
with $ \Gamma_{n-1} = \{ x\in \Omega_{n-1}, \; | \; u_{n-1}(x) =   \lambda \omega\}$.\\
\item[\bf {2.}] If $\Omega_{n-1} = \Omega_n$ the couple $(\Omega_{n-1},u_{n-1})$ is a solution for $(P_E)$. If $\Omega_{n-1} \neq \Omega_n$ solve
$(P_\Omega)$ with $\Omega = \Omega_n$.\\
\item[{\bf {3.}}] Let $(u_{n}, \Omega_{n})$ be a solution of $(P_\Omega)$ with $\Omega = \Omega_{n}$. Restart from step {\bf {1.}} for $n \to n+1$.  \\
\end{enumerate}
Hence a numerical algorithm for $(P_B)$ consists on solving $(P_E)$ with the scheme $\{ \bf {1.}, \bf {2.}, \bf {3.} \}$ for a decreasing sequence of values for $\lambda$.

 We remark the reader that estimates showing
 convergence and stability of algorithm $\{ \bf {1.}, \bf {2.}, \bf {3.} \}$ are not provided in this manuscript but are very interesting open questions.

\subsection{Open problems}

Let $K\Subset\Omega$ be two bounded domains in $\mathbb R^N$. Let $u\in\mathcal C^2(\Omega\backslash\overline K)\cap \mathcal C(\overline \Omega)$ be a classical solution of the Dirichlet problem
\begin{equation*}
\left\{\begin{split}
&F(x,u,\nabla u, D^2 u)=0 &\quad\mbox{in }\Omega\backslash \overline K, \\
&u=0 &\quad \mbox{on }\partial\Omega,\\
&u=1 &\quad \mbox{on }\partial K,
\end{split}\right.
\end{equation*}
where $F(x,t,p,A)$ is a proper and (degenerate) elliptic operator defined on $\mathbb R^N\times \mathbb R\times \mathbb R^N\times \mathcal S^N$ (here $\mathcal S^N$ denotes the set of real symmetric $n\times n$ matrices). If both $K$ and $\Omega$ are convex, the natural question to ask is whether all the level sets of $u$ are convex. i.e. whether $u$ is a \emph{quasi-convex} function. Without suitable assumptions on $F$, the answer can be negative (see \cite{Monneau-Shahgholian}) for instance. Before dealing with the classical Bernoulli or the discrete version of it presented in this manuscript, one should aim to find sufficient conditions on $F$ which guarantee that $u$ is a \emph{quasi-convex} function.
An answer to this question for the $p$-Laplacian operator can be found in Lemma \ref{convexity-level-sets} and  Lemma \ref{starshaped-level-sets} for respectively the convex and star-shaped domains. Note that for a general $F$, the problem was considered in \cite{Salani:starshapedness}.

Another interesting open question is whether any solution to $(P_E)$ (or to $(P_I)$) satisfies the so called {\em{normal vector property:}} a solution $(u,\Omega)$ to $(P_E)$ satisfies the {{normal vector property}} if given any point $x_0 \in \partial \Omega$ the line perpendicular to the boundary $\partial\Omega$ at $x_0$ intersects the {\em{convex hull}} of $K$, as in Figure \ref{mov-planes}. Similar questions have been addressed for reaction-diffusion equations in the context of front-propagation (Jones' Lemma, see \cite{Jones83,Berestycki2003}).


Finally, one may also consider cases in which the condition (\ref{main-condition}) is replaced by
$$
\dd (x,\{u=0\})=\lambda, \quad \forall x\in\;\{u=l\},
$$
or, even more general, by the distance between any two levels sets. Such problems fall outside the scope of this paper and are left as open questions.

A  still open and tantalizing problem concerns the formulation of the discrete Bernoulli problem considered here via a suitable variational functional. The next question would then be to formulate suitable isoperimetric problems, in the spirit of \cite{DK10}.\\





\vspace{1cm}

\textbf{Acknowledgments}:  M.d.M Gonzalez is supported by grants MTM2008-06349-C03-01 and MTM2011-27739-C04-01 (from the Spanish government), and 2009SGR345 (from Generalitat de Catalunya). M.P. Gualdani is supported by NSF-DMS 1109682.  H. Shahgholian is supported in part by Swedish Research Council. This work was finalized at KTH with the support from G\"oran Gustafsson foundation.
The authors would like to thank the hospitality of MSRI during the program \emph{Free Boundary Problems, Theory and Applications} in the Spring 2011.


\bibliographystyle{amsplain}

\end{document}